%


\documentclass[12pt]{article}
\usepackage{ct}




\usepackage{tikz}
\usetikzlibrary{arrows.meta}
\usetikzlibrary{calc}
\usepackage{placeins}
\usepackage[labelformat=empty]{subfig}


\specs{6}{4 (1)}{2024}{}

\dateline{Mar 20, 2023}{Dec 29, 2023}{Jun 30, 2024}

\keywords{Tilings, aperiodic order, polyforms}

\MSC{05B45, 52C20, 05B50}

\title{An aperiodic monotile}


\author[1]{David Smith}
\author[2]{Joseph Samuel Myers\thanks{Development of software used in
    this work was supported in part by a Senior Rouse Ball Studentship
    for 2002--3 from Trinity College, Cambridge.}}
\author[3]{Craig S. Kaplan}
\author[4]{Chaim Goodman-Strauss} 

\affil[1]{%
Yorkshire, U.K.

\email{ds.orangery@gmail.com}%
}

\affil[2]{%
Cambridge, U.K.

\email{jsm@polyomino.org.uk}%
}

\affil[3]{%
School of Computer Science, University of Waterloo, Waterloo, Ontario, Canada

\email{csk@uwaterloo.ca}%
}

\affil[4]{%
National Museum of Mathematics, New York, New York, U.S.A.

\email{chaimgoodmanstrauss@gmail.com}%
}

%



\newcommand{\secref}[1]{Section~\ref{#1}}
\newcommand{\fignum     } [1] {\ref{#1}}
\newcommand{\fig        } [1] {Figure~\fignum{#1}}
\graphicspath{{figures/}}
\newcommand{\vcoords}[2]{($sqrt(0.75)*(#1,0)+0.5*(0,#1)+(0,#2)$)}
\newcommand{\hcoords}[2]{($sqrt(12)*(#1,0)+sqrt(3)*(#2,0)+3*(0,#2)+sqrt(3)*(1,0)+(0,1)$)}
\newcommand{\colA}{red}
\newcommand{\colB}{blue}
\newcommand{\colX}{green!80!black}
\newcommand{\colF}{pink}
\newcommand{\colL}{gray}
\newcommand{\colP}{Goldenrod}
\newcommand{\colG}{violet}
\newcommand{\Ttile}[4]{%
  \draw[shift={\vcoords{#2}{#3}},rotate=#1,\colA] \vcoords{0}{0} -- \vcoords{3}{0};
  \fill[shift={\vcoords{#2}{#3}},rotate=#1,\colA] \vcoords{1.25}{0} -- \vcoords{1.25}{0.5} -- \vcoords{1.75}{0} -- cycle;
  \draw[shift={\vcoords{#2}{#3}},rotate=#1,\colA] \vcoords{3}{0} -- \vcoords{0}{3};
  \fill[shift={\vcoords{#2}{#3}},rotate=#1,\colA] \vcoords{1.25}{1.25} -- \vcoords{1.25}{1.75} -- \vcoords{1.75}{1.25} -- cycle;
  \draw[shift={\vcoords{#2}{#3}},rotate=#1,\colB] \vcoords{0}{3} -- \vcoords{0}{0};
  \fill[shift={\vcoords{#2}{#3}},rotate=#1,\colB] \vcoords{0}{1.75} arc[start angle=90,delta angle=180,radius=0.25] -- cycle;
  \draw[shift={\vcoords{#2}{#3}},rotate=#1] \vcoords{1}{1} node {#4}
}
\newcommand{\Htile}[4]{%
  \draw[shift={\vcoords{#2}{#3}},rotate=#1,\colX] \vcoords{0}{0} -- \vcoords{1}{-1};
  \draw[shift={\vcoords{#2}{#3}},rotate=#1,\colX] \vcoords{0.5}{-0.5} -- \vcoords{0.25}{-0.75};
  \draw[shift={\vcoords{#2}{#3}},rotate=#1,\colB] \vcoords{1}{-1} -- \vcoords{4}{-1};
  \fill[shift={\vcoords{#2}{#3}},rotate=#1,\colB] \vcoords{2.75}{-1} arc[start angle=30,delta angle=180,radius=0.25] -- cycle;
  \draw[shift={\vcoords{#2}{#3}},rotate=#1,\colX] \vcoords{4}{-1} -- \vcoords{5}{-1};
  \draw[shift={\vcoords{#2}{#3}},rotate=#1,\colX] \vcoords{4.5}{-1} -- \vcoords{4.25}{-0.5};
  \draw[shift={\vcoords{#2}{#3}},rotate=#1,\colX] \vcoords{5}{-1} -- \vcoords{5}{0};
  \draw[shift={\vcoords{#2}{#3}},rotate=#1,\colX] \vcoords{5}{-0.5} -- \vcoords{5.5}{-0.75};
  \draw[shift={\vcoords{#2}{#3}},rotate=#1,\colB] \vcoords{5}{0} -- \vcoords{2}{3};
  \fill[shift={\vcoords{#2}{#3}},rotate=#1,\colB] \vcoords{3.25}{1.75} arc[start angle=150,delta angle=180,radius=0.25] -- cycle;
  \draw[shift={\vcoords{#2}{#3}},rotate=#1,\colX] \vcoords{2}{3} -- \vcoords{1}{4};
  \draw[shift={\vcoords{#2}{#3}},rotate=#1,\colX] \vcoords{1.5}{3.5} -- \vcoords{1.25}{3.25};
  \draw[shift={\vcoords{#2}{#3}},rotate=#1,\colX] \vcoords{1}{4} -- \vcoords{0}{4};
  \draw[shift={\vcoords{#2}{#3}},rotate=#1,\colX] \vcoords{0.5}{4} -- \vcoords{0.25}{4.5};
  \draw[shift={\vcoords{#2}{#3}},rotate=#1,\colA] \vcoords{0}{4} -- \vcoords{0}{1};
  \fill[shift={\vcoords{#2}{#3}},rotate=#1,\colA] \vcoords{0}{2.75} -- \vcoords{-0.5}{2.75} -- \vcoords{0}{2.25} -- cycle;
  \draw[shift={\vcoords{#2}{#3}},rotate=#1,\colX] \vcoords{0}{1} -- \vcoords{0}{0};
  \draw[shift={\vcoords{#2}{#3}},rotate=#1,\colX] \vcoords{0}{0.5} -- \vcoords{0.5}{0.25};
  \draw[shift={\vcoords{#2}{#3}},rotate=#1] \vcoords{2}{1} node {#4}
}
\newcommand{\Ptile}[4]{%
  \draw[shift={\vcoords{#2}{#3}},rotate=#1,\colL] \vcoords{0}{0} -- \vcoords{1}{-1};
  \draw[shift={\vcoords{#2}{#3}},rotate=#1,\colX] \vcoords{1}{-1} -- \vcoords{2}{-2};
  \draw[shift={\vcoords{#2}{#3}},rotate=#1,\colX] \vcoords{1.5}{-1.5} -- \vcoords{1.75}{-1.25};
  \draw[shift={\vcoords{#2}{#3}},rotate=#1,\colX] \vcoords{2}{-2} -- \vcoords{3}{-2};
  \draw[shift={\vcoords{#2}{#3}},rotate=#1,\colX] \vcoords{2.5}{-2} -- \vcoords{2.75}{-2.5};
  \draw[shift={\vcoords{#2}{#3}},rotate=#1,\colA] \vcoords{3}{-2} -- \vcoords{6}{-2};
  \fill[shift={\vcoords{#2}{#3}},rotate=#1,\colA] \vcoords{4.25}{-2} -- \vcoords{4.25}{-1.5} -- \vcoords{4.75}{-2} -- cycle;
  \draw[shift={\vcoords{#2}{#3}},rotate=#1,\colL] \vcoords{6}{-2} -- \vcoords{5}{-1};
  \draw[shift={\vcoords{#2}{#3}},rotate=#1,\colX] \vcoords{5}{-1} -- \vcoords{4}{0};
  \draw[shift={\vcoords{#2}{#3}},rotate=#1,\colX] \vcoords{4.5}{-0.5} -- \vcoords{4.25}{-0.75};
  \draw[shift={\vcoords{#2}{#3}},rotate=#1,\colX] \vcoords{4}{0} -- \vcoords{3}{0};
  \draw[shift={\vcoords{#2}{#3}},rotate=#1,\colX] \vcoords{3.5}{0} -- \vcoords{3.25}{0.5};
  \draw[shift={\vcoords{#2}{#3}},rotate=#1,\colB] \vcoords{3}{0} -- \vcoords{0}{0};
  \fill[shift={\vcoords{#2}{#3}},rotate=#1,\colB] \vcoords{1.75}{0} arc[start angle=30,delta angle=180,radius=0.25] -- cycle;
  \draw[shift={\vcoords{#2}{#3}},rotate=#1] \vcoords{3}{-1} node {#4}
}
\newcommand{\Ftile}[4]{%
  \draw[shift={\vcoords{#2}{#3}},rotate=#1,\colL] \vcoords{0}{0} -- \vcoords{1}{-1};
  \draw[shift={\vcoords{#2}{#3}},rotate=#1,\colX] \vcoords{1}{-1} -- \vcoords{2}{-2};
  \draw[shift={\vcoords{#2}{#3}},rotate=#1,\colX] \vcoords{1.5}{-1.5} -- \vcoords{1.75}{-1.25};
  \draw[shift={\vcoords{#2}{#3}},rotate=#1,\colX] \vcoords{2}{-2} -- \vcoords{3}{-2};
  \draw[shift={\vcoords{#2}{#3}},rotate=#1,\colX] \vcoords{2.5}{-2} -- \vcoords{2.75}{-2.5};
  \draw[shift={\vcoords{#2}{#3}},rotate=#1,\colL] \vcoords{3}{-2} -- \vcoords{4}{-2};
  \draw[shift={\vcoords{#2}{#3}},rotate=#1,\colX] \vcoords{4}{-2} -- \vcoords{5}{-2};
  \draw[shift={\vcoords{#2}{#3}},rotate=#1,\colX] \vcoords{4.5}{-2} -- \vcoords{4.25}{-1.5};
  \draw[shift={\vcoords{#2}{#3}},rotate=#1,\colF] \vcoords{5}{-2} -- \vcoords{5}{-1};
  \draw[shift={\vcoords{#2}{#3}},rotate=#1,\colF] \vcoords{5}{-1.5} -- \vcoords{5.25}{-1.5};
  \draw[shift={\vcoords{#2}{#3}},rotate=#1,\colF] \vcoords{5}{-1} -- \vcoords{4}{0};
  \draw[shift={\vcoords{#2}{#3}},rotate=#1,\colF] \vcoords{4.5}{-0.5} -- \vcoords{4.5}{-0.75};
  \draw[shift={\vcoords{#2}{#3}},rotate=#1,\colX] \vcoords{4}{0} -- \vcoords{3}{0};
  \draw[shift={\vcoords{#2}{#3}},rotate=#1,\colX] \vcoords{3.5}{0} -- \vcoords{3.25}{0.5};
  \draw[shift={\vcoords{#2}{#3}},rotate=#1,\colB] \vcoords{3}{0} -- \vcoords{0}{0};
  \fill[shift={\vcoords{#2}{#3}},rotate=#1,\colB] \vcoords{1.75}{0} arc[start angle=30,delta angle=180,radius=0.25] -- cycle;
  \draw[shift={\vcoords{#2}{#3}},rotate=#1] \vcoords{3}{-1} node {#4}
}
\newcommand{\Pplus}[4]{%
  \draw[shift={\vcoords{#2}{#3}},rotate=#1,\colL] \vcoords{0}{0} -- \vcoords{1}{-1};
  \draw[shift={\vcoords{#2}{#3}},rotate=#1,\colP] \vcoords{1}{-1} -- \vcoords{5}{-1};
  \fill[shift={\vcoords{#2}{#3}},rotate=#1,\colP] \vcoords{2}{-1} -- \vcoords{2.125}{-1.25} -- \vcoords{4.125}{-1.25} -- \vcoords{4}{-1} -- cycle;
  \draw[shift={\vcoords{#2}{#3}},rotate=#1,\colX] \vcoords{5}{-1} -- \vcoords{4}{0};
  \draw[shift={\vcoords{#2}{#3}},rotate=#1,\colX] \vcoords{4.5}{-0.5} -- \vcoords{4.25}{-0.75};
  \draw[shift={\vcoords{#2}{#3}},rotate=#1,\colX] \vcoords{4}{0} -- \vcoords{3}{0};
  \draw[shift={\vcoords{#2}{#3}},rotate=#1,\colX] \vcoords{3.5}{0} -- \vcoords{3.25}{0.5};
  \draw[shift={\vcoords{#2}{#3}},rotate=#1,\colB] \vcoords{3}{0} -- \vcoords{0}{0};
  \fill[shift={\vcoords{#2}{#3}},rotate=#1,\colB] \vcoords{1.75}{0} arc[start angle=30,delta angle=180,radius=0.25] -- cycle;
  \draw[shift={\vcoords{#2}{#3}},rotate=#1] \vcoords{2.5}{-0.5} node {#4}
}
\newcommand{\Pminus}[4]{%
  \draw[shift={\vcoords{#2}{#3}},rotate=#1,\colX] \vcoords{0}{0} -- \vcoords{1}{-1};
  \draw[shift={\vcoords{#2}{#3}},rotate=#1,\colX] \vcoords{0.5}{-0.5} -- \vcoords{0.75}{-0.25};
  \draw[shift={\vcoords{#2}{#3}},rotate=#1,\colX] \vcoords{1}{-1} -- \vcoords{2}{-1};
  \draw[shift={\vcoords{#2}{#3}},rotate=#1,\colX] \vcoords{1.5}{-1} -- \vcoords{1.75}{-1.5};
  \draw[shift={\vcoords{#2}{#3}},rotate=#1,\colA] \vcoords{2}{-1} -- \vcoords{5}{-1};
  \fill[shift={\vcoords{#2}{#3}},rotate=#1,\colA] \vcoords{3.25}{-1} -- \vcoords{3.25}{-0.5} -- \vcoords{3.75}{-1} -- cycle;
  \draw[shift={\vcoords{#2}{#3}},rotate=#1,\colL] \vcoords{5}{-1} -- \vcoords{4}{0};
  \draw[shift={\vcoords{#2}{#3}},rotate=#1,\colP] \vcoords{4}{0} -- \vcoords{0}{0};
  \fill[shift={\vcoords{#2}{#3}},rotate=#1,\colP] \vcoords{3}{0} -- \vcoords{3.125}{-0.25} -- \vcoords{1.125}{-0.25} -- \vcoords{1}{0} -- cycle;
  \draw[shift={\vcoords{#2}{#3}},rotate=#1] \vcoords{2.5}{-0.5} node {#4}
}
\newcommand{\Fplus}[4]{%
  \draw[shift={\vcoords{#2}{#3}},rotate=#1,\colL] \vcoords{0}{0} -- \vcoords{1}{-1};
  \draw[shift={\vcoords{#2}{#3}},rotate=#1,\colG] \vcoords{1}{-1} -- \vcoords{5}{-1};
  \fill[shift={\vcoords{#2}{#3}},rotate=#1,\colG] \vcoords{2}{-1} -- \vcoords{3.125}{-1.25} -- \vcoords{4}{-1} -- cycle;
  \draw[shift={\vcoords{#2}{#3}},rotate=#1,\colF] \vcoords{5}{-1} -- \vcoords{4}{0};
  \draw[shift={\vcoords{#2}{#3}},rotate=#1,\colF] \vcoords{4.5}{-0.5} -- \vcoords{4.5}{-0.75};
  \draw[shift={\vcoords{#2}{#3}},rotate=#1,\colX] \vcoords{4}{0} -- \vcoords{3}{0};
  \draw[shift={\vcoords{#2}{#3}},rotate=#1,\colX] \vcoords{3.5}{0} -- \vcoords{3.25}{0.5};
  \draw[shift={\vcoords{#2}{#3}},rotate=#1,\colB] \vcoords{3}{0} -- \vcoords{0}{0};
  \fill[shift={\vcoords{#2}{#3}},rotate=#1,\colB] \vcoords{1.75}{0} arc[start angle=30,delta angle=180,radius=0.25] -- cycle;
  \draw[shift={\vcoords{#2}{#3}},rotate=#1] \vcoords{2.5}{-0.5} node {#4}
}
\newcommand{\Fminus}[4]{%
  \draw[shift={\vcoords{#2}{#3}},rotate=#1,\colX] \vcoords{0}{0} -- \vcoords{1}{-1};
  \draw[shift={\vcoords{#2}{#3}},rotate=#1,\colX] \vcoords{0.5}{-0.5} -- \vcoords{0.75}{-0.25};
  \draw[shift={\vcoords{#2}{#3}},rotate=#1,\colX] \vcoords{1}{-1} -- \vcoords{2}{-1};
  \draw[shift={\vcoords{#2}{#3}},rotate=#1,\colX] \vcoords{1.5}{-1} -- \vcoords{1.75}{-1.5};
  \draw[shift={\vcoords{#2}{#3}},rotate=#1,\colL] \vcoords{2}{-1} -- \vcoords{3}{-1};
  \draw[shift={\vcoords{#2}{#3}},rotate=#1,\colX] \vcoords{3}{-1} -- \vcoords{4}{-1};
  \draw[shift={\vcoords{#2}{#3}},rotate=#1,\colX] \vcoords{3.5}{-1} -- \vcoords{3.25}{-0.5};
  \draw[shift={\vcoords{#2}{#3}},rotate=#1,\colF] \vcoords{4}{-1} -- \vcoords{4}{0};
  \draw[shift={\vcoords{#2}{#3}},rotate=#1,\colF] \vcoords{4}{-0.5} -- \vcoords{4.25}{-0.5};
  \draw[shift={\vcoords{#2}{#3}},rotate=#1,\colG] \vcoords{4}{0} -- \vcoords{0}{0};
  \fill[shift={\vcoords{#2}{#3}},rotate=#1,\colG] \vcoords{3}{0} -- \vcoords{2.125}{-0.25} -- \vcoords{1}{0} -- cycle;
  \draw[shift={\vcoords{#2}{#3}},rotate=#1] \vcoords{2.5}{-0.5} node {#4}
}
\newcommand{\markpt}[2]{%
  \filldraw \vcoords{#1}{#2} circle [radius=3pt]
}
\newcommand{\vctxt}[3]{%
  \draw \vcoords{#1}{#2} node {#3}
}
\newcommand{\colfill}{lightgray}
\newcommand{\colcluster}{lime!85!black}
\newcommand{\rawtileA}[6]{%
  #1[#2shift={\hcoords{#4}{#5}},rotate=#3]
    \vcoords{0}{0} -- \vcoords{-2}{1} -- \vcoords{-2}{0} --
    \vcoords{0}{-2} -- \vcoords{1}{-2} -- \vcoords{2}{-4} --
    \vcoords{4}{-5} -- \vcoords{4}{-4} -- \vcoords{3}{-3} --
    \vcoords{4}{-2} -- \vcoords{3}{0} -- \vcoords{2}{0} --
    \vcoords{1}{1} -- cycle;
  \draw[shift={\hcoords{#4}{#5}},rotate=#3] \vcoords{2}{-1} node {#6}
}
\newcommand{\rawtileAr}[6]{%
  #1[#2shift={\hcoords{#4}{#5}},rotate=#3]
    \vcoords{0}{0} -- \vcoords{1}{-2} -- \vcoords{2}{-2} --
    \vcoords{3}{-3} -- \vcoords{4}{-2} -- \vcoords{3}{0} --
    \vcoords{4}{0} -- \vcoords{4}{1} -- \vcoords{2}{2} --
    \vcoords{1}{1} -- \vcoords{0}{2} -- \vcoords{-2}{2} --
    \vcoords{-2}{1} -- cycle;
  \draw[shift={\hcoords{#4}{#5}},rotate=#3] \vcoords{2}{-1} node {#6}
}
\newcommand{\tileA}[4]{\rawtileA{\draw}{}{#1}{#2}{#3}{#4}}
\newcommand{\tileAr}[4]{\rawtileAr{\draw}{}{#1}{#2}{#3}{#4}}
\newcommand{\ftileA}[4]{\rawtileA{\filldraw}{fill=\colfill,}{#1}{#2}{#3}{#4}}

\newcommand{\rawtileB}[6]{%
  #1[#2shift={\hcoords{#4}{#5}},rotate=#3]
    \vcoords{0}{0} -- \vcoords{-2}{1} -- \vcoords{-2}{0} --
    \vcoords{-3}{0} -- \vcoords{-2}{-2} -- \vcoords{2}{-4} --
    \vcoords{3}{-3} -- \vcoords{4}{-4} -- \vcoords{5}{-4} --
    \vcoords{4}{-2} -- \vcoords{2}{-1} -- \vcoords{2}{0} --
    \vcoords{1}{1} -- cycle;
  \draw[shift={\hcoords{#4}{#5}},rotate=#3] \vcoords{0}{-2} node {#6}
}

\newcommand{\tileB}[4]{\rawtileB{\draw}{}{#1}{#2}{#3}{#4}}

\newcommand{\colfillkite}{lime}
\newcommand{\colkite}{lightgray}
\newcommand{\rawkite}[5]{%
  #1[#2shift={\hcoords{#4}{#5}},rotate=#3]
    \vcoords{0}{0} -- \vcoords{-2}{1} -- \vcoords{-2}{0} --
    \vcoords{-1}{-1} -- cycle
}

\newcommand{\ffkite}[3]{\rawkite{\filldraw}{fill=\colfillkite,draw=\colkite,}{#1}{#2}{#3}}
\newcommand{\threekite}[4]{%
  \draw[shift={\hcoords{#2}{#3}},rotate=#1]
    \vcoords{0}{0} -- \vcoords{-6}{3} -- \vcoords{-6}{0} --
    \vcoords{-3}{-3} -- cycle;
  \draw[shift={\hcoords{#2}{#3}},rotate=#1] \vcoords{-3}{0} node {#4}
}
\newcommand{\fftileA}[4]{\rawtileA{\filldraw}{fill=\colfillkite,}{#1}{#2}{#3}{#4}}
\newcommand{\fftileAr}[4]{\rawtileAr{\filldraw}{fill=\colfillkite,}{#1}{#2}{#3}{#4}}
\newcommand{\vcoordsx}[5]{($sqrt(0.75)*(#1,0)+0.5*(0,#1)+(0,#2)+#5*0.5*(#3,0)+#5*0.5*sqrt(1/3)*(0,#3)+#5*sqrt(1/3)*(0,#4)$)}
\newcommand{\tiler}[5]{%
  \draw[shift={\vcoordsx{#2}{#3}{#4}{#5}{#1}}]
  \vcoordsx{0}{0}{0}{0}{#1} -- \vcoordsx{0}{0}{-2}{1}{#1} --
  \vcoordsx{0}{-1}{-2}{1}{#1} -- \vcoordsx{2}{-3}{-2}{1}{#1} --
  \vcoordsx{3}{-3}{-2}{1}{#1} -- \vcoordsx{3}{-3}{-1}{-1}{#1} --
  \vcoordsx{3}{-3}{1}{-2}{#1} -- \vcoordsx{3}{-2}{1}{-2}{#1} --
  \vcoordsx{2}{-1}{1}{-2}{#1} -- \vcoordsx{2}{-1}{2}{-1}{#1} --
  \vcoordsx{2}{-1}{1}{1}{#1} -- \vcoordsx{1}{-1}{1}{1}{#1} --
  \vcoordsx{0}{0}{1}{1}{#1} -- cycle
}
\newcommand{\tilerr}[5]{%
  \draw[shift={\vcoordsx{#2}{#3}{#4}{#5}{#1}},rotate=-30]
  \vcoordsx{0}{0}{0}{0}{#1} -- \vcoordsx{0}{0}{1}{-2}{#1} --
  \vcoordsx{1}{0}{1}{-2}{#1} -- \vcoordsx{2}{-1}{1}{-2}{#1} --
  \vcoordsx{2}{-1}{2}{-1}{#1} -- \vcoordsx{2}{-1}{1}{1}{#1} --
  \vcoordsx{3}{-1}{1}{1}{#1} -- \vcoordsx{3}{0}{1}{1}{#1} --
  \vcoordsx{3}{0}{-1}{2}{#1} -- \vcoordsx{3}{0}{-2}{1}{#1} --
  \vcoordsx{2}{1}{-2}{1}{#1} -- \vcoordsx{0}{1}{-2}{1}{#1} --
  \vcoordsx{0}{0}{-2}{1}{#1} -- cycle
}
\newcommand{\drawsquare}[2]{%
  \draw[shift={(#1,#2)}] (0,0) -- (1,0) -- (1,1) -- (0,1) -- cycle
}
\newcommand{\drawtriangle}[3]{%
  \draw[shift={\vcoords{#2}{#3}},rotate=#1]
  \vcoords{0}{0} -- \vcoords{1}{0} -- \vcoords{0}{1} -- cycle
}
\newcommand{\hexkites}[4]{%
  \draw[shift={\vcoordsx{#1}{#2}{#3}{#4}{3}}]
  \vcoords{0}{0} -- \vcoords{2}{-2} -- \vcoords{4}{-2} --
  \vcoords{4}{0} -- \vcoords{2}{2} -- \vcoords{0}{2} -- cycle;
  \draw[shift={\vcoordsx{#1}{#2}{#3}{#4}{3}}]
  \vcoords{0}{1} -- \vcoords{4}{-1};
  \draw[shift={\vcoordsx{#1}{#2}{#3}{#4}{3}}]
  \vcoords{1}{-1} -- \vcoords{3}{1};
  \draw[shift={\vcoordsx{#1}{#2}{#3}{#4}{3}}]
  \vcoords{3}{-2} -- \vcoords{1}{2}
}
\newcommand{\trikites}[5]{%
  \draw[shift={\vcoordsx{#2}{#3}{#4}{#5}{3}},rotate=#1]
  \vcoordsx{0}{0}{0}{0}{3} -- \vcoordsx{0}{0}{2}{0}{3} --
  \vcoordsx{0}{0}{0}{2}{3} -- cycle;
  \draw[shift={\vcoordsx{#2}{#3}{#4}{#5}{3}},rotate=#1]
  \vcoordsx{0}{0}{1}{0}{3} -- \vcoordsx{0}{0}{2}{2}{1};
  \draw[shift={\vcoordsx{#2}{#3}{#4}{#5}{3}},rotate=#1]
  \vcoordsx{0}{0}{1}{1}{3} -- \vcoordsx{0}{0}{2}{2}{1};
  \draw[shift={\vcoordsx{#2}{#3}{#4}{#5}{3}},rotate=#1]
  \vcoordsx{0}{0}{0}{1}{3} -- \vcoordsx{0}{0}{2}{2}{1}
}
\newcommand{\colextside}{lime!85!black}
\definecolor{colroot}{rgb}{0.86, 0.38, 0.03}

\extrafloats{100}
\maxdeadcycles=500


\begin{document}

\maketitle


\begin{abstract}
  A longstanding open problem asks for an aperiodic monotile, also known
  as an ``einstein'': a shape 
  that admits tilings of the plane, but never periodic tilings.
  We answer this problem for topological disk tiles by exhibiting a continuum of 
  combinatorially equivalent aperiodic polygons.
  We first show that a representative example, the ``hat''
  polykite, can form clusters called ``metatiles'', for which substitution
  rules can be defined.  Because the metatiles admit tilings of
  the plane, so too does the hat.  We then prove that generic members
  of our continuum of polygons are aperiodic, through a new kind of
  geometric incommensurability argument.  Separately, we give a combinatorial,
  computer-assisted proof that the hat must form hierarchical---and hence
  aperiodic---tilings.  
\end{abstract}



\section{Introduction}
\label{sec:intro}

\begin{figure}[ht]
\begin{center}
	\includegraphics[width=.8\textwidth]{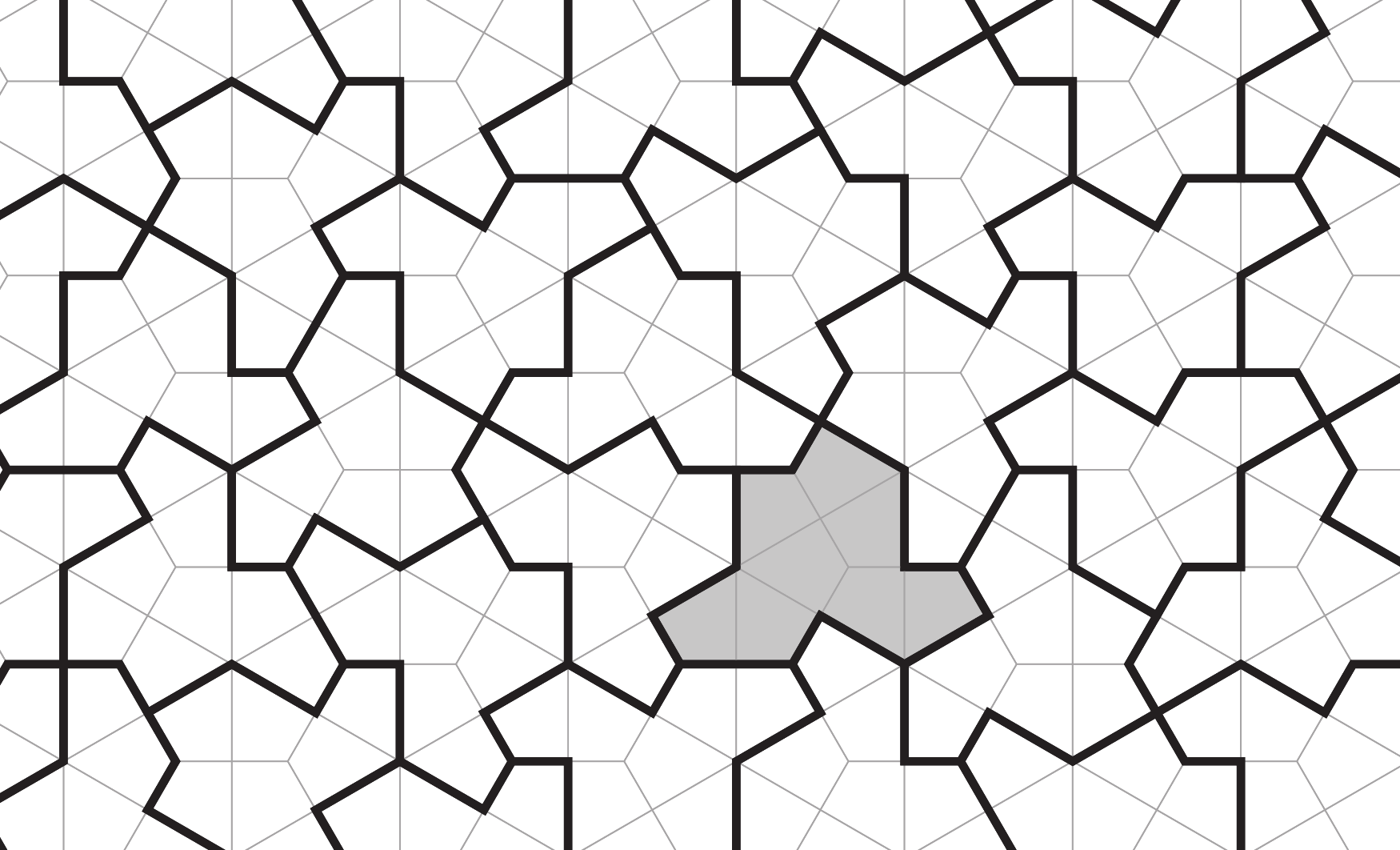}
\end{center}
\caption{\label{fig:polykite} The grey ``hat'' polykite tile
is an aperiodic monotile, also known as an ``einstein''.
Copies of this tile may be assembled into tilings of the plane
(the tile ``admits'' tilings), but none of those tilings can have 
translational symmetry.  In fact, the hat admits uncountably many tilings. In Sections~\ref{sec:substitution},~\ref{sec:clusters}, and~\ref{sec:subst} we describe how these tilings all arise from substitution rules, and thus all have the same local structure.}
\end{figure}

Given a set of two-dimensional tiles,  the nature of the planar
tilings that they admit arises from a deep interaction between the
local and the global.  Constraints on the ways that two neighbouring
tiles interlock can reverberate through the global structure of a
tiling at every scale.  Local constraints
encoded in a set of tiles determine the larger space of tilings they admit in
subtle ways.

{\em Aperiodic} sets of tiles walk a fine line between order and disorder, admitting tilings, but only  those without the simple repetition of
translational symmetry. 
Their study dates to Wang's work on the then remaining open cases of Hilbert's {\it Entscheidungsproblem}~\cite{Wang}. Wang encoded logical fragments by what are now known as \emph{Wang tiles}---congruent squares with coloured edges---to
be tiled by translation only with colours matching on adjoining edges.
He conjectured that every set of Wang tiles that admits a tiling (possibly
using only a subset of the tiles) must also
admit a periodic tiling, and showed that this would imply the decidability
of the \textit{tiling problem} (or \textit{domino problem}): the question
of whether a given set of Wang tiles admits any tilings at all.
The algorithm would consist of enumerating, for each positive integer $n$,
the finite set of all legal~$n\times n$ blocks of tiles.
If there is no tiling by the tiles, there must be some~$n$
for which no such block exists (by the
Extension Theorem~\cite[Theorem~3.8.1]{GS}, which ultimately depends on the
compactness of spaces of patches), and we will eventually encounter the
smallest such~$n$.
On the other hand, if there is a fundamental
domain for a periodic tiling, we will eventually discover it in  
a block.  If Wang's conjecture held and aperiodic sets of tiles did not 
exist, this algorithm would always terminate.

Berger~\cite{Berger} then showed that it was undecidable whether a set
of Wang tiles admits a tiling of the plane. He 
constructed the first aperiodic set of  $20426$~Wang tiles, which he used 
as a kind of scaffolding for encoding finite but unbounded runs of arbitrary computation.

Subsequent decades have spawned a rich literature on aperiodic tiling, touching many  different mathematical and scientific settings; we do not attempt a broad survey here. Yet there remain remarkably few really distinct methods of proving aperiodicity in the plane, despite or due to the underlying undecidability of the tiling problem. 

Berger's initial set comprised thousands of tiles, naturally prompting
the question of how small a set of tiles could be while still forcing
aperiodicity.   
Professional and amateur mathematicians produced successively smaller aperiodic sets, culminating in discoveries by Penrose~\cite{Penrose}
and others of several consisting of just two tiles.  Surveys of these 
sets appear in Chapters~10 and~11 of Gr\"unbaum and Shephard~\cite{GS} and
in an account of the Trilobite and Cross tiles~\cite{ChaimTC}.
A recent table appears in the work of Greenfeld and Tao~\cite{GT1}, counting tiles by translation classes (tiles in different orientations are counted as distinct).

The obvious conclusion of this reduction in size would be to arrive
at an \textit{aperiodic monotile}, a single shape that can form tilings (is a monotile) but can only form non-periodic ones (is aperiodic).  Such a shape is also sometimes referred to as an
``einstein'' (a pun from the German ``ein stein'', roughly ``one
shape'', popularized by Danzer).  In the present article we reserve
these terms for two-dimensional closed topological disks that tile
aperiodically purely by virtue of their geometry, without the need
for any kind of non-geometric matching rules that further constrain
tile adjacencies.  It has long been an open question whether such
a tile exists.  Can one tile embody enough complexity
to forcibly disrupt  periodic order at all scales?

\subsection{The search for an einstein}
\label{sec:search}

Several candidate tiles have been proposed as einsteins, but they all challenge in some way the concepts
of ``tile'', ``tiling'', or ``aperiodic''.

Gummelt~\cite{Gummelt} and Jeong and Steinhardt~\cite{SteinhardtJeong,JeongSteinhardt}
describe a single regular decagon that can cover the plane with copies that are allowed to overlap by prescribed rules, but only non-periodically, in a manner tightly coupled to the Penrose tiling. 
Senechal~\cite{Senechalpersonalcommunication} similarly describes simple rules that allow copies of the Penrose dart to overlap and cover the plane, but never periodically. The result is an ingenious route to aperiodicity, but not a 
tiling in the usual sense.

Tiles are often endowed with \textit{matching rules} that constrain
their placement.  Matching rules have taken a variety of different forms
in the literature.  They sometimes act as a symbolic proxy for neighbour
relationships that could easily be encoded geometrically, but they can 
also determine more complex relationships between tiles.
The Taylor--Socolar tile~\cite{ST1} is a regular hexagon with matching
rules in the form of markings in the interiors of tiles.  The matching rules
force aperiodicity, but they require non-adjacent tiles to exchange
information.  As a result, it is impossible to reduce the behaviour of
the tile to the shape of a closed two-dimensional topological
disk.  The matching rules can be expressed
purely geometrically, but doing so requires either a disconnected tile, a
 tile with cutpoints, or a three-dimensional shape that aperiodically tiles a thickened plane ${\mathbb R}^2\times [0,1]$~\cite{ST2}. 

The structure of the Taylor--Socolar
tiling is closely related to Penrose's $1+\epsilon+\epsilon^2$
tiling \cite{penrose_epsilon, baakegahlergrimm2012,taylornotes}. Like the Trilobite and Crab tiles~\cite{ChaimTC}, these can be adjusted so that an arbitrarily high fraction of the area lies in copies of just one kind of tile. 
But no matter how thin or small they become, the other tiles remain necessary.

Loosely speaking, it is often possible to shift the complexity in
a construction from the tiles to the matching rules or vice versa.
For example, if we use a finite atlas of finite configurations as
our allowed matching rules, even the lowly $2\times1$ rectangle is
an aperiodic monotile!\footnote{Beginning with an aperiodic set of
tiles with, say, geometric matching rules, pixelate pictures of the
tiles and how they fit together, in some black and white bit-map.
Take an atlas of these pictures, splitting black pixels vertically
and white ones horizontally into identical rectangles. The rectangle
is an aperiodic monotile with this atlas of matching rules.} Walton
and Whittaker recently described a hexagonal tile that, like the
Taylor--Socolar tile, achieves aperiodicity via a system of
markings~\cite{WW21}.  These ``orientational'' rules are edge-to-edge,
in that they only constrain a tile's relationships to its immediate
neighbours.  However, this tile's behaviour also cannot be expressed
as pure geometry.

Moving to higher dimensional space permits richer forms of aperiodicity
to arise.  The Schmitt--Conway--Danzer tile~\cite[Section 7.2]{Senechal}
tiles $\mathbb{R}^3$, with tilings that never have translations as
symmetries; none of its tilings have compact fundamental domains.
However, the tile does admit a tiling whose symmetry group contains a screw
motion, and hence an infinite cyclic subgroup of screw motions.  We
refer to such a tile as \emph{weakly aperiodic}.
The ``weak'' label is appropriate, as such tiles 
appear readily in the
hyperbolic plane and other non-amenable spaces.  As early as 1974,
B\"or\"oczky exhibited a weakly aperiodic monotile in the hyperbolic
plane~\cite{Boroczsky}, the elegantly simple basis of the ``binary
tilings''~\cite{BlockWeinberger,regprod,MargulisMozes,Mozes97}.

Following Mozes~\cite{Mozes97}, we say a set of tiles is \emph{strongly
aperiodic} if it admits tilings but none with any infinite cyclic
symmetry.  In the Euclidean plane, a set of ``normal'' tiles is weakly 
aperiodic if and only if it is strongly aperiodic~\cite[Theorem~3.7.1]{GS},
leaving us with a single notion of aperiodicity there.

 Recently, Greenfeld and Tao~\cite{GT2} showed that for a  sufficiently 
high number $n$ of dimensions, a single tile, tiling \emph{only by translation}, can be
aperiodic in~$\mathbb{Z}^n$ (and thus in~$\mathbb{R}^n$); Greenfeld and
Kolountzakis~\cite{greenfeld2023tiling} strengthened this result by showing
that the tile can be connected.
Greenfeld and Tao also showed that it is undecidable whether a single
tile, again tiling by translation, admits a tiling of a periodic
subset of $\mathbb{Z}^2 \times G$ for some nonabelian
group~$G$~\cite{GT1}, and subsequently proved this for tiling a
periodic subset of $\mathbb{Z}^n$ (where $n$~is one of the inputs to
the decision problem and not fixed)~\cite{greenfeld2023undecidability}.
Translational aperiodicity is known to be impossible in~$\mathbb{R}^2$.
Kenyon~\cite{Kenyon,Kenyonerratum,Kenyon2}, building on the work of
Girault-Beauquier and Nivat~\cite{GiraultBeauquierNivat}, showed that 
any topological disk that admits a tiling by translation also admits a
periodic tiling.  Bhattacharya~\cite{Bhattacharya} showed the
same for any finite set in~$\mathbb{Z}^2$.

Little is known about limits on what sorts of shapes could potentially be
aperiodic monotiles.  Rao~\cite{Rao} showed through a computer
search that the list of 15 known families of convex pentagons that tile the
plane is complete, thereby eliminating any remaining possibility
that a convex polygon could be an einstein.
Jeandel and Rao~\cite{JeandelRao} showed that the smallest aperiodic set
of Wang tiles is of size~$11$.

Even when a single tile admits periodic tilings, that periodicity
may be more or less abstruse, in a way that offers tantalizing hints
about aperiodicity.  The \emph{isohedral number} of a tile is the
minimum number of transitivity classes in any tiling it admits; a
tile is \emph{anisohedral} if its isohedral number is greater than
one.  The second part of Hilbert's 18th problem~\cite{Hilbert} asked
whether there exist anisohedral polyhedra in $\mathbb{R}^3$.
Gr\"unbaum and Shephard suggest \cite[Section~9.6]{GS} that this
question was asked in $\mathbb{R}^3$ because Hilbert assumed that
no such tiles exist in the plane.  But Reinhardt~\cite{Reinhardt}
found an example of such a polyhedron, and Heesch~\cite{Heesch}
then gave an example of such a tile in the plane.  Many anisohedral
prototiles are known today. The computer enumeration by Myers~\cite{Myers}
furnished numerous anisohedral polyominoes, polyhexes, and polyiamonds,
including a record-holding $16$-hex that tiles with a minimum of
ten transitivity classes.  It is unknown whether there is an upper
bound on isohedral numbers of monotiles.\footnote{The problem of
determining whether or not a given set of tiles admits a periodic
tiling is also undecidable, at least for larger sets of
tiles~\cite{Gurevich}. If we enumerate sets of tiles, and define
$I(n)$ to be the isohedral number of the $n$th set if it admits a
periodic tiling, and $-1$ otherwise, then $I(n)$ cannot
be bounded by any computable function. This defies our imagination.}

Related insights can be gleaned from the study of shapes that do not tile
the plane.  A tile's \emph{Heesch number} is the largest possible 
combinatorial radius of any patch formed by copies of the shape (or
equivalently, the maximum number of complete concentric rings that can 
be constructed around it).  A shape that tiles the plane is said to have a
Heesch number of $\infty$.  Heesch first exhibited a shape with Heesch
number~1, and a few isolated examples with Heesch numbers up to~3 were
discovered thereafter~\cite{Mann2004}.  Mann and Thomas discovered marked
polyforms with Heesch numbers up to~5 through a brute-force computer 
search~\cite{MT2016}.  Kaplan conducted a search on unmarked 
polyforms~\cite{Kaplan}, yielding examples with Heesch numbers up to~4.
Ba{\v{s}}i{\'c} discovered the current record holder, a shape with Heesch
number~6~\cite{Basic2021}.  \emph{Heesch's problem} asks which 
positive integers can be Heesch numbers; beyond specific examples with
Heesch numbers up to~6, nothing is known about the solution.
An upper bound on finite Heesch numbers would imply the decidability of 
the tiling problem for a single shape. The algorithm would simply consist of
generating all possible concentric rings around a central tile; eventually
one will either fail (in which case the shape does not tile the plane) or
exceed the upper bound on Heesch numbers (in which case it must tile the plane).

\subsection{Main result}

\begin{figure}[ht!]
\begin{center}
\begin{tikzpicture}[x=1cm,y=1cm]
  \node[draw,text width=4cm] at (0,10.3) {Polykites with periodic tilings
    have aligned periodic tilings (Lemma~\ref{lemma:polykitealign})};
  \node[draw,text width=4cm] at (0,7.3) {Polyforms with aligned weakly
    periodic tilings have aligned strongly periodic tilings (similar
    to \cite[Theorem~3.7.1]{GS})};
  \node[draw,text width=4cm] at (0,4.3) {The hat polykite does not have
    aligned strongly periodic tilings (\secref{sec:coupling})};
  \node[draw,text width=4cm] at (0,2.3) {Clusters of hat polykites can form
    metatiles (\secref{sec:discussion})};
  \node[draw,text width=4cm] at (0,0) {Metatiles have a substitution
    system forming combinatorially equivalent supertiles
    (Sections \ref{sec:discussion} and~\ref{sec:subst})};
  \node[draw,text width=4cm] at (5.5,0) {Metatiles tile the plane};
  \node[draw,text width=4cm] at (5.5,2) {Hat polykites tile the plane};
  \node[draw,text width=4cm,ultra thick] at (5.5,4) {The hat polykite is strongly
    aperiodic};
  \node[draw,text width=4cm,ultra thick] at (5.5,6.5) {All $\mathrm{Tile}(a, b)$ for
    positive $a \ne b$ are strongly aperiodic};
  \node[draw,text width=4cm] at (5.5,10) {Tilings by $\mathrm{Tile}(a,b)$ are
    combinatorially equivalent to those by the hat polykite for
    positive $a \ne b$ (\secref{sec:family})};
  \draw[arrows={->[length=2mm]}] (2.2,0) -- (3.3,0);
  \draw[arrows={->[length=2mm]}] (2.2,2) -- (3.3,2);
  \draw[arrows={->[length=2mm]}] (2.2,4) -- (3.3,4);
  \draw[arrows={->[length=2mm]}] (2.2,7.3) -- (3.3,4.6);
  \draw[arrows={->[length=2mm]}] (2.2,10.3) -- (3.5,4.6);
  \draw[arrows={->[length=2mm]}] (5.3,0.4) -- (5.3,1.4);
  \draw[arrows={->[length=2mm]}] (5.3,2.6) -- (5.3,3.4);
  \draw[arrows={->[length=2mm]}] (5.3,4.6) -- (5.3,5.6);
  \draw[arrows={->[length=2mm]}] (5.3,8.3) -- (5.3,7.4);
\end{tikzpicture}
\end{center}
\caption{The high-level structure of the first proof of aperiodicity in
  this paper}
\label{fig:proofstructure}
\end{figure}
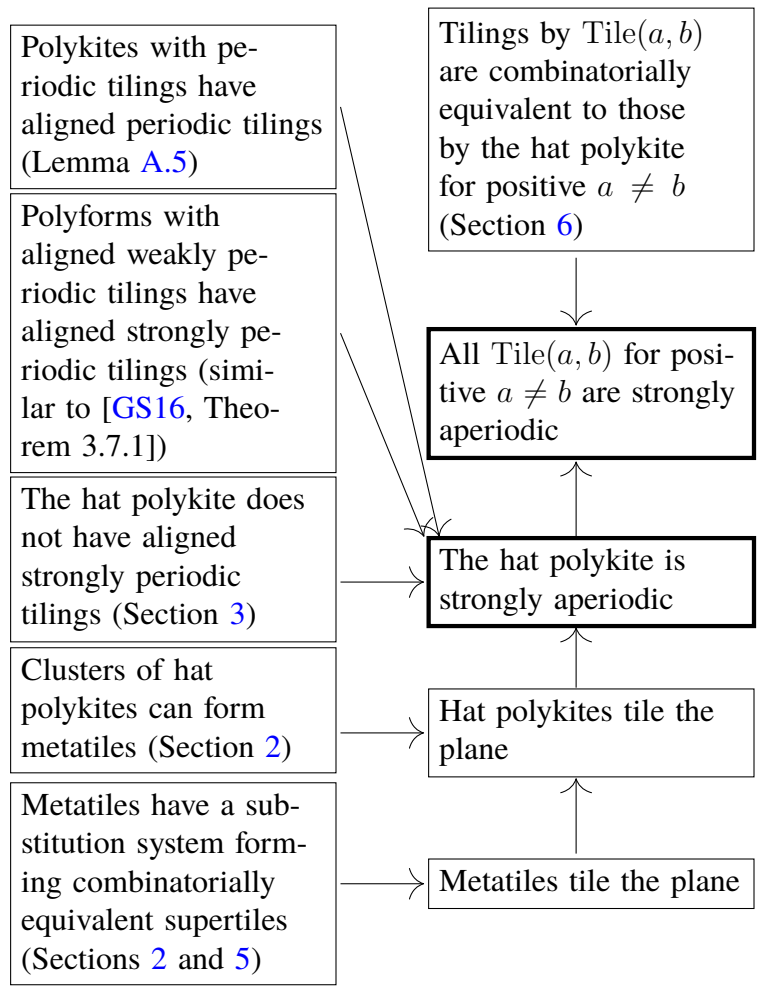

\begin{figure}[ht!]
\begin{center}
\begin{tikzpicture}[x=1cm,y=1cm]
  \node[draw,text width=4cm] at (0,6.4) {Polykites with periodic tilings
    have aligned periodic tilings (Lemma~\ref{lemma:polykitealign})};
  \node[draw,text width=4cm] at (0,3.3) {Clusters of hat polykites
    must form metatiles, adjoining in accordance with matching rules
    (\secref{sec:clusters} and Appendix~\ref{sec:patches})};
  \node[draw,text width=4cm] at (0,-0.3) {Metatiles must follow a
    substitution system forming combinatorially equivalent supertiles
    (Sections \ref{sec:discussion} and~\ref{sec:subst})};
  \node[draw,text width=4cm] at (5.5,0) {The metatiles are strongly aperiodic};
  \node[draw,text width=4cm,ultra thick] at (5.5,2) {The hat polykite is strongly
    aperiodic};
  \node[draw,text width=4cm,ultra thick] at (5.5,4.5) {All $\mathrm{Tile}(a, b)$ for
    positive $a \ne b$ are strongly aperiodic};
  \node[draw,text width=4cm] at (5.5,8) {Tilings by $\mathrm{Tile}(a,b)$ are
    combinatorially equivalent to those by the hat polykite for
    positive $a \ne b$ (\secref{sec:family})};
  \draw[arrows={->[length=2mm]}] (2.2,0) -- (3.3,0);
  \draw[arrows={->[length=2mm]}] (2.2,2) -- (3.3,2);
  \draw[arrows={->[length=2mm]}] (2.2,6.3) -- (3.3,2.6);
  \draw[arrows={->[length=2mm]}] (5.3,0.6) -- (5.3,1.4);
  \draw[arrows={->[length=2mm]}] (5.3,2.6) -- (5.3,3.6);
  \draw[arrows={->[length=2mm]}] (5.3,6.3) -- (5.3,5.4);
\end{tikzpicture}
\end{center}
\caption{The high-level structure of the second proof of aperiodicity in
  this paper}
\label{fig:proofstructure2}
\end{figure}
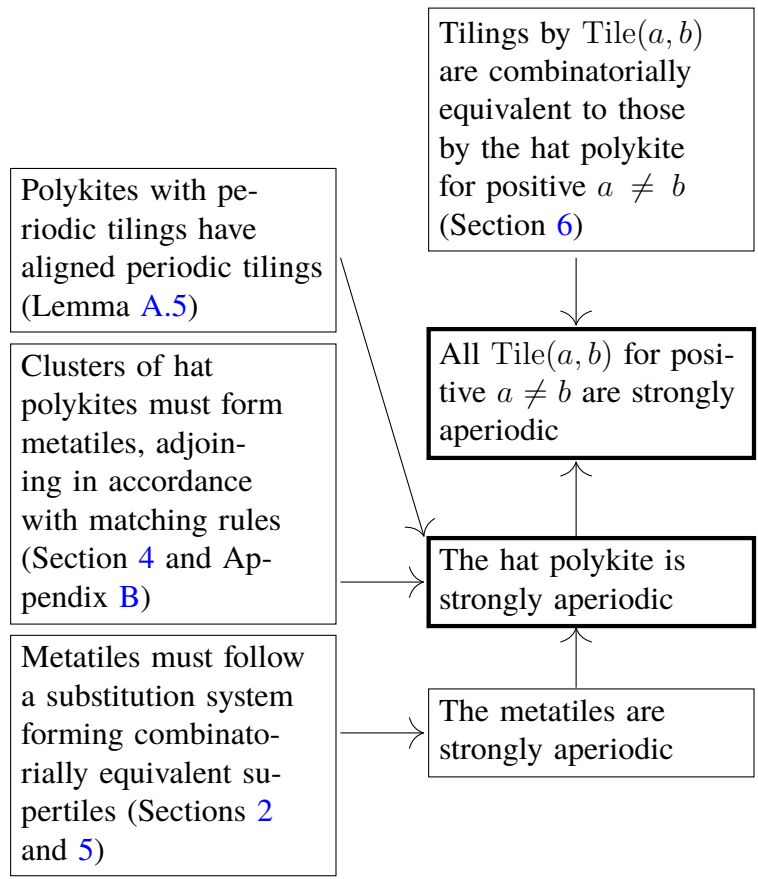

In this paper, we prove the following:  

\begin{theorem}
\label{thm:main}
The shape shown shaded in \fig{fig:polykite}, a polykite that we call
the ``hat'', is an aperiodic monotile.
\end{theorem}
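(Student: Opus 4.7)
The plan is to prove the theorem by separately establishing its two ingredients: first that the hat admits a tiling of the plane (so it is a monotile), and second that no such tiling is periodic (so it is aperiodic). These two halves are, following Figure~\ref{fig:proofstructure}, handled by rather different machinery, which is why it is natural to attack them in parallel rather than fold them together.

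For existence, I would build tilings hierarchically. The first step is to identify a small collection of \emph{metatile} clusters of hats, verify via a local case analysis that copies of the hat can only fit together in ways consistent with decomposing the plane into these metatiles, and record the finite list of matching patterns by which adjacent metatiles can abut. The second step is to exhibit a substitution (inflation) rule that replaces each metatile by a larger cluster of metatiles, and then to check combinatorial self-similarity: each inflated \emph{supertile} has the same matching interface as the metatile it replaces. Iterating the substitution produces legal patches of hats of arbitrarily large diameter, and a standard compactness argument such as the Extension Theorem \cite[Theorem~3.8.1]{GS} then gives a tiling of all of $\mathbb{R}^2$.

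For aperiodicity I would argue by contradiction, following the reduction in Figure~\ref{fig:proofstructure}. Suppose the hat admits a periodic tiling. Because the hat is a polykite, Lemma~\ref{lemma:polykitealign} allows us to assume that all tiles are aligned with a common underlying kite grid. An analogue of the Gr\"unbaum--Shephard argument \cite[Theorem~3.7.1]{GS} then upgrades any such aligned weakly periodic tiling to an aligned strongly periodic one. It therefore suffices to rule out aligned strongly periodic tilings by hats. For this last step I would deform the hat through the one-parameter family $\mathrm{Tile}(a,b)$ of combinatorially equivalent polygons: any strongly periodic aligned tiling by the hat would persist, combinatorially, as a tiling by $\mathrm{Tile}(a,b)$ for nearby $(a,b)$. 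Walking around closed loops in such a tiling yields linear edge-length equations in $a$ and $b$ that must close up. A \emph{coupling} or geometric incommensurability analysis then shows that the equations produced by any periodic tiling cannot be satisfied simultaneously for generic $(a,b)$, and therefore cannot be satisfied at the hat's own parameter values either.

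The main obstacle I anticipate is the local-to-global step underlying both halves of the argument: proving that every legal patch of hats is \emph{forced} to decompose into metatiles in essentially one way, so that the substitution is not a construction we impose but a structure intrinsic to the tile. This requires carefully enumerating the possible neighbourhoods around each hat and checking, perhaps by computer, that any extension is compatible with the metatile decomposition. A subsidiary difficulty is ensuring that the coupling argument produces at least one genuinely nontrivial linear relation in $(a,b)$ for \emph{every} possible translation lattice simultaneously, so that the incommensurability conclusion is uniform rather than lattice-by-lattice.
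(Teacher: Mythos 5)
Your overall architecture matches the paper's first proof: existence via metatile clusters, a combinatorially self-similar substitution, and the Extension Theorem (this is Theorem~\ref{thm:subst_tiling}); aperiodicity via alignment (Lemma~\ref{lemma:polykitealign}), the weak-to-strong periodicity upgrade, and a coupling argument exploiting the $\mathrm{Tile}(a,b)$ family (\secref{sec:coupling}). One correction to your stated ``main obstacle'': on this route the forced decomposition into metatiles is not needed at all --- you only need the substitution to \emph{produce} legal patches, not to be the unique structure of every tiling. The forcing case analysis belongs to the paper's second, Berger-style proof; conflating the two makes the first route look harder than it is.

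The genuine gap is in the mechanism you propose for the contradiction. You suggest that walking around closed loops in a hypothetical periodic tiling yields linear edge-length relations in $(a,b)$ that fail for generic parameters. But those loop-closure relations are satisfied for \emph{every} choice of $(a,b)$: the $1$-side vectors and the $r$-side vectors each sum to zero around every tile and balance across every maximal line segment, which is exactly why the entire continuum admits combinatorially equivalent tilings (Theorem~\ref{thm:tilercomb}). Deforming the parameters therefore produces no obstruction of the kind you describe, periodic or not. The paper's actual argument degenerates the hypothetical periodic tiling in two different ways, to a chevron tiling $\mathcal{T}_4$ (contracting the sides of length $1$ and $2$) and a comet tiling $\mathcal{T}_8$ (contracting the sides of length $\sqrt{3}$), and compares their translation lattices: periodicity forces an affine bijection $g$ between them that scales areas by $2/3$, yet no \emph{similarity} between lattices on regular triangular grids can have the corresponding length scale factor (squared distances $m^2+mn+n^2$ always carry an even number of factors of $2$). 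The technical heart --- entirely absent from your sketch --- is proving that $g$ nevertheless must be a similarity, which the paper does by showing the three orientation classes of hats occur in equal proportion and tracking how translations along ``$i$-strips'' of $\mathcal{T}_4$ transport to $\mathcal{T}_8$, forcing the three image translation vectors to lie at $120^\circ$ to one another. Without that step, or some replacement for it, there is no contradiction and the aperiodicity half of your proof does not close.
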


The shape is almost mundane in its simplicity. It is a \textit{polykite}:
the union of eight kites in the Laves tiling $[3.4.6.4]$ (drawn in 
thin lines in \fig{fig:polykite}), the dual
to the $(3.4.6.4)$ Archimedean tiling.  No special qualifications
or additional matching rules are required: as shown, this shape
tiles the plane, but never with any translational symmetries.

We provide two different proofs of aperiodicity, both with novel
aspects.  The first proof follows the structure shown in
\fig{fig:proofstructure}, centred on a 
new approach in \secref{sec:coupling} for proving aperiodicity in the plane.
We observe that
any tiling by the hat corresponds to tilings by two different
polyiamonds, one with two thirds the area of the other.  If there were a
strongly periodic tiling by the hat, the other two tilings would also
be strongly periodic. We prove that if so, the lattices of
translations in the polyiamond tilings would necessarily be related by a
similarity; but
no similarity between lattices of translations on the regular
triangular tiling can have the scale factor~$\sqrt{2}$ required by the
ratio of the areas.  This argument does not show that a tiling exists, and
must be combined with an explicit construction of a tiling (outlined
in \secref{sec:discussion} and given in detail in Sections
\ref{sec:clusters} and~\ref{sec:subst}) to complete the proof of
aperiodicity.

The second proof presented (but the first one found) follows the structure
shown in \fig{fig:proofstructure2}.  Here we generally adhere to
Berger's approach, but we must begin with a novel step
to get to the point where such a proof is possible.  
We first show that in any tiling by
the hat polykite, every tile belongs uniquely to one of four distinct
clusters called \textit{metatiles} (\secref{sec:clusters}), which inherit
matching rules from the geometry of the hats that make them up.
The metatiles abstract away the details of individual hats, and support
a standard style of hierarchical construction.
We then proceed with a Berger-style inductive proof of non-periodicity
in \secref{sec:subst}.  We show that any
tile in any tiling by these four metatiles lies in a unique hierarchy
of \textit{supertiles}---effectively combinatorial copies
of the metatiles---at larger and larger scales. The proof is
constructive.  We show that every metatile belongs uniquely to a 
level-$1$ supertile, and that these supertiles 
have the same combinatorial structure as the metatiles.  The 
level-$1$ supertiles must therefore lie uniquely within 
level-$2$ supertiles with the same combinatorics, and so on
for subsequent levels.
This construction proves that a tiling by copies of the
metatiles must be non-periodic, because if it contained a
translational symmetry, then these hierarchies of supertiles could not be
unique.\footnote{In particular, if a tiling had a translational
symmetry, then for sufficiently large $k$ there would exist a level-$k$
supertile that overlaps its image under this translation.
Any metatile in the intersection of these two supertiles would then 
lie within both of their infinite hierarchies, contradicting the
supposed uniqueness of those hierarchies~\cite[Theorem 10.1.1]{GS}.}
It also shows that the metatiles (and hence the hats) admit 
tilings of the plane, because we
construct clusters of arbitrary size~\cite[Theorem 3.8.1]{GS}.
We are not aware of past work that uses a metatile-like construction
as an intermediate stage towards a proof of aperiodicity.

Because of the combinatorial complexity of the hat polykite, 
a significant fraction of our second proof relies on exhaustive enumeration
of cases, which we carried out and cross-checked with two 
independent software implementations developed by two of the authors
in isolation.  These calculations are necessarily ad hoc, and are essentially
unenlightening.  This case
analysis is only needed to show that all tilings follow the
substitution structure; it is not needed for showing that a tiling
exists, and thus is not needed to show that the tile is aperiodic,
given the proof in \secref{sec:coupling} that no periodic tiling exists.

In Section~\ref{sec:clusters} we learn that every tiling by hats
necessarily contains a mixture of reflected and unreflected tiles.
Thus the hat's status as a monotile depends on
whether one considers a shape and its reflection to be congruent.
By longstanding tradition in the tiling literature (indeed, going
back to Euclid’s \textit{Elements}), shapes are considered congruent
if they are equivalent under any Euclidean isometry, including those
that reverse orientation.  The hat is therefore rightly considered a monotile.
 Still, this potential caveat emphasizes the importance of considering the
setting in which a tiling problem is defined: the geometric space
in which we are working, conditions on the tiles and their matching
rules, and the specific families of isometries that we are allowed
to use.  The diversity of ideas discussed in \secref{sec:search}
illustrates how context can colour the problem of aperiodicity.
We revisit the question of tiling aperiodically without
reflections in \secref{sec:conclusion}.

We close this introduction with definitions of the essential terminology
we will need for the rest of the article.
In \secref{sec:discussion},
we then present a compendium of provisional observations about this polykite,
including an explicit construction of a tiling and aspects of its structure
that deserve further study.  Our two proofs of aperiodicity follow:
we show that there are no periodic tilings (\secref{sec:coupling}),
then that tiles must group into clusters that define metatiles
equipped with matching rules (\secref{sec:clusters}),
and finally that metatiles must compose into
supertiles with combinatorially equivalent matching rules
(\secref{sec:subst}).
In \secref{sec:family}, we offer additional remarks about the continuum
of tiles that contains the hat polykite.  As noted there,
computer search shows that the hat is the smallest aperiodic polykite.

\subsection{Terminology}
\label{sec:terminology}

Terminology used for tilings generally follows that of
Gr\"unbaum and Shephard~\cite{GS}.

A \emph{tile} in a metric space is a closed set of points from that
space. A \emph{tiling} by a set of tiles is a collection of images of
tiles from that set under isometries, the interiors of which are
pairwise disjoint and the union of which is the whole space; we
say a set of tiles \emph{admits} the tiling, or in the case of a single tile
that it admits the tiling.  For most purposes, it is convenient for
tiles to be nonempty compact sets that are the closures of their
interiors; the tiles considered here are polygons, or more generally
closed topological disks.  A
tiling is \emph{monohedral} if all its tiles are congruent (where
congruences can incorporate mirror reflections).  All
tilings considered here are also \emph{locally finite}: every circular
disk meets only finitely many tiles. Every
monohedral plane tiling by closed topological disks is locally
finite.

In any locally finite tiling of the plane by closed topological disks,
the connected components of the intersection of two or more tiles are
isolated points, which are called \emph{vertices} of the tiling, and
Jordan arcs, which are called \emph{edges} of the tiling, and the
boundary of any tile is divided into finitely many edges, alternating
with vertices. Each edge lies on the boundary of exactly two tiles,
which we refer to as lying on opposite sides of the edge.  Two
distinct tiles are \emph{neighbours} if they share any point of their
boundaries, and \emph{adjacents} if they share an edge.

When a (closed topological disk) tile has a polygonal boundary, we
refer to it as having \emph{sides} (maximal straight line segments
lying on that boundary) and \emph{corners} (between two sides), to
distinguish these features from the edges and vertices of a tiling.  We rely on
context to distinguish the meanings of ``side'' as referring to sides
of a polygon or the two sides of an edge of a tiling.  A tiling by
polygons is \emph{edge-to-edge} if the corners and sides of the
polygons coincide with the vertices and edges of the tiling.

A \textit{patch} of tiles is a collection of non-overlapping tiles whose
union is a topological disk.  More specifically, a \textit{$0$-patch}
is a patch containing a single tile, and an \textit{$(n+1)$-patch} is 
a patch formed from the union of an $n$-patch $P$ and a set $S$ of
additional tiles, so that $P$ lies in the interior of the patch and no
proper subset of~$S$ yields a patch with $P$ in its interior.  (In
other words, an $n$-patch is
a tile surrounded by $n$ concentric rings of tiles.)  Every tile in a 
fixed tiling generates an $n$-patch for all finite $n$, by recursively
constructing an $(n-1)$-patch and adjoining all its neighbours in the
tiling, along with any other tiles required to fill in holes left by
adding neighbours.

Given a tiling~$\mathcal{T}$\!, a \emph{poly-$\mathcal{T}$-tile} is a
closed topological disk that is the union of finitely many tiles
from~$\mathcal{T}$; in other words, it is the union of the tiles in
a patch within~$\mathcal{T}$. Poly-$\mathcal{T}$-tiles are also referred to
generically as \emph{polyforms}.  Poly-$\mathcal{T}$-tiles may also
be defined so that they are permitted to have holes.
Because we are mainly concerned with tiles
that admit monohedral tilings, it is not generally significant for the
purposes of this paper whether shapes with holes are allowed or not.

The \emph{symmetry group} of a tiling is the group of those isometries
that act as a permutation on the tiles of the tiling.  A tiling is
\emph{weakly periodic} if its symmetry group has an element of
infinite order; in the plane, this means it includes a nonzero
translation.\footnote{Some authors such as Greenfeld and
Tao~\cite{GT0} have used the term ``weakly periodic'' to refer to a
tiling that is a finite union of sets of tiles, each of which is
weakly periodic in the sense used here.}  A tiling is \emph{strongly
periodic} if the symmetry
group has a discrete subgroup with cocompact action on the space
tiled. In Euclidean space, all strongly periodic
tilings are also weakly periodic.  A set of tiles (or a single tile)
is \emph{weakly aperiodic} if it admits a tiling but does not admit a
strongly periodic tiling, and \emph{strongly aperiodic} if it admits a
tiling but does not admit a weakly periodic tiling.

Any finite set of polygons in the plane that admits a weakly periodic
edge-to-edge tiling also admits a strongly periodic
tiling~\cite[Theorem~3.7.1]{GS}. A similar but simpler argument
shows the same to be the case for a finite set of
poly-$\mathcal{T}$-tiles where $\mathcal{T}$ is itself a strongly
periodic tiling and the weakly periodic tiling consists of copies 
of the
tiles all aligned to the same underlying copy of~$\mathcal{T}$,
instead of being edge-to-edge.  Thus in such contexts it is not
necessary to distinguish weak and strong aperiodicity and we refer to
tiles and sets of tiles simply as \emph{aperiodic}.

A \emph{uniform tiling}~\cite[Section~2.1]{GS} is an edge-to-edge
tiling by regular polygons with a vertex-transitive symmetry group.
In the Euclidean plane, a uniform tiling can be described by listing 
the sequence of regular polygons around each vertex, yielding notation
such as $(3.4.6.4)$.  A \emph{Laves
tiling}~\cite[Section~2.7]{GS} is an edge-to-edge monohedral tiling by
convex polygons with regular vertices (all angles between consecutive
edges at a vertex equal) and a tile-transitive symmetry group.  Analogous
notation such as $[3.4.6.4]$ is used for Laves tilings, listing the
sequence of vertex degrees round each tile, and in an appropriate
sense Laves tilings are dual to uniform tilings.


\section{The hat polykite and its tilings}
\label{sec:discussion}

\label{sec:substitution}

Before proceeding to the full proof of aperiodicity, we 
first offer a less formal presentation of the hat, including an explicit
construction of a tiling.  This section 
fulfills three goals.  First, it offers an abundance of visual 
intuition, which provides context for the technical machinery that will
follow.  Second, it gives some sense of our process of discovery and
analysis, though it should not be interpreted as an ordered timeline.
Third, it includes a few observations that will not be 
considered further in this article, but which might provide opportunities
for future work by others.

The first author (Smith) began investigating the hat polykite as
part of his open-ended visual exploration of shapes and their tiling
properties.  Working largely by hand, with the assistance of
Scherphuis's PolyForm Puzzle Solver 
software (\href{https://www.jaapsch.net/puzzles/polysolver.htm}{\nolinkurl{www.jaapsch.net/puzzles/polysolver.htm}}),
he could find no obvious barriers to the construction of large patches, 
and yet no clear cluster of tiles that filled the plane periodically.

Because the hat is a polyform, it was natural at this point to
obtain an initial diagnosis of its tiling properties computationally.
We modified Kaplan's SAT-based Heesch number software~\cite{Kaplan}
to determine that if the hat does not tile the plane, then its
Heesch number must be at least 16.  Similarly, we modified Myers'
polyform tiling software~\cite{Myers} to determine that if the hat
admits periodic tilings, then its isohedral number must be at least
64.  These two computations already establish that the hat is of
extreme interest---if it had turned out not to be an einstein, then 
it would have shattered either
the record for Heesch numbers or the record for isohedral numbers,
in both cases by a wide margin.

\begin{figure}[htp!]
\begin{center}
\includegraphics[width=\textwidth]{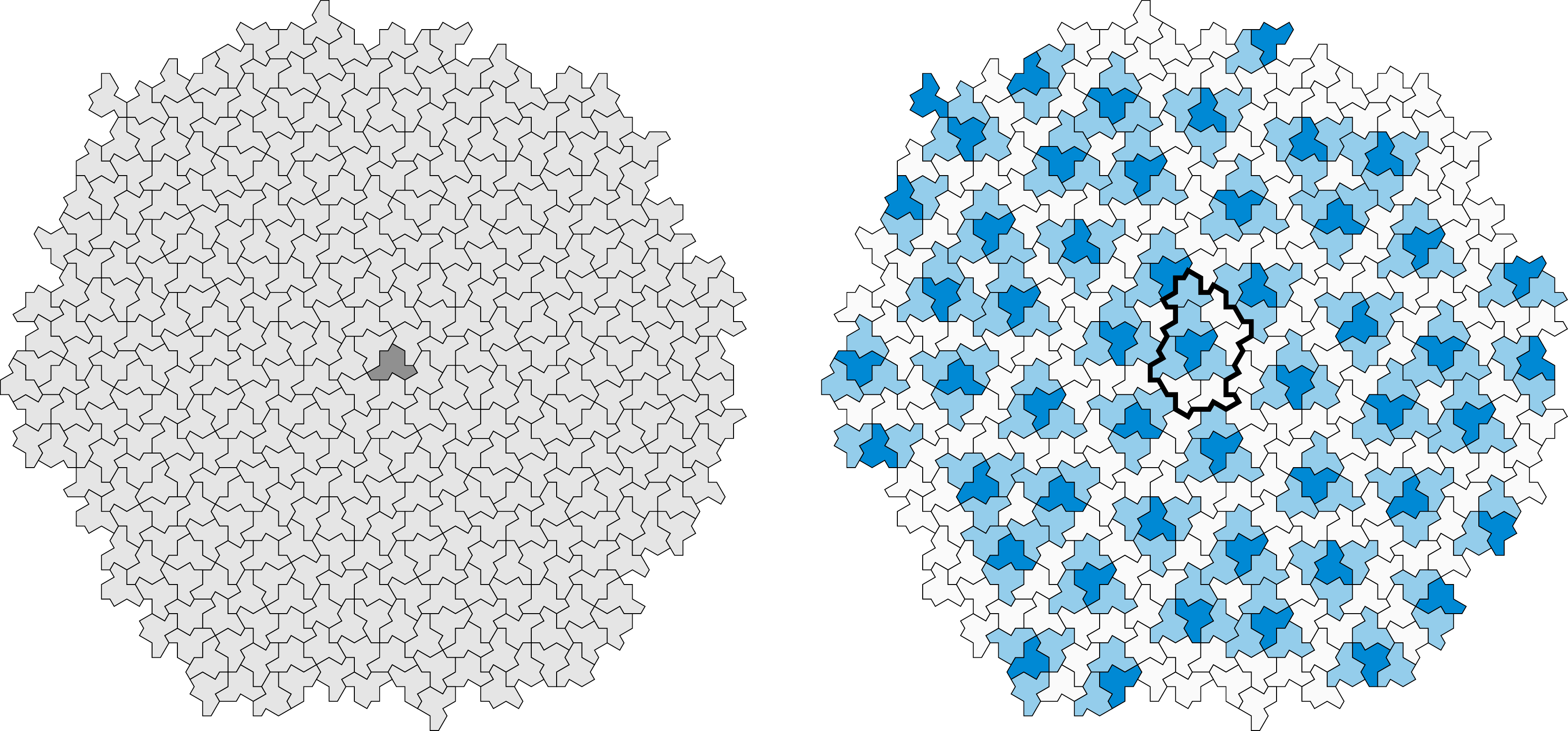}
\end{center}
\caption{\label{fig:patch10}A computer-generated $10$-patch of 391 hats
	(left), arranged in ten concentric rings around a central shaded hat.
	The tiles can be coloured (right), showing that the reflected 
	hats (dark blue) are sparsely distributed and each is surrounded by
	a congruent ``shell'' of three unreflected hats (light blue).
	A thickened outline shows the boundary of the maximal cluster
	of tiles that appears congruently around every reflected tile.}
\end{figure}

\fig{fig:patch10} (left) shows a computer-generated $10$-patch 
(i.e., ten concentric rings of tiles around a shaded central tile,
where each tile in a ring touches the ring it encloses in at least one 
point).  It was constructed by allowing Kaplan's software to work outward to
that radius, and then stopping it manually.
At first glance, it can be difficult to discern any
structure at all in this patch.  However, by colouring the tiles in different
ways, clear ``features'' begin to emerge.  Of course, we cannot infer any
conclusive properties of infinite tilings from a finite computed patch.
We must be particularly wary of tiles near the periphery of the patch,
where features may break down under the extra freedom afforded by
the proximity to empty space.  However, for a sufficiently large patch,
we might hope that tiles near the centre will be representative of 
configurations that arise in generic tilings.

\begin{figure}[htp!]
\begin{center}
\includegraphics[width=\textwidth]{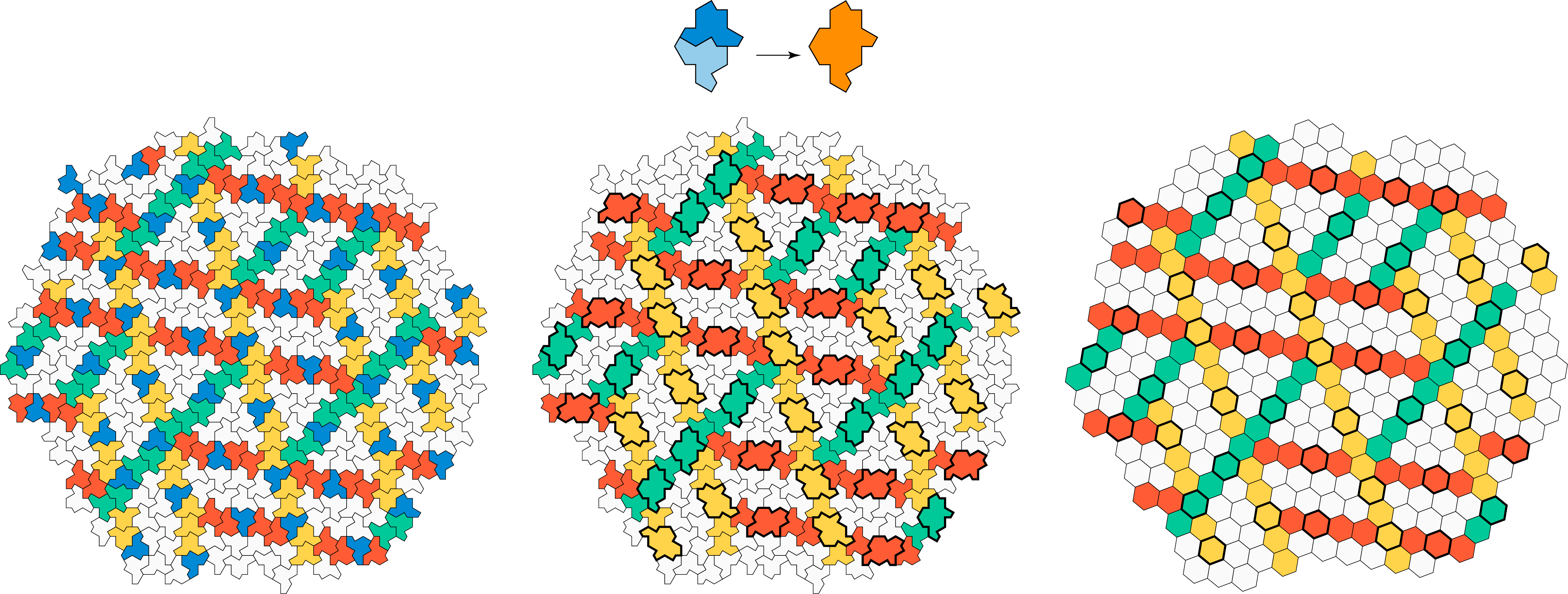}
\end{center}
\caption{\label{fig:patch10col}
	Long chains of similarly oriented tiles pass through reflected tiles
	in six directions (left).  We can merge each reflected tile with one
	of its neighbours in its chain (centre), yielding a structure
	that can be placed into one-to-one correspondence with a patch of
	regular hexagons (right).}
\end{figure}

The most important colouring for the purposes of this article is the one shown
on the right in \fig{fig:patch10}.  A single hat is asymmetric, 
and so in any patch we can distinguish between ``unreflected'' and 
``reflected'' orientations of tiles.  In the patches we computed,
reflected tiles (shown in dark blue) are always
distributed sparsely and evenly within a field of unreflected tiles.
Furthermore, every reflected tile is contained within a congruent cluster
of nine tiles, where the other eight tiles in the cluster are unreflected.
One such cluster is outlined in bold in the illustration.  The interior of
the patch can be covered completely by overlapping copies of that cluster.
Within the cluster, we are particularly interested in the ``shell'' of 
three light blue tiles adjacent to each reflected tile.  Every reflected tile
resides in a congruent, non-overlapping copy of this shell.

We have also observed that unreflected tiles tend to form long
``chains'' of like orientation, occasionally interrupted by reflected
tiles.  The chains contained in the example patch are shown coloured
on the left in \fig{fig:patch10col}.  Because the hats are aligned
with the underlying kite grid, unreflected tiles come in six
orientations, all of which also appear as chain directions.  Chains
may end at reflected tiles or pass through them, but each reflected
tile is a hub for at least two, and at most five spokes.  
Long segments of these chains have boundaries with halfturn symmetry.
It is tempting to seek parallels between these chains and linear
features in other aperiodic tilings, such as Ammann bars~\cite[Section
10.6]{GS} and Conway worms~\cite[Section 10.5]{GS}.
Finally, we have noticed that these chains seem to impart a rough
hexagonal arrangement to the hats, which is particularly clear in the
triangular and parallelogram-shaped structures that are surrounded
by chains.  We have found that if we merge each reflected tile with its
immediate neighbour as shown in \fig{fig:patch10col} (centre), then the
tiles in any patch can be put into one-to-one correspondence with a 
patch of hexagons, as in \fig{fig:patch10col} (right).  The hexagonal grid
may provide a convenient domain in which to perform computations on the
combinatorial structure of tilings by hats.

\begin{figure}[htp!]
\begin{center}
\includegraphics[width=\textwidth]{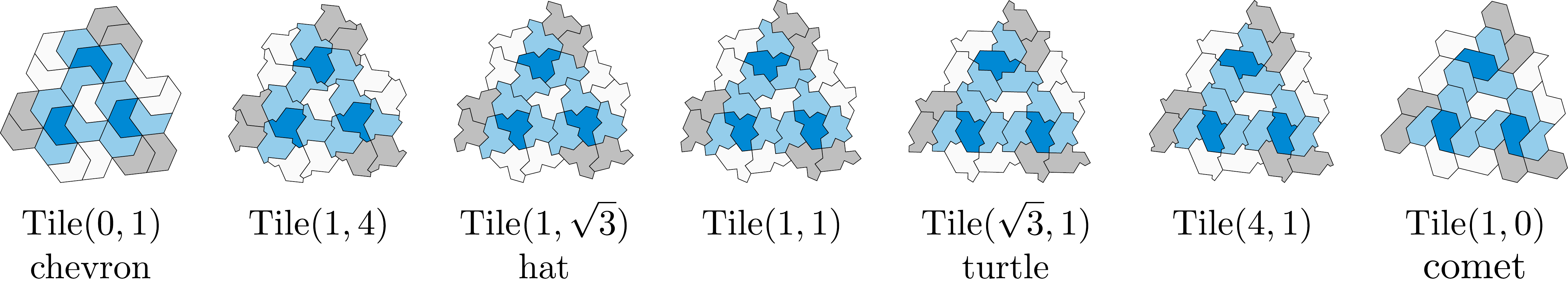}
\end{center}
\caption{\label{fig:tile_ab}The two edge lengths in the hat polykite
	can be manipulated independently, producing a continuum of shapes.
	A selection of those shapes is shown here (with each patch rescaled
	for legibility).  
	$\mathrm{Tile}(0,1)$ (the ``chevron''), $\mathrm{Tile}(1,1)$, 
	and $\mathrm{Tile}(1,0)$ (the ``comet'')
	admit periodic tilings; all others are aperiodic.}
\end{figure}

In the course of his explorations, the first author discovered a
\emph{second} polykite that did not seem to have a finite isohedral
number or a finite Heesch number, this one a union of ten kites that we
call the ``turtle''.
The idea of identifying two einsteins back-to-back seemed too good to be 
true!  It was both a relief and a revelation when we determined that not
only were the hat and the turtle related, they were in fact two
points from a continuum of shapes that all tile the plane the same
way.  The hat is derived from the $[3,4,6,4]$ grid, and therefore its edges
come in two lengths, which we can take to be $1$ and $\sqrt{3}$
(where we regard an edge of length~$2$ as two consecutive edges of
length~$1$).  Furthermore,
these edges come in parallel pairs, allowing us to set the two lengths
independently to any non-negative values.  We use the notation 
$\mathrm{Tile}(a,b)$ with $a$ and $b$ not both zero to refer to the shape
produced when edge lengths~$a$ and~$b$ are used in place of~$1$ and
$\sqrt{3}$, respectively.
Note that $\mathrm{Tile}(a,b)$ is similar to
$\mathrm{Tile}(ka,kb)$ for any $k\neq 0$.  

\fig{fig:tile_ab} shows
a selection of shapes from the $\mathrm{Tile}(a,b)$ continuum.
We have also created an animation showing a continuous evolution
of $\mathrm{Tile}(a,1-a)$ as $a$ moves from $0$ to $1$ and back---see
\href{https://youtu.be/W-ECvtIA-5A}{\nolinkurl{youtu.be/W-ECvtIA-5A}}.
Within this continuum, we see that the hat is $\mathrm{Tile}(1,\sqrt{3})$ and the 
turtle is $\mathrm{Tile}(\sqrt{3},1)$.  When one of~$a$ or~$b$ is zero,
we obtain two additional shapes of interest.  We refer to 
$\mathrm{Tile}(0,1)$ as the ``chevron''; it is a tetriamond, a union
of four equilateral triangles.  Similarly $\mathrm{Tile}(1,0)$ is an 
octiamond that we call the ``comet''.
The chevron, the comet,
and the equilateral $\mathrm{Tile}(1,1)$ each admit simple periodic tilings;
in \secref{sec:family}, we will show that all other shapes in this continuum
are aperiodic monotiles with combinatorially equivalent tilings.
In \secref{sec:coupling}, the chevron and comet 
will play a crucial role in establishing that the hat is aperiodic.
Inspired by cut-and-project methods~\cite{deBruijn1,deBruijn2}, we are also
left wondering
whether it would be productive to construct a closed path in four or
six dimensions, which projects down to this family of tiles from a suitable
set of directions.

\begin{figure}[htp!]
\begin{center}
\includegraphics[height=2.75in]{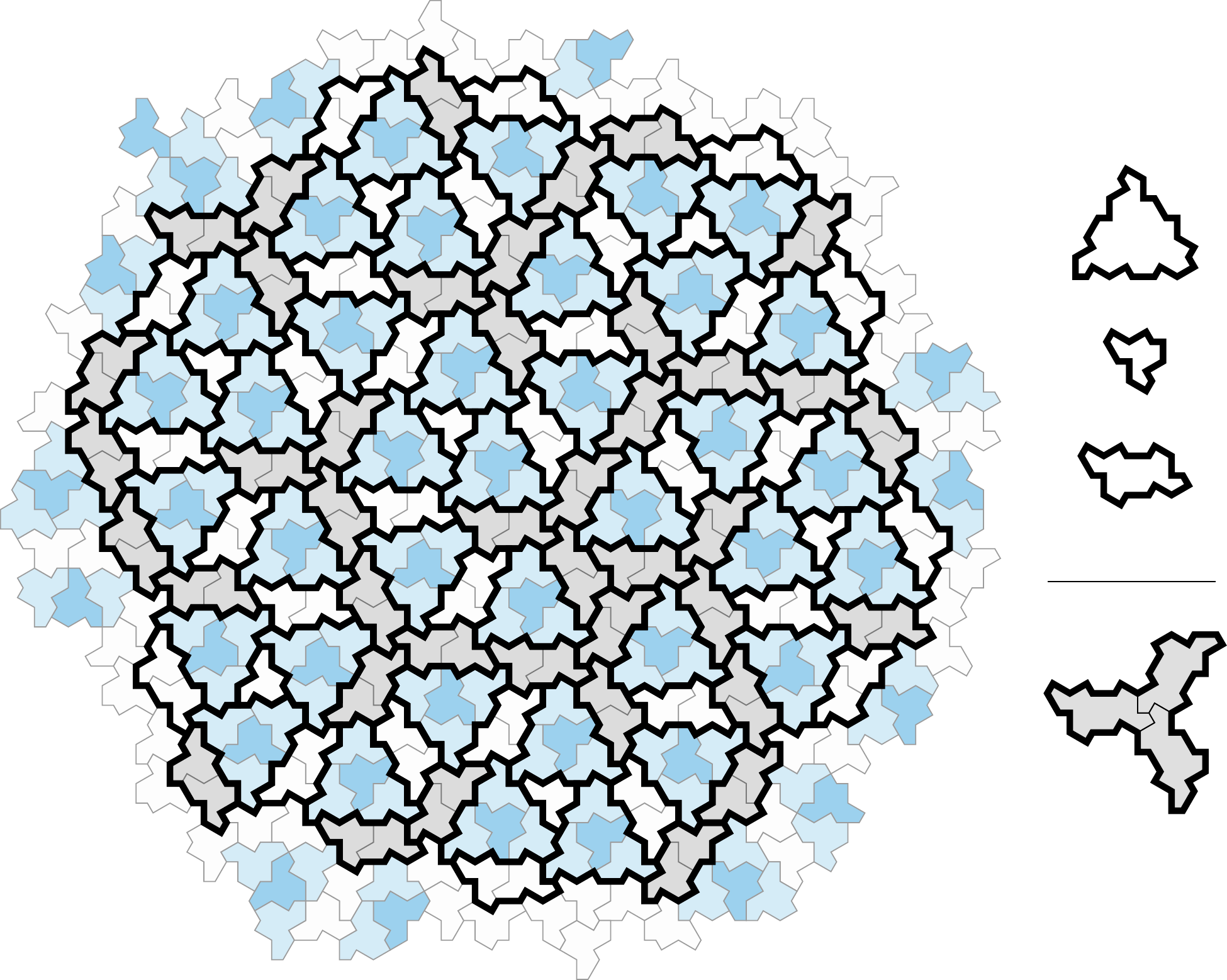}
\end{center}
\caption{\label{fig:patch10clu}A grouping of tiles into clusters in the
	example patch.  In addition to four-tile clusters consisting of a 
	reflected hat and its three-hat shell, we identify clusters consisting
	of a single tile, and parallelogram-shaped clusters consisting of
	pairs of tiles. The parallelograms come in two varieties: one 
	separates two nearby shells, and the other joins up with two rotated
	copies to make a three-armed propeller shape called a \textit{triskelion}.
	An isolated triskelion is shown shaded in grey in the lower right.}\end{figure}

Given the colouring in \fig{fig:patch10col} showing non-overlapping
clusters of reflected tiles and their shells,
it is natural to wonder whether the remaining unaffiliated tiles in the
patch reliably 
form clusters of other kinds.  \fig{fig:patch10clu} illustrates that we can
account for all remaining tiles using two additional cluster types (shown
separately on the right).  First,
where three shells meet they enclose a single isolated tile, which must be
included as a cluster of size one.  Then the remaining tiles group into
copies of a parallelogram-shaped cluster of size two.
These appear in two varieties, depending on the local arrangement of
clusters around them.  In the first case, coloured white in the drawing,
the parallelogram is adjacent to
two shells along its long edges.  In the second case, coloured grey,
one end of the 
parallelogram is plugged into a local centre of threefold rotation, joining
six hats into a three-armed propeller shape called a \textit{triskelion}.
A triskelion is shown in isolation on the bottom right of \fig{fig:patch10clu}.

These clusters are the starting point for the definition of a
substitution system, one that can be iterated to produce patches
of hats of arbitrary size.  The substitution rules
do not apply to the hats directly.  Instead, we derive new \textit{metatiles}
from the clusters, and build a substitution system based on the
metatiles. The underlying hats are simply brought along for the
ride.

\begin{figure}[htp!]
\begin{center}
\includegraphics[width=0.75\textwidth]{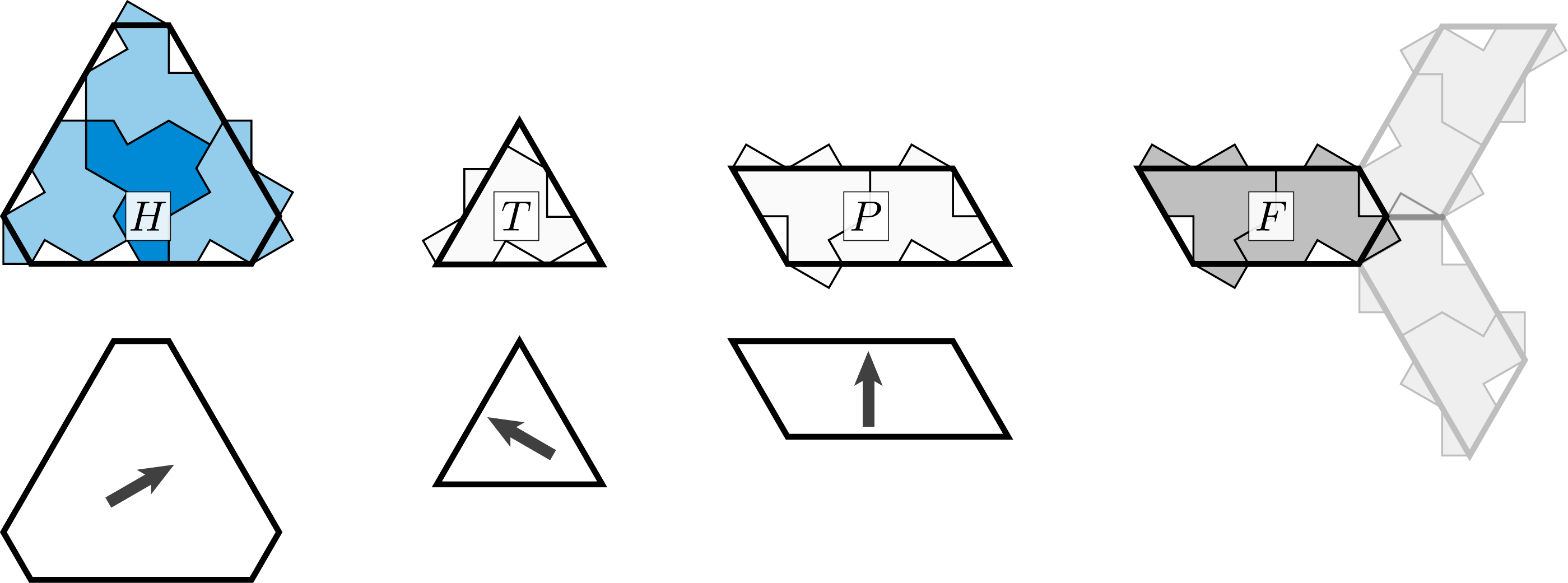}
\end{center}
\caption{\label{fig:simp_outlines}The $H$, $T$, $P$, and $F$ metatiles (top),
	constructed by simplifying the boundaries of clusters of hats.
	We mark the $H$, $T$, and $P$ metatiles with arrows when needed (bottom),
	to distinguish between otherwise symmetric orientations.}
\end{figure}

\fig{fig:simp_outlines} shows the shapes of the metatiles.  Each
one is constructed by simplifying the boundary of one of the clusters
of hats in \fig{fig:patch10clu}.  In order to ensure that the metatiles
do not overlap, we must distinguish between the two varieties of two-hat
parallelograms discussed above.  Specifically, we remove a triangular
notch from the parallelogram associated with each leg of a triskelion.  Thus
the three clusters yield four metatiles: an irregular hexagon ($H$), 
an equilateral triangle ($T$), a parallelogram ($P$), and a pentagonal
triskelion leg ($F$).
The original clusters can now
be seen as endowing the metatiles with matching rules
along their edges; these rules will be formalized in
\secref{sec:clusters}.

The $H$, $T$, and $P$ metatiles have rotational symmetries.
In the bottom row of \fig{fig:simp_outlines}, we mark tiles
with arrows showing their intended orientations.  In each case, the
arrow points to the (unique) side of the metatile from which two adjacent
kites protrude.  The arrows suffice to distinguish symmetric rotations
and our construction will not use reflections. (We will not need these
arrows in later sections, as metatile orientations will be implied by 
labels on their edges.)

\begin{figure}[htp!]
\begin{center}
\includegraphics[width=0.95\textwidth]{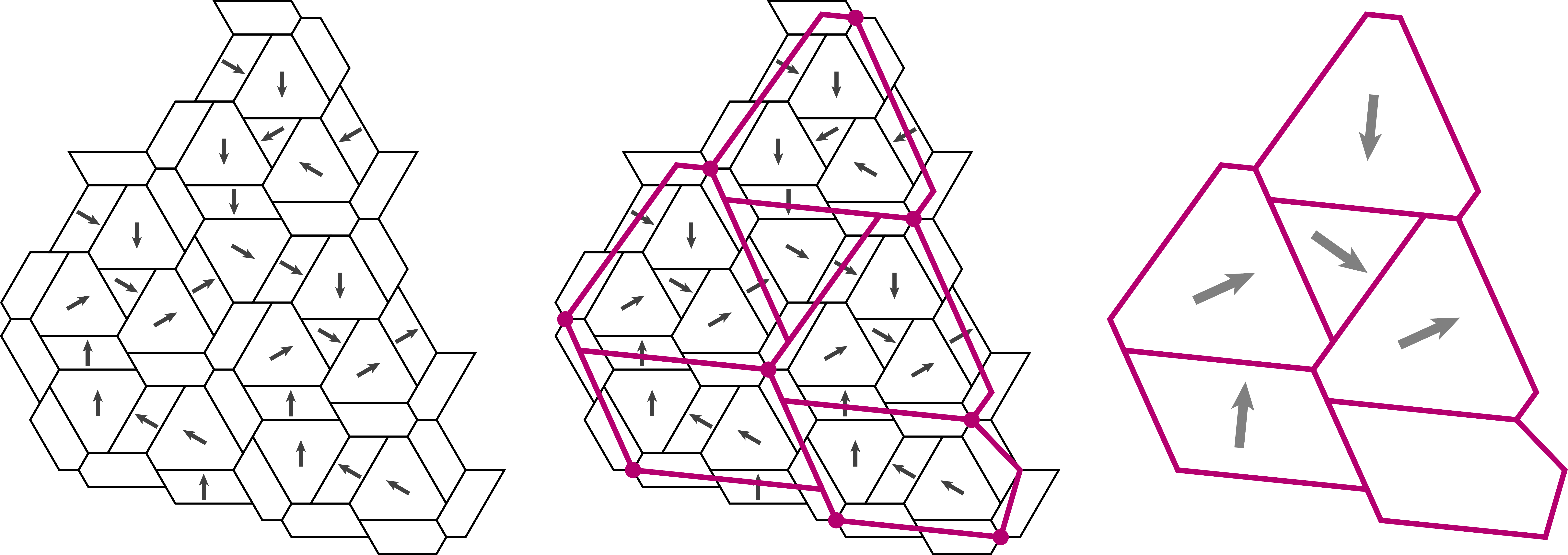}
\end{center}
\caption{\label{fig:supertiles}The construction of a family of 
	supertiles from a patch of metatiles.  The patch of metatiles on
	the left can be used to locate key vertices of the supertiles,
	marked with red dots in the central diagram.  Those dots, 
	together with constraints on angles, fully determine the shapes
	of the supertiles, which are not merely scaled-up copies of their
	progenitors.  On the right, the supertiles are marked with
	arrows indicating their orientations.}
\end{figure}

We can now define a family of supertiles that are analogous to the
metatiles, following the procedure illustrated in 
\fig{fig:supertiles}.  We first assemble the patch of oriented metatiles
shown on the left.  It can easily be checked that the elided
hats borne by these tiles fit together with no gaps and no
overlaps.  This patch is large enough to pick out one
or more copies of 
each supertile, drawn in red in the central diagram.
The supertile shapes are fully determined by two constraints:
the red dots coincide with the centres of triskelions, and all interior angles
of the hexagonal outlines are~$120^\circ$.  The diagram on the right 
shows the supercluster outlines in isolation, with their inherited
orientation markings.  Here, each arrow points to the unique supertile
edge that passes through an outward-pointing $P$ tile from the previous
generation.

\begin{figure}[ht!]
\begin{center}
\includegraphics[width=\textwidth]{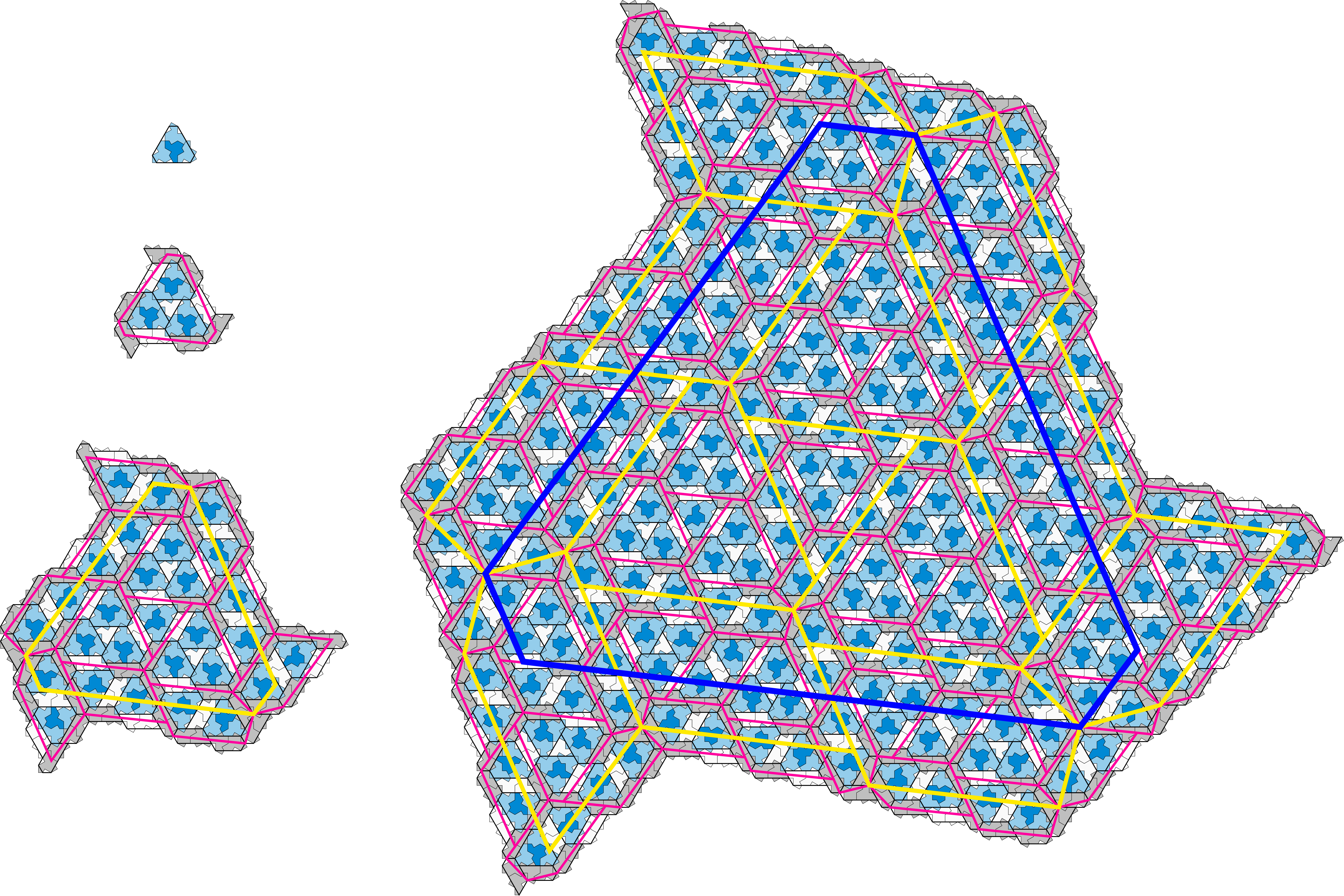}
\end{center}
\caption{\label{fig:hexclusters}The first four iterations of the 
	$H$ metatile and its supertiles.  At each level, tiles partially
	overlap the boundary of their supertile.  Overlaps are acceptable
	here, because the supertile will be met by neighbouring supertiles
	with the same configuration of smaller tiles on its boundary.}
\end{figure}

\begin{figure}[ht!]
\begin{center}
\includegraphics[width=\textwidth]{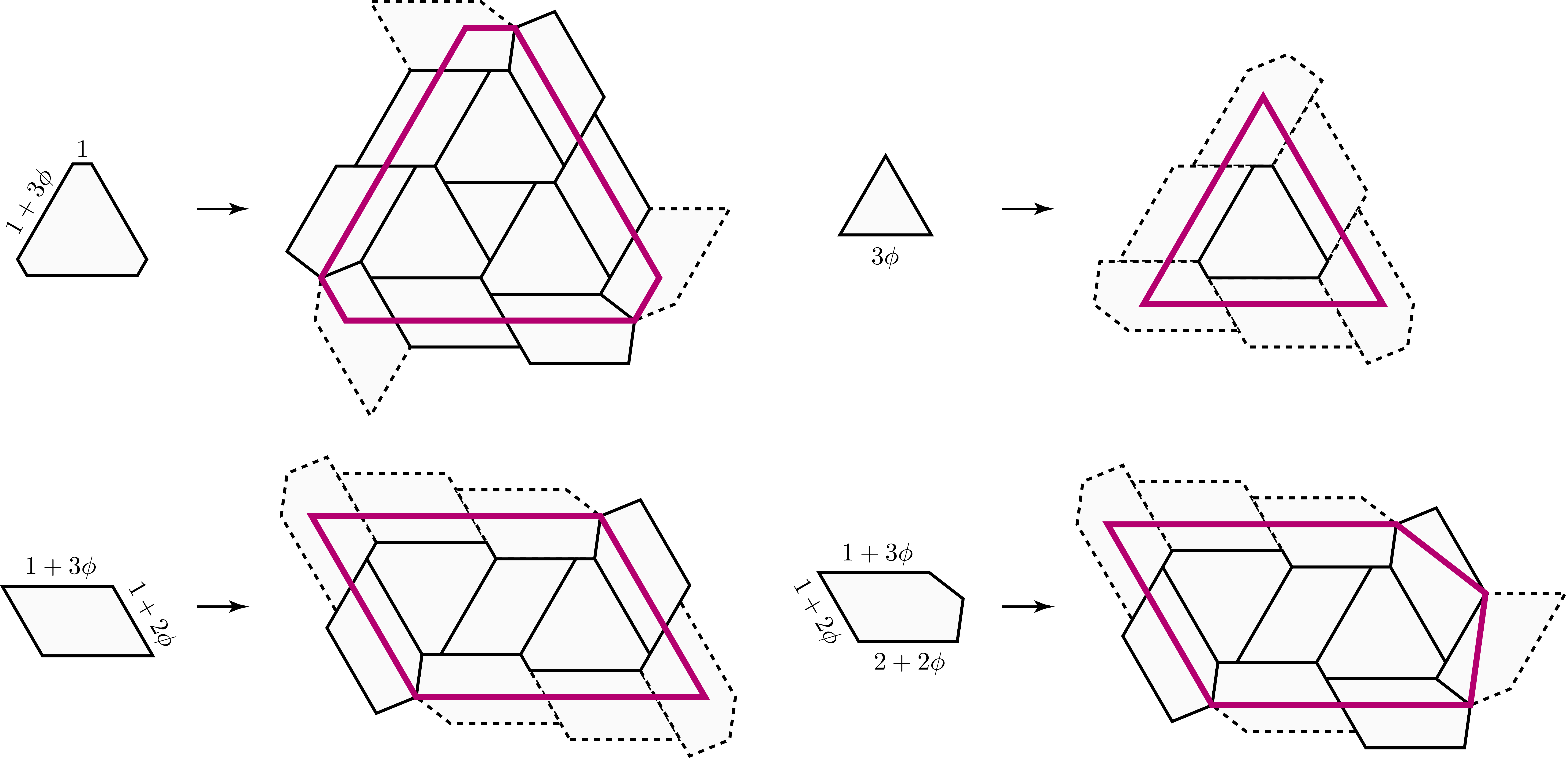}
\end{center}
\caption{\label{fig:perfect}A geometric substitution system based on
	converged
	tile shapes.  The tiles are scaled so that the short edges of the $H$
	tile have unit length.  All tile edges except the two 
	unmarked $F$ edges
	have lengths in $\mathbb{Z}[\phi]$, where $\phi$ is the golden ratio.
	The lengths of the unmarked edges in $H$, $T$, and $P$ are given by
	symmetry.
	In each substitution rule, tiles shown with dashed boundaries can be 
	omitted, leading to patches in which there are no duplicate tiles
	contributed by supertiles sharing an edge.}
\end{figure}

At first glance, these supertiles appear to be scaled-up copies
of the metatiles.  If that were so, we could perhaps proceed to
define a typical substitution tiling, where each scaled-up supertile is
associated with a set of rigidly transformed tiles.  However, with
the obvious exception of the $T$, none of the 
supertiles is similar to its corresponding metatile.
Despite that discrepancy, the supertiles are fully
compatible with the construction in \fig{fig:supertiles}---they
can be arranged in the same configuration shown on the left, and used as
a scaffolding for deriving outlines of level-$2$ supertiles (implicitly
yielding a much larger patch of hats along the way).  Indeed, the construction
can be iterated any number of times, with slightly different outlines in
every generation.  
Substitution systems like this one, where successive generations
are combinatorially but not geometrically compatible, are uncommon
in the world of aperiodic tilings.  Here we are forced to work with the
properties of a shape discovered in the wild, instead of
engineering a tile set to conform to our wishes.
To see this construction in action, please try our interactive
browser-based visualization tool at
\href{https://cs.uwaterloo.ca/~csk/hat/}{\nolinkurl{cs.uwaterloo.ca/~csk/hat/}}.

We know from \fig{fig:simp_outlines} that each of the four metatiles
can be associated with a cluster of hats.  The construction in
\fig{fig:supertiles} can then be iterated any number of times to form
ever-larger patches of metatiles, and hence of hats.  We can, for 
example, consider the $H$ supertiles formed through this process of
iteration, and the patch of hats each one contains.  The first few 
generations of
$H$ supertiles are illustrated in \fig{fig:hexclusters}.  These patches form
a sequence that grows in radius without bound, each patch a subset of
its successor.  The Extension Theorem~\cite[Section 3.8]{GS} allows
us to continue this iteration process ``to infinity'', yielding the
following result.

\begin{theorem}
\label{thm:subst_tiling}
	The hat polykite admits tilings of the plane.
\end{theorem}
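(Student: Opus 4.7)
The plan is to apply the Extension Theorem \cite[Theorem~3.8.1]{GS} to the sequence of $H$ supertiles that the substitution system already produces. First, I would check that the substitution rule of \fig{fig:supertiles}, when applied to any single metatile, yields a patch of metatiles whose outer boundary carries the same cyclic pattern of labelled edges as the progenitor. This combinatorial self-similarity is what allows the rule to be iterated: each level-$n$ supertile can then be subdivided into level-$(n-1)$ supertiles without ambiguity, and hence ultimately into individual metatiles.

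Next, since each of $H$, $T$, $P$, $F$ is by definition a cluster of hats (\fig{fig:simp_outlines}), I would verify that when two metatiles meet along a shared edge in a legal metatile patch, the hats the two metatiles carry on either side of that edge fit together without gaps or overlaps. With that boundary compatibility in hand, unfolding any legal patch of metatiles produces a legal patch of hats covering the same region; in particular, unfolding a level-$n$ $H$ supertile gives a patch of hats.

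Iterating the substitution $n$~times on a fixed central $H$ metatile, and choosing the iterations consistently, then yields a nested sequence of hat patches $P_1 \subset P_2 \subset P_3 \subset \cdots$. Because the substitution enlarges linear dimensions by a fixed factor greater than~$1$, the inradius of $P_n$ tends to infinity. The Extension Theorem now delivers a tiling of the whole plane by hats.

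The main obstacle is the consistency check: that the geometric substitution of \fig{fig:supertiles} really does produce a legal patch of metatiles at every scale, and that the boundary hats of adjacent metatiles mesh together exactly. This is a finite but delicate case analysis---each metatile has only finitely many edges, each with finitely many possible neighbour configurations---and it is essentially the substance of \secref{sec:clusters} and \secref{sec:subst}. Once it is in hand, the argument above is immediate.
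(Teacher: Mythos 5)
Your proposal follows essentially the same route as the paper: iterate the metatile substitution on the $H$ supertile, check that the hats carried by adjacent metatiles mesh across shared edges, and invoke the Extension Theorem on the resulting nested patches of unbounded radius, with the hard combinatorial verification deferred to the case analyses of Sections~\ref{sec:clusters} and~\ref{sec:subst}. The only small caveat is that the supertile shapes change from one generation to the next (they are combinatorially but not geometrically equivalent), so there is no single linear inflation factor at finite levels; the paper instead observes directly that the radius of a ball contained in the level-$n$ supertiles tends to infinity, which is all the Extension Theorem requires.
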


Of course, this theorem is not sufficient to establish aperiodicity
on its own---we must also show that the hat does not also admit
periodic tilings.  In \secref{sec:subst} we revisit this
substitution process, tracking matching rules on supertile edges
after every step.  There we show that \textit{all} tilings by the 
hat necessarily obey the substitution rules given here
(Theorem~\ref{thm:subst}).  The construction in that section also
furnishes a more detailed proof that the hat tiles the plane.

The shapes of each generation of supertiles are different from those
of the generation before it.  However, by normalizing the tiles for
size, we have computed that they converge on a fixed point, a 
set of tiles that truly do yield scaled copies of themselves under the
construction in \fig{fig:supertiles}.  These converged tile
shapes are particularly interesting because they can be used to define a 
geometric substitution system that operates via inflation and replacement.
The converged tiles, together with
their substitution rules, are shown in \fig{fig:perfect}.
By virtue of its connection to the original metatiles in
\fig{fig:simp_outlines}, we know that this substitution tiling is aperiodic
when the tiles are endowed with suitable matching rules on their
edges.  We can also use this system as an alternative means of constructing 
patches of hats.  We cannot simply associate a cluster of hats rigidly with
each converged tile, but a patch of converged tiles is combinatorially
equivalent to a corresponding patch of metatiles, which are
equipped with hats.

If we rescale the converged tiles so that the short $H$ edges
have unit length, then all tile edges except the two $F$ edges adjacent to
a triskelion centre
will have lengths in $\mathbb{Z}[\phi]$, where $\phi$ is the
golden ratio.  Furthermore, this substitution system has an inflation factor
of $\phi^2$.  The factor of~$\phi^2$ can also be derived algebraically,
using an eigenvalue computation on the substitution matrix corresponding to
the system presented in
this section.

At first blush, it may be surprising to see $\phi$ arise
in a tiling closely associated with the Laves tiling $[3.4.6.4]$; it
appears more naturally in contexts such as Penrose tilings, which feature
angles derived from the regular pentagon.
The presence of $\phi$ appears to be closely related to the
appearance of~$\sqrt{2}$ in the argument of \secref{sec:coupling}.
That number is also not expected on the regular triangular tiling or
related contexts (distances are the square roots of integers
that can be expressed by the quadratic form $x^2+xy+y^2$, so expected
square roots are of $3$ and primes of the form $6k+1$).  However, $1 +
\phi^{-1} + \phi^{-2} = 2$, from which it follows that a triangle with a
$120^\circ$~angle between sides of lengths $1$ and~$\phi^{-1}$ has
a third side of length~$\sqrt{2}$ (\fig{fig:arctan}, left). 
Now observe that in any tiling by $\mathrm{Tile}(a,b)$ we can construct
progressively larger equilateral triangles whose corners coincide with the
centres of triskelions (these triangles correspond to alternating corners
of the $H$-metatile and its supertiles).  
We may then superimpose a tiling by $\mathrm{Tile}(1,0)$ with a rotated
and translated tiling by $\mathrm{Tile}(0,\sqrt{2})$ 
so that two such equilateral
triangles are aligned.  We find that the equilateral triangle
grids supporting these two tilings are offset by an angle that approximates
$\tan^{-1}\sqrt{3/5}$, which is one of the angles of the triangle with
sides $1$, $\phi^{-1}$, and~$\sqrt{2}$, and that the approximation appears
to converge as we align larger triangles~(\fig{fig:arctan}, right).

\begin{figure}[ht!]
\begin{center}
\includegraphics[width=0.65\textwidth]{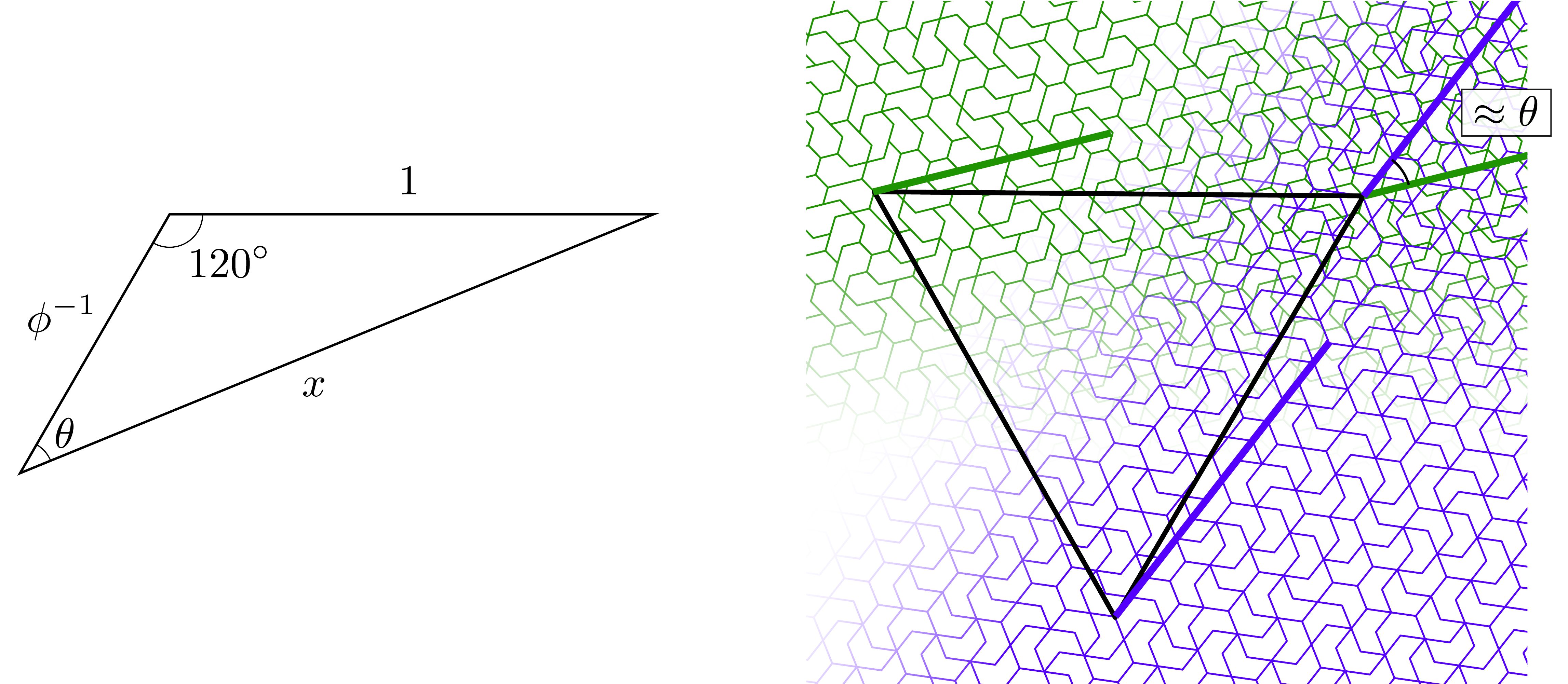}
\end{center}
\caption{\label{fig:arctan}
	A demonstration of how the golden ratio $\phi$ might arise in
	the context of tilings by hats.  The triangle on the left has
	an angle of $120^\circ$ between sides of lengths $1$ and $\phi^{-1}$;
	from trigonometric identities we can compute that $x=\sqrt{2}$ and
	$\theta=\tan^{-1}\sqrt{3/5}$.  On the right we show portions
	of tilings by $\mathrm{Tile}(1,0)$ (green) and $\mathrm{Tile}(0,\sqrt{2})$
	(blue), registered to the same centres of local threefold rotation.
	The angle between the edges of the triangle tilings underlying these
	two polyiamond tilings is approximately $\theta$, and we believe this
	approximation converges as we register larger patches of the two
	tilings.}
\end{figure}

\begin{figure}[htp!]
\begin{center}
\includegraphics[width=3in]{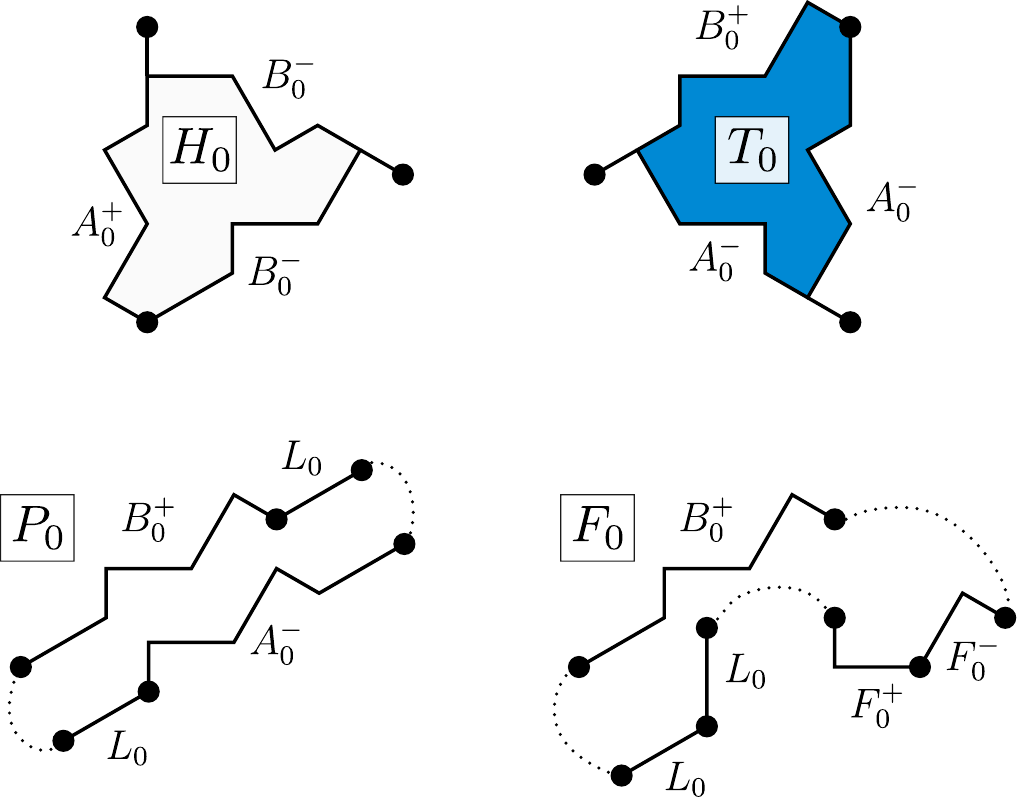}
\end{center}
\caption{\label{fig:subclusters}
	Four subclusters that may be thought of as preceding the clusters making
	up the metatiles in \fig{fig:simp_outlines}.  Edges are marked with
	the labels that will be used in \secref{sec:clusters}.  The~$P_0$ and $F_0$ subclusters have zero area; dotted lines indicate 
	vertices that should be regarded as coincident.}
\end{figure}

The arrangement of three $H$ tiles and a $T$ tile inside of an $H$ 
supertile mimics the arrangement of a reflected hat and its three
unreflected neighbours in a single $H$ metatile.  We are naturally
led to wonder whether the clusters of hats that make up the metatiles
are primordial, or whether they are preceded by a set of ``subclusters''
that launch the substitution process one step earlier.
A possible form for such subclusters is shown in Figure~\ref{fig:subclusters}.
The labels on the edges denote matching rules that will be explained
in detail in \secref{sec:clusters}.
Note that subclusters $P_0$ and~$F_0$
have zero area; their boundaries are shown split into multiple
parts to clarify the sequence of edges (in the case of~$F_0$, some of
those edges intersect others).  Also, the $X^+$ and $X^-$
edges from Figure~\ref{fig:tileaclusters} have length zero in the
subclusters and are not shown in the diagram.  Defining what exactly
it means to partition a tiling into subclusters following matching
rules, when some subclusters have area zero and some edges have length
zero but must still adjoin in the correct orientations, seems 
more awkward than the corresponding argument
based on metatiles, so we do not pursue the subclusters further.

\begin{figure}[t]
\begin{center}
\includegraphics[width=0.9\textwidth]{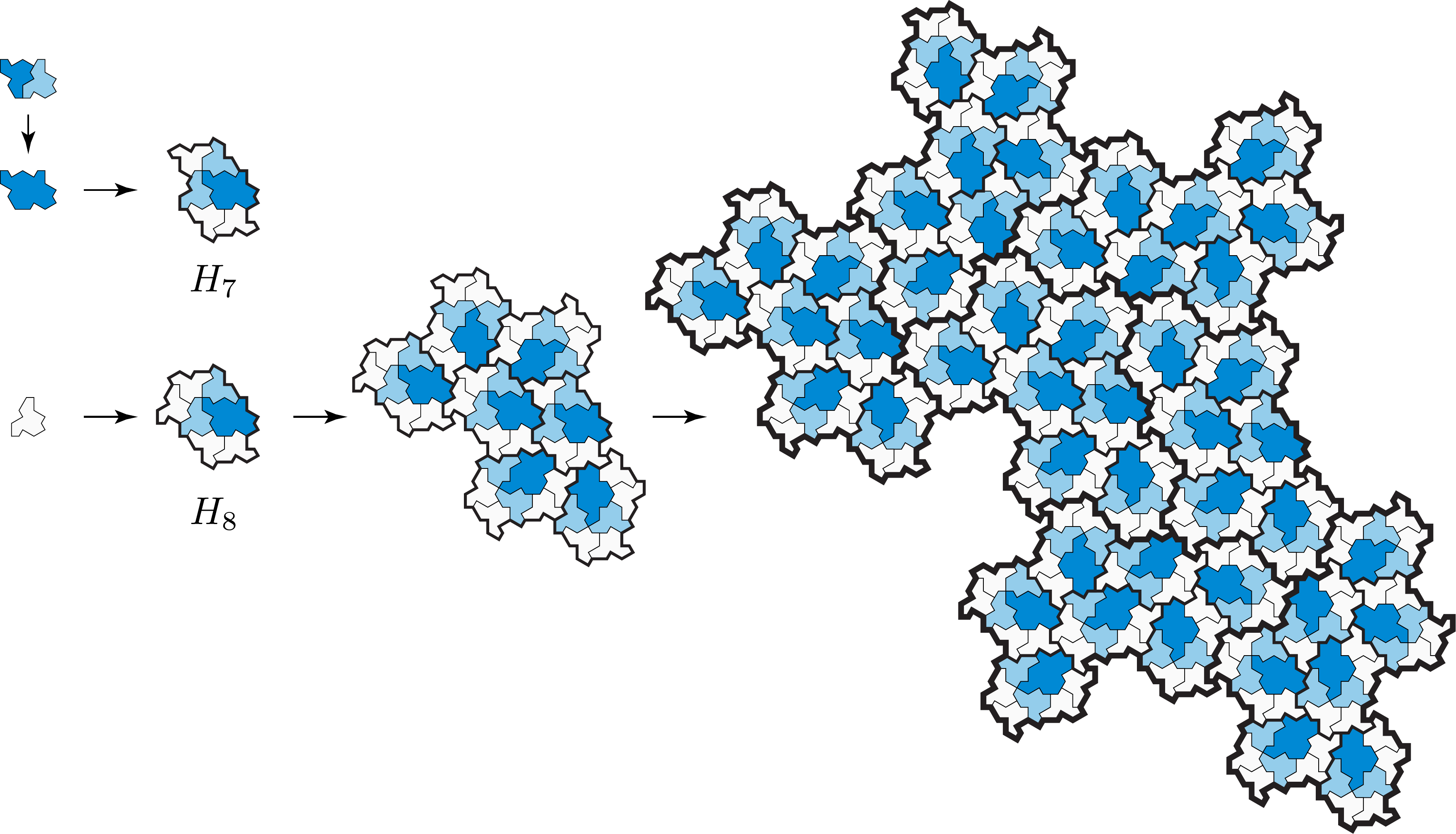}
\end{center}
\caption{\label{fig:alt_subst}An alternative substitution system that
	yields the same tiling by hats as the system presented earlier.
	We merge a reflected hat with one of its neighbours (top left) to
	produce a two-hat compound.  We can then define substitution
	rules that replace a compound by a cluster labelled $H_7$ and a
	single hat by a cluster labelled $H_8$.  Two additional iterations of
	the $H_8$ rule are shown.}
\end{figure}

Finally, in \fig{fig:alt_subst} we exhibit an alternative substitution
system based on a different set of clusters.  Here each reflected
hat is merged with a specific neighbour in its shell, 
in a manner similar to \fig{fig:patch10col}, to form the two-hat 
compound shown in the upper left.  We can then
define just two combinatorial substitution rules: one replaces a
two-hat compound by a cluster of a compound and five hats (labelled $H_7$ 
in the figure), and the other replaces a single hat by a cluster of a 
compound and six hats (labelled $H_8$).
As with the original metatiles, this 
process can be iterated to produce a patch of any size, after which
each compound can be split back into a pair of hats.
This substitution system is attractive for its minimality, though we 
believe it would be more cumbersome for proving aperiodicity.
Although the tilings themselves are MLD (mutually locally
derivable)~\cite{BaakeMLD}
with those by the $H$, $T$, $P$, and $F$ metatiles presented earlier,
deriving those metatiles from the clusters shown here requires
considering a radius larger than a single cluster.
$H_7$ is always equivalent to the union of an
$H$ metatile, a $T$ metatile, and an $F$ metatile, but $H_8$
corresponds
to three different combinations of $H$, $P$, and $F$.  Alternatively,
to establish an MLD
system, we could define four congruent but combinatorially 
inequivalent copies of the substitution rule for a single hat.
It is impossible to define the hat tiling through a single subtitution 
rule. The implied substitution matrix would necessarily yield a rational
asymptotic increase in area after substitution, whereas we already know
that the hat tiling inflates areas by a factor of $\phi^4$.

\begin{figure}[ht!]
\begin{center}
\includegraphics[width=\textwidth]{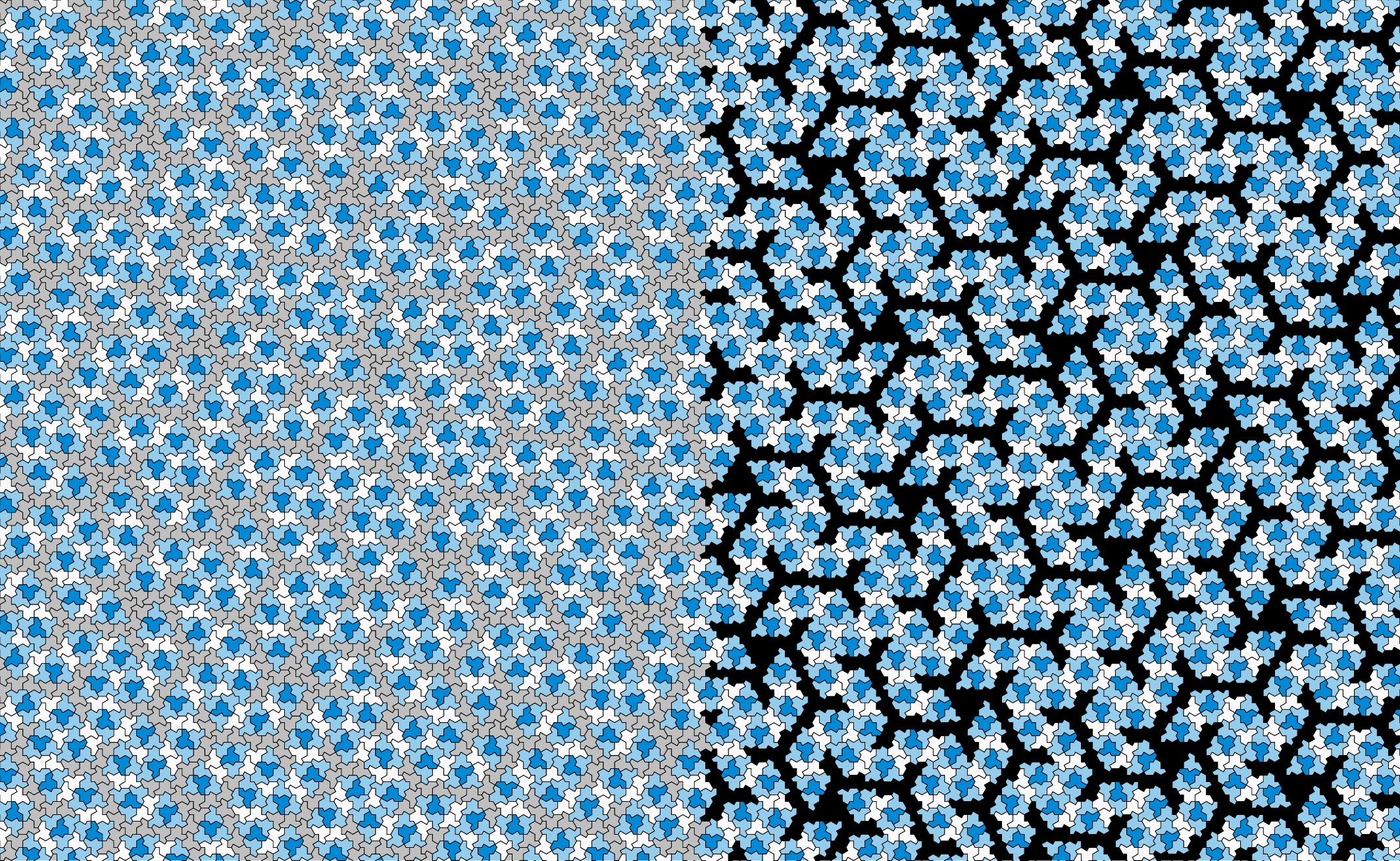}
\end{center}
\caption{\label{fig:gargantuan}An excerpt from a very large patch generated
	using the substitution system presented in this section.  In the right
	half of the drawing, hats belonging to $F$ metatiles are coloured black. It is an open question whether the $F$ metatiles must form a tree with tree complement, as suggested by the figure.}
\end{figure}

The ideas presented in this section are sufficient to show that the
hat does in fact tile the plane.  \fig{fig:gargantuan} offers a
final large patch of tiles as a demonstration.  On the right side
of the illustration we observe that the tiles belonging to triskelions
appear to form a connected tree structure that interlocks with a tree formed
from the remaining tiles.  This structure is reminiscent of those found
in other aperiodic tilings, such as the Taylor--Socolar tiling and 
the $1+\epsilon+\epsilon^2$ tiling.

However, exhibiting a tiling is usually the easy part of 
a proof of aperiodicity; it is also necessary to prove that none of the 
tilings admitted by the hat can be periodic.  In the next section
we present a novel geometric proof of aperiodicity.  Then, in 
Sections~\ref{sec:clusters} and~\ref{sec:subst} we turn to a more standard
combinatorial argument that the matching rules implied by the 
substitution system shown earlier are forced in tilings by the hat.

\FloatBarrier


\section{Aperiodicity via coupling of polyiamond tilings}
\label{sec:coupling}

In Theorem~\ref{thm:subst_tiling},  we proved that the hat polykite 
is a monotile: it admits tilings of the plane.  Our proof used the 
metatile substitution system of \secref{sec:discussion}, described  
in detail in Sections~\ref{sec:clusters} and~\ref{sec:subst}. 

In those sections we also use a computer-assisted case
analysis to show  that every tiling by the hat polykite arises from
the substitution rules. For that reason the hat polykite does not
admit periodic tilings, completing a proof of Theorem~\ref{thm:main}
using a standard approach going back to Berger~\cite{Berger}.

In this section we give a more direct proof of aperiodicity
that exploits the hat's
membership in the $\mathrm{Tile}(a,b)$ continuum introduced in 
\secref{sec:discussion}.
As noted in \secref{sec:terminology}, a planar tile that does not admit
strongly periodic tilings also cannot admit weakly periodic tilings.  Building
on that fact and Theorem~\ref{thm:subst_tiling}, the following establishes 
Theorem~\ref{thm:main}.

\begin{theorem}
\label{thm:coupling:aperiodic}
Let $\mathcal{T}$ be a tiling by the hat polykite.  Then $\mathcal{T}$
is not strongly periodic.
\end{theorem}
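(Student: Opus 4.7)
The plan is a proof by contradiction exploiting the hat's membership in the $\mathrm{Tile}(a,b)$ family. Suppose $\mathcal{T}$ is strongly periodic, so its translation group contains a rank-$2$ lattice $L \subset \mathbb{R}^2$. Because the combinatorial structure of a $\mathrm{Tile}(a,b)$ tiling is invariant under continuous deformation of $(a,b)$, I transport $\mathcal{T}$ to the two polyiamond extremes: the chevron tiling $\mathcal{T}_c$ at $(a,b)=(0,1)$ and the comet tiling $\mathcal{T}_k$ at $(a,b)=(1,0)$, both of which live on copies of the regular triangular lattice $\Lambda \cong \mathbb{Z}[\omega]$ (with $\omega = e^{i\pi/3}$). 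Each vertex position in the family is $\mathbb{R}$-linear in $(a,b)$, so every $\sigma \in L$ deforms to a translation vector $T_\sigma(a,b) = a\gamma_\sigma + b\delta_\sigma$ of $\mathrm{Tile}(a,b)$, with $\delta_\sigma$ a translation of $\mathcal{T}_c$ and $\gamma_\sigma$ a translation of $\mathcal{T}_k$. After verifying that the inherited translation sets $L_c = \{\delta_\sigma\}$ and $L_k = \{\gamma_\sigma\}$ are themselves rank-$2$ lattices, I obtain a group isomorphism $\phi\colon L_c \to L_k$, $\delta_\sigma \mapsto \gamma_\sigma$, extending uniquely to an $\mathbb{R}$-linear map $\tilde\phi\colon \mathbb{R}^2 \to \mathbb{R}^2$.

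The key step is to show that $\tilde\phi$ is a similarity, i.e., multiplication by some $z \in \mathbb{C}$. I expect this to follow from tracking the local $120^{\circ}$ rotation centres already visible in $\mathcal{T}$ (the centres of triskelions), which survive the deformation as local rotation centres of both $\mathcal{T}_c$ and $\mathcal{T}_k$. Conjugation by these local rotations acts on $L_c$ and $L_k$ identically in combinatorial terms, so $\phi$ intertwines the two $\omega$-actions; identifying $\mathbb{R}^2$ with $\mathbb{C}$, this forces $\tilde\phi$ to be $\mathbb{C}$-linear.

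Once $\tilde\phi$ is multiplication by some $z$, its modulus is pinned down by areas: a chevron is a tetriamond (area $4$ in units of small equilateral triangles) and a comet is an octiamond (area $8$), so a fundamental domain of $L_k$ has twice the area of one of $L_c$, giving $|z|^2 = 2$. But since $L_c, L_k \subset \mathbb{Z}[\omega]$, the scalar $z$ must lie in $\mathbb{Q}(\omega)$, and $|z|^2$ then has the rational form $x^2 + xy + y^2$. The value $2$ is not represented by this form ($2$ is inert in $\mathbb{Z}[\omega]$ since $2 \not\equiv 0,1 \pmod{3}$), which is the desired contradiction.

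The main obstacle is the similarity step. The purely combinatorial data encoded in $\phi$ does not immediately imply that $\tilde\phi$ respects angles, so one must extract geometric rigidity from the way local threefold rotation symmetries persist through the deformation. Subsidiary care is also needed to verify that the inherited lattices $L_c$ and $L_k$ are genuinely of rank $2$ rather than collapsing to sublattices of smaller rank; once those geometric facts are secured, the concluding contradiction is straightforward algebra in the Eisenstein integers.
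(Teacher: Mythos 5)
Your overall strategy coincides with the paper's own proof in \secref{sec:coupling}: pass from a putative periodic hat tiling to the coupled chevron and comet tilings, extract a linear map between the two rank-two translation lattices, argue that this map must be a similarity, and then rule out the required scale factor (area ratio $2$ in your normalization, $\sqrt{2/3}$ in the paper's) because $2$ has odd $2$-adic valuation while values of the Eisenstein norm form $x^2+xy+y^2$ always have even $2$-adic valuation. Your endgame is correct, and the subsidiary point you flag (that the inherited lattices have rank two) is routine, since the contraction is a bijection on tiles and hence the induced map on translation groups is injective.

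The genuine gap is exactly where you locate the main obstacle, and the fix you sketch does not close it. A triskelion centre is only a \emph{local} centre of threefold rotation; the rotation about it is not a symmetry of the tiling, so ``conjugation by these local rotations'' is not a well-defined operation on $L_c$ or $L_k$ at all---there is no induced $\omega$-action on the lattices for $\phi$ to intertwine. Even if each lattice were separately shown to be invariant under rotation by $120^\circ$, that would not force $\tilde\phi$ to commute with the rotation (complex conjugation preserves $\mathbb{Z}[\omega]$ yet is not $\mathbb{C}$-linear), so $\mathbb{C}$-linearity would still not follow. Moreover, the existence of exact local $C_3$ centres in an \emph{arbitrary} hat tiling is itself a consequence of the computer-assisted clustering analysis, which this geometric proof is designed to avoid relying on. The paper closes the gap with a substantial argument of a different character: it first shows that the three orientation classes of hats occur in equal proportions (forced by counting kite orientations in the underlying Laves tiling), then decomposes the chevron tiling into ``$i$-worms'' and ``$i$-strips'' along the three lattice directions, and explicitly computes the comet-tiling translation corresponding to a period $a\mathbf{v}_i$ of each family of strips; the equidistribution forces the three image vectors to be mutually at $120^\circ$, which is what pins the map down as a similarity. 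Some replacement of comparable strength is needed at this point in your argument before the Eisenstein-integer contradiction can be invoked.
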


\begin{figure}
\begin{center}
\includegraphics[width=\textwidth]{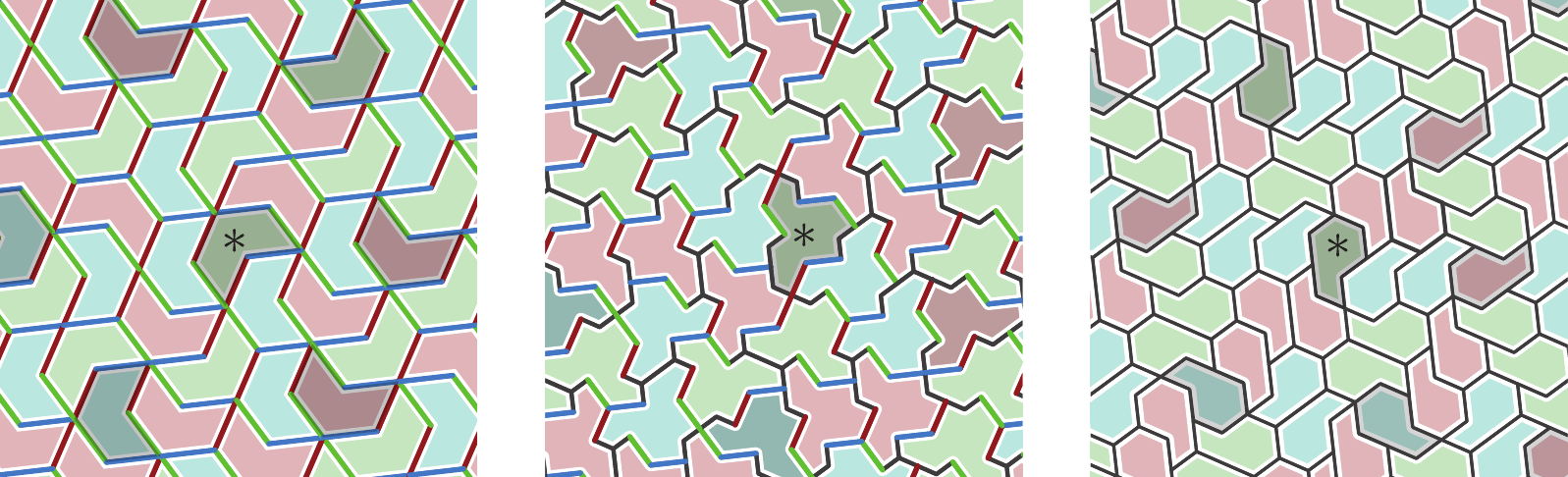}
\end{center}
\caption{\label{fig:tt4t8}A patch from a hat tiling $\mathcal{T}$ (centre),
	together with corresponding patches from a chevron tiling
	$\mathcal{T}_4$ (left) and a comet tiling $\mathcal{T}_8$ (right).
	Corresponding reference tiles are marked in each patch.
	Edges of length~$1$ are shown in black;
	tiles and edges of length~$\sqrt{3}$ are given distinct colours 
	according to their orientations, with mirrored tiles shaded darker. 
	$\mathcal{T}_4$ and $\mathcal{T}_8$ are obtained by contracting the
	black and coloured edges of $\mathcal{T}$\!, respectively.
	$\mathcal{T}$ is shown at half scale to fit more of it into the figure.
	We assume that these combinatorially equivalent tilings are periodic
	in order to derive a contradiction.
}
\end{figure}

We suppose throughout this section that there is a strongly periodic
tiling~$\mathcal{T}$ by the hat polykite, described as $\mathrm{Tile}(1,\sqrt{3})$ in \secref{sec:discussion}, and derive a contradiction. 

In the tiling $\mathcal{T}$\!, tiles are necessarily aligned to 
an underlying $[3.4.6.4]$ Laves tiling.  This claim is justified
by Lemma~\ref{lemma:tileaalign},
which shows that all tilings by the hat polykite are aligned in that way.

Contracting the sides of length 
$1$ or~$2$ to length~$0$ in $\mathcal{T}$ produces a strongly periodic
tiling~$\mathcal{T}_4$ by chevrons, tiles of the form $\mathrm{Tile}(0,\sqrt{3})$.
Each chevron is the union of four equilateral triangles
of side length~$\sqrt{3}$, and must therefore have area~$3\sqrt{3}$.
Similarly, contracting the sides of length~$\sqrt{3}$ to length~$0$
produces a strongly periodic tiling~$\mathcal{T}_8$ by comets of
the form $\mathrm{Tile}(1,0)$, which have area~$2\sqrt{3}$.
(Because this contraction process is well-defined around any tile,
edge or vertex, it yields a combinatorial tiling of the plane, and a
combinatorial tiling corresponds to a geometrical tiling of the entire
plane~\cite[Lemma~1.1]{regprod}.)  
\fig{fig:tt4t8} (centre) shows a patch from an example tiling
$\mathcal{T}$, together
with corresponding patches from $\mathcal{T}_4$ (left) and $\mathcal{T}_8$
(right).  This mapping to tiles of
different side lengths is discussed in more detail in
\secref{sec:family}.

Because both~$\mathcal{T}_4$ and~$\mathcal{T}_8$ are strongly
periodic, there must exist an affine map~$g$ that acts
as a bijection between the translation symmetries of~$\mathcal{T}_4$
and~$\mathcal{T}_8$.
Recall that a \textit{similarity} is an affine map that 
preserves shape but not necessarily size (that is, it
scales uniformly in every direction). We will first show that~$g$
is not a similarity. We will then prove that it must be,
obtaining a contradiction.

The tilings $\mathcal{T}$\!, $\mathcal{T}_4$, and $\mathcal{T}_8$ 
are coupled, in the sense that there are bijections
between their tiles, with corresponding tiles in corresponding
orientations and translation symmetries of any one mapping directly to
translation symmetries of the others.  They also have close combinatorial
relationships: any neighbours in the original
polykite tiling correspond to neighbours in both polyiamond tilings. 
The affine map~$g$ defined above transforms every translation symmetry
of~$\mathcal{T}_4$ into a corresponding translation symmetry
of~$\mathcal{T}_8$ 
(one between corresponding pairs of tiles).  Given the areas of the
chevrons and comets,~$g$ must scale areas by $2/3$.

If~$g$ were a similarity, then we could deduce immediately that it
must scale lengths in every direction by~$\sqrt{2/3}$.  However, 
a similarity with this scale factor cannot also map translations 
in~$\mathcal{T}_4$ to translations in~$\mathcal{T}_8$.
Consider the regular tiling by
equilateral triangles, positioned to include a unit edge from $(0,0)$ to
$(1,0)$.  Every vertex of this tiling is given by $m(1,0)+n(1/2,\sqrt{3}/2)$
for integers~$m$ and~$n$, meaning that vectors joining vertices 
(including all possible vectors defining translation symmetries) must
have this form as well.  It follows that 
any distance~$d$ between two vertices must have $d^2$ of the form 
$m^2+mn+n^2$, in which case $d^2$ has an even number
of factors of~$2$.  Therefore a scale factor
of~$\sqrt{2}$ is not possible between translations on two triangular
tilings with the same edge length, and a scale factor of~$\sqrt{2/3}$
is not possible between translations on two triangular tilings with
edge lengths in a ratio of $\sqrt{3}$ to~$1$.

Using the fact that the six translation classes of kites must appear with equal frequency in
any aligned tiling by polykites, we now proceed to show that $g$  must 
in fact be a similarity, which gives the required contradiction.

A polykite is a union of kites from a $[3.4.6.4]$ Laves tiling, and
so its constituent kites are constrained to six possible orientations.
It happens that the hat uses four of those six kite orientations once
each, and the other two orientations (which are related by a halfturn)
twice each.  In any aligned
hat tiling, there are twelve possible tile orientations.  Tiles can
therefore be partitioned into three ``classes'' of four orientations
each, based on the orientations of their repeated kites.
\fig{fig:kitetwocol} (left)
shows the four hat orientations that make up one such class;  within
each hat, kites in orientations that appear more than once are
shaded darker.  Because of these repeated kite orientations, hats 
in each class claim the Laves tiling's
kites in an unbalanced way, favouring two kite orientations over
the other four.  In an infinite hat tiling all kite orientations must
be used in equal proportion, and so to restore balance the tiling
must use tiles from the three classes in equal proportion as well
(meaning that in any patch with perimeter~$x$, the imbalance between
the numbers of polykites with orientations from any two of the sets
is~$O(x)$).  In \fig{fig:tt4t8} (centre), copies of the hat 
with the same orientation class and handedness are coloured the same way.

\begin{figure}[htp!]
\begin{center}
\includegraphics[width=.85\hsize]{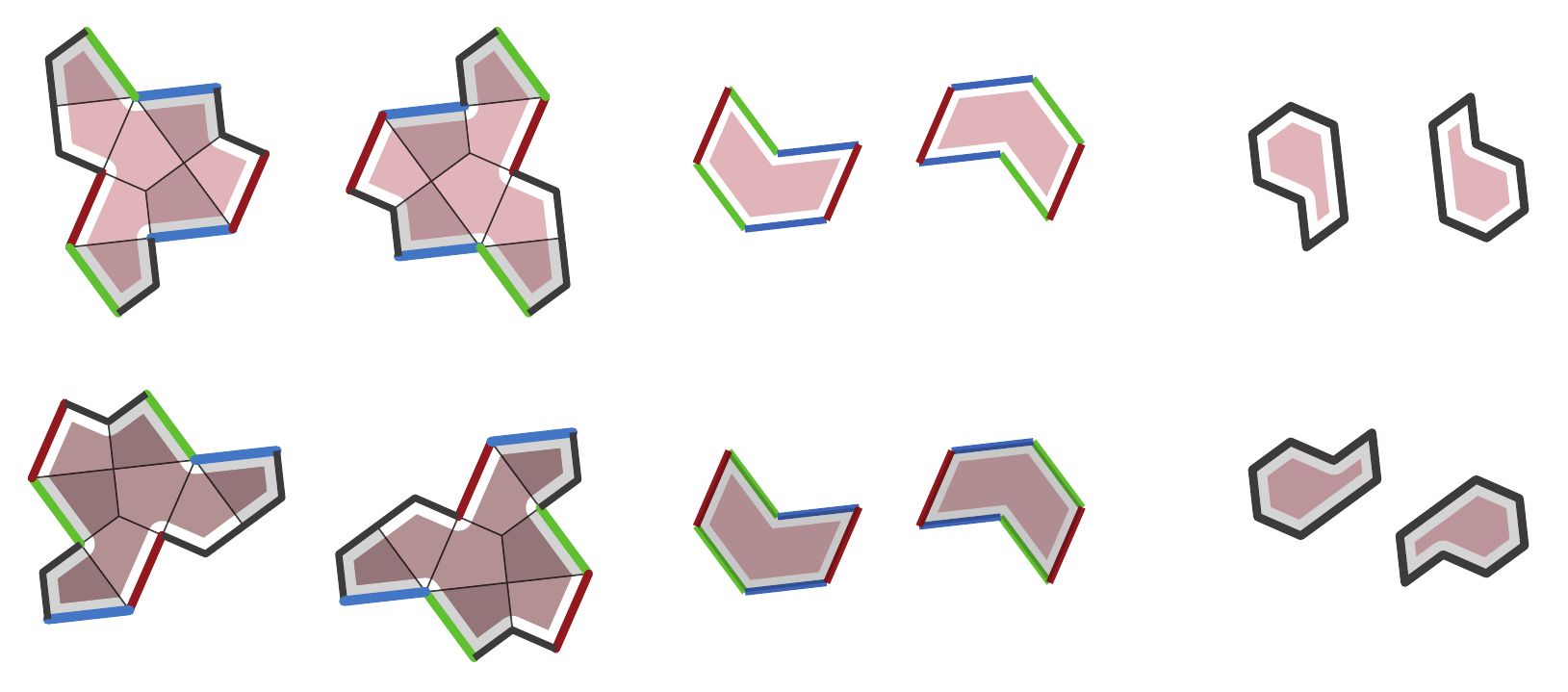}
\end{center}
\caption{Four orientations of the hat polykite that make up one
	orientation class, based on the repeated kite orientations they
	contain (left).  After contracting edges, these hats give rise to
	corresponding sets of chevrons (centre) and comets (right).}
\label{fig:kitetwocol}
\end{figure}

In the centre of  \fig{fig:kitetwocol}, we contract the sides of
the hat of length $1$ and~$2$, shown in black, to form chevrons in the
same orientations.
(This chevron is symmetric, so two orientations of the polykite in one of those sets
can give rise to identical-looking chevrons.  Those should still be
considered as different orientations of the chevron, as if it were
given an asymmetric marking.)
At right we contract the
sides of length~$\sqrt{3}$, shown coloured, to produce comets.
In Figures~\ref{fig:tt4t8} and~\ref{fig:istrips}
these tiles are coloured by orientation, matching the hats from
which they originated.

Note that given any two chevron sides in~$\mathcal{T}_4$, the
corresponding vector in~$\mathcal{T}_8$ between those two sides is
well defined: a side of a tile in~$\mathcal{T}_4$ corresponds to a
point on the boundary of the corresponding tile in~$\mathcal{T}_8$
(and adjoining sides on adjacent tiles correspond to the same point on
the boundaries of two neighbouring tiles in~$\mathcal{T}_8$), so the
vector is just the vector between those corresponding points. 

We will also make use of a tiling $\mathcal{T}_4'$, derived from
$\mathcal{T}_4$ by dividing every chevron into two congruent rhombi.
All chevrons associated with a single orientation class of hats divide
into rhombi in the same two orientations, as shown in 
\fig{fig:kitetwocol} (centre).  The balance of orientation
classes in $\mathcal{T}$ therefore implies that rhomb orientations
will occur with equal proportion in $\mathcal{T}_4'$.  (In fact,
based on tile adjacencies in hat tilings, it can be shown that 
$\mathcal{T}_4'$ is the Laves tiling $[3.6.3.6]$.)

\begin{figure}[t]
\begin{center}
\includegraphics[width=\textwidth]{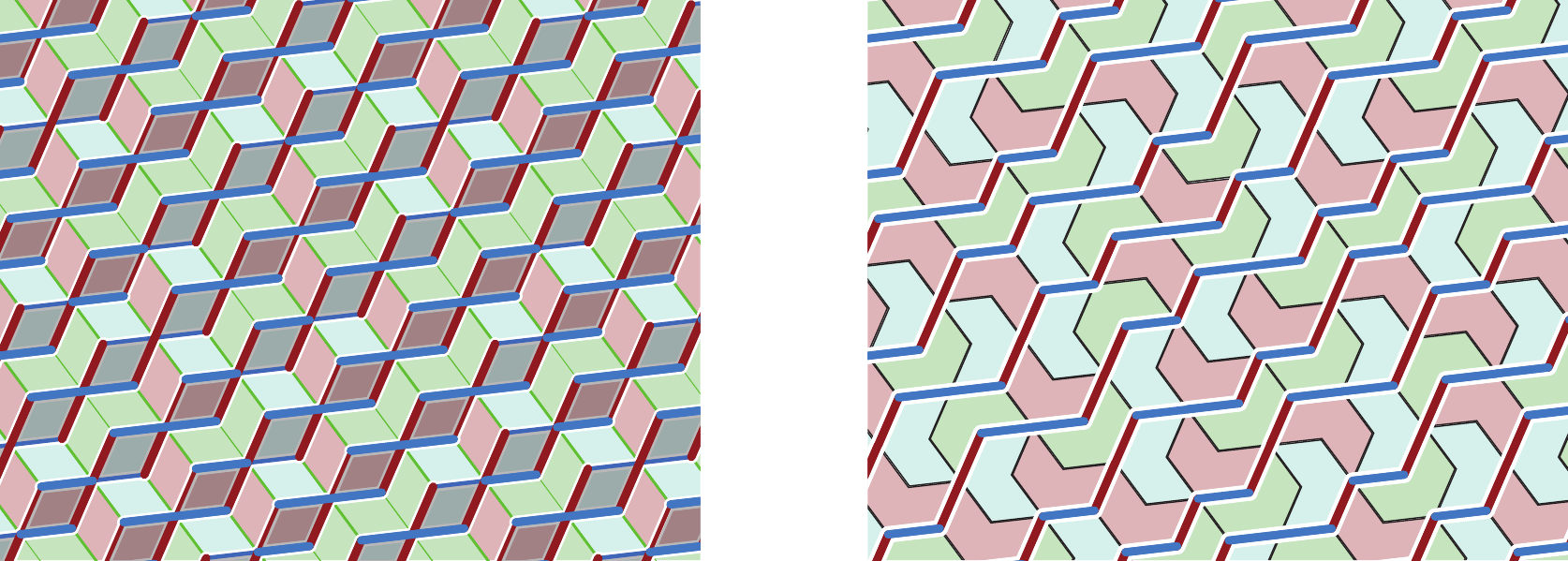}
\end{center}
\caption{\label{fig:istrips} Taking the green segments to be parallel to the lines in $\mathcal{L}_1$,  the light coloured rhombs at left form  $1$-worms in  $\mathcal{T}_4'$.  At right,  the corresponding $1$-strips are shown in  $\mathcal{T}_4$.}
\end{figure}

To show that the period-preserving affine map~$g$ 
must be a similarity, thereby deriving a contradiction, we examine
how~$g$ behaves on the partitions of~$\mathcal{T}_4$ into
structures we call ``$i$-strips''. The edges of the equilateral
triangles in the regular triangular tiling underlying $\mathcal{T}_4$
and~$\mathcal{T}_4'$ lie in three sets of parallel lines. Call those
sets $\mathcal{L}_1$, $\mathcal{L}_2$, and~$\mathcal{L}_3$. Segments
in these directions are coloured green, red, and blue in the figures.
For each $i\in\{1,2,3\}$ we can now identify a set of ``$i$-worms'' in
$\mathcal{T}_4'$.  These are pairwise disjoint, two-way infinite sequences
of rhombi, in which consecutive rhombi in one worm are adjacent 
along an edge parallel to those in~$\mathcal{L}_i$. 
\fig{fig:istrips} (left) illustrates the $1$-worms in $\mathcal{T}_4'$.
Note that the $i$-worms for any given~$i$ 
will collectively use~$2/3$ of the rhombi in $\mathcal{T}_4'$.

Clearly, the $i$-worms for a given $i$ cannot cross, and any line
parallel to those in~$\mathcal{L}_i$ passes through the $i$-worms
in the same order as any other such line passes
through them.  Furthermore, any translation symmetry preserves both 
$i$-worms themselves and the ordering of $i$-worms.   

Every $i$-worm in $\mathcal{T}_4'$ defines an $i$-strip in
$\mathcal{T}_4$, by assigning each chevron to the same strip as one
of its rhombi.  If a chevron's rhombi both belong to $i$-worms for
a given~$i$ in $\mathcal{T}_4'$, then they must be in the \emph{same}
$i$-strip in~$\mathcal{T}_4$, because the line segment between those
two rhombi lies on a line in~$\mathcal{L}_i$.  This assignment must
constitute a partition of the tiles in $\mathcal{T}_4$.  \fig{fig:istrips}
(right) illustrates the $1$-strips in $\mathcal{T}_4$.

Let $\mathbf{v}_i$ be a vector between two consecutive lines
in~$\mathcal{L}_i$, orthogonal to those lines, chosen so the pairwise
angles between those vectors are all~$120^\circ$.  Let $\mathbf{v}'_i$
be a vector orthogonal to~$\mathbf{v}_i$ and with length $1/\sqrt{3}$
times that of~$\mathbf{v}_i$, again chosen so the pairwise angles
between those vectors are all~$120^\circ$.  Note that $\sum_i
\mathbf{v}_i = 0$ and $\sum_i \mathbf{v}'_i = 0$.  Considering the
sides of rhombi in an $i$-worm in~$\mathcal{T}_4'$ that lie in
consecutive lines of~$\mathcal{L}_i$, the vector between the midpoints
of such sides is $\mathbf{v}_i \pm \mathbf{v}'_i$, where the sign
depends on the orientation of the rhomb.  Thus, if the vector
between the midpoints of any two $\mathcal{L}_i$-aligned rhomb sides in an
$i$-worm is $a \mathbf{v}_i + b \mathbf{v}'_i$,
then between those two sides there are $(a+b)/2$ rhombi of one
orientation and $(a-b)/2$ of the other orientation.

The translation symmetries of the strongly periodic tiling~$\mathcal{T}$ 
correspond to a subgroup of the symmetries of~$\mathcal{T}_4'$.  
There are only finitely many orbits of rhombi
under the action of the subgroup, so in any
$i$-worm~$\mathcal{S}$ there must be two rhombi in the same orbit.
The translation mapping one rhomb to the other is a translation symmetry of
$\mathcal{T}_4'$, and therefore maps $i$-worms to $i$-worms. Because it maps
$\mathcal{S}$ to itself and preserves the ordering of $i$-worms, it
must map every $i$-worm to itself.  If that translation is by a
vector $a \mathbf{v}_i + b \mathbf{v}'_i$, it follows that $b=0$,
because otherwise rhombi of the two orientations that make up
these $i$-worms would appear in the tiling in different proportions.

Thus for each $i$ we have some positive integer~$a_i$, such that a
translation by $a_i \mathbf{v}_i$ is a symmetry of both~$\mathcal{T}_4'$
and~$\mathcal{T}_4$.  In~$\mathcal{T}_4$, translation by this vector 
must map every $i$-strip to itself. 
Let~$a$ be the lowest common multiple of the~$a_i$.  Translation by 
$a \mathbf{v}_i$ is also a symmetry of $\mathcal{T}_4$ that 
sends each $i$-strip to itself.

By construction, translation by 
$a \mathbf{v}_i$ in $\mathcal{T}_4$ 
corresponds to a translation symmetry of $\mathcal{T}$\!,
and therefore also to some translation symmetry of~$\mathcal{T}_8$.
We may calculate the precise translation vectors in~$\mathcal{T}$ 
corresponding to each $a \mathbf{v}_i$ based on the tiles in
any $i$-strip in~$\mathcal{T}_4$ (between any two lines
in~$\mathcal{L}_i$ related by a translation by that vector). Every
such $i$-strip (and choice of lines) must produce the same vector
in~$\mathcal{T}_8$.  \fig{fig:kitetrans} shows the
corresponding translations between pairs of chevron edges, 
oriented by way of example to be parallel to  $\mathcal{L}_1$.
  For each such pair, first the
vector within the chevron is indicated, then the corresponding
comet vector between points on the boundary of the comet, then that vector
decomposed into components parallel to and orthogonal to the chevron 
sides between
which the vector is drawn.  The corresponding hats are shown to aid in 
verifying the calculation. 
Rotating, reflecting or reversing the
direction of the chevron vector has the same effect on the
comet  vector.

\begin{figure}[htp!]
\begin{center}
\includegraphics[width=\hsize]{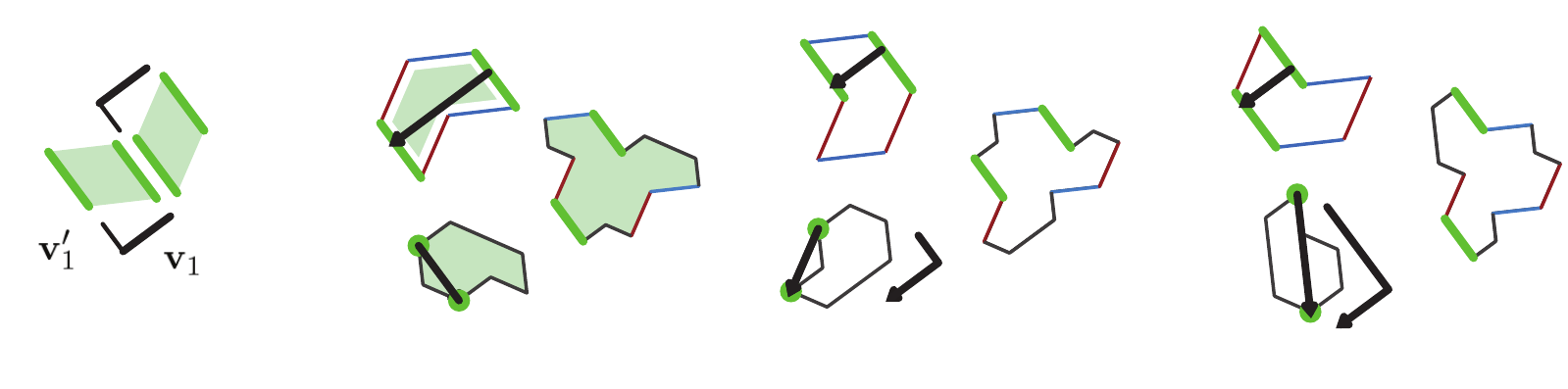}
\end{center}
\caption{Corresponding translations for the chevron and comet}
\label{fig:kitetrans}
\end{figure}

Note that in the first case, the comet vector is parallel to the
sides between which the chevron vector is drawn; the second and
third cases have equal components orthogonal to those sides.  For the
orthogonal component of the corresponding translations
in~$\mathcal{T}_8$ to be equal for all $i$-strips, it follows that
every $i$-strip must have the same proportion of the second and third
cases relative to the first case.  As the first case corresponds
exactly to one of the three sets of orientations that occur in equal
proportions in any tiling, the first case must thus be a third of the
chevrons in any $i$-strip, while the second and third cases (which
together correspond to the other two sets of orientations; however,
each case does not correspond to a single set of orientations) in that
figure must add to two thirds of the chevrons.

The chevrons  in the first case have orthogonal
translation vector $2{\bf v}_i$ in $\mathcal{T}_4$,  zero in
$\mathcal{T}_8$. The remaining two thirds of the chevrons have
orthogonal translation ${\bf v}_i$ in both $\mathcal{T}_4$ and
$\mathcal{T}_8$.  Thus if $a{\bf v}_i$ is a period of the $i$-strips
in $\mathcal{T}_4$, then each of its $i$-strips has  $a/4$ chevrons
from the first case and $a/2$ in the other two cases. This period
corresponds to  a translation symmetry of  $(a/2){\bf v}_i$ in
$\mathcal{T}_8$.  Since the sum of ${\bf v}_i$ over $i=1,2,3$ is
zero (as noted above), the sum of those three orthogonal components
of translation vectors in $\mathcal{T}_8$ is also zero.

Therefore their parallel components must also add to
zero.  But $\sum_i b_i \mathbf{v}'_i = 0$ if and only if all the
$b_i$~are equal; say they all equal~$b$.  That means the three
translation vectors in~$\mathcal{T}_8$ (which are~${\frac{a}{2}
\mathbf{v}_i + b \mathbf{v}'_i}$) are at $120^\circ$~angles to each
other.  In that case, the period-preserving affine transformation~$g$ 
must scale uniformly in every
direction, and is therefore a similarity.  But we know from the discussion
above that~$g$ cannot be a similarity, and so we arrive at a contradiction,
ruling out the initial supposition that~$\mathcal{T}$ was strongly periodic.

\section{Clustering of tiles}
\label{sec:clusters}

As discussed in \secref{sec:discussion}, tilings by the hat polykite
are composed of certain clusters of tiles.  These clusters can be 
used to define simplified tile shapes that we call \textit{metatiles}.
The metatiles inherit matching rules from the boundaries of the hats
that they contain. Furthermore, through a set of substitution rules 
they form larger, combinatorially equivalent
supertiles that fit together following the same matching rules.  In
this section, we give a precise definition of how tiles are assigned
to clusters, and a computer-assisted proof by case analysis that
this assignment does result in the clusters claimed, fitting together
in accordance with the matching rules given.  

\begin{figure}[ht!]
\captionsetup{margin=0pt}%
\begin{center}
\subfloat[Cluster $T$]{%
  \begin{tikzpicture}[x=5mm,y=5mm]
  \draw[\colcluster,ultra thick] \vcoords{2}{-2} -- \vcoords{8}{-2} --
    \vcoords{2}{4} -- cycle;
  \tileA{60}{0}{0}{$T_1$};
  \draw \vcoords{2}{3} -- \vcoords{2}{4};
  \draw \vcoords{7}{-1} -- \vcoords{8}{-2};
  \markpt{2}{-2};
  \markpt{8}{-2};
  \markpt{2}{4};
  \vctxt{0}{2}{$B^+$};
  \vctxt{5}{-3}{$A^-$};
  \vctxt{5}{2}{$A^-$};
\end{tikzpicture}%
} \qquad \subfloat[Cluster $H$]{%
\begin{tikzpicture}[x=5mm,y=5mm]
  \draw[\colcluster,ultra thick] \vcoords{2}{0} -- \vcoords{4}{-2} --
    \vcoords{12}{-2} -- \vcoords{12}{0} -- \vcoords{4}{8} --
    \vcoords{2}{8} -- cycle;
  \tileAr{240}{0}{2}{$H_1$};
  \tileA{60}{-1}{2}{$H_2$};
  \tileA{60}{1}{1}{$H_3$};
  \tileA{300}{0}{1}{$H_4$};
  \markpt{2}{0};
  \markpt{4}{-2};
  \markpt{10}{-2};
  \markpt{12}{-2};
  \markpt{12}{0};
  \markpt{6}{6};
  \markpt{4}{8};
  \markpt{2}{8};
  \markpt{2}{2};
  \vctxt{3}{-2.5}{$X^+$};
  \vctxt{8}{-3}{$B^-$};
  \vctxt{11.5}{-3}{$X^-$};
  \vctxt{14}{-2}{$X^+$};
  \vctxt{9}{4}{$B^-$};
  \vctxt{5.5}{7.5}{$X^-$};
  \vctxt{3}{9.5}{$X^+$};
  \vctxt{0}{5.5}{$A^+$};
  \vctxt{1}{1.5}{$X^-$};
\end{tikzpicture}%
} \\ \subfloat[Cluster $P$]{%
\begin{tikzpicture}[x=5mm,y=5mm]
  \draw[\colcluster,ultra thick] \vcoords{0}{0} -- \vcoords{4}{-4} --
    \vcoords{12}{-4} -- \vcoords{8}{0} -- cycle;
  \tileA{0}{0}{0}{$P_1$};
  \tileA{60}{1}{0}{$P_2$};
  \draw \vcoords{11}{-3} -- \vcoords{12}{-4};
  \markpt{0}{0};
  \markpt{2}{-2};
  \markpt{4}{-4};
  \markpt{6}{-4};
  \markpt{12}{-4};
  \markpt{10}{-2};
  \markpt{8}{0};
  \markpt{6}{0};
  \vctxt{0.5}{-1.5}{$L$};
  \vctxt{2.5}{-3.5}{$X^-$};
  \vctxt{5}{-5}{$X^+$};
  \vctxt{9}{-5}{$A^-$};
  \vctxt{11.5}{-2.5}{$L$};
  \vctxt{9.5}{-0.5}{$X^-$};
  \vctxt{7}{1}{$X^+$};
  \vctxt{3}{1.5}{$B^+$};
\end{tikzpicture}%
} \qquad \subfloat[Cluster $F$]{%
\begin{tikzpicture}[x=5mm,y=5mm]
  \draw[\colcluster,ultra thick] \vcoords{0}{0} -- \vcoords{4}{-4} --
    \vcoords{10}{-4} -- \vcoords{10}{-2} -- \vcoords{8}{0} -- cycle;
  \tileA{0}{0}{0}{$F_1$};
  \tileA{60}{1}{0}{$F_2$};
  \markpt{0}{0};
  \markpt{2}{-2};
  \markpt{4}{-4};
  \markpt{6}{-4};
  \markpt{8}{-4};
  \markpt{10}{-4};
  \markpt{10}{-2};
  \markpt{8}{0};
  \markpt{6}{0};
  \vctxt{0.5}{-1.5}{$L$};
  \vctxt{2.5}{-3.5}{$X^-$};
  \vctxt{5}{-5}{$X^+$};
  \vctxt{7.5}{-5}{$L$};
  \vctxt{9.5}{-5}{$X^-$};
  \vctxt{11.5}{-3.5}{$F^+$};
  \vctxt{9.5}{-0.5}{$F^-$};
  \vctxt{7}{1}{$X^+$};
  \vctxt{3}{1.5}{$B^+$};
\end{tikzpicture}%
}%
\end{center}
\caption{The four clusters}
\label{fig:tileaclusters}
\end{figure}
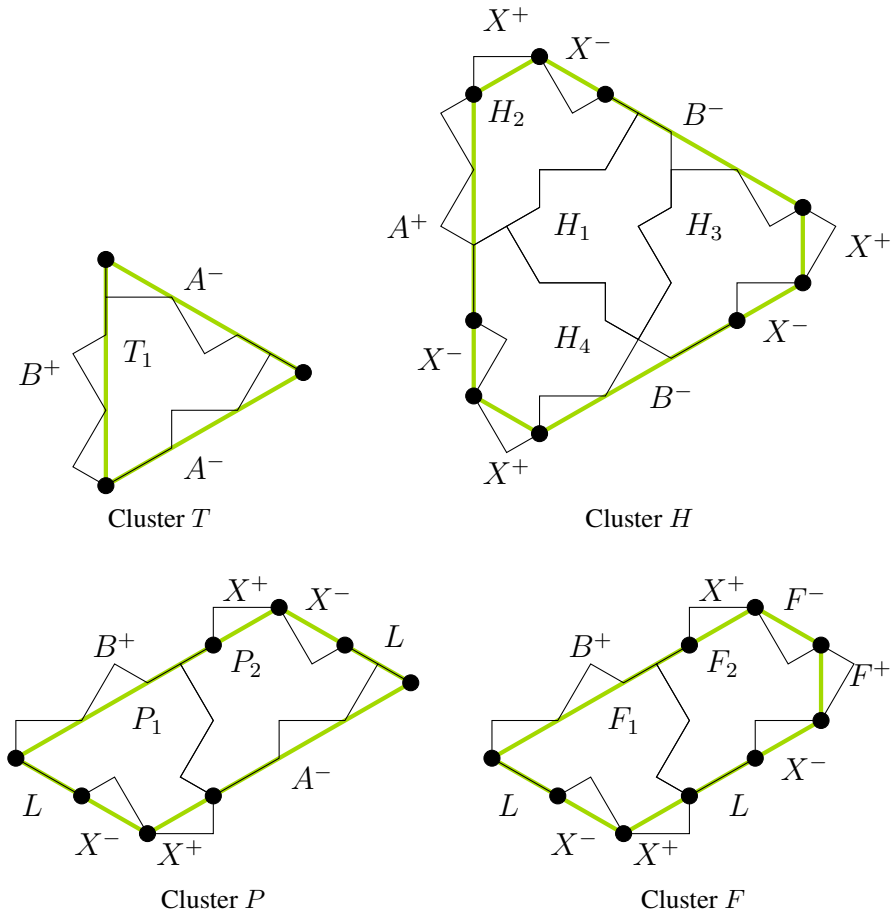

The clusters and their associated metatiles
are shown in Figure~\ref{fig:tileaclusters}.
Each metatile is a convex polyiamond outlined in
\textcolor{\colcluster}{\textbf{lime}}; its hats are overlaid, and 
each is given a unique label.
The union of the polykites in a cluster approximates the shape of
its metatile, but with some triangular indentations and protrusions along its
boundary.  At two corners of cluster~$T$, and one of cluster~$P$, an
additional line is drawn from a corner of a polykite to a corner of
the boundary of the polyiamond; this line clarifies  how an indentation to a
corner of the polyiamond is uniquely associated with one of that
polyiamond's sides.  

The boundaries of the four metatiles
are divided into labelled segments by marked points.
The labels represent matching rules to be obeyed in tilings 
by the metatiles.  To satisfy the matching rules, the four
metatiles must form a tiling using copies that are only rotated and
not reflected; edge segments marked $A^+$ and~$A^-$ must adjoin on
adjacent tiles of the tiling; likewise, edge segments $B^+$ and~$B^-$,
$X^+$ and~$X^-$, $F^+$ and~$F^-$, and $L$ and~$L$ must adjoin.
We will show in \secref{sec:subst} that any
tiling by the metatiles has a substitution structure: the tiles may be
grouped (after bisecting some tiles) into supertiles that satisfy
combinatorially equivalent matching rules.  This grouping process
implies that that no tiling by the metatiles is periodic.  Furthermore,
the
substitution structure allows the metatiles to tile arbitrarily
large regions of the plane, and hence the whole plane, implying that 
they form an aperiodic set.

In this section we establish the following result:

\begin{theorem}
\label{thm:clusters}
Any tiling by the hat polykite can be divided into the clusters shown
in Figure~\ref{fig:tileaclusters} (or reflections thereof, but not
mixing reflected and non-reflected clusters), satisfying the given
matching rules, with the resulting tiling by metatiles having the
same symmetries as the original tiling by polykites.
\end{theorem}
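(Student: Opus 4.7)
The plan is to establish Theorem~\ref{thm:clusters} by an exhaustive local case analysis, taking advantage of the discrete rigidity imposed by the underlying Laves grid. I would first invoke Lemma~\ref{lemma:tileaalign} to align every hat with a fixed copy of the $[3.4.6.4]$ tiling, which reduces the problem to a combinatorial one: each hat appears in one of only twelve orientations, and the ways two adjacent hats can share boundary form a finite catalogue that can be enumerated once and for all.

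The first substantive step is to show that every reflected hat is contained in a unique H cluster. The boundary of a reflected hat has several concavities whose only Laves-compatible fillings are the three unreflected hats $H_2$, $H_3$, and $H_4$ shown in Figure~\ref{fig:tileaclusters}, forming the distinctive shell observed empirically in \secref{sec:discussion}. Since H is the unique cluster type containing a reflected tile, each reflected hat is assigned uniquely to one H cluster. Next, for each unreflected hat not lying in the shell of some reflected hat, I would examine its immediate neighborhood and show by case analysis that its local environment forces a unique cluster assignment (either $T_1$, one of $P_1, P_2$, or one of $F_1, F_2$) together with a unique position within that cluster.

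Once the partition is established, I would verify the matching rules by inspecting each edge of the resulting metatile tiling and checking that the labels of the two adjoining segments are compatible ($A^+$ with $A^-$, and so on). These verifications follow directly from the local forcing arguments of the previous step. The ``no mixing'' claim and the equivariance under the tiling's symmetry group both follow from the purely local nature of the cluster assignment: the relative chirality between a reflected hat and its three-hat shell is fixed, so all H clusters in a single tiling share the same handedness, and the matching rules then propagate this handedness to every T, P, and F cluster.

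The main obstacle is the combinatorial size of the case analysis: enumerating all possible local neighborhoods of a hat, and verifying that each yields a unique cluster assignment compatible with those of its neighbors, requires checking many subcases. Each case must check not only the placement of the focal hat but also that the induced cluster boundary is compatible with the assignments made for surrounding hats, so the arguments must be carried out with sufficient locality to avoid circularity. This is why the authors rely on two independently developed computer implementations to verify the enumeration; the underlying arguments are elementary, but unwieldy enough in practice that purely human verification would be error-prone.
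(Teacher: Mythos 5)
Your proposal is correct and follows essentially the same route as the paper: reduce to aligned tilings via the alignment lemma, then run a computer-assisted exhaustive case analysis of local neighbourhoods to show that each hat's surroundings force a unique cluster assignment, verify the matching rules edge by edge, and deduce symmetry-preservation and the no-mixing claim from the purely local, choice-free nature of the assignment. The paper organizes the enumeration slightly differently---it defines a priority-ordered labelling rule on each tile's immediate neighbours and then verifies both the within-cluster and between-cluster conditions over the $188$ surroundable $2$-patches (deferring the $P_1$/$F_1$ distinction to a second pass)---but this is the same argument in substance.
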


Since inspection
of the cluster shapes shows that, conversely, any tiling by metatiles
induces one by the hat polykite (for example, $A^+$ and $A^-$ are
equal and opposite modifications to the shape of an edge and are
consistent wherever they appear in the clusters), the division into
clusters suffices as part of showing that the hat polykite is an
aperiodic monotile.

The proof of Theorem~\ref{thm:clusters} is computer-assisted.  
We define rules (\secref{sec:clusters:rules}) for
assigning the labels from \fig{fig:tileaclusters} 
to tiles in any tiling by the hat polykite.  Those rules
assign a label to a tile based only on its immediate neighbours.
Because no arbitrary choices are involved in the rules, they preserve
all symmetries of the tiling.  It then remains to show that (a)~the
labels assigned do induce a division into the clusters shown, and
(b)~the clusters adjoin other clusters in accordance with the matching
rules.  Because the matching rules do not permit a reflected cluster
to adjoin a non-reflected cluster, it then follows that either no
clusters are reflected or all clusters are reflected. Without
loss of generality we assume in \secref{sec:subst} that no clusters
are reflected.

Both (a) and~(b) may be demonstrated by a case analysis of $2$-patches
of hats.  Ideally, we would restrict our analysis to precisely those
$2$-patches that appear in tilings by the hat.  Such an approach is
unrealistic, however, as it requires foreknowledge of the space of
tilings we are attempting to understand.  In practice the list of
$2$-patches can include false positives that do not occur in any
tilings, as long as our analysis produces valid results for them
as well (and as long as the list contains every $2$-patch that can
occur in a tiling).

For the purposes of our proof we worked with the~$188$
``surroundable $2$-patches'': $2$-patches that can be surrounded
at least once more to form a $3$-patch.  We generated this set of~$188$
patches computationally.  Specifically,
we modified Kaplan's SAT-based software~\cite{Kaplan} to
enumerate all distinct $3$-patches of hats, and extracted the unique 
$2$-patches in their centres.  We validated this list by creating an
independent implementation based on brute-force search with backtracking;
the source code for this implementation is available with our article.
This list certainly includes false positives---a more sophisticated case
analysis shows that at most $63$ of the $188$ surroundable $2$-patches
can actually appear in a tiling by hats. However, all $188$ of them 
satisfy the conditions given in this section, allowing us to obtain
the results we need with simpler and more transparent algorithms.

It is also possible to demonstrate both (a) and~(b) by a shorter 
case analysis using only $1$-patches.  
However, an analysis based on $1$-patches is
more complicated because the classification rules 
in \secref{sec:clusters:rules} assume that all the
neighbours of a tile are known.  Those rules can therefore not be applied
directly to the outer tiles in a $1$-patch, making it necessary to work
with partial information about which labels are consistent with such a
tile.  For more details of this alternative case analysis, see
Appendix~\ref{sec:patches}. 

An analysis of tilings based on the enumeration of patches 
depends on the
assumption that it is only necessary to consider tilings where all
polykites are aligned to the same underlying $[3.4.6.4]$ Laves
tiling.  This assumption is not in fact obvious for tilings by
polykites or other polyforms in general; it is justified in
Appendix~\ref{sec:align}.

For each of the 188 surroundable $2$-patches, the classification
rules of \secref{sec:clusters:rules} determine labels for the tiles
in the patch's interior (comprising the central tile and its
neighbours).  We may then demonstrate~(a) by verifying that when
the central tile of a patch has a given label from one of the
clusters shown in \fig{fig:tileaclusters}, its neighbours in that
cluster appear with the correct labels in the expected positions
and orientations within the patch.  This ``within-cluster''
verification process is explained in detail in
\secref{sec:clusters:within}.  Similarly, in \secref{sec:clusters:between}
we describe a ``between-cluster'' verification process that
demonstrates~(b).  In particular, we show that when a patch's central
tile is adjacent to a tile with a label from a different cluster,
their adajcency relationship is consistent with the labelled edge 
segments that define the matching rules for the clusters.
The reference software mentioned above performs
all of these checks on the 188 surroundable $2$-patches.

\FloatBarrier

\subsection{Classification rules for the hat polykite}
\label{sec:clusters:rules}

\fig{fig:class} presents the eight classification rules for tiles.
Each rule shows a (labelled) central tile and some of its neighbours.  The 
order
of the rules is significant: the first rule that matches determines
the label on the central tile.  For each rule, if all the neighbours
shown are present, and no previous rule matched, the tile acquires
the label indicated.  The last rule is not constrained by any
neighbours, and therefore always matches if no previous rule did.
Thus every tile is assigned some label.

These rules do not distinguish between the labels~$P_1$ and~$F_1$:
the last rule assigns all such tiles the common label~$FP_1$.  The
within-cluster and between-cluster checks that follow are all expressed
in terms of this composite label.  An~$FP_1$ tile can always
be relabelled as either~$P_1$ or~$F_1$ later, depending on whether it
has a neighbour labelled~$P_2$ or~$F_2$ in the correct position and
orientation.

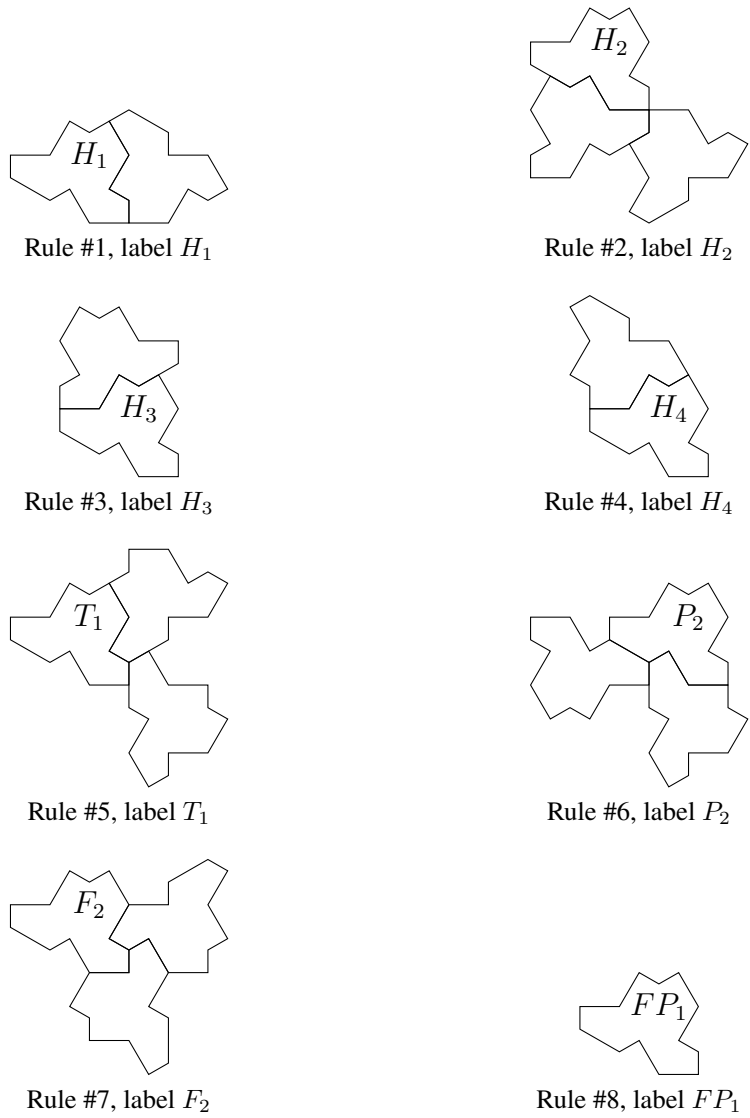
\begin{figure}[htp!]
\captionsetup{margin=0pt,justification=raggedright}%
\begin{center}
\subfloat[Rule \#1, label $H_1$]{%
\begin{minipage}[b]{6cm}
\begin{center}
\begin{tikzpicture}[x=3mm,y=3mm]
  \tileA{0}{0}{0}{$H_1$};
  \tileAr{300}{1}{0}{};
\end{tikzpicture}%
\end{center}%
\end{minipage}%
} \qquad \subfloat[Rule \#2, label $H_2$]{%
\begin{minipage}[b]{6cm}
\begin{center}
\begin{tikzpicture}[x=3mm,y=3mm]
  \tileA{0}{0}{0}{$H_2$};
  \tileAr{180}{1}{-1}{};
  \tileA{60}{2}{-2}{};
\end{tikzpicture}%
\end{center}%
\end{minipage}%
} \\ \subfloat[Rule \#3, label $H_3$]{%
\begin{minipage}[b]{6cm}
\begin{center}
\begin{tikzpicture}[x=3mm,y=3mm]
  \tileA{0}{0}{0}{$H_3$};
  \tileAr{180}{0}{1}{};
\end{tikzpicture}%
\end{center}%
\end{minipage}%
} \qquad \subfloat[Rule \#4, label $H_4$]{%
\begin{minipage}[b]{6cm}
\begin{center}
\begin{tikzpicture}[x=3mm,y=3mm]
  \tileA{0}{0}{0}{$H_4$};
  \tileAr{300}{-1}{1}{};
\end{tikzpicture}%
\end{center}%
\end{minipage}%
} \\ \subfloat[Rule \#5, label $T_1$]{%
\begin{minipage}[b]{6cm}
\begin{center}
\begin{tikzpicture}[x=3mm,y=3mm]
  \tileA{0}{0}{0}{$T_1$};
  \tileA{60}{1}{0}{};
  \tileA{300}{2}{-1}{};
\end{tikzpicture}%
\end{center}%
\end{minipage}%
} \qquad \subfloat[Rule \#6, label $P_2$]{%
\begin{minipage}[b]{6cm}
\begin{center}
\begin{tikzpicture}[x=3mm,y=3mm]
  \tileA{0}{0}{0}{$P_2$};
  \tileA{180}{0}{-1}{};
  \tileA{300}{1}{-1}{};
\end{tikzpicture}%
\end{center}%
\end{minipage}%
} \\ \subfloat[Rule \#7, label $F_2$]{%
\begin{minipage}[b]{6cm}
\begin{center}
\begin{tikzpicture}[x=3mm,y=3mm]
  \tileA{0}{0}{0}{$F_2$};
  \tileA{120}{2}{-2}{};
  \tileA{240}{2}{0}{};
\end{tikzpicture}%
\end{center}%
\end{minipage}%
} \qquad \subfloat[Rule \#8, label $FP_1$]{%
\begin{minipage}[b]{6cm}
\begin{center}
\begin{tikzpicture}[x=3mm,y=3mm]
  \tileA{0}{0}{0}{$FP_1$};
\end{tikzpicture}%
\end{center}%
\end{minipage}%
}%
\end{center}
\caption{Classification rules}
\label{fig:class}
\end{figure}

\FloatBarrier

\subsection{Within-cluster matching checks for the hat polykite}
\label{sec:clusters:within}

Let~$L_1$ and~$L_2$ be the labels of neighbouring tiles in one of the
clusters shown in \fig{fig:tileaclusters}.  To verify that
tiles can be grouped uniquely into copies of these clusters, we must
show that when the central tile of a surroundable $2$-patch has the
label~$L_1$, it has a neighbour labelled~$L_2$ in the expected position
and orientation shown in the cluster.  In practice, we do not need to
check all such pairs of labels---it suffices to choose a subset of
labels that define spanning trees of the neighbour relationships within each
cluster.  For~$H$, we choose the spanning tree that connects~$H_1$ to
its three neighbours.

\fig{fig:within} presents the eight within-cluster checks that must be
applied to each of the surroundable $2$-patches.  For each rule and each
patch, if the rule's shaded tile has the same label as the patch's
central tile, then the patch must also include the neighbour shown in the
rule.  As noted above, these rules do not distinguish between~$P_1$ and~$F_1$; 
it suffices to check that an~$FP_1$ tile has either of~$P_2$ or~$F_2$ as its
neighbour.  Because these rules hold for all surroundable $2$-patches,
the labels assigned in \secref{sec:clusters:rules} induce a division
of the tiles in any hat tiling into the~$H$, $T$, $P$, and~$F$ clusters.

\begin{figure}[htp!]
\captionsetup{margin=0pt,justification=raggedright}%
\begin{center}
\subfloat[$H_1$ neighbour $H_2$]{%
\begin{minipage}[b]{6cm}
\begin{center}
\begin{tikzpicture}[x=3mm,y=3mm]
  \ftileA{0}{0}{0}{$H_1$};
  \tileAr{180}{0}{1}{$H_2$};
\end{tikzpicture}%
\end{center}%
\end{minipage}%
} \qquad \subfloat[$H_1$ neighbour $H_3$]{%
\begin{minipage}[b]{6cm}
\begin{center}
\begin{tikzpicture}[x=3mm,y=3mm]
  \ftileA{0}{0}{0}{$H_1$};
  \tileAr{180}{1}{-1}{$H_3$};
\end{tikzpicture}%
\end{center}%
\end{minipage}%
} \\ \subfloat[$H_1$ neighbour $H_4$]{%
\begin{minipage}[b]{6cm}
\begin{center}
\begin{tikzpicture}[x=3mm,y=3mm]
  \ftileA{0}{0}{0}{$H_1$};
  \tileAr{300}{1}{0}{$H_4$};
\end{tikzpicture}%
\end{center}%
\end{minipage}%
} \qquad \subfloat[$H_2$ neighbour $H_1$]{%
\begin{minipage}[b]{6cm}
\begin{center}
\begin{tikzpicture}[x=3mm,y=3mm]
  \ftileA{0}{0}{0}{$H_2$};
  \tileAr{180}{1}{-1}{$H_1$};
\end{tikzpicture}%
\end{center}%
\end{minipage}%
} \\ \subfloat[$H_3$ neighbour $H_1$]{%
\begin{minipage}[b]{6cm}
\begin{center}
\begin{tikzpicture}[x=3mm,y=3mm]
  \ftileA{0}{0}{0}{$H_3$};
  \tileAr{180}{0}{1}{$H_1$};
\end{tikzpicture}%
\end{center}%
\end{minipage}%
} \qquad \subfloat[$H_4$ neighbour $H_1$]{%
\begin{minipage}[b]{6cm}
\begin{center}
\begin{tikzpicture}[x=3mm,y=3mm]
  \ftileA{0}{0}{0}{$H_4$};
  \tileAr{300}{-1}{1}{$H_1$};
\end{tikzpicture}%
\end{center}%
\end{minipage}%
}%
\end{center}
\caption{Within-cluster matching checks (part 1)}
\label{fig:within}
\end{figure}
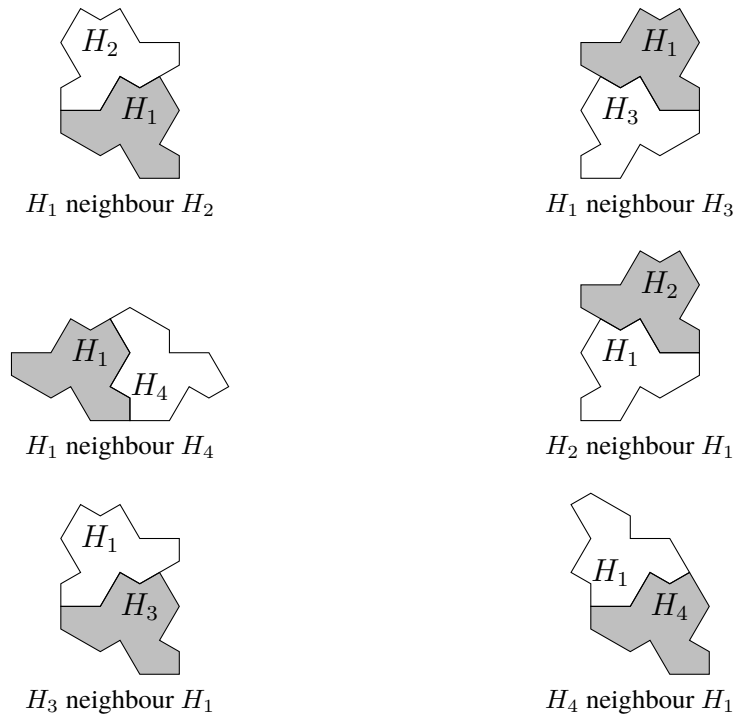
\begin{figure}[htp!]
\ContinuedFloat
\captionsetup{margin=0pt,justification=raggedright}%
\begin{center}
\subfloat[$FP_1$ neighbour $P_2$ or $F_2$ ($Z \in \{P_2, F_2\}$)]{%
\begin{minipage}[b]{6cm}
\begin{center}
\begin{tikzpicture}[x=3mm,y=3mm]
  \ftileA{0}{0}{0}{$FP_1$};
  \tileA{60}{1}{0}{$Z$};
\end{tikzpicture}%
\end{center}%
\end{minipage}%
} \qquad \subfloat[$P_2$ or $F_2$ neighbour $FP_1$ ($Y \in \{P_2, F_2\}$)]{%
\begin{minipage}[b]{6cm}
\begin{center}
\begin{tikzpicture}[x=3mm,y=3mm]
  \ftileA{0}{0}{0}{$Y$};
  \tileA{300}{-1}{1}{$FP_1$};
\end{tikzpicture}%
\end{center}%
\end{minipage}%
}%
\end{center}
\caption{Within-cluster matching checks (part 2)}
\label{fig:within:2}
\end{figure}
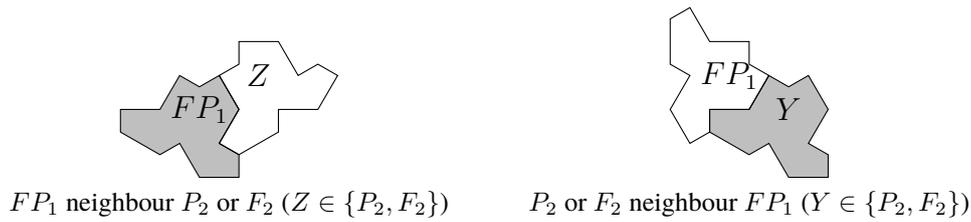

\FloatBarrier

\subsection{Between-cluster matching checks for the hat polykite}
\label{sec:clusters:between}

Let~$C$ be one of the four clusters in \fig{fig:tileaclusters}, and 
let~$E$ be any of its marked edge segments.  We can enumerate  all 
combinations of an edge segment~$E'$, belonging to a cluster~$C'$,
which are permitted to adjoin~$E$ according to the matching rules.
If any one tile in~$C'$ that adjoins~$E'$ is in the correct position and 
orientation relative to any one tile in~$C$ that adjoins~$E$, it follows as a
result of the within-cluster checks that the entire edge segment properly 
matches between the two clusters.  Furthermore, because the matching 
rules on the
boundaries of $F_1$ and~$P_1$ are identical, it suffices to handle both
using the single label~$FP_1$.  So for each~$E$ we
pick one tile in~$C$, and for each choice of~$E'$, we pick one tile
in~$C'$ that would be a neighbour of the tile picked in~$C$.  We then
check that, in each surroundable $2$-patch
whose central tile has the label of the tile picked in~$C$, there is a
neighbour in a position and orientation and with a label that matches
one of the possibilities for a tile picked in~$C'$ for one choice of~$E'$.

Figure~\ref{fig:between} presents the between-cluster checks that must be
applied to each of the surroundable $2$-patches.  Each diagram shows a 
shaded tile from cluster~$C$ 
and its neighbour from cluster~$C'$, with labels on both, and represents a
tile on one
side of a cluster edge and some options for a tile on the other side
of that edge.  In some cases, there are two alternatives listed
for the same edge, with separate figures for each, marked in the form
``(alternative~$k$ of~2)''.  Also, in some cases there are multiple 
options for the labels on one or both tiles, shown in a single figure.  The
central tile in every $2$-patch that can occur in a tiling should be
checked against all figures shown here with that central tile's label
as one of the options for the shaded tile; if, for all such
$2$-patches, one of the alternatives listed for that edge is present
with one of the labels indicated, then the clusters adjoin other
clusters in accordance with the matching rules.  (Where multiple
alternatives are listed for the same edge, only one of those
alternatives needs to pass the check.)

\begin{figure}[htp!]
\captionsetup{margin=0pt,justification=raggedright}%
\begin{center}
\subfloat[$H$ edge $A^+$ (alternative 1 of 2) ($Z \in \{T_1, P_2\}$)]{%
\begin{minipage}[b]{6cm}
\begin{center}
\begin{tikzpicture}[x=3mm,y=3mm]
  \ftileA{0}{0}{0}{$H_2$};
  \tileA{60}{-1}{0}{$Z$};
\end{tikzpicture}%
\end{center}%
\end{minipage}%
} \qquad \subfloat[$H$ edge $A^+$ (alternative 2 of 2)]{%
\begin{minipage}[b]{6cm}
\begin{center}
\begin{tikzpicture}[x=3mm,y=3mm]
  \ftileA{0}{0}{0}{$H_2$};
  \tileA{300}{-1}{1}{$T_1$};
\end{tikzpicture}%
\end{center}%
\end{minipage}%
} \\ \subfloat[$H$ upper edge $B^-$ ($Z \in \{T_1, FP_1\}$)]{%
\begin{minipage}[b]{6cm}
\begin{center}
\begin{tikzpicture}[x=3mm,y=3mm]
  \ftileA{0}{0}{0}{$H_1$};
  \tileAr{120}{0}{-1}{$Z$};
\end{tikzpicture}%
\end{center}%
\end{minipage}%
} \qquad \subfloat[$H$ lower edge $B^-$ ($Z \in \{T_1, FP_1\}$)]{%
\begin{minipage}[b]{6cm}
\begin{center}
\begin{tikzpicture}[x=3mm,y=3mm]
  \ftileA{0}{0}{0}{$H_3$};
  \tileA{300}{-1}{0}{$Z$};
\end{tikzpicture}%
\end{center}%
\end{minipage}%
} \\ \subfloat[$T$ upper edge $A^-$]{%
\begin{minipage}[b]{6cm}
\begin{center}
\begin{tikzpicture}[x=3mm,y=3mm]
  \ftileA{0}{0}{0}{$T_1$};
  \tileA{60}{1}{0}{$H_2$};
\end{tikzpicture}%
\end{center}%
\end{minipage}%
} \qquad \subfloat[$T$ or $P$ lower edge $A^-$ ($Y \in \{T_1, P_2\}$)]{%
\begin{minipage}[b]{6cm}
\begin{center}
\begin{tikzpicture}[x=3mm,y=3mm]
  \ftileA{0}{0}{0}{$Y$};
  \tileA{300}{1}{-1}{$H_2$};
\end{tikzpicture}%
\end{center}%
\end{minipage}%
} \\ \subfloat[$T$, $P$ or $F$ edge $B^+$ ($Y \in \{T_1, FP_1\}$) ($Z \in \{H_3, H_4\}$)]{%
\begin{minipage}[b]{6cm}
\begin{center}
\begin{tikzpicture}[x=3mm,y=3mm]
  \ftileA{0}{0}{0}{$Y$};
  \tileA{300}{-1}{1}{$Z$};
\end{tikzpicture}%
\end{center}%
\end{minipage}%
} \qquad \subfloat[$F$ edge $F^+$]{%
\begin{minipage}[b]{6cm}
\begin{center}
\begin{tikzpicture}[x=3mm,y=3mm]
  \ftileA{0}{0}{0}{$F_2$};
  \tileA{120}{2}{-2}{$F_2$};
\end{tikzpicture}%
\end{center}%
\end{minipage}%
} \\ \subfloat[$F$ edge $F^-$]{%
\begin{minipage}[b]{6cm}
\begin{center}
\begin{tikzpicture}[x=3mm,y=3mm]
  \ftileA{0}{0}{0}{$F_2$};
  \tileA{240}{2}{0}{$F_2$};
\end{tikzpicture}%
\end{center}%
\end{minipage}%
} \qquad \subfloat[$X^+$ edge at top of polykite ($Y \in \{H_2, P_2, F_2\}$) (alternative 1 of 2) ($Z \in \{H_2, P_2\}$)]{%
\begin{minipage}[b]{6cm}
\begin{center}
\begin{tikzpicture}[x=3mm,y=3mm]
  \ftileA{0}{0}{0}{$Y$};
  \tileA{240}{1}{1}{$Z$};
\end{tikzpicture}%
\end{center}%
\end{minipage}%
}%
\end{center}
\caption{Between-cluster matching checks (part 1)}
\label{fig:between}
\end{figure}
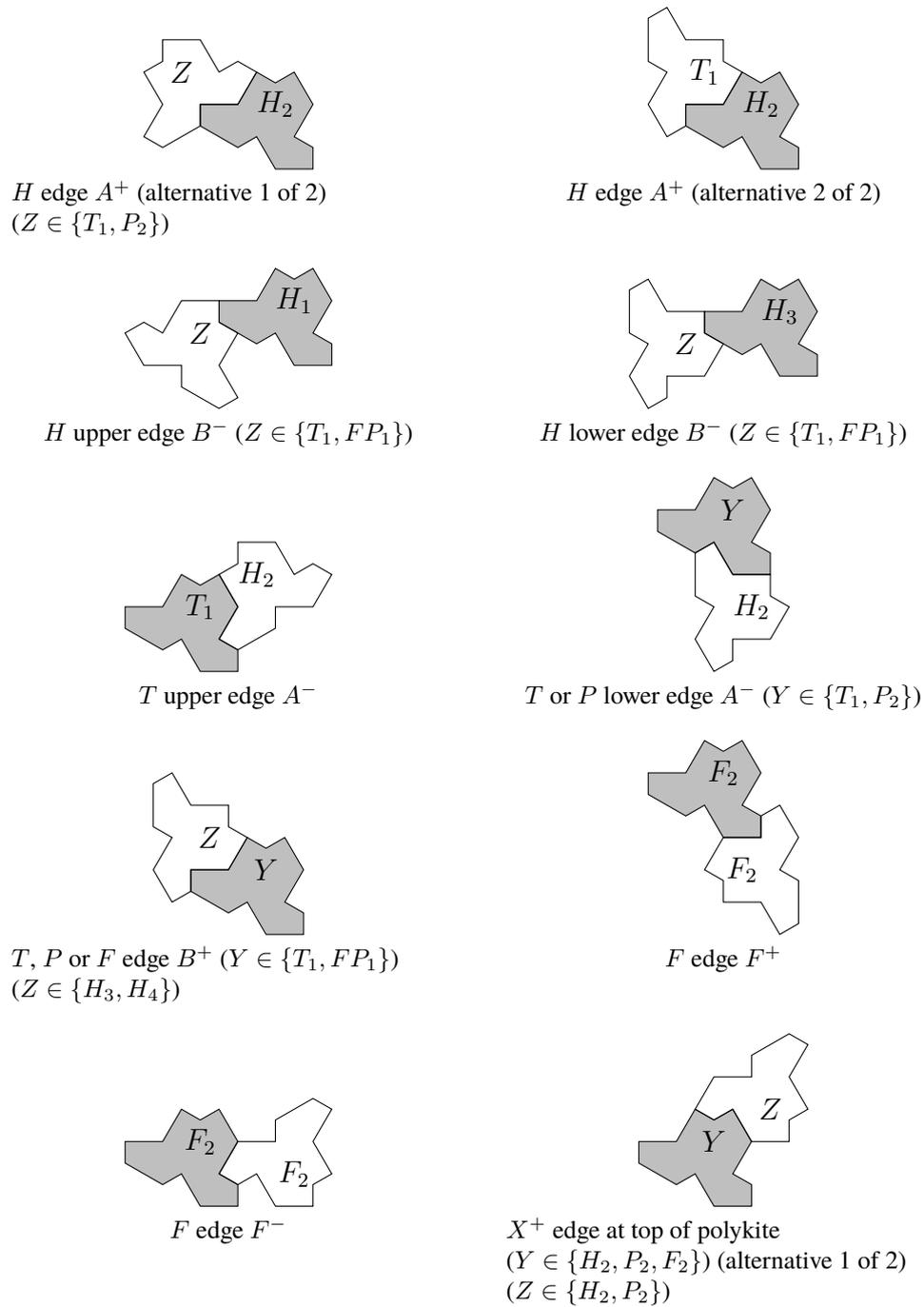
\begin{figure}[htp!]
\ContinuedFloat
\captionsetup{margin=0pt,justification=raggedright}%
\begin{center}
\subfloat[$X^+$ edge at top of polykite ($Y \in \{H_2, P_2, F_2\}$) (alternative 2 of 2) ($Z \in \{H_3, H_4, FP_1, F_2\}$)]{%
\begin{minipage}[b]{6cm}
\begin{center}
\begin{tikzpicture}[x=3mm,y=3mm]
  \ftileA{0}{0}{0}{$Y$};
  \tileA{0}{0}{1}{$Z$};
\end{tikzpicture}%
\end{center}%
\end{minipage}%
} \qquad \subfloat[$X^+$ edge at right of polykite ($Y \in \{H_3, H_4, FP_1\}$) (alternative 1 of 2) ($Z \in \{H_2, P_2\}$)]{%
\begin{minipage}[b]{6cm}
\begin{center}
\begin{tikzpicture}[x=3mm,y=3mm]
  \ftileA{0}{0}{0}{$Y$};
  \tileA{120}{2}{-2}{$Z$};
\end{tikzpicture}%
\end{center}%
\end{minipage}%
} \\ \subfloat[$X^+$ edge at right of polykite ($Y \in \{H_3, H_4, FP_1\}$) (alternative 2 of 2) ($Z \in \{H_3, H_4, FP_1, F_2\}$)]{%
\begin{minipage}[b]{6cm}
\begin{center}
\begin{tikzpicture}[x=3mm,y=3mm]
  \ftileA{0}{0}{0}{$Y$};
  \tileA{240}{2}{-1}{$Z$};
\end{tikzpicture}%
\end{center}%
\end{minipage}%
} \qquad \subfloat[$X^-$ edge at right of polykite ($Y \in \{H_2, P_2\}$) (alternative 1 of 2) ($Z \in \{H_2, F_2, P_2\}$)]{%
\begin{minipage}[b]{6cm}
\begin{center}
\begin{tikzpicture}[x=3mm,y=3mm]
  \ftileA{0}{0}{0}{$Y$};
  \tileA{120}{2}{-1}{$Z$};
\end{tikzpicture}%
\end{center}%
\end{minipage}%
} \\ \subfloat[$X^-$ edge at right of polykite ($Y \in \{H_2, P_2\}$) (alternative 2 of 2) ($Z \in \{H_3, H_4, FP_1\}$)]{%
\begin{minipage}[b]{6cm}
\begin{center}
\begin{tikzpicture}[x=3mm,y=3mm]
  \ftileA{0}{0}{0}{$Y$};
  \tileA{240}{2}{0}{$Z$};
\end{tikzpicture}%
\end{center}%
\end{minipage}%
} \qquad \subfloat[$X^-$ edge at bottom of polykite ($Y \in \{H_3, H_4, FP_1, F_2\}$) (alternative 1 of 2) ($Z \in \{H_2, F_2, P_2\}$)]{%
\begin{minipage}[b]{6cm}
\begin{center}
\begin{tikzpicture}[x=3mm,y=3mm]
  \ftileA{0}{0}{0}{$Y$};
  \tileA{0}{0}{-1}{$Z$};
\end{tikzpicture}%
\end{center}%
\end{minipage}%
} \\ \subfloat[$X^-$ edge at bottom of polykite ($Y \in \{H_3, H_4, FP_1, F_2\}$) (alternative 2 of 2) ($Z \in \{H_3, H_4, FP_1\}$)]{%
\begin{minipage}[b]{6cm}
\begin{center}
\begin{tikzpicture}[x=3mm,y=3mm]
  \ftileA{0}{0}{0}{$Y$};
  \tileA{120}{1}{-2}{$Z$};
\end{tikzpicture}%
\end{center}%
\end{minipage}%
} \qquad \subfloat[$L$ edge at right of polykite (alternative 1 of 2)]{%
\begin{minipage}[b]{6cm}
\begin{center}
\begin{tikzpicture}[x=3mm,y=3mm]
  \ftileA{0}{0}{0}{$P_2$};
  \tileA{180}{3}{-2}{$P_2$};
\end{tikzpicture}%
\end{center}%
\end{minipage}%
}%
\end{center}
\caption{Between-cluster matching checks (part 2)}
\label{fig:between:2}
\end{figure}
\begin{figure}[htp!]
\ContinuedFloat
\captionsetup{margin=0pt,justification=raggedright}%
\begin{center}
\subfloat[$L$ edge at right of polykite (alternative 2 of 2) ($Z \in \{FP_1, F_2\}$)]{%
\begin{minipage}[b]{6cm}
\begin{center}
\begin{tikzpicture}[x=3mm,y=3mm]
  \ftileA{0}{0}{0}{$P_2$};
  \tileA{300}{2}{-1}{$Z$};
\end{tikzpicture}%
\end{center}%
\end{minipage}%
} \qquad \subfloat[$L$ edge at bottom of polykite ($Y \in \{FP_1, F_2\}$) (alternative 1 of 2)]{%
\begin{minipage}[b]{6cm}
\begin{center}
\begin{tikzpicture}[x=3mm,y=3mm]
  \ftileA{0}{0}{0}{$Y$};
  \tileA{60}{-1}{-1}{$P_2$};
\end{tikzpicture}%
\end{center}%
\end{minipage}%
} \\ \subfloat[$L$ edge at bottom of polykite ($Y \in \{FP_1, F_2\}$) (alternative 2 of 2) ($Z \in \{FP_1, F_2\}$)]{%
\begin{minipage}[b]{6cm}
\begin{center}
\begin{tikzpicture}[x=3mm,y=3mm]
  \ftileA{0}{0}{0}{$Y$};
  \tileA{180}{0}{-1}{$Z$};
\end{tikzpicture}%
\end{center}%
\end{minipage}%
}%
\end{center}
\caption{Between-cluster matching checks (part 3)}
\label{fig:between:3}
\end{figure}
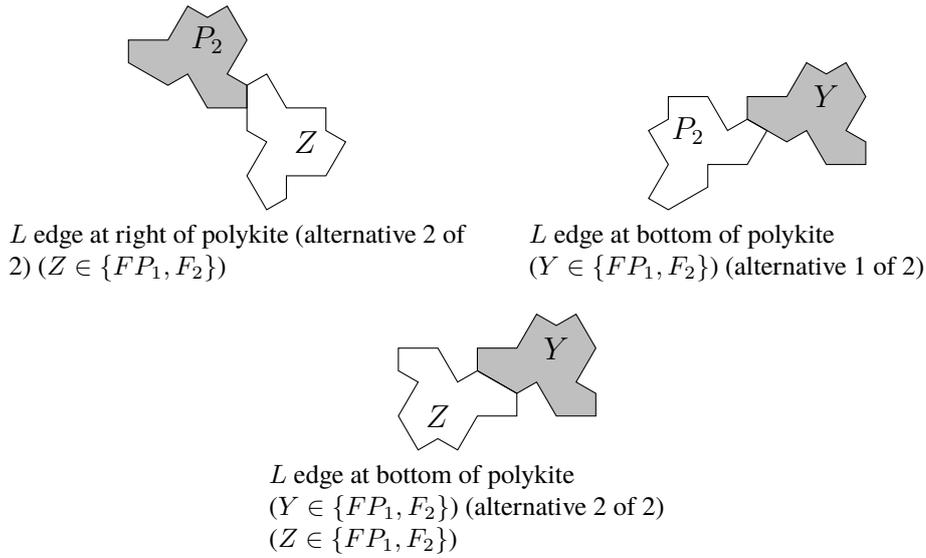

\FloatBarrier

\section{A four-tile substitution system}
\label{sec:subst}

Consider the four metatiles, with matching rules as in
Figure~\ref{fig:tileaclusters}, which are depicted in this section in
the form shown in Figure~\ref{fig:subst}.  Edges $A$ are
\textcolor{\colA}{\textbf{red}}, $B$ are
\textcolor{\colB}{\textbf{blue}}, $X$ are
\textcolor{\colX}{\textbf{green}}, $F$ are
\textcolor{\colF}{\textbf{pink}}, and $L$ are
\textcolor{\colL}{\textbf{grey}}.  Edges are marked with small
geometrical decorations to indicate the signs (outward on the
${}^+$~side, inward on the ${}^-$~side): equilateral triangles
for~$A$, semicircles for~$B$, orthogonal line segments for~$X$, short
slanted line segments for~$F$.  Note that the $A$ and $B$ on $H$ are
the opposite signs to those on $T$, $P$, and $F$.  Also note that the
tiles in this substitution system may not be reflected, only rotated.

\begin{figure}[htp!]
\begin{center}
\begin{tikzpicture}[x=5mm,y=5mm,ultra thick]
  \Ttile{0}{0}{0}{$T$};
  \Htile{0}{4}{-2}{$H$};
  \Ptile{0}{10}{-5}{$P$};
  \Ftile{0}{17}{-8.5}{$F$};
\end{tikzpicture}
\end{center}
\caption{Metatiles $T$, $H$, $P$, and $F$}
\label{fig:subst}
\end{figure}
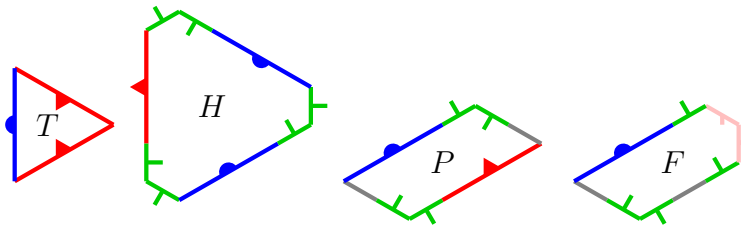

Later in the argument it is convenient to bisect some tiles~$P$
and~$F$, as shown in Figure~\ref{fig:substbisect}.  We refer to the
edges resulting from the bisection of~$P$ as $P^+$ (in the sub-tile
that has an edge~$B^+$) and~$P^-$, coloured
\textcolor{\colP}{\textbf{yellow}} and decorated with a rectangle, and
to the edges resulting from the bisection of $F$ as $G^+$ (in the
sub-tile that has an edge~$B^+$) and $G^-$, coloured
\textcolor{\colG}{\textbf{violet}} and decorated with an obtuse
triangle.  We also refer to the halves with a $B^+$~edge as the upper
halves, and the other halves as the lower halves.  We will show the
following:

\begin{theorem}
\label{thm:subst}
In any tiling by the four metatiles, after bisecting~$P$
and~$F$ metatiles as described above, the metatiles fit together to form larger,
combinatorially equivalent supertiles, thereby forming a substitution
system.  The tiling by the supertiles has the same symmetries as the
tiling by the metatiles.
\end{theorem}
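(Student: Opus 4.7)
My plan is to mirror the analysis used in Section~\ref{sec:clusters}: define a local classification rule that assigns every metatile (or each half of a bisected $P$ or $F$) to a role within a candidate supertile, then verify by exhaustive case analysis on small patches that the rule is consistent and forces the desired supertile structure. Since each metatile has a small bounded number of possible neighbours dictated by the matching rules (rotations only, with each labelled edge segment forced to meet its signed partner), the enumeration terminates and the resulting assignment is deterministic. As in the previous section, determinism guarantees that any isometry permuting the metatiles also permutes supertiles, so the supertile tiling inherits the symmetry group of the metatile tiling.

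First I would fix the supertile shapes $H'$, $T'$, $P'$, $F'$ suggested by Figure~\ref{fig:supertiles} together with a labelling of their boundary segments by $A^\pm, B^\pm, X^\pm, F^\pm, L$, chosen so that when one assembles the prescribed interior patch of metatiles (the one depicted in Figure~\ref{fig:supertiles}), the labels on the outer boundary segments of the patch concatenate into exactly these supertile edge labels. This is a finite geometric verification, completely analogous to reading off edge labels around a metatile in Figure~\ref{fig:tileaclusters}. At this point the content of the theorem reduces to two claims: \textbf{(a)} every metatile in a tiling lies in a unique such supertile patch, and \textbf{(b)} the supertiles meet in accordance with the labelled boundary rules.

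For~\textbf{(a)}, I would introduce classification rules analogous to those in Figure~\ref{fig:class}, now applied to metatiles: for each metatile type and each of its valid neighbour configurations, assign it a role (such as ``$H_3$ position inside an $H'$'', or ``upper half of a $P'$''). I would enumerate all surroundable $2$-patches of metatiles consistent with the matching rules via a brute-force backtracking search, and check mechanically that each such $2$-patch admits a unique consistent role assignment to its central tile. From this, within-supertile matching checks (the analogue of Section~\ref{sec:clusters:within}) confirm that the roles knit together into full copies of the supertiles, and between-supertile checks (the analogue of Section~\ref{sec:clusters:between}) verify~\textbf{(b)}. The bisection of $P$ and $F$ enters precisely so that each half can independently be assigned to one of the two supertiles that straddle the parent metatile; this avoids ambiguity in the role assignment at the cost of a single extra edge type ($P^\pm$, $G^\pm$) that must be compatible with the bisection line.

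The main obstacle is, as in Section~\ref{sec:clusters}, the combinatorial size of the case analysis: the number of surroundable $2$-patches of metatiles, together with the number of role configurations for each central tile, is large enough that correctness must be certified by independent software implementations rather than inspection. A secondary subtlety is making sure the bisection of $P$ and $F$ is globally well-defined: one must check that the bisection line assigned to each such metatile by its two neighbouring supertile candidates is the same, which again reduces to a finite local check once the role assignment has been verified to be unique. Once these finite checks pass, combinatorial equivalence of the supertile edge labelling with the original metatile labelling is immediate from the construction in the first paragraph, and the preservation of symmetries follows because the entire supertile assignment depends only on the $2$-patch around each metatile.
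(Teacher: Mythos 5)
Your proposal is viable in outline but takes a genuinely different route from the paper. The paper does \emph{not} prove Theorem~\ref{thm:subst} by enumerating surroundable $2$-patches of metatiles; instead it gives a hand-checkable deductive forcing argument: starting from a $T$ (or from an $H$ not adjacent to a $T$), it branches on the few neighbours permitted by the matching rules and repeatedly adds \emph{forced} tiles until each branch either dies in the impossible $PP$ configuration of Figure~\ref{fig:PP} or stabilizes as one of the four supertile patches ($T1PF$, $HPP$, $HFP$, $HFF$). It then states explicit allocation rules assigning each metatile (and each half of a bisected $P$ or $F$) to exactly one supertile, and argues the supertile matching rules directly from which decorated edges can physically meet. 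What your approach buys is uniformity with \secref{sec:clusters} and mechanical checkability; what the paper's approach buys is a short, human-verifiable argument that exploits the $PP$ obstruction to prune the case tree to a handful of diagrams.

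Two concrete weak points in your version. First, the radius of your enumeration is not obviously sufficient: the paper's eliminations of cases such as $T2P$ (Figure~\ref{fig:T2P}) and $T1PP$ (Figure~\ref{fig:T1PP}) require chains of seven to nine forced tiles, reaching well beyond two rings around the central tile, before a contradiction appears. A surroundable $2$-patch of metatiles realizing one of these doomed configurations may well extend to a $3$-patch, in which case your uniqueness check on role assignments would fail at radius~$2$ even though the theorem is true; you would need to determine empirically (and justify) the patch radius at which the classification becomes deterministic. Second, your claim that combinatorial equivalence of supertiles with metatiles is ``immediate from the construction'' is too quick. Matching up boundary labels is not enough: because the supertiles are not similar to the metatiles, one must separately verify that the same combinations of supertiles close up around vertices as the corresponding metatiles do. The paper handles this by checking that the directed angle between corresponding edges of a metatile and its supertile is the same constant across all edge classes, so that angle sums around a vertex are preserved. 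Without that check the induction to level-$n$ supertiles does not go through. Both issues are repairable by finite verification, but they must be stated and carried out.
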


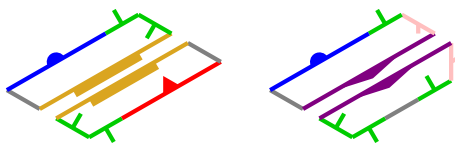
\begin{figure}[htp!]
\begin{center}
\begin{tikzpicture}[x=5mm,y=5mm,ultra thick]
  \Pplus{0}{0}{0}{};
  \Pminus{0}{1.5}{-1.5}{};
  \Fplus{0}{8}{-4}{};
  \Fminus{0}{9.5}{-5.5}{};
\end{tikzpicture}
\end{center}
\caption{Bisection of tiles $P$ and $F$}
\label{fig:substbisect}
\end{figure}

The bisection of tiles is not strictly necessary, in that the
bisecting lines can be arbitrary curves---and, in particular, can go
entirely along one side or other of the $F$ or $P$ tiles (keeping the
same end points), effectively allocating an entire tile to one of
two neighbouring supertiles.  However, the bisected tiles are convenient
for proving that the supertiles obey matching rules equivalent to those
of the original tiles.  In particular, bisection causes adjacencies
between supertiles to be more clearly encoded in 
the boundaries of the supertiles themselves, without also relying on
information about forced tiles that are not part of the supertiles.
In some situations it may be more useful to assign whole tiles to supertiles
at every level of substitution, with no bisection.  
For example, these whole tiles may be more convenient for analyzing sizes
or growth rates of patches in the inflation process.
If needed, we can
define a symmetry-preserving bijection between the supertiles
shown here and any alternative choice of supertiles that avoids bisection.

Sections~\ref{sec:subst:t} and~\ref{sec:subst:nott} present
a branching network of cases in diagrammatic form, building up to patches 
that can be found in tilings by metatiles.
The diagrams should be interpreted as follows.
There are some unnumbered tiles that define the case being considered,
then some numbered tiles that are forced in the sequence given by
their numbers.  If it is then necessary to split into multiple next steps,
the position at which multiple choices of tile must be considered is
marked on the diagram with a filled circle, and there are then
separate diagrams for each choice (in which the previous forced tiles
are now unnumbered, but newly forced tiles are numbered).

The configuration of two~$P$ metatiles shown in Figure~\ref{fig:PP},
denoted~$PP$, often appears in the case
analysis.  The two adjoining copies of~$P$ in the same orientation
force a contradiction because after adding the two forced~$H$ metatiles, 
nothing fits at the marked point.
Subsequently, when identifying forced tiles, as well as considering a
tile as forced when it is the only one that would fit in a given place
consistent with the matching rules, we also consider a tile as
forced when the only alternative consistent with the matching
rules would be to place a $P$~tile in a way that yields this~$PP$ 
configuration. 

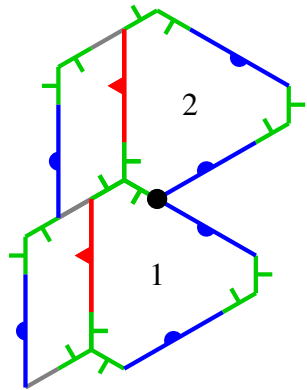
\begin{figure}[htp!]
\begin{center}
\begin{tikzpicture}[x=5mm,y=5mm,ultra thick]
  \Ptile{60}{0}{0}{};
  \Ptile{60}{1}{4}{};
  \Htile{0}{2}{0}{1};
  \Htile{0}{3}{4}{2};
  \markpt{4}{3};
\end{tikzpicture}
\end{center}
\caption{A common impossible configuration, referred to as $PP$}
\label{fig:PP}
\end{figure}

\subsection{Cases involving $T$}
\label{sec:subst:t}

The two $A^-$ edges of~$T$ must be adjacent to the $A^+$ edge of~$H$,
while the $B^+$~edge of~$T$ may be adjacent to either of the
$B^-$~edges of~$H$.  Thus we have two cases for the configuration
around a $T$~tile, which we refer to as $T1$ and $T2$
(Figure~\ref{fig:T1T2}).  As explained in the captions to this and
subsequent figures, a sequence of deductions shows that any~$T$ in a
tiling must occur in case~$T1PF$ (Figure~\ref{fig:T1PF}).

\begin{figure}[htp!]
\begin{center}
\subfloat[Case $T1$]{%
  \begin{tikzpicture}[x=5mm,y=5mm,ultra thick]
  \Ttile{0}{0}{0}{};
  \Htile{-60}{-1}{0}{};
  \Htile{60}{4}{-1}{};
  \Htile{60}{-1}{0}{};
  \markpt{-1}{0};
  \markpt{4}{-1};
  \markpt{0}{4};
\end{tikzpicture}%
} \qquad \subfloat[Case $T2$]{%
\begin{tikzpicture}[x=5mm,y=5mm,ultra thick]
  \Ttile{0}{0}{0}{};
  \Htile{-60}{-1}{0}{};
  \Htile{60}{4}{-1}{};
  \Htile{-60}{-5}{5}{};
  \markpt{-1}{0};
  \markpt{4}{-1};
  \markpt{0}{4};
\end{tikzpicture}%
}%
\end{center}
\caption{Cases $T1$ and $T2$.  Consider the three marked places in
  each of $T1$ and~$T2$.  These can be filled with either $P$ or~$H$.
  On a side of the figure where there are two $B^-$~edges, the marked
  place cannot be filled with~$H$, because that would result in a
  $60^\circ$~angle between two $B^-$~edges, which cannot be filled.
  So both those sides must have $P$~in the marked place, while the
  third side may have $H$ (oriented to avoid such a $60^\circ$~angle
  between two $B^-$~edges; subsequently, when the same situation
  arises, we just consider the orientation of the~$H$ to be forced
  without further comment) or~$P$.  This results in four cases, which
  we call $T1P$ (Figure~\ref{fig:T1P}), $T2P$ (Figure~\ref{fig:T2P}),
  $T1H$ (Figure~\ref{fig:T1H}) and $T2H$ (Figure~\ref{fig:T2H}), and
  we proceed to draw further forced tiles in each of those cases.}
\label{fig:T1T2}
\end{figure}
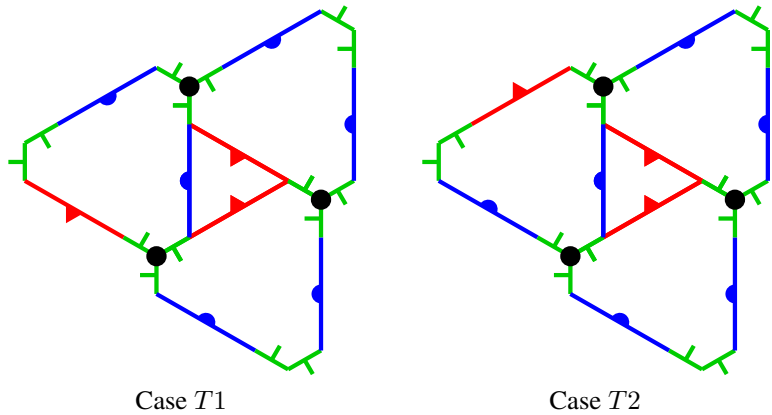

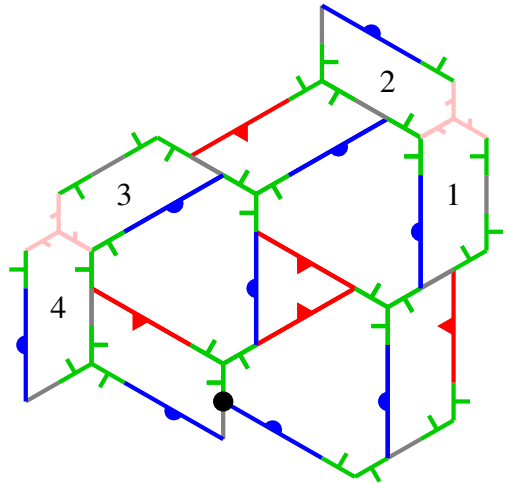
\begin{figure}[htp!]
\begin{center}
\begin{tikzpicture}[x=5mm,y=5mm,ultra thick]
  \Ttile{0}{0}{0}{};
  \Htile{-60}{-1}{0}{};
  \Htile{60}{4}{-1}{};
  \Htile{60}{-1}{0}{};
  \Ptile{60}{4}{-5}{};
  \Ptile{180}{4}{4}{};
  \Ptile{120}{-1}{-2}{};
  \Ftile{60}{5}{-1}{1};
  \Ftile{-60}{2}{8}{2};
  \Ftile{180}{-1}{5}{3};
  \Ftile{60}{-7}{2}{4};
  \markpt{-1}{-1};
\end{tikzpicture}
\end{center}
\caption{Case $T1P$.  The marked place can be filled with $F$ or~$P$,
  resulting in cases we call $T1PF$ (Figure~\ref{fig:T1PF}) and $T1PP$
  (Figure~\ref{fig:T1PP}).}
\label{fig:T1P}
\end{figure}

\begin{figure}[htp!]
\begin{center}
\begin{tikzpicture}[x=5mm,y=5mm,ultra thick]
  \Ttile{0}{0}{0}{};
  \Htile{-60}{-1}{0}{};
  \Htile{60}{4}{-1}{};
  \Htile{-60}{-5}{5}{};
  \Ptile{60}{4}{-5}{};
  \Ptile{-60}{-5}{4}{};
  \Ptile{180}{4}{4}{};
  \Ptile{0}{-7}{7}{1};
  \Ftile{120}{-5}{3}{2};
  \Htile{-120}{-5}{2}{3};
  \Htile{120}{-5}{2}{4};
  \Ftile{-60}{-1}{-1}{5};
  \Ptile{0}{-5}{-3}{6};
  \Ttile{180}{-6}{2}{7};
  \Htile{-120}{-10}{3}{8};
  \Ptile{0}{-10}{-2}{9};
  \markpt{-4}{-4};
\end{tikzpicture}
\end{center}
\caption{Case $T2P$, eliminated because $PP$ occurs at the marked point.}
\label{fig:T2P}
\end{figure}

\begin{figure}[htp!]
\begin{center}
\begin{tikzpicture}[x=5mm,y=5mm,ultra thick]
  \Ttile{0}{0}{0}{};
  \Htile{-60}{-1}{0}{};
  \Htile{60}{4}{-1}{};
  \Htile{60}{-1}{0}{};
  \Ptile{60}{4}{-5}{};
  \Ptile{180}{4}{4}{};
  \Htile{180}{-1}{0}{};
  \Ttile{-120}{-1}{-1}{1};
  \Htile{-60}{-2}{-4}{2};
  \Ptile{60}{3}{-9}{3};
  \markpt{5}{-5};
\end{tikzpicture}
\end{center}
\caption{Case $T1H$, eliminated because $PP$ occurs at the marked point.}
\label{fig:T1H}
\end{figure}

\begin{figure}[htp!]
\begin{center}
\begin{tikzpicture}[x=5mm,y=5mm,ultra thick]
  \Ttile{0}{0}{0}{};
  \Htile{-60}{-1}{0}{};
  \Htile{60}{4}{-1}{};
  \Htile{-60}{-5}{5}{};
  \Ptile{60}{4}{-5}{};
  \Ptile{-60}{-5}{4}{};
  \Htile{180}{1}{8}{};
  \Ttile{120}{4}{4}{1};
  \Htile{60}{5}{3}{2};
  \Ptile{60}{5}{-1}{3};
  \markpt{6}{-1};
\end{tikzpicture}
\end{center}
\caption{Case $T2H$, eliminated because $PP$ occurs at the marked point.}
\label{fig:T2H}
\end{figure}

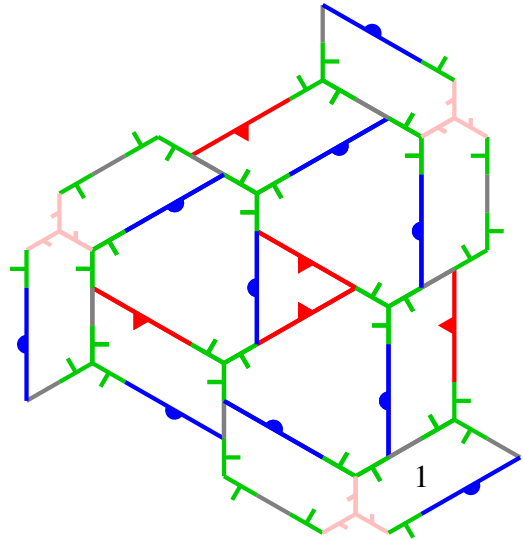
\begin{figure}[htp!]
\begin{center}
\begin{tikzpicture}[x=5mm,y=5mm,ultra thick]
  \Ttile{0}{0}{0}{};
  \Htile{-60}{-1}{0}{};
  \Htile{60}{4}{-1}{};
  \Htile{60}{-1}{0}{};
  \Ptile{60}{4}{-5}{};
  \Ptile{180}{4}{4}{};
  \Ptile{120}{-1}{-2}{};
  \Ftile{60}{5}{-1}{};
  \Ftile{-60}{2}{8}{};
  \Ftile{180}{-1}{5}{};
  \Ftile{60}{-7}{2}{};
  \Ftile{-60}{-1}{-1}{};
  \Ftile{180}{8}{-7}{1};
\end{tikzpicture}
\end{center}
\caption{Case $T1PF$.  Any $T$ in a tiling must occur in this case.
  Bisecting the $P$ and~$F$ tiles in that case produces the
  configuration of Figure~\ref{fig:Hsuper}, which we call~$H'$ and
  which combinatorially acts like~$H$ (with the edge segments
  indicated marked for matching rules) in a tiling along with
  configurations $T'$, $P'$, and~$F'$.}
\label{fig:T1PF}
\end{figure}

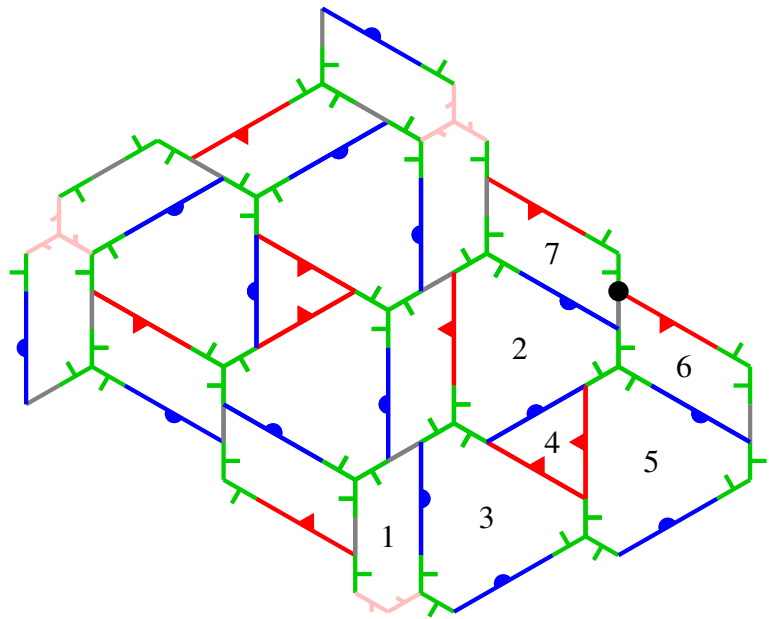
\begin{figure}[htp!]
\begin{center}
\begin{tikzpicture}[x=5mm,y=5mm,ultra thick]
  \Ttile{0}{0}{0}{};
  \Htile{-60}{-1}{0}{};
  \Htile{60}{4}{-1}{};
  \Htile{60}{-1}{0}{};
  \Ptile{60}{4}{-5}{};
  \Ptile{180}{4}{4}{};
  \Ptile{120}{-1}{-2}{};
  \Ftile{60}{5}{-1}{};
  \Ftile{-60}{2}{8}{};
  \Ftile{180}{-1}{5}{};
  \Ftile{60}{-7}{2}{};
  \Ptile{-60}{-1}{-1}{};
  \Ftile{-120}{5}{-5}{1};
  \Htile{0}{6}{-5}{2};
  \Htile{-120}{6}{-5}{3};
  \Ttile{-60}{7}{-6}{4};
  \Htile{0}{10}{-10}{5};
  \Ptile{120}{15}{-10}{6};
  \Ptile{120}{11}{-5}{7};
  \markpt{11}{-4};
\end{tikzpicture}
\end{center}
\caption{Case $T1PP$, eliminated because $PP$ occurs at the marked point.}
\label{fig:T1PP}
\end{figure}

\FloatBarrier

\subsection{Cases with $H$ not adjacent to $T$}
\label{sec:subst:nott}

Any $H$ not adjacent to a $T$~tile must have a $P$~tile adjacent to
its $A^+$ edge, while the $B^-$ edges may each be adjacent to $P$
or~$F$.  This results in four cases, which we call $HPP$
(Figure~\ref{fig:HPP}), $HPF$ (Figure~\ref{fig:HPF}), $HFP$
(Figure~\ref{fig:HFP}), and $HFF$ (Figure~\ref{fig:HFF}), and we
proceed to draw further forced tiles in each of those cases, with
consequences explained in the captions to those figures.

\begin{figure}[htp!]
\begin{center}
\begin{tikzpicture}[x=5mm,y=5mm,ultra thick]
  \Htile{0}{0}{0}{};
  \Ptile{60}{-2}{0}{};
  \Ptile{120}{5}{0}{};
  \Ptile{0}{1}{-1}{};
  \Ftile{60}{-1}{4}{1};
  \Ftile{-60}{5}{1}{2};
  \Ftile{180}{2}{-2}{3};
\end{tikzpicture}
\end{center}
\caption{Case $HPP$.  Bisecting the $P$ tiles and removing the forced
  $F$~tiles produces the configuration of Figure~\ref{fig:Tsuper},
  which we call~$T'$ and which combinatorially acts like~$T$ (with the
  edge segments indicated marked for matching rules) in a tiling
  with the other supertiles.  Although the forced $F$~tiles are not
  included in~$T'$, the fact that they are forced will be used in the
  proof that the supertiles must follow the matching rules where
  they are adjacent to each other.}
\label{fig:HPP}
\end{figure}
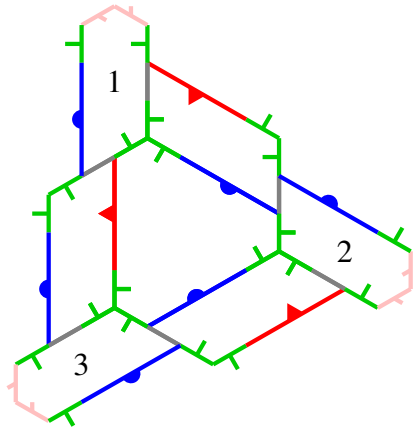

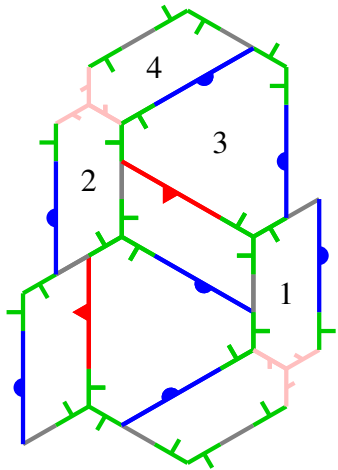
\begin{figure}[htp!]
\begin{center}
\begin{tikzpicture}[x=5mm,y=5mm,ultra thick]
  \Htile{0}{0}{0}{};
  \Ptile{60}{-2}{0}{};
  \Ptile{120}{5}{0}{};
  \Ftile{0}{1}{-1}{};
  \Ftile{-120}{7}{2}{1};
  \Ftile{60}{-1}{4}{2};
  \Htile{60}{5}{2}{3};
  \Ftile{180}{5}{7}{4};
\end{tikzpicture}
\end{center}
\caption{Case $HPF$.  The newly added $H$ cannot be adjacent to
  a~$T$.  It must therefore itself be in case $HFP$ (\fig{fig:HFP})
  or~$HFF$ (\fig{fig:HFF}).
  Because the tiles of the original~$HPF$ cluster are themselves
  forced in those two cases, they handle all patches that can arise here.}
\label{fig:HPF}
\end{figure}

\begin{figure}[htp!]
\begin{center}
\begin{tikzpicture}[x=5mm,y=5mm,ultra thick]
  \Htile{0}{0}{0}{};
  \Ptile{60}{-2}{0}{};
  \Ftile{120}{5}{0}{};
  \Ptile{0}{1}{-1}{};
  \Ftile{0}{-4}{6}{1};
  \Ftile{180}{2}{-2}{2};
  \Ftile{-60}{-7}{4}{3};
  \Htile{-60}{-7}{5}{4};
  \Ptile{0}{-9}{7}{5};
\end{tikzpicture}
\end{center}
\caption{Case $HFP$.  Bisecting all $P$ and~$F$ tiles produces the
  configuration of Figure~\ref{fig:Psuper}, which we call $P'$ and
  which combinatorially acts like~$P$ (with the edge segments
  indicated marked for matching rules) in a tiling with the other
  supertiles.}
\label{fig:HFP}
\end{figure}

\begin{figure}[htp!]
\begin{center}
\begin{tikzpicture}[x=5mm,y=5mm,ultra thick]
  \Htile{0}{0}{0}{};
  \Ptile{60}{-2}{0}{};
  \Ftile{120}{5}{0}{};
  \Ftile{0}{1}{-1}{};
  \Ftile{-120}{7}{2}{1};
  \Ftile{0}{-4}{6}{2};
  \Ftile{180}{2}{-2}{3};
  \Ftile{-60}{-7}{4}{4};
  \Htile{-60}{-7}{5}{5};
  \Ptile{0}{-9}{7}{6};
\end{tikzpicture}
\end{center}
\caption{Case $HFF$.  Bisecting all $P$ and~$F$ tiles produces the
  configuration of Figure~\ref{fig:Fsuper}, which we call $F'$ and
  which combinatorially acts like~$F$ (with the edge segments
  indicated marked for matching rules) in a tiling with the other
  supertiles.}
\label{fig:HFF}
\end{figure}

\FloatBarrier

\subsection{The supertiles}

\begin{figure}[htp!]
\begin{center}
\begin{tikzpicture}[x=5mm,y=5mm,ultra thick]
  \Htile{0}{0}{0}{};
  \Pminus{60}{-1}{0}{};
  \Pplus{120}{5}{0}{};
  \Pplus{0}{1}{-1}{};
  \markpt{1}{-1};
  \markpt{5}{0};
  \markpt{0}{4};
  \vctxt{5}{-3}{$A^-_2$};
  \vctxt{4}{3}{$A^-_2$};
  \vctxt{-2}{3}{$B^+_2$};
  \Ttile{0}{10}{-5}{$T$};
\end{tikzpicture}
\end{center}
\caption{Supertile $T'$, alongside corresponding $T$}
\label{fig:Tsuper}
\end{figure}

\begin{figure}[htp!]
\begin{center}
\begin{tikzpicture}[x=5mm,y=5mm,ultra thick]
  \Ttile{0}{0}{0}{};
  \Htile{-60}{-1}{0}{};
  \Htile{60}{4}{-1}{};
  \Htile{60}{-1}{0}{};
  \Pplus{60}{4}{-5}{};
  \Pplus{180}{4}{4}{};
  \Pminus{120}{-1}{-1}{};
  \Fplus{60}{5}{-1}{};
  \Fminus{-60}{2}{7}{};
  \Fplus{180}{-1}{5}{};
  \Fminus{60}{-6}{2}{};
  \Fplus{-60}{-1}{-1}{};
  \Fminus{180}{7}{-6}{};
  \markpt{-6}{2};
  \markpt{-1}{-2};
  \markpt{3}{-6};
  \markpt{7}{-6};
  \markpt{6}{-1};
  \markpt{6}{3};
  \markpt{2}{7};
  \markpt{-2}{6};
  \markpt{-6}{6};
  \vctxt{-4}{0}{$A^+_2$};
  \vctxt{1}{-5}{$X^-_2$};
  \vctxt{6}{-8}{$X^+_2$};
  \vctxt{6}{-3}{$B^-_2$};
  \vctxt{7}{1}{$X^-_2$};
  \vctxt{4}{6}{$X^+_2$};
  \vctxt{0}{7}{$B^-_2$};
  \vctxt{-5}{7}{$X^-_2$};
  \vctxt{-7}{4}{$X^+_2$};
  \Htile{60}{14}{-7}{$H$};
\end{tikzpicture}
\end{center}
\caption{Supertile $H'$, alongside corresponding $H$}
\label{fig:Hsuper}
\end{figure}

\begin{figure}[htp!]
\begin{center}
\begin{tikzpicture}[x=5mm,y=5mm,ultra thick]
  \Htile{0}{0}{0}{};
  \Ptile{60}{-2}{0}{};
  \Fplus{120}{5}{0}{};
  \Pplus{0}{1}{-1}{};
  \Fminus{0}{-3}{5}{};
  \Fminus{180}{1}{-1}{};
  \Fplus{-60}{-7}{4}{};
  \Htile{-60}{-7}{5}{};
  \Pminus{0}{-8}{6}{};
  \markpt{-7}{4};
  \markpt{-7}{3};
  \markpt{-3}{-1};
  \markpt{1}{-1};
  \markpt{5}{0};
  \markpt{5}{1};
  \markpt{1}{5};
  \markpt{-3}{5};
  \vctxt{-8}{4}{$L_2$};
  \vctxt{-6}{1}{$X^-_2$};
  \vctxt{0}{-2}{$X^+_2$};
  \vctxt{5}{-3}{$A^-_2$};
  \vctxt{6}{0}{$L_2$};
  \vctxt{4}{3}{$X^-_2$};
  \vctxt{-1}{6}{$X^+_2$};
  \vctxt{-6}{7}{$B^+_2$};
  \Ptile{0}{10}{-5}{$P$};
\end{tikzpicture}
\end{center}
\caption{Supertile $P'$, alongside corresponding $P$}
\label{fig:Psuper}
\end{figure}

\begin{figure}[htp!]
\begin{center}
\begin{tikzpicture}[x=5mm,y=5mm,ultra thick]
  \Htile{0}{0}{0}{};
  \Ptile{60}{-2}{0}{};
  \Fplus{120}{5}{0}{};
  \Fplus{0}{1}{-1}{};
  \Fminus{-120}{6}{2}{};
  \Fminus{0}{-3}{5}{};
  \Fminus{180}{1}{-1}{};
  \Fplus{-60}{-7}{4}{};
  \Htile{-60}{-7}{5}{};
  \Pminus{0}{-8}{6}{};
  \markpt{-7}{4};
  \markpt{-7}{3};
  \markpt{-3}{-1};
  \markpt{1}{-1};
  \markpt{2}{-2};
  \markpt{6}{-2};
  \markpt{5}{2};
  \markpt{1}{5};
  \markpt{-3}{5};
  \vctxt{-8}{4}{$L_2$};
  \vctxt{-6}{1}{$X^-_2$};
  \vctxt{-0.5}{-2}{$X^+_2$};
  \vctxt{1.25}{-2}{$L_2$};
  \vctxt{5}{-3}{$X^-_2$};
  \vctxt{7}{0}{$F^+_2$};
  \vctxt{4}{3}{$F^-_2$};
  \vctxt{-1}{6}{$X^+_2$};
  \vctxt{-6}{7}{$B^+_2$};
  \Ftile{0}{10}{-5}{$F$};
\end{tikzpicture}
\end{center}
\caption{Supertile $F'$, alongside corresponding $F$}
\label{fig:Fsuper}
\end{figure}
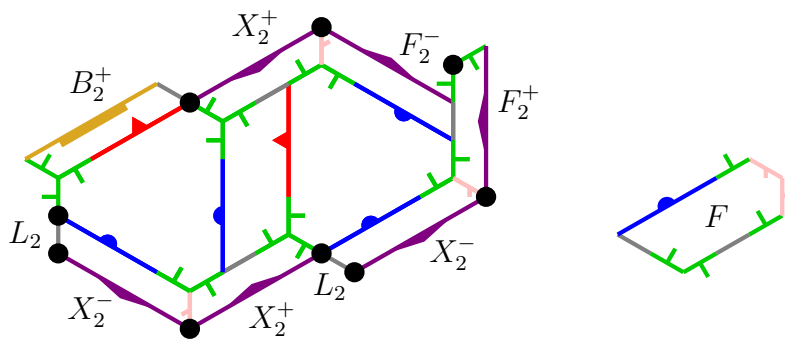

The previous arguments have shown that every $H$ or $T$ tile appears
in a configuration corresponding to the supertiles $T'$, $H'$, $P'$ or
$F'$.  We now provide more detailed rules for allocating each $H$ or
$T$ tile, and the halves of each bisected~$P$ or~$F$ tile, 
to groupings of tiles.  This process ensures that each tile is 
allocated to exactly one grouping, that the groupings all have the form of
one of the supertiles, and that all symmetries of the original tiling are also
symmetries of the tiling by supertiles (this property follows immediately from
the form of the rules, which do not involve any arbitrary choices that
could break symmetry).  As shown in Figures~\ref{fig:Tsuper}--\ref{fig:Fsuper},
we also label the exposed edges of the bisected~$P$ and~$F$ tiles;
the supertiles will be shown to adjoin each other in
accordance with the implied matching rules ($A^+_2$ adjoining
$A^-_2$, $B^+_2$ adjoining $B^-_2$, $X^+_2$ adjoining $X^-_2$, $F^+_2$
adjoining $F^-_2$, and $L_2$ adjoining $L_2$).

We use the following allocation rules to build groupings of metatiles.

\begin{itemize}
  \item Each $T$ tile is allocated to an $H'$~supertile, along with all
    the $H$~tiles adjacent to that $T$.
  \item Each $H$ tile in case $HPP$ is allocated to a $T'$~supertile.
  \item Each $H$ tile in case $HFP$ is allocated to a $P'$~supertile,
    along with the $H$~tile in case~$HPF$ shown in
    Figure~\ref{fig:HFP}.
  \item Each $H$ tile in case~$HFF$ is allocated to an $F'$~supertile,
    along with the $H$~tile in case~$HPF$ shown in
    Figure~\ref{fig:HFF}.
  \item Each $H$ tile in case $HPF$ was allocated to a supertile by
    exactly one of the previous two rules.
  \item Each half of a $P$ tile, and the upper half of each $F$~tile,
    is adjacent to exactly one $H$~tile along its $A^-$ or $B^+$~edge,
    and is allocated to the same supertile as that $H$~tile.  (We
	simplify Figures~\ref{fig:Psuper} and~\ref{fig:Fsuper} by eliding
	the bisection in the central~$P$ tile.)
  \item It remains to allocate the lower halves of $F$~tiles.  Each
    such lower half has an $X^-$~edge between an $L$~edge and an
    $F^+$~edge; it is allocated to the same supertile as the $H$~tile
    adjacent to that $X^-$ edge.  For this allocation rule to be
	well defined, we need to show that this $X^-$ edge is indeed
    adjacent to a $H$~tile.  The only other possibility not violating
    the metatile matching rules would be the configuration shown in 
    Figure~\ref{fig:FF}.  This configuration cannot arise in a tiling
	by metatiles, because no tile can be adjoined at the marked point.
\end{itemize}

The $X^-$ edge referenced in the last allocation rule cannot be
adjacent to any of the exposed~$X^+$~edges of $H$~tiles in supertiles
$T'$, $P'$ or~$F'$ without violating the matching rules.  Thus all
lower halves of $F$~tiles are the ones that appear on the diagrams of
the supertiles, and we have shown that the tiling is partitioned into
the supertiles.

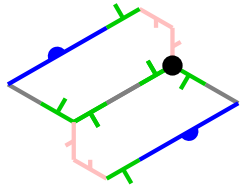
\begin{figure}[htp!]
\begin{center}
\begin{tikzpicture}[x=5mm,y=5mm,ultra thick]
  \Ftile{0}{0}{0}{};
  \Ftile{180}{7}{-4}{};
  \markpt{5}{-2};
\end{tikzpicture}
\end{center}
\caption{Impossible adjacency of two $F$ tiles}
\label{fig:FF}
\end{figure}

We now show that the supertiles must adjoin each other in accordance with
the matching rules indicated.  First, we examine $P^+$~edges
(appearing in~$A^-_2$ and~$B^-_2$) and $P^-$~edges (appearing in
$A^+_2$ and~$B^+_2$).  $B^-_2$ and~$A^+_2$ appear only in~$H'$, where
their $P^+$ and~$P^-$~edges cannot meet without tiles intersecting.
So $B^-_2$ can only join to~$B^+_2$ and $A^+_2$ can only join to~$A^-_2$.

Next we show that the converse holds: $A^-_2$ can only join to~$A^+_2$
and $B^+_2$ can only join to~$B^-_2$.  For a contradiction, suppose
that the $P^+$ and $P^-$ edges in some $A^-_2$ and~$B^+_2$ are joined.
If the~$B^+_2$ comes from a $P'$~supertile, then that $P^-$ edge bisects
tile~5 in case~$HFP$.  Adjacent tiles 5 and~1 in that configuration
both have $B^+$~edges, which must both be adjacent to $H$~tiles. Those
$H$~tiles are adjacent to each other, placing this configuration within 
an~$H'$ supertile, which does not have an $A^-_2$~edge.  The same
argument applies in the case of an $F'$~supertile, considering tiles~6
and~2 in case~$HFF$.  If the~$P^+$ in an~$A^-_2$ from~$P'$ is joined
to a~$B^+_2$, a similar argument applies (considering tile 2 and an
adjacent unnumbered tile in case~$HFP$).  So the only remaining case
would be if both edges come from supertile~$T'$, but that possibility is
inconsistent with the $F$~tiles forced in case~$HPP$.

Keeping in mind that 
$A^+_2$, $A^-_2$, $B^+_2$, and $B^-_2$ must obey the
matching rules for the supertiles, note next that the only $X^+$
and $X^-$ metatile edges on the boundaries of the supertiles that are not
part
of $A^+_2$, $A^-_2$, $B^+_2$ or $B^-_2$ are those forming part of
$F^+_2$ and~$F^-_2$.  Thus it follows that $F^+_2$ and~$F^-_2$ must
also adjoin each other.  The only $G^+$ and~$G^-$ metatile edges still
unaccounted for are those that form $X^-_2$ and $X^+_2$ edges of the
supertiles, meaning that those also match. Finally, the remaining 
$L$~edges form~$L_2$, which must also match.

To show that the supertiles are fully combinatorially equivalent to
the original tiles, one more thing must be checked: that the same
combinations of supertiles fit together at vertices as combinations of
tiles fit together at vertices.  Each supertile has been drawn with a
copy of the corresponding tile alongside it, in a corresponding
orientation.  By inspection, if we take any class of edges of the
metatiles, including both sides of the edge (for example, $A^+$ and
$A^-$), and take any line segment in the corresponding edges of the
supertiles, the (directed) angle between the (directed) edges in the
tile and in the supertile is consistent across all the diagrams.

This consistency of angles between edges of metatiles and of supertiles
means that the angles at vertices of supertiles around a point, each
consecutive pair having matching edges, add up to the same amount as
the corresponding angles for the corresponding metatiles (an angle at a
vertex of a supertile equals the angle at the corresponding vertex of
the corresponding metatile, plus the difference between the
metatile--supertile angles for the two edges, and those differences cancel
when adding up around the point).

The supertiles are therefore combinatorially equivalent to the metatiles,
and so
the above arguments apply inductively to ensure that the composition
of tiles into supertiles may be applied~$n$~times for all~$n$.  Since
the radius of a ball contained in the supertiles goes to infinity
with~$n$ (a fact that does not depend on the geometry used to bisect~$P$
and~$F$ metatiles, but that may be easier to show with alternative 
supertiles that avoid bisection), 
and the tilings by supertiles have all the
symmetries of the original tiling, it follows that the original tiling
cannot have a translation as a symmetry.  Furthermore, the
substitution structure implies that the metatiles tile arbitrarily
large finite regions of the plane, and hence the whole plane.

Because of the symmetry-preserving correspondence between tilings by
metatiles and tilings by hat polykites, 
we have completed a proof of Theorem~\ref{thm:subst_tiling}.

\section{A family of aperiodic monotiles}
\label{sec:family}

In the previous sections, we showed that the hat polykite
is an aperiodic monotile.  This polykite is
formed of eight kites from the $[3.4.6.4]$ Laves tiling.  Likewise
the turtle polykite, formed of ten kites and shown in Figure~\ref{fig:tileb},
is also aperiodic.  We have verified via a computer search that there are no 
other aperiodic $n$-kites for $n \le 24$.

\begin{figure}[htp!]
\begin{center}
\begin{tikzpicture}[x=5mm,y=5mm]
  \tileB{0}{0}{0}{};
\end{tikzpicture}
\end{center}
\caption{An aperiodic $10$-kite called the ``turtle''}
\label{fig:tileb}
\end{figure}
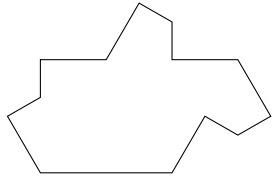

These two aperiodic polykites are two examples of a family of
aperiodic monotiles, all of which have combinatorially equivalent sets
of tilings, and which are determined by the choice of two side
lengths.

The hat polykite has sides of lengths $1$, $2$, and~$\sqrt{3}$; for the
purposes of this section, we consider the side of length~$2$ as two
consecutive sides of length~$1$ with a $180^\circ$~angle between them.
The tile of Figure~\ref{fig:tileb} has the same angles, but with the
side lengths of $1$ and~$\sqrt{3}$ swapped.

Let $a$ and~$b$ be nonnegative reals, not both zero, and if $a\ne 0$
let $r = b / a$.
Define $\mathrm{Tile}(a, b)$ to be the polygon resulting from replacing the sides of
length~$1$ in the hat polykite with sides of length~$a$ (we
refer to the resulting sides as \emph{$1$-sides}) and replacing the
sides of length~$\sqrt{3}$ in the hat polykite with sides of
length~$b$ (we refer to the resulting sides as \emph{$r$-sides}).
Thus the hat is $\mathrm{Tile}(1, \sqrt{3})$ and the turtle
is $\mathrm{Tile}(\sqrt{3}, 1)$.
This process results in a closed curve (because the vectors of the
$1$-sides add up to~$0$, as do those of the $r$-sides) that can
easily be shown to be free of self-intersections.
It is a $13$-gon (or one with a smaller number of sides if $a$ or~$b$ is
zero), but considered as a $14$-gon for the purposes of this section.
$\mathrm{Tile}(a,b)$ has area~$\sqrt{3}(2a^2+\sqrt{3}ab+b^2)$.  

For nonzero~$a$, the value 
of~$r$ determines the tile up to similarity.  In acknowledgment of 
these similarity classes, we write $\mathrm{Tile}(r)$ as a
shorthand for $\mathrm{Tile}(1,r)$.  We will show this tile is
aperiodic for any positive $r \ne 1$.
In fact, $\mathrm{Tile}(1, k\sqrt{3})$ and $\mathrm{Tile}(k\sqrt{3},
1)$ are polykites for all odd positive integers~$k$, implying that this
continuum of aperiodic monotiles contains a countably infinite family of
aperiodic polykites.

\begin{figure}[htp!]
\begin{center}
\begin{tikzpicture}[x=5mm,y=5mm]
  \tiler{1}{0}{0}{0}{0};
  \tiler{1}{-1}{2}{3}{0};
  \tiler{1}{-2}{4}{6}{0};
  \tiler{1}{-3}{6}{9}{0};
  \tilerr{1}{3}{-2}{1}{-2};
  \tilerr{1}{2}{0}{4}{-2};
  \tilerr{1}{1}{2}{7}{-2};
  \tilerr{1}{0}{4}{10}{-2};
  \tiler{1}{4}{-2}{6}{-6};
  \tiler{1}{3}{0}{9}{-6};
  \tiler{1}{2}{2}{12}{-6};
  \tiler{1}{1}{4}{15}{-6};
  \tilerr{1}{7}{-4}{7}{-8};
  \tilerr{1}{6}{-2}{10}{-8};
  \tilerr{1}{5}{0}{13}{-8};
  \tilerr{1}{4}{2}{16}{-8};
\end{tikzpicture}
\end{center}
\caption{Periodic tiling by $\mathrm{Tile}(1, 1)$}
\label{fig:tile1periodic}
\end{figure}

We define a notion of combinatorial equivalence between tilings of
these tiles, for two positive values of~$r$, as follows: two tilings
are combinatorially equivalent if there exists a bijection between
their tiles, and a bijection between the maximal line segments in the
unions of the boundaries of the tiles, such that corresponding tiles
and line segments in the two tilings are in the same orientation,
corresponding tiles adjoin corresponding line segments, on the same
side of those line segments, in the two tilings, and corresponding
tiles on the corresponding sides of corresponding line segments appear
in the same order along those segments.  All the interior angles of
the tile are at least~$90^\circ$, and no two $90^\circ$~angles appear
consecutively, so any maximal line segment has at most two sides of
tiles on each side of the line segment (and in particular is finite).

We now prove the following result:

\begin{theorem}
\label{thm:tilercomb}
Suppose $r \ne 1$ and $r'\ne 1$ are positive.  Then there is a
bijection between combinatorially equivalent tilings for $\mathrm{Tile}(r)$ and
$\mathrm{Tile}(r')$, given by changing the lengths of all $r$-sides from $r$
to~$r'$, while preserving angles, orientations, and adjacencies to
maximal line segments.
\end{theorem}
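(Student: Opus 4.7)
The plan is to define the claimed bijection constructively. Given a tiling $\mathcal{T}$ by $\mathrm{Tile}(r)$, I would build a tiling $\mathcal{T}'$ by $\mathrm{Tile}(r')$ that reuses the combinatorial data of $\mathcal{T}$: the same abstract tiles, the same orientations, and the same adjacencies along maximal line segments. Because the interior angles of $\mathrm{Tile}(a,b)$ are determined by the $[3.4.6.4]$ grid and do not depend on $a$ or $b$, orientations transfer directly from $\mathcal{T}$ to $\mathcal{T}'$. I would fix a reference tile of $\mathcal{T}$, place its image in $\mathcal{T}'$, and determine the position of every other tile by propagating displacements along a chain of adjacent tiles, replacing each $r$-side length by $r'$ as it is traversed.

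The crux is path-independence: any closed cycle of adjacencies in $\mathcal{T}'$ must yield zero net displacement. This reduces to the following key property---along every maximal line segment of $\mathcal{T}$, the $1$-sides abutting the segment occur in equal numbers on each side, and likewise for the $r$-sides. Given this property, lengths along each segment balance for any choice of $r'$, so the pieces of $\mathcal{T}'$ fit together without gaps or overlaps and form a tiling of the plane. Using the observation in the excerpt that each side of a maximal line segment carries at most two tile sides, the property reduces to a finite check: for $r \ne 1$ and $r \notin \{2, 1/2\}$, any equation $a_1 + b_1 r = a_2 + b_2 r$ with $a_i, b_i$ nonnegative integers of sum at most two forces $a_1 = a_2$ and $b_1 = b_2$, so matching counts is automatic.

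The main obstacle is the exceptional values $r = 2$ and $r = 1/2$, at which a single $r$-side could in principle balance against two $1$-sides across a maximal line segment without violating lengths. I would rule these configurations out by appealing to the substitution system of Theorem~\ref{thm:subst}: its matching rules, indicated on the metatile boundaries of Figure~\ref{fig:tileaclusters}, are purely combinatorial, and the metatiles themselves are defined as fixed clusters of hats whose shapes scale compatibly with $r$. Inspection of the labelled edge-segments shows that each such segment is composed of a fixed sequence of $1$-sides and $r$-sides, and is only ever allowed to adjoin another segment of the same type; hence $r$-sides meet only $r$-sides and $1$-sides meet only $1$-sides, so the exceptional configurations never actually arise in any tiling. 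This establishes the key property for all $r \ne 1$. Bijectivity then follows by symmetry: running the construction with $r$ and $r'$ interchanged provides the inverse map.
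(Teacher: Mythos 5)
Your overall strategy---reduce the problem to showing that every maximal line segment carries the same number of $1$-sides and the same number of $r$-sides on each of its two sides, then rebuild the tiling with $r$ replaced by $r'$ using path-independence of displacements (equivalently, simple connectivity of the plane)---is exactly the paper's argument, and your finite check for $r\notin\{1,2,\tfrac12\}$ using the bound of two tile sides per side of a segment is also the paper's.

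However, your treatment of the exceptional values $r=2$ and $r=\tfrac12$ has a genuine gap: it is circular. The metatile clustering (Theorem~\ref{thm:clusters}) and the substitution structure (Theorem~\ref{thm:subst}) are established only for tilings by the \emph{hat}, $\mathrm{Tile}(1,\sqrt3)$, via a case analysis of patches of hats aligned to the $[3.4.6.4]$ Laves tiling (itself resting on Lemma~\ref{lemma:tileaalign}, which is specific to the hat). To transport that structure to a tiling by $\mathrm{Tile}(2)$ or $\mathrm{Tile}(1/2)$ you would need to know that such a tiling is combinatorially equivalent to a hat tiling---but that equivalence is precisely the content of Theorem~\ref{thm:tilercomb}, the statement you are proving. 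In particular, the assertion that ``$r$-sides meet only $r$-sides'' along maximal segments is stated in the paper only as a \emph{consequence} of the bijection together with the hat's alignment lemma; you cannot assume it for an arbitrary tiling by $\mathrm{Tile}(2)$ whose global structure is a priori unknown.

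The paper closes these cases with a purely local angle argument that needs no structural theorem: if two $1$-sides were to align against one $r$-side (the only extra possibility at $r=2$), the outer endpoints of the consecutive $1$-sides carry corners of $120^\circ$ or $240^\circ$, whereas every $r$-side has an endpoint with a $90^\circ$ or $270^\circ$ corner, and the tile's angle set forbids these from meeting at one vertex of a tiling; the case $r=\tfrac12$ is symmetric. Since the corner angles of $\mathrm{Tile}(a,b)$ are independent of $a$ and $b$, this works for any tiling without presupposing anything about its global combinatorics. Replacing your appeal to the substitution system with an argument of this local kind is what is needed to make the proof sound.
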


Suppose first that $r$ is irrational. 
If a maximal line segment in the union of the
boundaries of the tiles has $p$~$1$-sides and $q$~$r$-sides on one
side of the line segment, it also has $p$~$1$-sides and~$q$~$r$-sides
on the other side of the line segment. Because a maximal line segment
has at most two sides of tiles on each side of the segment, the same
argument also applies for any rational~$r$ except possibly $\frac{1}{2}$, $1$,
and~$2$.

If $r = 2$, there is the additional possibility that two $1$-sides
align with one $r$-side.  When there are two consecutive $1$-sides on
one side of a line, with $90^\circ$~corners of the two tiles between
those two sides (or the $180^\circ$~corner of a single tile), the
other ends of those sides have corners with angles $120^\circ$
or~$240^\circ$.  But for every $r$-side, one corner has angle
$90^\circ$ or~$270^\circ$, and the angles of the tile do not permit
$120^\circ$ or $240^\circ$ at the same vertex of a tiling as
$90^\circ$ or~$270^\circ$.

Similarly, in the case $r = \frac{1}{2}$, the only additional
possibility is that two $r$-sides align with one $1$-side.  The outer
corners of the two $r$-sides have angles $120^\circ$ or~$240^\circ$,
one corner of every $1$-side has angle $90^\circ$, $180^\circ$
or~$270^\circ$, and those cannot appear at the same vertex.

Thus for any positive $r\ne 1$, we have shown that if a maximal line segment 
in the union of the boundaries of the tiles has $p$~$1$-sides and
$q$~$r$-sides on one side of the line segment, it also has~$p$~$1$-sides and $q$~$r$-sides on the other side of the line segment.
We can now construct the required bijection.  Because side vectors
around any tile add up to zero, and
the sides of tiles on both sides of a maximal line segment add up to
the same length, the specified process converts a tiling by $\mathrm{Tile}(r)$ into one
by $\mathrm{Tile}(r')$ that is combinatorially equivalent~\cite[Lemma~1.1]{regprod}.

(This argument relies on the fact that the plane is simply connected.
A tiling by $\mathrm{Tile}(r)$ of a region with a hole that cannot be filled
with tiles might not convert to a tiling by $\mathrm{Tile}(r')$ of a region with
a combinatorially equivalent hole. Indeed, for some vectors defining
the sides of the hole, there might not exist any combinatorially equivalent 
hole if the vectors of the $1$-sides among the sides of the hole do not add up
to~$0$.)

As shown in Lemma~\ref{lemma:tileaalign}, all tilings by
$\mathrm{Tile}(\sqrt{3})$ are aligned to an underlying $[3.4.6.4]$ Laves tiling,
so in fact each maximal line segment is made up only of $1$-sides or
only of $r$-sides.

Finally, $\mathrm{Tile}(1)$---or more generally $\mathrm{Tile}(a, a)$---is
not aperiodic, as shown by the periodic tiling in
Figure~\ref{fig:tile1periodic}.  The polyiamonds $\mathrm{Tile}(a, 0)$ and
$\mathrm{Tile}(0, b)$ are also not aperiodic.  A tiling by $\mathrm{Tile}(r)$ for
positive $r\ne 1$ can still be mapped to a corresponding tiling by
$\mathrm{Tile}(a, a)$, $\mathrm{Tile}(a, 0)$, or $\mathrm{Tile}(0, b)$ following the process
described above, but the map is not a bijection.

\section{Conclusion}
\label{sec:conclusion}

We have exhibited an einstein, the first topological disk that tiles
aperiodically with no additional constraints or matching rules.
The hat polykite is in fact a member of a continuous family of aperiodic
monotiles that admit combinatorially equivalent tilings.
The hat forces
tilings with hierarchical structure, as is the case for many aperiodic
sets of tiles in the plane, but a new method introduced in
\secref{sec:coupling} also suffices to show the lack of periodic
tilings without needing that hierarchical structure, beyond
demonstrating the existence of a tiling.

Our substitution system satisfies the relatively mild conditions needed
to guarantee an uncountable infinity of combinatorially distinct tilings,
all of which are hierarchical~\cite[Section 7.6.2]{Senechal}.  But not
every tiling by hats is necessarily produced purely through substitution.
As with Robinson's aperiodic set of six shapes~\cite{Robinson},
it is conceivable that hats could tile infinite sectors of the plane, which
could then be combined into tilings with infinite ``fault lines'' that
lie on the boundaries of supertiles at all levels.  Future work should
examine the possibility
of tilings with fault lines, as part of characterizing the full space of 
hat tilings.  In particular, it should be determined whether every
finite patch that appears in some hat tiling must appear infinitely
often in all hat tilings, or whether there are patches that only
appear on fault lines and not in the interior of a supertile.

The hat is a $13$-sided non-convex polygon.  A convex polygon cannot
be an aperiodic mono\-tile, and all non-convex quadrilaterals can 
easily be seen to tile periodically.  Therefore, in terms of number of 
sides, the ``simplest'' aperiodic $n$-gon must have $5\le n \le 13$.
Subsequent research could chip away at this range, by finding aperiodic
$n$-gons for $n<13$ or ruling them out for~$n\ge 5$.

Tilings by the hat necessarily include both reflected and unreflected
tiles.  We might therefore ask whether there exists an aperiodic
monotile for which reflections are not needed, either because the tile
has bilateral symmetry or because it covers the plane using only
translations and rotations.\footnote{In subsequent
work~\cite{spectre}, we show that
$\mathrm{Tile}(1,1)$ is such a tile if its boundary is modified to
prevent the use of reflections.}

Finding such a monotile pushes the boundaries of complexity known to
be achievable by the tiling behaviour of a single closed topological
disk.  It does not, however, settle various other unresolved
questions about that complexity.  For example, all of the following
questions remain open.

\begin{itemize}
  \item Are Heesch numbers unbounded?  That is, does there exist, for
  	every positive integer $n$, a topological disk that does not tile the 
	plane and has Heesch number at least $n$?  We conjecture that there is
	no bound on Heesch numbers.

  \item Are isohedral numbers unbounded?  That is, does there exist, for
  	every positive integer $n$, a topological disk that tiles the plane 
	periodically
	but only admits tilings with at least $n$ transitivity classes?  Again,
	we conjecture that no bound exists.  If the requirement of periodicity
	is omitted here, then the hat polykite requires infinitely many 
	transitivity classes in any tiling. 
	Socolar~\cite{Socolar} showed that if the tile is not required to be
    a closed topological disk, then tiles exist with every positive
	isohedral number.

  \item Is it computationally undecidable whether a polygon (or
    indeed a more general single tile in the plane) admits a tiling?
    It would again be reasonable to conjecture yes, which would also
    imply unbounded Heesch numbers.  Greenfeld and
    Tao~\cite{GT1,greenfeld2023undecidability} demonstrated
    undecidability in a more general
    context.  For sets of tiles in the plane, Ollinger~\cite{Ollinger}
    proved  undecidability for sets of five polyominoes.

  \item Is it computationally undecidable whether a polygon (or
    indeed a more general single tile in the plane) admits a periodic
    tiling?  It would again be reasonable to conjecture yes.  Such
    an answer would imply unbounded isohedral numbers.
\end{itemize}

Although we have provided a description of tilings by the
hat polykite and related tiles described here (all such tilings are
given by the substitution system of \secref{sec:subst}, as applied to
the clusters of tiles from \secref{sec:clusters}, subject to the
possibility mentioned above of tilings with fault lines, where each
sector is produced by the substitution system), there are various
informal observations in \secref{sec:discussion} that have not been
fully explored or given precise statements.  Those observations could
provide starting points for possible future investigation of the tiles
described here and their tilings, the metatiles used in classifying
tilings by the hat polykite, and other related substitution tilings.
It is not clear which ideas from this work will be most promising for
future work, so we have generally erred on the side of including 
observations that might be of use, rather than making the paper focus
more narrowly on a single proof of a single main result.

We believe that the approach presented in \secref{sec:coupling}, of coupling 
two separate tilings to show that a third tiling cannot be periodic, is a
new way to prove that a set of tiles is aperiodic.  It would
be worth investigating whether it can be applied in other contexts.
In particular, polykites (and more generally
poly-$[4.6.12]$-tiles, a subset of the shapes known as
\emph{polydrafters}) may be unusually well-suited to this method of
proof, because their edges lie on lines belonging to two regular
triangle tilings.  It might also be applicable to some
poly-$[4.8.8]$-tiles (a subset of the
\emph{polyaboloes}).\footnote{Note that if Lemma~\ref{lemma:align} is
applied to poly-$[4.6.12]$-tiles or poly-$[4.8.8]$-tiles, the
conclusion is weaker than that of Lemma~\ref{lemma:polykitealign} for
polykites, so tilings may need to be considered that are only aligned
in this weaker sense.} 
This style of proof might help explain how small polykites
proved to be aperiodic when polyominoes, polyiamonds and polyhexes up
to high orders yielded no einsteins.  However, as noted in
\secref{sec:family}, searches of polykites have not found other
aperiodic examples outside the family described in this paper.



\section*{Acknowledgements}
\label{sec:acks}

Thanks to Ava Pun for her work on software for computing Heesch numbers
and displaying patches.  Thanks also to Jaap Scherphuis for creating a
number of useful free software tools for exploring tilings.  These tools
all played a crucial role in the explorations that led to the discovery of
our polykite.  Thanks to Marjorie Senechal, Michael Baake, and Pablo Rosell
for suggesting improvements to earlier drafts of this article, and to the
anonymous referees for their many constructive and insightful suggestions.
Thanks to the members of The Tiling List for years of making connections
and engaging in discussion.

\appendix
\section{Aligned and unaligned tilings of polyforms}
\label{sec:align}

The proof that the hat polykite is aperiodic
involves a case analysis for ways of surrounding a copy of that tile,
and that case analysis in turn involves considering possibilities for
how an individual kite in a copy of the hat polykite could fill a
particular kite on an underlying $[3.4.6.4]$ Laves tiling.  By itself
a proof founded on such a case analysis shows the absence of
periodic tilings only when all tiles are \textit{aligned} to the same 
underlying $[3.4.6.4]$ Laves tiling.  This argument leaves open the
possibility that polykites might be able to tile periodically if they
may be translated, rotated and reflected without regard to the underlying grid.

In this appendix we prove that for the purposes of establishing the
aperiodicity of the hat, it suffices to consider only aligned tilings
by polykites.  Specifically, if a polykite admits any periodic tiling,
it must also admit one that is aligned.  In fact, we present a more
general result (Lemma~\ref{lemma:align}) that gives sufficient conditions
under which one may restrict attention to aligned tilings.  Our result
covers a broad class of polyforms that includes polykites, and
a broad family of tiling properties that includes periodicity.  We offer
it because it may be useful in related contexts where combinatorial
arguments help establish tiling properties of polyforms.
We also show that the
hat in particular does not admit any unaligned tilings 
(Lemma~\ref{lemma:tileaalign}).

In principle, the same issue arises for polyominoes and polyiamonds.
However, the only unaligned tilings by congruent squares consist of
offset parallel rows of squares 
(Figure~\ref{fig:slidsquares}), and much the same applies to tilings
by congruent equilateral triangles (Figure~\ref{fig:slidtriangles}),
and so it is clear that no interesting examples of unaligned tilings
by polyominoes or polyiamonds can arise.
However, kites can form nontrivial unaligned tilings such as that of
Figure~\ref{fig:slidhextrikites}, and so there is genuinely something to
be proved here, something less obvious than it is for polyominoes and
polyiamonds.

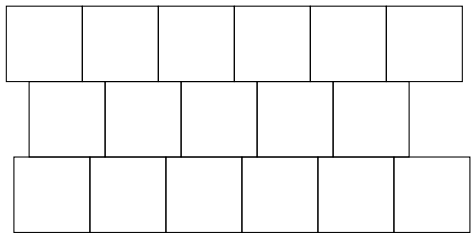
\begin{figure}[htp!]
\begin{center}
\begin{tikzpicture}[x=1cm,y=1cm]
  \drawsquare{0}{0};
  \drawsquare{1}{0};
  \drawsquare{2}{0};
  \drawsquare{3}{0};
  \drawsquare{4}{0};
  \drawsquare{5}{0};
  \drawsquare{0.2}{1};
  \drawsquare{1.2}{1};
  \drawsquare{2.2}{1};
  \drawsquare{3.2}{1};
  \drawsquare{4.2}{1};
  \drawsquare{-0.1}{2};
  \drawsquare{0.9}{2};
  \drawsquare{1.9}{2};
  \drawsquare{2.9}{2};
  \drawsquare{3.9}{2};
  \drawsquare{4.9}{2};
\end{tikzpicture}
\end{center}
\caption{Sliding rows of squares}
\label{fig:slidsquares}
\end{figure}

\begin{figure}[htp!]
\begin{center}
\begin{tikzpicture}[x=1cm,y=1cm]
  \drawtriangle{0}{0}{0};
  \drawtriangle{-60}{0}{1};
  \drawtriangle{0}{0}{1};
  \drawtriangle{-60}{0}{2};
  \drawtriangle{0}{0}{2};
  \drawtriangle{-60}{0}{3};
  \drawtriangle{0}{0}{3};
  \drawtriangle{-60}{0}{4};
  \drawtriangle{0}{0}{4};
  \drawtriangle{0}{1}{-0.7};
  \drawtriangle{-60}{1}{0.3};
  \drawtriangle{0}{1}{0.3};
  \drawtriangle{-60}{1}{1.3};
  \drawtriangle{0}{1}{1.3};
  \drawtriangle{-60}{1}{2.3};
  \drawtriangle{0}{1}{2.3};
  \drawtriangle{-60}{1}{3.3};
  \drawtriangle{0}{1}{3.3};
  \drawtriangle{0}{2}{-1.3};
  \drawtriangle{-60}{2}{-0.3};
  \drawtriangle{0}{2}{-0.3};
  \drawtriangle{-60}{2}{0.7};
  \drawtriangle{0}{2}{0.7};
  \drawtriangle{-60}{2}{1.7};
  \drawtriangle{0}{2}{1.7};
  \drawtriangle{-60}{2}{2.7};
  \drawtriangle{0}{2}{2.7};
\end{tikzpicture}
\end{center}
\caption{Sliding columns of equilateral triangles}
\label{fig:slidtriangles}
\end{figure}
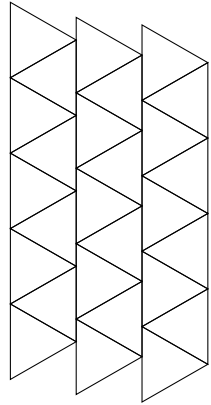

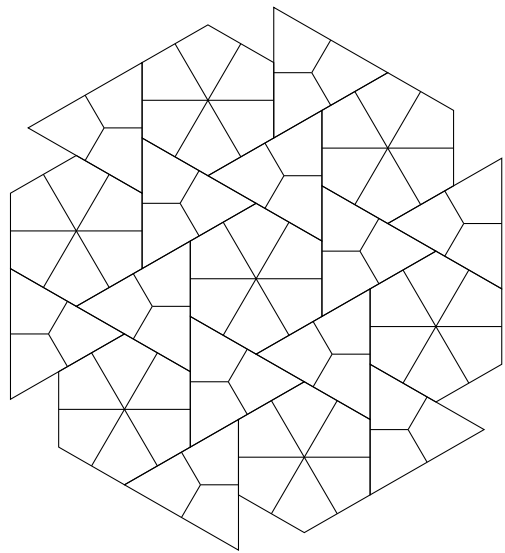
\begin{figure}[htp!]
\begin{center}
\begin{tikzpicture}[x=5mm,y=5mm]
  \hexkites{0}{0}{0}{0};
  \trikites{-120}{0}{0}{0}{0};
  \trikites{-60}{2}{-2}{0}{0};
  \trikites{0}{4}{-2}{0}{0};
  \trikites{60}{4}{0}{0}{0};
  \trikites{120}{2}{2}{0}{0};
  \trikites{180}{0}{2}{0}{0};
  \hexkites{2}{-4}{2}{0};
  \hexkites{4}{-2}{0}{2};
  \hexkites{2}{2}{-2}{2};
  \hexkites{-2}{4}{-2}{0};
  \hexkites{-4}{2}{0}{-2};
  \hexkites{-2}{-2}{2}{-2};
  \trikites{-120}{2}{-4}{2}{0};
  \trikites{-60}{6}{-4}{0}{2};
  \trikites{0}{6}{0}{-2}{2};
  \trikites{60}{2}{4}{-2}{0};
  \trikites{120}{-2}{4}{0}{-2};
  \trikites{180}{-2}{0}{2}{-2};
\end{tikzpicture}
\end{center}
\caption{Unaligned tiling of kites}
\label{fig:slidhextrikites}
\end{figure}

In order to apply the results presented here to other classes of 
polyforms such as polyaboloes,
we state the required conditions on the tiles in fairly general and
technical form.

Let $S$ be a set of real numbers that are linearly
independent over~$\mathbb{Q}$.  Let $\mathcal{P}$ be a finite set of
closed topological disk polygonal tiles, such that all the angles
of corners of tiles in~$\mathcal{P}$ are rational sub-multiples of~$\pi$,
all the lengths of sides of tiles in~$\mathcal{P}$ are integer
multiples of elements of~$S$, and such that, if a polygon
in~$\mathcal{P}$ has two or more collinear sides,
the lengths of those sides are integer multiples of the same element
of~$S$, as are the distances between their endpoints.

We now consider clusters of tiles built using copies of the polygons
in $\mathcal{P}$.  Let $\mathcal{Q}$ be a nonempty set of tiles, each
one congruent
to one of the polygons in $\mathcal{P}$, with disjoint interiors.  
The set $\mathcal{Q}$ may cover the entire plane, or just part of it.
The union of the boundaries of the tiles in $\mathcal{Q}$ decomposes
into a set of maximal line segments, rays, and infinite lines, which we will
refer to generically as \textit{segments}.  These segments are maximal
in the sense that no segment is a subset of a longer segment contained in
the union of the tile boundaries.

Given one such maximal segment~$\ell$, and two tiles $A, B\in\mathcal{Q}$
(which may be identical), we say that $A$ and~$B$ are \emph{$\ell$-aligned}
if they both have sides that are subsets of~$\ell$, all sides of $A$
or~$B$ that lie in~$\ell$ have lengths that are integer multiples of
the same $s\in S$, and the distance between any endpoint of
one of those sides that lies in~$\ell$ and any other such endpoint is
also an integer multiple of~$s$.

The set $\mathcal{Q}$ naturally induces a graph whose vertices correspond
to the tiles in the set.  Two tiles $A$ and $B$ are connected by an edge
in the graph if there is a maximal segment~$\ell$ such that $A$ and $B$
are $\ell$-aligned and intersect in a line segment of positive length
that is a subset of $\ell$.
We say that $\mathcal{Q}$ is \emph{weakly aligned} if this graph is 
connected.  We say that it is \emph{strongly aligned} if it is weakly 
aligned and, for every maximal segment~$\ell$ determined by $\mathcal{Q}$,
and all tiles $A$ and~$B$ that have sides
lying in~$\ell$, $A$ and~$B$ are $\ell$-aligned.  We say that
\emph{$\mathcal{P}$ has the alignment property for side
lengths~$S$} if every weakly aligned set is strongly
aligned.  Here we drop the qualifiers ``strongly'' and ``weakly'' and refer 
to $\mathcal{Q}$, given the combination
of~$\mathcal{P}$ and~$S$, simply as \emph{aligned}.

\begin{lemma}
Any finite set of polykites, where the underlying kites have side
lengths $1$ and~$\sqrt{3}$, has the alignment property for side
lengths $\{1,\sqrt{3}\}$.
\end{lemma}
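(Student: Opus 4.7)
The plan is to show that every placed polykite canonically determines an underlying copy of the $[3.4.6.4]$ Laves tiling of the plane (via its prescribed kite decomposition), that weak alignment forces these underlying tilings to coincide across all of $\mathcal{Q}$, and that strong alignment then follows immediately.

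First, I would record some geometric facts about the $[3.4.6.4]$ Laves tiling. Its maximal kite-edge-bearing lines split into two disjoint families: \emph{short lines} (containing only length-$1$ edges) and \emph{long lines} (containing only length-$\sqrt{3}$ edges). Indeed, at a degree-$3$ vertex only short edges meet (at $120^\circ$, no two collinear); at a degree-$6$ vertex only long edges meet (with opposite pairs collinear); and at a degree-$4$ vertex the two short edges and the two long edges are each internally collinear but mutually perpendicular. Consequently every polykite side consists of collinear kite-edges of a single length, and has total length equal to an integer multiple of either $1$ or $\sqrt{3}$, exclusively. Moreover, each placed polykite $A$ inherits its kite decomposition rigidly from its shape and placement, and this decomposition extends uniquely to a Laves tiling $\mathcal{L}_A$ of the whole plane.

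The central rigidity step is the following: if polykites $A$ and $B$ are $\ell$-aligned and share a segment of positive length on $\ell$, then $\mathcal{L}_A = \mathcal{L}_B$. The $\ell$-alignment condition ensures that both $A$ and $B$ have sides on $\ell$ consisting of kite-edges of a common length $s \in \{1,\sqrt{3}\}$ with endpoints at integer multiples of $s$ along $\ell$. The shared positive-length overlap on $\ell$ therefore contains at least one full common kite-edge of $\mathcal{L}_A$ and of $\mathcal{L}_B$. Since a $[3.4.6.4]$ Laves tiling is rigidly determined by any single kite-edge together with the orientation of an adjacent kite, it follows that $\mathcal{L}_A = \mathcal{L}_B$ globally. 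A straightforward induction along paths in the weakly-aligned connectivity graph then yields a common underlying Laves tiling $\mathcal{L}$ for every tile of $\mathcal{Q}$. Strong alignment is now immediate: each maximal segment $\ell$ in the union of boundaries of $\mathcal{Q}$ is a subset of a line of $\mathcal{L}$; every tile side lying on $\ell$ is a chain of kite-edges of $\mathcal{L}$ of a common length $s$; and all endpoints of these sides are kite-vertices of $\mathcal{L}$ on $\ell$, spaced at integer multiples of $s$.

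The principal obstacle is giving a clean argument for the rigidity claim---specifically, ruling out the possibility that the kite decompositions of $A$ and $B$ could agree on the shared segment of $\ell$ while corresponding to reflected or rotated copies of the Laves tiling off $\ell$. To handle this one must track not just the kite-vertex grid along $\ell$ but also the chirality and orientation of the kites incident to $\ell$ on each side, which are already fixed by the polykite shapes; once these data are fixed, the standard rigidity of the Laves tiling closes the argument.
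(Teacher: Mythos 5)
Your central rigidity step is false, and the failure is not just the technical wrinkle you flag in your last paragraph---it admits a genuine counterexample. Take $\mathcal{P}$ to be the monokite and let $\mathcal{Q}$ consist of two kites sharing their long side, one obtained from the other by a half-turn about the midpoint of that side (this is exactly the local configuration of the $180^\circ$ tiling in Figure~\ref{fig:kite180tiling}). Each tile has exactly one side on the shared line~$\ell$, of length~$\sqrt{3}$, and those sides coincide, so the pair is $\ell$-aligned and hence weakly aligned. But $\mathcal{L}_A \ne \mathcal{L}_B$: the $[3.4.6.4]$ Laves tiling extending $A$ places the \emph{mirror image} of $A$'s kite on the far side of that edge, not the half-turn image, and the kite has no symmetry interchanging the two ends of its long side, so these are distinct placements. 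Your proposed repair---tracking the chirality and orientation of the kites incident to~$\ell$ on each side---does not close the gap, because those orientations are indeed fixed by the placements of $A$ and~$B$, and they can simply be inconsistent with any single Laves tiling while still satisfying the $\ell$-alignment condition, which constrains only lengths and offsets along~$\ell$ and says nothing about what happens off~$\ell$. The paper is explicit that this is the true state of affairs: for polykites, ``aligned'' is strictly weaker than ``all tiles from the same underlying $[3.4.6.4]$ tiling,'' and the stronger conclusion requires the separate, later arguments of Lemmas~\ref{lemma:polykitealign} and~\ref{lemma:tileaalign} (and fails outright for the monokite).

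The fix, and the paper's actual route, is to run your argument one level down: subdivide each kite into the $24$ triangles of the $[4.6.12]$ Laves tiling (Figure~\ref{fig:kitedecomp24}). Unlike the $[3.4.6.4]$ tiling, this finer triangulation \emph{is} preserved across every edge-to-edge adjacency of congruent kites---any kite sharing a full edge with a kite that is a union of such triangles is itself such a union---so induction along the weak-alignment graph yields a single global $[4.6.12]$ tiling underlying all of~$\mathcal{Q}$. Strong alignment then follows by checking which vertices of that triangulation have the correct angles to serve as corners of a kite on a given line: on lines carrying length-$1$ edges the admissible corners sit at integer offsets, and on lines carrying length-$\sqrt{3}$ edges at integer multiples of~$\sqrt{3}$. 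Your opening observations (the two disjoint families of lines, each maximal segment carrying kite edges of only one length) are correct and are essentially what is needed for that last step; the error lies solely in which common grid weak alignment actually entitles you to extract.
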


\begin{proof}
In the Laves tiling $[3.4.6.4]$, subdivide each kite into
24 $30^\circ$--$60^\circ$--$90^\circ$ triangles as shown in
Figure~\ref{fig:kitedecomp24}, forming a $[4.6.12]$
Laves tiling.  Furthermore, if a kite congruent to
one of those from the original $[3.4.6.4]$ adjoins edge-to-edge a kite
that is a union of triangles from that $[4.6.12]$ tiling, then it too
is such a union.  Thus all polykites
in any weakly aligned set are unions of tiles from the same $[4.6.12]$
tiling.  On any line in the union of the boundaries of the tiles from
$[4.6.12]$ that contains sides with rational length, sides of such
kites can only be at integer offsets from each other, and on the other
lines (containing sides with length a rational multiple
of~$\sqrt{3}$), sides of such kites can only be at offsets from each
other that are integer multiples of~$\sqrt{3}$ (both of these facts
follow from consideration of which vertices have the correct angles to
form a corner of such a kite).  So every weakly aligned set is
strongly aligned.
\end{proof}

\begin{figure}[htp!]
\begin{center}
\begin{tikzpicture}[x=1cm,y=1cm]
  \draw \vcoords{0}{0} -- \vcoords{3}{-3} -- \vcoords{6}{0} --
    \vcoords{0}{3} -- cycle;
  \draw \vcoords{0}{0} -- \vcoords{6}{0};
  \draw \vcoords{3}{-3} -- \vcoords{3}{1.5};
  \draw \vcoords{0}{3} -- \vcoords{4.5}{-1.5};
  \draw \vcoords{3}{-3} -- \vcoords{0}{3};
  \draw \vcoords{4}{-2} -- \vcoords{2}{2};
  \draw \vcoords{5}{-1} -- \vcoords{4}{1};
  \draw \vcoords{0}{0} -- \vcoords{2}{2};
  \draw \vcoords{1.5}{-1.5} -- \vcoords{4}{1};
  \draw \vcoords{0}{0} -- \vcoords{4}{-2};
  \draw \vcoords{0}{1.5} -- \vcoords{5}{-1};
\end{tikzpicture}
\end{center}
\caption{Decomposition of a kite into $24$~triangles}
\label{fig:kitedecomp24}
\end{figure}
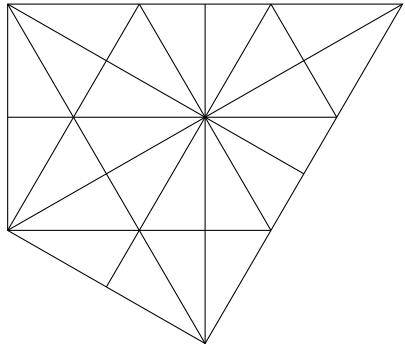

We now consider a set of tiles $\mathcal{P}$ that has the alignment 
property for side
lengths~$S$, and proceed to show that, in an appropriate
sense, only aligned tilings need to be considered.  Note that for
polykites, at this point ``aligned'' means only that the kites adjoin
edge-to-edge, which is weaker than all tiles coming from the same
underlying $[3.4.6.4]$ tiling; there will be further lemmas specific
to polykites to show that we need only consider tilings where all
tiles come from the same underlying $[3.4.6.4]$.

The alignment property implies that the tiles of any tiling can be
partitioned into strongly aligned sets such that for any maximal line
segment~$\ell$ in the union of the boundaries of the tiles, and any
two tiles in different sets that have sides sharing a segment of
positive length lying on~$\ell$, those two tiles are not
$\ell$-aligned.  We refer to these as the \emph{aligned components} of
the tiling.  Each aligned component is a connected set (possibly
unbounded), with connected interior.

Suppose $\mathcal{C}$ is an aligned component in a tiling, and
$\mathcal{D}$~is a connected component of the complement
of~$\mathcal{C}$ ($\mathcal{D}$ might be the interior of another
aligned component, or might be the interior of the union of more than
one aligned component).  The boundary of~$\mathcal{D}$ consists of a single
polygonal curve, either closed or infinite, and as with any other
polygon we may speak of its corners and sides.  Furthermore, that
curve cannot pass through the same point more than once; if it did,
either $\mathcal{D}$ (an open set) would not be connected, or
$\mathcal{C}$ would not have connected interior.

Consider traversing the boundary of~$\mathcal{D}$; note that
$\mathcal{D}$ always lies on the same side of the boundary during that
traversal.  When the traversal
encounters a corner, say that corner is convex if an open line segment
between two points on the curve sufficiently close to that corner but
on opposite sides of it is entirely within~$\mathcal{D}$.

When the boundary of~$\mathcal{D}$ is a closed curve, we must also
initially allow for $\mathcal{D}$ being inside that curve (a hole
in~$\mathcal{C}$) or outside (in which case $\mathcal{C}$ is
bounded).  The following lemma shows that the first of those cases
cannot occur, since if $\mathcal{D}$ is inside the curve it must have
at least three convex corners.

\begin{lemma}
The boundary of~$\mathcal{D}$ has no convex corners if it is a closed
curve (so in that case $\mathcal{C}$ must be a bounded convex set),
and at most one convex corner if it is an infinite curve.
\end{lemma}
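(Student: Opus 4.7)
The plan is to argue by contradiction that each convex corner of $\partial\mathcal{D}$ would force a violation of the partition of the tiling into distinct aligned components, and then to combine this with the standard turning‑angle identity to handle both the closed and infinite cases.

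First I set up notation at a convex corner $v$ of $\partial\mathcal{D}$. Let $e_1,e_2$ be the two incident edges of $\partial\mathcal{D}$ at $v$, contained in maximal line segments $\ell_1,\ell_2$ of the full tiling. Convexity means the $\mathcal{D}$‑side angle at $v$ is some $\alpha<\pi$, so the opposite ($\mathcal{C}$‑side) angle exceeds $\pi$. Listing the tiles with a corner at $v$ in angular order, they split into a chain $T_1,\dots,T_j$ lying in the $\mathcal{D}$‑sector (not in $\mathcal{C}$, with $T_1$ sharing $e_1$ and $T_j$ sharing $e_2$) and a chain $T_{j+1},\dots,T_k$ lying in the $\mathcal{C}$‑sector (in $\mathcal{C}$, with $T_{j+1}$ incident to $\ell_2$ and $T_k$ incident to $\ell_1$); the degenerate cases $j=1$ or $k=j+1$, where one of the sectors is occupied by a single tile with a very obtuse or reflex corner, need to be tracked separately.

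Next I propagate the alignment of $\mathcal{C}$ across the corner. Because $\mathcal{P}$ has the alignment property and $\mathcal{C}$ is an aligned component, there exist $s_1,s_2\in S$ such that every side of a tile of $\mathcal{C}$ lying on $\ell_1$ (resp.\ $\ell_2$) has length an integer multiple of $s_1$ (resp.\ $s_2$), and $v$ is a lattice endpoint for these structures. Now consider $T_1$: its side on $\ell_1$ ends at $v$ and has length an integer multiple of some $s'\in S$, and it shares the positive‑length segment $e_1$ with $T_k$'s side. The reflex $\mathcal{C}$‑sector supplies a whole chain of tile sides emanating from $v$, all tied together by integer multiples of elements of $S$; combined with the hypothesis that collinear sides of a single polygon of $\mathcal{P}$ use a common $s$, and with the $\mathbb{Q}$‑linear independence of $S$, this forces $s'=s_1$ (and analogously for $T_j$ along $\ell_2$). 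Consequently $T_1$ is $\ell_1$‑aligned with $T_k$ along $e_1$, so $\{T_1,T_k\}$ is weakly aligned and by the alignment property strongly aligned; this places $T_1$ into the aligned component of $T_k$, contradicting $T_1\notin\mathcal{C}$.

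Finally, the topological statements follow from the turning‑angle identity. A closed simple polygonal curve traversed with $\mathcal{D}$ on the inside has total exterior angle $+2\pi$, and every exterior angle is strictly less than $\pi$, so at least three convex corners would be required; the alignment contradiction above rules this out and forces $\mathcal{D}$ to lie on the outside, so $\mathcal{C}$ is bounded. In that case every corner of $\partial\mathcal{D}$ is reflex on the $\mathcal{D}$‑side and therefore convex on the $\mathcal{C}$‑side, which means $\mathcal{C}$ is a convex polygon. For an infinite $\partial\mathcal{D}$, the turning budget along the two unbounded ends can absorb exactly one positive corner without triggering a second instance of the alignment contradiction between two distinct convex corners, giving the "at most one" bound.

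The main obstacle is the propagation step. Tiles on opposite sides of $e_1$ belong, by construction, to different aligned components, so no direct alignment crosses $e_1$; the argument must therefore work entirely through the single point $v$, leveraging the reflex angle on the $\mathcal{C}$‑side to supply enough constraints to pin down $s'$. Verifying this rigorously — in particular, handling the subcase in which a single tile of $\mathcal{C}$ contributes a reflex corner at $v$ and the subcase in which $T_1=T_j$ spans the entire $\mathcal{D}$‑sector, as well as explaining precisely why a single convex corner on an infinite curve is \emph{allowed} rather than forbidden — is where the technical bulk of the proof lives.
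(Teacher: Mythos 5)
The key step of your argument --- that a \emph{single} convex corner already forces a contradiction --- is both unjustified and, I believe, unprovable, and your closing paragraph in effect concedes this. You assert that the alignment of $\mathcal{C}$ along $\ell_1$ ``propagates through the single point $v$'' so as to force the side of the $\mathcal{D}$-side tile $T_1$ on $\ell_1$ to be an integer multiple of the same $s_1\in S$ governing $\mathcal{C}$'s sides there, whence $T_1$ would be $\ell_1$-aligned with the $\mathcal{C}$-tile across $e_1$. But nothing ties $T_1$'s side length on $\ell_1$ to $s_1$: the ``chain of tile sides emanating from $v$'' in the reflex sector lies on other lines and imposes no constraint on lengths measured along $\ell_1$, and the $\mathbb{Q}$-linear independence of $S$ can only \emph{distinguish} multiples of different elements of $S$, never identify them. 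The defining property of the boundary between aligned components is precisely that tiles meeting across $e_1$ are \emph{not} $\ell_1$-aligned, so any proof that they are must import genuinely new information, and a single point supplies none. Worse, if your step were correct it would show that $\partial\mathcal{D}$ has no convex corner at all, whereas the lemma deliberately claims only ``at most one'' on an infinite curve and the paper's argument is consistent with one occurring; your final paragraph then quietly switches to needing ``two distinct convex corners'' for a contradiction, an argument you never actually made, so the infinite case is left unproved.

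The paper's proof works globally along the curve rather than locally at $v$. For every finite side of $\partial\mathcal{D}$, the non-alignment across it forces at least one of that side's two endpoints to lie in the interior of a collinear side of one of the adjacent tiles; starting from a convex corner $v$, this role is forced onto $v_1$ for the side $vv_1$, which in turn forces it onto $v_2$ for the side $v_1v_2$, and so on in each direction. A contradiction arises only when this forced chain meets a second convex corner, or returns to $v$ around a closed curve --- exactly accounting for ``none on a closed curve'' and ``at most one on an infinite curve.'' Your turning-angle count (a closed curve bounding $\mathcal{D}$ on the inside needs at least three convex corners since each exterior angle is less than $\pi$) matches the paper's remark preceding the lemma, and the conclusion that $\mathcal{C}$ is then a bounded convex set is fine, but both rest on the missing step. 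To repair the proof you must replace the local propagation-through-$v$ claim with a propagation of this combinatorial condition \emph{along} the boundary between corners.
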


\begin{proof}
If we consider any finite side of the
boundary of~$\mathcal{D}$, lying in some maximal line segment~$\ell$,
all the tiles lying on the other side of the side from~$\mathcal{D}$
are not $\ell$-aligned with any of those in~$\mathcal{D}$, meaning
that at least one of the two corners at the ends of that side lies in
the middle of a side of such a tile.  If $v$~is a convex corner, and
$v_1$, $v_2$, \ldots\ are successive vertices traversing the boundary
curve in one direction from~$v$, then we conclude that $v_1$~lies in
the middle of a side of a tile on the same line as~$vv_1$, then that
$v_2$~lies in the middle of a side of a tile on the same line
as~$v_1v_2$, and so on.  This results in a contradiction if we
encounter another convex corner (see Figure~\ref{fig:twoconvex} for an
illustration), or encounter $v$~again on a closed curve (see
Figure~\ref{fig:closedconvex}).
\end{proof}

\begin{figure}[htp!]
\begin{center}
\begin{tikzpicture}[x=1cm,y=1cm]
  \draw (0,0) -- (1,0) -- (2,1) -- (3,1) -- (4,0) -- (5,0);
  \draw (3,1) -- (2.5,1.5);
  \draw (2,1) -- (1.5,1);
  \draw (4.2,0.2) node {$v$};
  \draw (3.2,1.2) node {$v_1$};
  \draw (1.8,1.2) node {$v_2$};
\end{tikzpicture}
\end{center}
\caption{Boundary curve with two convex corners}
\label{fig:twoconvex}
\end{figure}
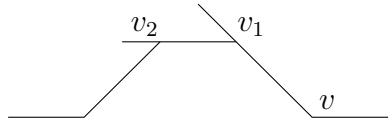

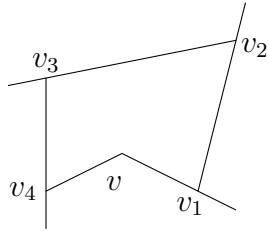
\begin{figure}[htp!]
\begin{center}
\begin{tikzpicture}[x=1cm,y=1cm]
  \draw (0,0) -- (1,0.5) -- (2,0) -- (2.5,2) -- (0,1.5) -- cycle;
  \draw (2,0) -- (2.5,-0.25);
  \draw (2.5,2) -- (2.625,2.5);
  \draw (0,1.5) -- (-0.5,1.4);
  \draw (0,0) -- (0,-0.5);
  \draw (0.9,0.1) node {$v$};
  \draw (1.9,-0.2) node {$v_1$};
  \draw (2.75,1.9) node {$v_2$};
  \draw (0,1.7) node {$v_3$};
  \draw (-0.3,0) node {$v_4$};
\end{tikzpicture}
\end{center}
\caption{Closed boundary curve with a convex corner}
\label{fig:closedconvex}
\end{figure}

Now we need to list the tiling properties for which our argument says
we do not need to consider unaligned tilings.  Let $H$ be one of the
following predicates on a tiling~$\mathcal{T}$; here, $k$~may be any
positive integer.

\begin{itemize}
\item $\mathcal{T}$ is a tiling (the trivial predicate).
\item $\mathcal{T}$ is a strongly periodic tiling.
\item $\mathcal{T}$ is a weakly periodic tiling.
\item $\mathcal{T}$ is a tiling with at most $k$~orbits of tiles
  under the action of its symmetry group.
\item $\mathcal{T}$ is an isohedral tiling by $180^\circ$ rotation.
\item $\mathcal{T}$ is an isohedral tiling by translation.
\end{itemize}

\begin{lemma}
\label{lemma:align}
Let $\mathcal{P}$ be a set of tiles with the alignment property for
a set $\mathcal{S}$ of side lengths.
If $\mathcal{P}$ admits a tiling with property~$H$, it admits an
aligned tiling with property~$H$.
\end{lemma}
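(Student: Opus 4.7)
The plan is to take any tiling $\mathcal{T}$ by $\mathcal{P}$ satisfying property $H$, decompose it into its aligned components, and construct a new aligned tiling $\mathcal{T}'$ by translating each component individually so that the tiles across every interface come into alignment. By the preceding structural result, each aligned component $\mathcal{C}_\alpha$ is either a bounded convex polygon or an unbounded region whose boundary, restricted to each connected infinite piece, contains at most one convex corner of the neighbouring complement component. In particular, two neighbouring aligned components meet along a single shared direction (a segment, ray, or entire line), and along each such interface either component can be translated parallel to the interface direction without disrupting the local validity of the tiling within it.

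To build $\mathcal{T}'$, I would formalize the sliding step as follows. For each interface between $\mathcal{C}_\alpha$ and $\mathcal{C}_\beta$ in direction $\mathbf{u}$, let $\delta_{\alpha\beta}$ denote the displacement parallel to $\mathbf{u}$, taken modulo the relevant element of $S$, by which $\mathcal{C}_\beta$ must slide to become strongly aligned with $\mathcal{C}_\alpha$; the alignment property for $\mathcal{P}$ with side lengths $S$ guarantees that $\delta_{\alpha\beta}$ is well-defined and takes values in a discrete set. I would then seek translation vectors $v_\alpha\in\mathbb{R}^2$ satisfying $(v_\alpha - v_\beta)\cdot\mathbf{u} = \delta_{\alpha\beta}$ on every interface. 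Existence of such an assignment amounts to consistency around every cycle in the adjacency graph of aligned components; the structural lemma, by severely constraining which shapes components can take, rules out the multi-directional cycles that could otherwise obstruct the assignment. Translating each $\mathcal{C}_\alpha$ by its chosen $v_\alpha$ yields a tiling that is strongly aligned by construction, which immediately settles the trivial case in which $H$ is just the predicate ``is a tiling''.

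For the periodic and isohedral cases, I would observe that the symmetry group $G$ of $\mathcal{T}$ permutes aligned components and preserves interface offsets, so the vectors $v_\alpha$ can be chosen $G$-equivariantly: pick one representative per $G$-orbit of components, then propagate along orbits. This ensures that $G$ remains a subgroup of the symmetry group of $\mathcal{T}'$, so strong and weak periodicity, the ``at most $k$ orbits'' condition, and both isohedry conditions transfer from $\mathcal{T}$ to $\mathcal{T}'$ uniformly. The key point in each case is that equivariance plus interface consistency determine the $v_\alpha$ compatibly up to an overall translation, which is harmless.

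The main obstacle will be verifying closure of the offset constraints around cycles, particularly when a bounded convex component with many interfaces in many directions meets several neighbours whose translation vectors are jointly constrained. I expect to reduce global cycle consistency to local loops encircling a single bounded convex component, at which point cancellation of offsets follows from the fact that, around any closed polygonal boundary, the side vectors sum to zero. The delicate part is then checking that the equivariant choice in the periodic and isohedral cases remains compatible with these local consistency conditions; here I anticipate arguing orbit by orbit, using the fact that $G$ acts on orbits of interfaces and that the discreteness of $\delta_{\alpha\beta}$ leaves only finitely many equivariant sliding choices to verify.
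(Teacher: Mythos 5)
There is a genuine gap: your central premise---that two neighbouring aligned components ``meet along a single shared direction'' and can therefore be slid parallel to that direction into alignment---is false, and the whole repair-by-translation strategy collapses with it. The structural lemma only says that a bounded aligned component is convex and that the boundary of a complementary region has at most one convex corner; it does not say interfaces are straight. A bounded convex component has sides in at least three directions, and a single neighbouring component can share boundary with it along two non-parallel segments, in which case \emph{no} nonzero relative translation preserves the fit along both segments. More generally, a component whose neighbours constrain it along non-parallel directions forces all its relative offsets $\delta_{\alpha\beta}$ to be zero, yet by the very definition of distinct aligned components at least one required offset is nonzero---so your system of constraints is infeasible, not merely in need of a cycle-consistency check. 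The unaligned kite tiling of Figure~\ref{fig:slidhextrikites} already illustrates a tiling that cannot be slid into alignment component by component. Your proposed resolution (``side vectors around a closed boundary sum to zero'') addresses consistency of offsets around a cycle, which is not the obstruction; the obstruction is that a single pair of components can be over-constrained.

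The paper's proof takes a fundamentally different route: it never repairs the given tiling $\mathcal{T}$. Instead it runs a case analysis on the possible shapes of aligned components (whole plane, half-plane, bi-infinite strip, other unbounded regions, bounded triangles/quadrilaterals, bounded convex $k$-gons with $k\ge 5$) and in each feasible case \emph{constructs a new aligned tiling from scratch} out of a single component or arbitrarily large aligned patches---reflecting a half-plane, repeating a strip by translation or $180^\circ$ rotation, rotating a triangle or quadrilateral, or invoking the Extension Theorem---checking property $H$ for each construction separately. The one remaining configuration (all components bounded convex with at least five sides) is shown to be impossible outright by an Euler-formula count. If you want to salvage your approach you would have to prove that the offset system is always feasible, but that is exactly what fails in the case the paper eliminates by counting; some form of the paper's construct-a-new-tiling argument appears unavoidable.
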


\begin{proof}
Consider a tiling~$\mathcal{T}$ with property~$H$ and look at the
forms that aligned components take in that tiling.  By the previous lemma,
such components must be simply connected; either bounded, or unbounded
and with each boundary curve having at most one convex corner (in the
sense defined above, i.e., convex considered as a corner of a
connected component of the complement of the aligned component).

If such a component is the whole plane, the tiling is aligned and we
are done.  If it is a half-plane, form an aligned tiling of that
component and its reflection, and that tiling has property~$H$ (which
can only be the trivial property or ``weakly periodic'' in that case).
If it is a strip infinite in both directions, with straight lines as
its boundaries on both sides, repeat that strip by translation if
there is such a tiling that is aligned, and otherwise repeat it by
$180^\circ$ rotation; by considering each possible
predicate~$H$ separately, the resulting aligned tiling has property~$H$.

Otherwise, if there is any unbounded component, it does not have a
translation as a symmetry and $H$~is the trivial predicate.  If an
unbounded component contains balls of radius~$R$ for all~$R$, there
are aligned tilings of arbitrarily large regions of the plane, and so
of the whole plane.  The only way an unbounded component can avoid
containing such balls (given that each boundary curve has at most one
convex corner and all angles are rational sub-multiples of~$\pi$, which
implies that all boundary curves end in rays in finitely many
directions) is for it to include a semi-infinite strip (bounded on
either side by rays).  But there are only finitely many ways for
aligned tiles to cross the width of the strip at any point, so tiling
a semi-infinite strip implies the existence of a periodic aligned
tiling of an infinite strip, and thus a periodic aligned tiling of the
whole plane.

It remains to consider the case where there is no unbounded component.
If some component is a triangle or a quadrilateral, tiling that by
$180^\circ$~rotation yields an aligned tiling of the whole plane,
which must have property~$H$ (if property~$H$ is `isohedral tiling by
translation', this case cannot occur; a component could be an infinite
strip, but not a single parallelogram).  If components contain
unbounded balls, the tiling has no translation as a symmetry and
aligned tilings of arbitrarily large regions of the plane imply
aligned tilings of the whole plane.  If components do not contain
unbounded balls but also are not contained in bounded balls (i.e.,
they are of unbounded size in one direction only), they must have
pairs of opposite parallel sides, of unbounded length but a bounded
distance apart; the tiling is at most weakly periodic, and the same
argument as for components including a semi-infinite strip applies
since there are only finitely many possible distances between those
opposite parallel sides.

Otherwise, all components are convex polygons of bounded size with at
least five sides; we will show this case leads to a contradiction.  
Observe that every
vertex of the induced tiling by these polygons lies in the middle of a
side of one of the polygons and has degree exactly~$3$.
If a vertex does not lie in the middle of a side, or has
degree $4$ or more, there is a vertex~$v$ of a polygon~$P$, either not
in the middle of a side or in the middle of a side that is not
collinear with either of the sides $vv_1$ and $vv_2$ of~$P$ next
to~$v$, and the same argument that excluded convex corners on a closed
curve earlier serves to exclude this possibility as well.

We now apply Euler's theorem for plane maps.  Suppose, for some
sufficiently large~$R$, a ball of radius~$R$ contains $t_k$ components
that are $k$-gons (where $\sum_k t_k = \Omega(R^2)$).  A vertex of the
tiling is incident with two corners of tiles and a point in the middle
of a side, so there are~$\sum_k k t_k / 2 + O(R)$ vertices in that
ball.  The number of sides of edges in the tiling in that ball (i.e.,
twice the number of edges) is $\sum_k \frac{3}{2}k t_k + O(R)$, since
there are $\sum_k k t_k + O(R)$ sides of polygons, and each vertex is
in the middle of a side so serves to increase the number of sides of
edges by~$1$.  But now $\sum_k t_k + \sum_k k t_k / 2 = \sum_k
\frac{3}{4}k t_k + O(R)$, so $\sum_k t_k = \sum_k k t_k / 4 + O(R)$.
Since all $k \ge 5$, we have $\sum_k k t_k / 4 \ge \sum_k \frac{5}{4}
t_k$, contradicting that equality.
\end{proof}

Now we strengthen this lemma to a stricter notion of aligned tilings by
polykites, by considering what edge-to-edge tilings by the monokite
are possible.

\begin{lemma}
The only edge-to-edge tilings by the monokite are (a) the tiling
resulting from a~$180^\circ$~rotation about the midpoint of each side
(Figure~\ref{fig:kite180tiling}), and (b) tilings composed of rows of
equilateral triangles each composed of three kites, where some of
those rows may be translated relative to each other (by the length of the
long side of the kite) so they are no longer aligned as in the Laves
tiling.
\end{lemma}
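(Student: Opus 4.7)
The plan is to classify all possible vertex configurations in an edge-to-edge tiling by the monokite, and then use them to force a global structure. First I would enumerate the ways the angles $\{60^\circ, 90^\circ, 120^\circ\}$ of the kite can sum to $360^\circ$ around a vertex, and discard any whose cyclic order would require a $60^\circ$ corner adjacent to a $120^\circ$ corner: such an adjacency is forbidden because the two edges meeting at a $60^\circ$ corner are both long (length $\sqrt{3}$) while the two at a $120^\circ$ corner are both short (length $1$), so their common edge would need two different lengths. Only five admissible vertex figures should survive: $6\cdot 60^\circ$, $3\cdot 60^\circ + 2\cdot 90^\circ$ (with the two $90^\circ$s adjacent), $4\cdot 90^\circ$, $60^\circ + 90^\circ + 120^\circ + 90^\circ$ in cyclic order, and $3\cdot 120^\circ$.

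The central observation is that every kite has a $120^\circ$ corner, so each such corner must lie at a vertex of one of the last two types. A direct computation with three congruent kites sharing their $120^\circ$ corners shows that at every $3\cdot 120^\circ$ vertex the three kites glue together into a single equilateral triangle of side $2\sqrt{3}$, with the three $60^\circ$ corners at its vertices and the three pairs of $90^\circ$ corners collinear at the midpoints of its sides; no such triangle forms at a $60^\circ + 90^\circ + 120^\circ + 90^\circ$ vertex. The main obstacle is then establishing the dichotomy that in any tiling either every $120^\circ$ corner lies at a $3\cdot 120^\circ$ vertex or every $120^\circ$ corner lies at a $60^\circ + 90^\circ + 120^\circ + 90^\circ$ vertex. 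I would prove this by propagation along short edges: at a $60^\circ + 90^\circ + 120^\circ + 90^\circ$ vertex $v$ the $120^\circ$ kite $K$ shares each of its two short edges with one of the flanking $90^\circ$ kites, and that shared short edge runs to the $120^\circ$ corner of the neighbouring kite, at which $K$ itself contributes a $90^\circ$ corner; the presence of this $90^\circ$ corner rules out the $3\cdot 120^\circ$ type at the neighbouring vertex, so it too must be of the $60^\circ + 90^\circ + 120^\circ + 90^\circ$ type, and the property spreads across the tiling. In the opposite direction, the triangle rigidity at a $3\cdot 120^\circ$ vertex together with the forced $4\cdot 90^\circ$ midpoint vertex on each triangle side forces the neighbouring triangles to continue in the same fashion.

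In the all-triangles case every kite belongs to a unique equilateral triangle, the triangles tile the plane edge-to-edge, and the forced $4\cdot 90^\circ$ midpoint vertex along each triangle side organises the triangles into parallel strips of height $3$ in which up- and down-pointing triangles alternate without freedom. The boundary between two consecutive strips is a straight line whose vertices are spaced by $\sqrt{3}$ and alternate between $60^\circ$-type and $90^\circ$-type positions, and the edge-to-edge condition forces the relative shift of adjacent strips to be an integer multiple of $\sqrt{3}$: an even multiple recovers the Laves $[3.4.6.4]$ tiling, while an odd multiple produces $3\cdot 60^\circ + 2\cdot 90^\circ$ vertices along the shared boundary, yielding exactly option (b). In the remaining case, where every $120^\circ$ corner lies at a $60^\circ + 90^\circ + 120^\circ + 90^\circ$ vertex, the same propagation argument rigidly determines the tiling up to Euclidean motion, and a direct inspection identifies it with the tiling generated by iterated $180^\circ$ rotations about the midpoints of the sides of a single seed kite, which is option (a).
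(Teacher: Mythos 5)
Your overall strategy is the same as the paper's: enumerate the admissible vertex figures using the fact that the $60^\circ$ corner of the kite is flanked by two long sides and the $120^\circ$ corner by two short sides, observe that exactly two of the five surviving figures (the $60^\circ,90^\circ,120^\circ,90^\circ$ figure and the $90^\circ,90^\circ,60^\circ,60^\circ,60^\circ$ figure) do not occur in the Laves tiling, and then argue that each of these forces a global structure. Your vertex-figure classification is correct, and the observation that a $3\cdot 120^\circ$ vertex welds its three kites into an equilateral triangle is a clean way to see where option (b) comes from.

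However, the key forcing step has a genuine gap. Your propagation for the dichotomy moves only along short edges: from the $120^\circ$ kite $K$ at a $(60^\circ,90^\circ,120^\circ,90^\circ)$ vertex you reach the two kites sharing $K$'s short edges. Since every kite has exactly two short edges, the ``shares a short edge'' graph is $2$-regular, so iterating this traces out a single bi-infinite chain of kites, not the whole tiling; the kite contributing the $60^\circ$ at each such vertex, and everything beyond it, is never reached, so ``the property spreads across the tiling'' is unjustified as stated. The paper closes exactly this hole by exhibiting an explicit ring of twelve further forced tiles around the initial vertex figure, which forces every neighbouring vertex (not just those at the ends of short edges) to have the same figure, after which induction finishes. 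A second problem is internal to your all-triangles case: the claims that ``the triangles tile the plane edge-to-edge'' and that the midpoint of \emph{each} triangle side is a forced $4\cdot 90^\circ$ vertex are false precisely for the slid tilings the lemma must allow --- across a slid row boundary the triangles are offset by half a side, do not meet edge-to-edge, and their side midpoints are $(90^\circ,90^\circ,60^\circ,60^\circ,60^\circ)$ vertices. You in effect contradict these claims two sentences later when you permit odd multiples of $\sqrt{3}$. The row structure therefore cannot be derived from a forced $4\cdot 90^\circ$ midpoint; the paper instead starts from an occurrence of the five-corner figure, forces two slid rows of triangles around it, and only then argues that each completed row can be bordered only by another such row in one of two positions.
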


\begin{figure}[htp!]
\begin{center}
\begin{tikzpicture}[x=2mm,y=2mm]
  \threekite{0}{0}{0}{};
  \threekite{180}{-1.5}{0}{};
  \threekite{0}{1}{1}{};
  \threekite{180}{-0.5}{1}{};
  \threekite{0}{2}{2}{};
  \threekite{180}{0.5}{2}{};
  \threekite{0}{1.5}{-1.5}{};
  \threekite{180}{0}{-1.5}{};
  \threekite{0}{2.5}{-0.5}{};
  \threekite{180}{1}{-0.5}{};
  \threekite{0}{3.5}{0.5}{};
  \threekite{180}{2}{0.5}{};
  \threekite{0}{3}{-3}{};
  \threekite{180}{1.5}{-3}{};
  \threekite{0}{4}{-2}{};
  \threekite{180}{2.5}{-2}{};
  \threekite{0}{5}{-1}{};
  \threekite{180}{3.5}{-1}{};
\end{tikzpicture}
\end{center}
\caption{$180^\circ$ tiling by kites}
\label{fig:kite180tiling}
\end{figure}
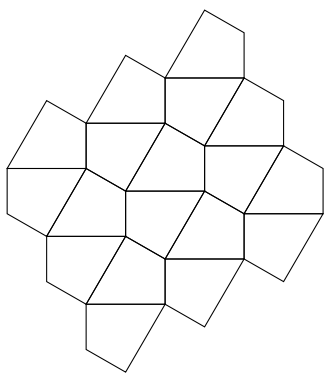

\begin{proof}
There are exactly two possible vertex figures in an edge-to-edge
tiling by the monokite that do not appear in the Laves tiling: one
with angles of $90^\circ$, $120^\circ$, $90^\circ$, and $60^\circ$ in
that order (Figure~\ref{fig:kite180vertex}), and one with angles of
$90^\circ$, $90^\circ$, $60^\circ$, $60^\circ$, and $60^\circ$ in that
order (Figure~\ref{fig:kiteslidvertex}).  If the first one occurs in a
tiling, successive surrounding tiles are forced (in the order
numbered) that force all neighbouring vertices, and so all vertices,
to have that vertex figure.  If the other one occurs in a tiling,
successive surrounding tiles are forced (in the order numbered, taking
into account that the first vertex figure cannot appear anywhere in
the tiling) that force two neighbouring vertices to have that vertex
figure, and thus force two rows of equilateral triangles, slid
relative to each other, and then the only possibilities on either side
of such a row are another such row in either of two positions.
\end{proof}

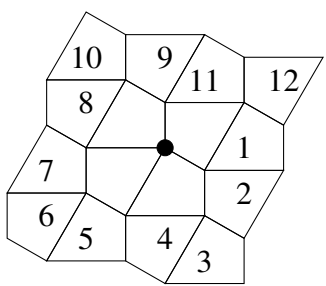
\begin{figure}[htp!]
\begin{center}
\begin{tikzpicture}[x=2mm,y=2mm]
  \threekite{0}{0}{0}{};
  \threekite{180}{-1.5}{0}{};
  \threekite{0}{1}{1}{};
  \threekite{180}{0}{-1.5}{};
  \threekite{180}{1}{-0.5}{1};
  \threekite{0}{2.5}{-0.5}{2};
  \threekite{180}{1.5}{-3}{3};
  \threekite{0}{1.5}{-1.5}{4};
  \threekite{180}{-1}{-2.5}{5};
  \threekite{0}{-1}{-1}{6};
  \threekite{180}{-2.5}{-1}{7};
  \threekite{0}{-1.5}{1.5}{8};
  \threekite{0}{-0.5}{2.5}{9};
  \threekite{180}{-3}{1.5}{10};
  \threekite{180}{-0.5}{1}{11};
  \threekite{0}{2}{2}{12};
  \markpt{2}{0};
\end{tikzpicture}
\end{center}
\caption{Vertex with angles of $90^\circ$, $120^\circ$, $90^\circ$, and
  $60^\circ$ in that order}
\label{fig:kite180vertex}
\end{figure}

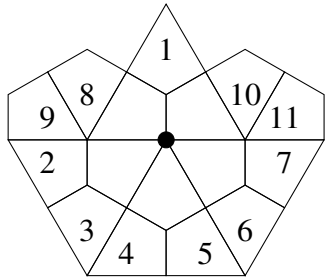
\begin{figure}[htp!]
\begin{center}
\begin{tikzpicture}[x=2mm,y=2mm]
  \threekite{0}{0}{0}{};
  \threekite{60}{0}{0}{};
  \threekite{120}{0}{0}{};
  \threekite{180}{-1.5}{0}{};
  \threekite{-60}{1.5}{0}{};
  \threekite{60}{-1.5}{3}{1};
  \threekite{120}{-3}{0}{2};
  \threekite{-120}{0}{-3}{3};
  \threekite{180}{0}{-3}{4};
  \threekite{-60}{3}{-3}{5};
  \threekite{-120}{3}{-3}{6};
  \threekite{0}{3}{0}{7};
  \threekite{-120}{-1.5}{0}{8};
  \threekite{-60}{-1.5}{0}{9};
  \threekite{-120}{1.5}{0}{10};
  \threekite{180}{1.5}{0}{11};
  \markpt{2}{0};
\end{tikzpicture}
\end{center}
\caption{Vertex with angles of $90^\circ$, $90^\circ$, $60^\circ$,
  $60^\circ$, and $60^\circ$ in that order}
\label{fig:kiteslidvertex}
\end{figure}

\begin{lemma}
\label{lemma:polykitealign}
If $\mathcal{P}$ is a finite set of closed topological disk polykites,
all from the same underlying Laves tiling, and $\mathcal{P}$ admits a
tiling with property~$H$, it admits a tiling with property~$H$ where
all polykites in the tiling are aligned to the same underlying Laves
tiling, except possibly when $\mathcal{P}$ contains the monokite and
$H$ is ``isohedral tiling by $180^\circ$ rotation''.
\end{lemma}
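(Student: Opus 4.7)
The plan is to combine Lemma~\ref{lemma:align} with the preceding classification of edge-to-edge tilings by the monokite. Applying Lemma~\ref{lemma:align} with $S=\{1,\sqrt{3}\}$ yields a weakly aligned tiling $\mathcal{T}'$ satisfying $H$ in which the constituent kites of the polykites glue edge-to-edge. The preceding lemma then forces the induced monokite tiling into one of two types: type~(a), the $180^\circ$-rotation tiling with vertex figure $[90^\circ,120^\circ,90^\circ,60^\circ]$ at every vertex, or type~(b), rows of equilateral three-kite triangles with some rows possibly slid by $\sqrt{3}$. The un-slid variant of type~(b) is already the $[3.4.6.4]$ Laves tiling, so it suffices to rule out type~(a) and the slid variants of type~(b) whenever $\mathcal{T}'$ uses a non-monokite polykite, and to treat the monokite-only sub-case separately.

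Suppose first that $\mathcal{T}'$ uses some polykite with two or more kites. Such a polykite has at least one interior edge shared by two of its constituent kites, with endpoints at Laves vertices of degree $3$, $4$, or $6$; at each such endpoint the polykite contributes two cyclically consecutive \emph{equal} kite-corner angles ($2\times 120^\circ$, $2\times 90^\circ$, or $2\times 60^\circ$ respectively) to the tiling's vertex figure. The type~(a) vertex figure $[90^\circ,120^\circ,90^\circ,60^\circ]$ has no two consecutive equal angles, immediately ruling type~(a) out. For slid type~(b) I would use the rigidity of the polykite: wherever any shared interior edge lies on or crosses a slid row boundary, the polykite's internal Laves structure pins its two kites to their Laves-adjacent relative positions, which a $\sqrt{3}$ inter-row shift displaces incompatibly. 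So any slide must occur at a row boundary straddled by no polykite of $\mathcal{T}'$; such slides are combinatorially inert and can be undone row by row, each polykite remaining in its original row throughout, yielding the $[3.4.6.4]$ Laves tiling. The un-sliding is a symmetry-preserving bijection, so property $H$ is preserved.

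Next, if $\mathcal{T}'$ uses only monokite tiles from $\mathcal{P}$, the $[3.4.6.4]$ Laves tiling by monokites is itself aligned, strongly periodic, and tile-transitive under the wallpaper group $p6m$, so it satisfies every predicate on the list except possibly ``isohedral by $180^\circ$ rotation''. Under the subgroup of $p6m$ generated solely by $180^\circ$ rotations, the six kite orientations fall into three orbits, so the Laves tiling fails that one predicate; the type~(a) tiling does satisfy it, being transitive under its own $180^\circ$-rotation symmetries, but is not Laves-aligned. This is exactly the exception permitted in the lemma statement.

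The main obstacle will be making the slid type~(b) rigidity argument precise: one must verify that a shared interior edge of any non-monokite polykite located at or across a hypothetical slid row boundary produces a genuine geometric contradiction, not merely an unfamiliar vertex figure. This should reduce to the observation that a $\sqrt{3}$ shift along a row boundary is not a Laves-adjacency-preserving transformation between the two sides, so the polykite's predetermined internal kite positions cannot survive it.
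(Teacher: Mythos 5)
Your proposal is correct and follows essentially the same route as the paper: reduce to an edge-to-edge kite tiling via Lemma~\ref{lemma:align}, invoke the classification of edge-to-edge monokite tilings, translate blocks of consecutive rows to undo any slides, and absorb the $180^\circ$ monokite tiling into the stated exception for ``isohedral tiling by $180^\circ$ rotation''. Your explicit observations---that a polykite with two or more kites forces two cyclically consecutive equal angles at a tiling vertex, which is incompatible with the $[90^\circ,120^\circ,90^\circ,60^\circ]$ vertex figure, and that a slid row boundary cannot be straddled by any polykite---usefully spell out steps the paper's very terse proof leaves implicit.
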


\begin{proof}
If the edge-to-edge tiling with property~$H$ (``aligned'' in the more
general sense) is the~$180^\circ$ tiling by the monokite, we are done
because the monokite admits an isohedral tiling.  Otherwise, taking a
minimal block of consecutive rows of equilateral triangles filled
exactly with tiles from~$\mathcal{P}$ and translating it so as to be
aligned with the underlying Laves tiling produces a tiling with
property~$H$.
\end{proof}

\begin{lemma}
\label{lemma:tileaalign}
All tilings by the hat polykite are aligned
to an underlying $[3.4.6.4]$ Laves tiling.
\end{lemma}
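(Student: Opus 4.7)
The plan is to argue by contradiction. Suppose some tiling $\mathcal{T}$ by the hat is not aligned to a single $[3.4.6.4]$ Laves tiling. We have already shown (in the preceding lemma about polykites) that the hat has the alignment property for side lengths $\{1,\sqrt{3}\}$, so by Lemma~\ref{lemma:align} the tiles of $\mathcal{T}$ partition into aligned components. The earlier lemmas classifying edge-to-edge monokite tilings refine this further: within each aligned component, all tiles lie on a common $[3.4.6.4]$ Laves tiling (the hat is not the monokite, so the sliding-rows pathology is not available once we recall that any aligned component containing a hat is itself a union of polykites from a single Laves tiling). Our goal is then to rule out the existence of more than one such component.

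The plan is to choose two adjacent components $\mathcal{C}_1$ and $\mathcal{C}_2$ whose kites belong to distinct Laves tilings, and examine a maximal boundary segment $\ell$ between them. Since $1$ and $\sqrt{3}$ are linearly independent over $\mathbb{Q}$, the same multiset of $1$-sides and $\sqrt{3}$-sides appears along $\ell$ on each side; the two tilings can only differ by a translation (or rotation-plus-translation) along $\ell$ that leaves the multiset of projected edges invariant. Because $\mathcal{C}_1$ and $\mathcal{C}_2$ are not $\ell$-aligned, there must exist at least one point $v\in\ell$ that is a vertex of the tiling on one side (say, the $\mathcal{C}_1$~side) but strictly interior to a side of some hat on the $\mathcal{C}_2$~side. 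At such a $v$, the hat corners on the $\mathcal{C}_1$ side must sum to exactly $180^\circ$.

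The main step is then a case analysis of the hat's corners. I would first rule out that a $240^\circ$ reflex corner can participate at $v$, since it would require the remaining corners to sum to a negative angle. The remaining corners of the hat have angles in $\{60^\circ,90^\circ,120^\circ\}$, so the partitions of $180^\circ$ at $v$ are $60^\circ+120^\circ$, $90^\circ+90^\circ$, or $60^\circ+60^\circ+60^\circ$, possibly with a single $180^\circ$ contribution from a length-2 side. For each partition I would list, corner-by-corner, which hat corners realize the specified angle, which length of side ($1$ or $\sqrt{3}$) emerges along $\ell$, and what the forced local patch on the $\mathcal{C}_1$ side looks like. Then I would verify that matching this forced $\mathcal{C}_1$-side configuration against the forced $\mathcal{C}_2$-side configuration (a single straight hat edge through $v$ with its own forced neighbours elsewhere on $\mathcal{C}_2$) leads to an edge-length mismatch along $\ell$ or an angle sum exceeding $360^\circ$ at some nearby tiling vertex.

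The hard part will be keeping the case analysis tractable: there are a handful of partition types of $180^\circ$, but each combines with several choices of corner identity and side-length assignments, mirroring in spirit (though on a larger corner set) the monokite analysis of Figures~\ref{fig:kite180vertex} and~\ref{fig:kiteslidvertex}. The analysis is ultimately finite because the hat has finitely many corners and only two side lengths, and in every case the constraints from the $\mathcal{C}_2$ side propagate a length-$1$-versus-length-$\sqrt{3}$ incompatibility back to the $\mathcal{C}_1$ side. Concluding that no boundary segment $\ell$ can exist, we obtain a single aligned component, so $\mathcal{T}$ is aligned to one $[3.4.6.4]$ Laves tiling, as required.
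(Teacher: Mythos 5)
Your reduction to aligned components via Lemma~\ref{lemma:align} and the alignment property of polykites matches the paper's setup, but the core of your argument has a genuine gap. The configuration you propose to contradict---a tiling vertex on the $\mathcal{C}_1$ side of $\ell$ lying strictly interior to a side of a hat on the $\mathcal{C}_2$ side---is not locally contradictory: hat tilings are not edge-to-edge, and in particular two $90^\circ$ corners meeting above $\ell$ opposite the midpoint of a length-$2$ side below $\ell$ is a perfectly realizable, fully aligned configuration. So no ``corner-by-corner'' analysis at a single point $v$ can finish the job; the contradiction has to come from the quantitative misalignment of offsets (non-integer multiples of a common side length), which forces you to propagate forced corners along the whole of $\ell$, and you supply no mechanism for why that propagation terminates. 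The ingredient that supplies termination---and on which the paper's proof is built---is that every interior angle of the hat is at least $90^\circ$ and no side of the hat has $90^\circ$ corners at both ends, so a maximal segment in the union of the tile boundaries carries at most two tile sides on each side and in particular is finite. From this the paper concludes there are no infinite rays, hence that each aligned component is a \emph{bounded convex} set whose corners must be corners of a single hat, and then checks that no corner of the hat can be such a corner (four are reflex, seven are adjacent to a reflex corner, and for the remaining two, extending a side from the corner cuts off a region too small to fill). Your plan never isolates this finiteness, so the case analysis you describe has no reason to close.

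Two further problems. First, your list of corner angles is wrong: the hat has no $60^\circ$ corners (its convex corners are all $90^\circ$ or $120^\circ$, and every interior angle is at least $90^\circ$), so of your proposed partitions of $180^\circ$ only $90^\circ+90^\circ$ survives; this happens to shrink your case list, but it shows the analysis was not grounded in the actual tile. Second, your dismissal of the slid-rows monokite pathology is circular: ``once we recall that any aligned component containing a hat is itself a union of polykites from a single Laves tiling'' is precisely what the lemma asserts. The paper instead excludes the slid-rows tilings because the boundary between two slid rows would be an infinite line in the union of the hat boundaries, contradicting the no-infinite-rays observation above, and excludes the $180^\circ$-rotation monokite tiling because no two adjacent kites of the hat are compatible with it.
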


\begin{proof}
Note that any maximal segment in the union of the boundaries of tiles in such
a tiling can contain no more than two sides of tiles, since any
$90^\circ$~angle is adjacent on either side to angles greater
than~$90^\circ$.  In particular, there are no infinite rays contained
in the union of the boundaries of tiles.

This constraint immediately excludes the case of aligned tilings decomposing 
into rows of equilateral triangles slid relative to each other, and no two
adjacent kites in the polykite are consistent with the
$180^\circ$~rotation tiling by the monokite.  So any tiling not
aligned with an underlying $[3.4.6.4]$ is also unaligned in the more
general sense, and we consider aligned components.  Because there are
no infinite rays among the boundaries of tiles, such aligned
components must be bounded convex sets.  The corners of those sets
must be corners of a single polykite (since any two angles of the
polykite add to at least~$180^\circ$).  But no corner of the polykite
can be a corner of a convex set tiled by the polykite: four have
reflex angles, seven are adjacent to a reflex angle, and for the
remaining two, extending one of the sides from that vertex cuts off a
region too small to be filled by polykites
(Figure~\ref{fig:polykiteconvex}).
\end{proof}

\begin{figure}[htp!]
\begin{center}
\begin{tikzpicture}[x=5mm,y=5mm]
  \draw[\colextside,ultra thick] \vcoords{0}{0} -- \vcoords{4}{-4};
  \draw[\colextside,ultra thick] \vcoords{6}{-5} -- \vcoords{6}{-2};
  \tileA{0}{0}{0}{};
  \markpt{0}{0};
  \markpt{6}{-5};
\end{tikzpicture}
\end{center}
\caption{Extending a side of the polykite from either vertex that is
  neither a reflex angle nor adjacent to one}
\label{fig:polykiteconvex}
\end{figure}
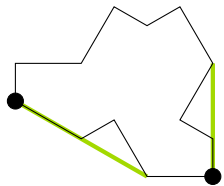

\section{Case analysis for $1$-patches}
\label{sec:patches}

We present here details of a computer-generated but human-verifiable
case analysis, based on consideration of $1$-patches rather than
$2$-patches.  This analysis can be used to complete a variant of the
proof in \secref{sec:clusters} that
when tiles in a tiling by the hat polykite
are assigned labels following the rules given there, then (a)~the labels
assigned do induce a division into the clusters shown, and (b)~the
clusters adjoin other clusters in accordance with the matching rules.
As is justified in Appendix~\ref{sec:align}, we only consider tilings
where all tiles are aligned with an underlying $[3.4.6.4]$ Laves
tiling.

\subsection{Enumeration of neighbours}

First we produce a list of possible neighbours of the hat polykite in a
tiling.  There are $58$~possible neighbours when we only require such
a neighbour not to intersect the original polykite; these are shown in
Figure~\ref{fig:nbr}, with the original polykite shaded.  The first
$41$ of these neighbours remain in consideration for the enumeration
of $1$-patches.  The final~$17$ are immediately eliminated (in the
order shown) because they cannot be extended to a tiling: either there
is no possible neighbour that can contain kite shaded in green (without
resulting in an intersection, or a pair of tiles that were previously
eliminated as possible neighbours), or we eliminated $Y$ as a
neighbour of~$X$ and so can also eliminate $X$ as a neighbour of~$Y$.

\begin{figure}[htp!]
\renewcommand\thesubfigure{\arabic{subfigure}}%
\captionsetup{margin=0pt,justification=raggedright}%
\begin{center}
\subfloat[Possible neighbour~$1$]{%
\begin{minipage}[b]{6cm}
\begin{center}
\begin{tikzpicture}[x=3mm,y=3mm]
  \ftileA{0}{0}{0}{};
  \tileA{0}{-2}{1}{};
\end{tikzpicture}%
\end{center}%
\end{minipage}%
} \qquad \subfloat[Possible neighbour~$2$]{%
\begin{minipage}[b]{6cm}
\begin{center}
\begin{tikzpicture}[x=3mm,y=3mm]
  \ftileA{0}{0}{0}{};
  \tileA{60}{-1}{-1}{};
\end{tikzpicture}%
\end{center}%
\end{minipage}%
} \\ \subfloat[Possible neighbour~$3$]{%
\begin{minipage}[b]{6cm}
\begin{center}
\begin{tikzpicture}[x=3mm,y=3mm]
  \ftileA{0}{0}{0}{};
  \tileA{60}{-1}{0}{};
\end{tikzpicture}%
\end{center}%
\end{minipage}%
} \qquad \subfloat[Possible neighbour~$4$]{%
\begin{minipage}[b]{6cm}
\begin{center}
\begin{tikzpicture}[x=3mm,y=3mm]
  \ftileA{0}{0}{0}{};
  \tileA{120}{-1}{0}{};
\end{tikzpicture}%
\end{center}%
\end{minipage}%
} \\ \subfloat[Possible neighbour~$5$]{%
\begin{minipage}[b]{6cm}
\begin{center}
\begin{tikzpicture}[x=3mm,y=3mm]
  \ftileA{0}{0}{0}{};
  \tileA{180}{-1}{0}{};
\end{tikzpicture}%
\end{center}%
\end{minipage}%
} \qquad \subfloat[Possible neighbour~$6$]{%
\begin{minipage}[b]{6cm}
\begin{center}
\begin{tikzpicture}[x=3mm,y=3mm]
  \ftileA{0}{0}{0}{};
  \tileA{300}{-1}{0}{};
\end{tikzpicture}%
\end{center}%
\end{minipage}%
}%
\end{center}
\caption{Possible neighbours (part 1)}
\label{fig:nbr}
\end{figure}
\begin{figure}[htp!]
\renewcommand\thesubfigure{\arabic{subfigure}}%
\ContinuedFloat
\captionsetup{margin=0pt,justification=raggedright}%
\begin{center}
\subfloat[Possible neighbour~$7$]{%
\begin{minipage}[b]{6cm}
\begin{center}
\begin{tikzpicture}[x=3mm,y=3mm]
  \ftileA{0}{0}{0}{};
  \tileAr{60}{-1}{0}{};
\end{tikzpicture}%
\end{center}%
\end{minipage}%
} \qquad \subfloat[Possible neighbour~$8$]{%
\begin{minipage}[b]{6cm}
\begin{center}
\begin{tikzpicture}[x=3mm,y=3mm]
  \ftileA{0}{0}{0}{};
  \tileA{240}{-1}{1}{};
\end{tikzpicture}%
\end{center}%
\end{minipage}%
} \\ \subfloat[Possible neighbour~$9$]{%
\begin{minipage}[b]{6cm}
\begin{center}
\begin{tikzpicture}[x=3mm,y=3mm]
  \ftileA{0}{0}{0}{};
  \tileA{300}{-1}{1}{};
\end{tikzpicture}%
\end{center}%
\end{minipage}%
} \qquad \subfloat[Possible neighbour~$10$]{%
\begin{minipage}[b]{6cm}
\begin{center}
\begin{tikzpicture}[x=3mm,y=3mm]
  \ftileA{0}{0}{0}{};
  \tileAr{0}{-1}{1}{};
\end{tikzpicture}%
\end{center}%
\end{minipage}%
} \\ \subfloat[Possible neighbour~$11$]{%
\begin{minipage}[b]{6cm}
\begin{center}
\begin{tikzpicture}[x=3mm,y=3mm]
  \ftileA{0}{0}{0}{};
  \tileAr{300}{-1}{1}{};
\end{tikzpicture}%
\end{center}%
\end{minipage}%
} \qquad \subfloat[Possible neighbour~$12$]{%
\begin{minipage}[b]{6cm}
\begin{center}
\begin{tikzpicture}[x=3mm,y=3mm]
  \ftileA{0}{0}{0}{};
  \tileAr{240}{-1}{2}{};
\end{tikzpicture}%
\end{center}%
\end{minipage}%
} \\ \subfloat[Possible neighbour~$13$]{%
\begin{minipage}[b]{6cm}
\begin{center}
\begin{tikzpicture}[x=3mm,y=3mm]
  \ftileA{0}{0}{0}{};
  \tileA{0}{0}{-1}{};
\end{tikzpicture}%
\end{center}%
\end{minipage}%
} \qquad \subfloat[Possible neighbour~$14$]{%
\begin{minipage}[b]{6cm}
\begin{center}
\begin{tikzpicture}[x=3mm,y=3mm]
  \ftileA{0}{0}{0}{};
  \tileA{180}{0}{-1}{};
\end{tikzpicture}%
\end{center}%
\end{minipage}%
} \\ \subfloat[Possible neighbour~$15$]{%
\begin{minipage}[b]{6cm}
\begin{center}
\begin{tikzpicture}[x=3mm,y=3mm]
  \ftileA{0}{0}{0}{};
  \tileA{240}{0}{-1}{};
\end{tikzpicture}%
\end{center}%
\end{minipage}%
} \qquad \subfloat[Possible neighbour~$16$]{%
\begin{minipage}[b]{6cm}
\begin{center}
\begin{tikzpicture}[x=3mm,y=3mm]
  \ftileA{0}{0}{0}{};
  \tileAr{120}{0}{-1}{};
\end{tikzpicture}%
\end{center}%
\end{minipage}%
}%
\end{center}
\caption{Possible neighbours (part 2)}
\label{fig:nbr:2}
\end{figure}
\begin{figure}[htp!]
\renewcommand\thesubfigure{\arabic{subfigure}}%
\ContinuedFloat
\captionsetup{margin=0pt,justification=raggedright}%
\begin{center}
\subfloat[Possible neighbour~$17$]{%
\begin{minipage}[b]{6cm}
\begin{center}
\begin{tikzpicture}[x=3mm,y=3mm]
  \ftileA{0}{0}{0}{};
  \tileA{0}{0}{1}{};
\end{tikzpicture}%
\end{center}%
\end{minipage}%
} \qquad \subfloat[Possible neighbour~$18$]{%
\begin{minipage}[b]{6cm}
\begin{center}
\begin{tikzpicture}[x=3mm,y=3mm]
  \ftileA{0}{0}{0}{};
  \tileA{60}{0}{1}{};
\end{tikzpicture}%
\end{center}%
\end{minipage}%
} \\ \subfloat[Possible neighbour~$19$]{%
\begin{minipage}[b]{6cm}
\begin{center}
\begin{tikzpicture}[x=3mm,y=3mm]
  \ftileA{0}{0}{0}{};
  \tileA{120}{0}{1}{};
\end{tikzpicture}%
\end{center}%
\end{minipage}%
} \qquad \subfloat[Possible neighbour~$20$]{%
\begin{minipage}[b]{6cm}
\begin{center}
\begin{tikzpicture}[x=3mm,y=3mm]
  \ftileA{0}{0}{0}{};
  \tileA{180}{0}{1}{};
\end{tikzpicture}%
\end{center}%
\end{minipage}%
} \\ \subfloat[Possible neighbour~$21$]{%
\begin{minipage}[b]{6cm}
\begin{center}
\begin{tikzpicture}[x=3mm,y=3mm]
  \ftileA{0}{0}{0}{};
  \tileAr{0}{0}{1}{};
\end{tikzpicture}%
\end{center}%
\end{minipage}%
} \qquad \subfloat[Possible neighbour~$22$]{%
\begin{minipage}[b]{6cm}
\begin{center}
\begin{tikzpicture}[x=3mm,y=3mm]
  \ftileA{0}{0}{0}{};
  \tileAr{60}{0}{1}{};
\end{tikzpicture}%
\end{center}%
\end{minipage}%
} \\ \subfloat[Possible neighbour~$23$]{%
\begin{minipage}[b]{6cm}
\begin{center}
\begin{tikzpicture}[x=3mm,y=3mm]
  \ftileA{0}{0}{0}{};
  \tileAr{120}{0}{1}{};
\end{tikzpicture}%
\end{center}%
\end{minipage}%
} \qquad \subfloat[Possible neighbour~$24$]{%
\begin{minipage}[b]{6cm}
\begin{center}
\begin{tikzpicture}[x=3mm,y=3mm]
  \ftileA{0}{0}{0}{};
  \tileAr{180}{0}{1}{};
\end{tikzpicture}%
\end{center}%
\end{minipage}%
} \\ \subfloat[Possible neighbour~$25$]{%
\begin{minipage}[b]{6cm}
\begin{center}
\begin{tikzpicture}[x=3mm,y=3mm]
  \ftileA{0}{0}{0}{};
  \tileA{300}{0}{2}{};
\end{tikzpicture}%
\end{center}%
\end{minipage}%
} \qquad \subfloat[Possible neighbour~$26$]{%
\begin{minipage}[b]{6cm}
\begin{center}
\begin{tikzpicture}[x=3mm,y=3mm]
  \ftileA{0}{0}{0}{};
  \tileA{120}{1}{-2}{};
\end{tikzpicture}%
\end{center}%
\end{minipage}%
}%
\end{center}
\caption{Possible neighbours (part 3)}
\label{fig:nbr:3}
\end{figure}
\begin{figure}[htp!]
\renewcommand\thesubfigure{\arabic{subfigure}}%
\ContinuedFloat
\captionsetup{margin=0pt,justification=raggedright}%
\begin{center}
\subfloat[Possible neighbour~$27$]{%
\begin{minipage}[b]{6cm}
\begin{center}
\begin{tikzpicture}[x=3mm,y=3mm]
  \ftileA{0}{0}{0}{};
  \tileA{300}{1}{-1}{};
\end{tikzpicture}%
\end{center}%
\end{minipage}%
} \qquad \subfloat[Possible neighbour~$28$]{%
\begin{minipage}[b]{6cm}
\begin{center}
\begin{tikzpicture}[x=3mm,y=3mm]
  \ftileA{0}{0}{0}{};
  \tileAr{180}{1}{-1}{};
\end{tikzpicture}%
\end{center}%
\end{minipage}%
} \\ \subfloat[Possible neighbour~$29$]{%
\begin{minipage}[b]{6cm}
\begin{center}
\begin{tikzpicture}[x=3mm,y=3mm]
  \ftileA{0}{0}{0}{};
  \tileA{60}{1}{0}{};
\end{tikzpicture}%
\end{center}%
\end{minipage}%
} \qquad \subfloat[Possible neighbour~$30$]{%
\begin{minipage}[b]{6cm}
\begin{center}
\begin{tikzpicture}[x=3mm,y=3mm]
  \ftileA{0}{0}{0}{};
  \tileAr{300}{1}{0}{};
\end{tikzpicture}%
\end{center}%
\end{minipage}%
} \\ \subfloat[Possible neighbour~$31$]{%
\begin{minipage}[b]{6cm}
\begin{center}
\begin{tikzpicture}[x=3mm,y=3mm]
  \ftileA{0}{0}{0}{};
  \tileA{180}{1}{1}{};
\end{tikzpicture}%
\end{center}%
\end{minipage}%
} \qquad \subfloat[Possible neighbour~$32$]{%
\begin{minipage}[b]{6cm}
\begin{center}
\begin{tikzpicture}[x=3mm,y=3mm]
  \ftileA{0}{0}{0}{};
  \tileA{240}{1}{1}{};
\end{tikzpicture}%
\end{center}%
\end{minipage}%
} \\ \subfloat[Possible neighbour~$33$]{%
\begin{minipage}[b]{6cm}
\begin{center}
\begin{tikzpicture}[x=3mm,y=3mm]
  \ftileA{0}{0}{0}{};
  \tileA{60}{2}{-2}{};
\end{tikzpicture}%
\end{center}%
\end{minipage}%
} \qquad \subfloat[Possible neighbour~$34$]{%
\begin{minipage}[b]{6cm}
\begin{center}
\begin{tikzpicture}[x=3mm,y=3mm]
  \ftileA{0}{0}{0}{};
  \tileA{120}{2}{-2}{};
\end{tikzpicture}%
\end{center}%
\end{minipage}%
} \\ \subfloat[Possible neighbour~$35$]{%
\begin{minipage}[b]{6cm}
\begin{center}
\begin{tikzpicture}[x=3mm,y=3mm]
  \ftileA{0}{0}{0}{};
  \tileA{0}{2}{-1}{};
\end{tikzpicture}%
\end{center}%
\end{minipage}%
} \qquad \subfloat[Possible neighbour~$36$]{%
\begin{minipage}[b]{6cm}
\begin{center}
\begin{tikzpicture}[x=3mm,y=3mm]
  \ftileA{0}{0}{0}{};
  \tileA{120}{2}{-1}{};
\end{tikzpicture}%
\end{center}%
\end{minipage}%
} \\ \subfloat[Possible neighbour~$37$]{%
\begin{minipage}[b]{6cm}
\begin{center}
\begin{tikzpicture}[x=3mm,y=3mm]
  \ftileA{0}{0}{0}{};
  \tileA{240}{2}{-1}{};
\end{tikzpicture}%
\end{center}%
\end{minipage}%
} \qquad \subfloat[Possible neighbour~$38$]{%
\begin{minipage}[b]{6cm}
\begin{center}
\begin{tikzpicture}[x=3mm,y=3mm]
  \ftileA{0}{0}{0}{};
  \tileA{300}{2}{-1}{};
\end{tikzpicture}%
\end{center}%
\end{minipage}%
}%
\end{center}
\caption{Possible neighbours (part 4)}
\label{fig:nbr:4}
\end{figure}
\begin{figure}[htp!]
\renewcommand\thesubfigure{\arabic{subfigure}}%
\ContinuedFloat
\captionsetup{margin=0pt,justification=raggedright}%
\begin{center}
\subfloat[Possible neighbour~$39$]{%
\begin{minipage}[b]{6cm}
\begin{center}
\begin{tikzpicture}[x=3mm,y=3mm]
  \ftileA{0}{0}{0}{};
  \tileAr{240}{2}{-1}{};
\end{tikzpicture}%
\end{center}%
\end{minipage}%
} \qquad \subfloat[Possible neighbour~$40$]{%
\begin{minipage}[b]{6cm}
\begin{center}
\begin{tikzpicture}[x=3mm,y=3mm]
  \ftileA{0}{0}{0}{};
  \tileA{240}{2}{0}{};
\end{tikzpicture}%
\end{center}%
\end{minipage}%
} \\ \subfloat[Possible neighbour~$41$]{%
\begin{minipage}[b]{6cm}
\begin{center}
\begin{tikzpicture}[x=3mm,y=3mm]
  \ftileA{0}{0}{0}{};
  \tileA{180}{3}{-2}{};
\end{tikzpicture}%
\end{center}%
\end{minipage}%
} \qquad \subfloat[Possible neighbour~$42$ (eliminated by considering neighbours containing the shaded kite)]{%
\begin{minipage}[b]{6cm}
\begin{center}
\begin{tikzpicture}[x=3mm,y=3mm]
  \ffkite{120}{1}{-1};
  \ftileA{0}{0}{0}{};
  \tileAr{300}{0}{-1}{};
\end{tikzpicture}%
\end{center}%
\end{minipage}%
} \\ \subfloat[Possible neighbour~$43$ (eliminated by considering neighbours containing the shaded kite)]{%
\begin{minipage}[b]{6cm}
\begin{center}
\begin{tikzpicture}[x=3mm,y=3mm]
  \ffkite{120}{1}{-1};
  \ftileA{0}{0}{0}{};
  \tileA{60}{1}{-2}{};
\end{tikzpicture}%
\end{center}%
\end{minipage}%
} \qquad \subfloat[Possible neighbour~$44$ (eliminated together with possible neighbour~$43$)]{%
\begin{minipage}[b]{6cm}
\begin{center}
\begin{tikzpicture}[x=3mm,y=3mm]
  \ftileA{0}{0}{0}{};
  \tileA{300}{1}{1}{};
\end{tikzpicture}%
\end{center}%
\end{minipage}%
} \\ \subfloat[Possible neighbour~$45$ (eliminated by considering neighbours containing the shaded kite)]{%
\begin{minipage}[b]{6cm}
\begin{center}
\begin{tikzpicture}[x=3mm,y=3mm]
  \ffkite{0}{1}{-1};
  \ftileA{0}{0}{0}{};
  \tileAr{0}{1}{-2}{};
\end{tikzpicture}%
\end{center}%
\end{minipage}%
} \qquad \subfloat[Possible neighbour~$46$ (eliminated by considering neighbours containing the shaded kite)]{%
\begin{minipage}[b]{6cm}
\begin{center}
\begin{tikzpicture}[x=3mm,y=3mm]
  \ffkite{120}{1}{-1};
  \ftileA{0}{0}{0}{};
  \tileAr{60}{1}{-2}{};
\end{tikzpicture}%
\end{center}%
\end{minipage}%
}%
\end{center}
\caption{Possible neighbours (part 5)}
\label{fig:nbr:5}
\end{figure}
\begin{figure}[htp!]
\renewcommand\thesubfigure{\arabic{subfigure}}%
\ContinuedFloat
\captionsetup{margin=0pt,justification=raggedright}%
\begin{center}
\subfloat[Possible neighbour~$47$ (eliminated together with possible neighbour~$46$)]{%
\begin{minipage}[b]{6cm}
\begin{center}
\begin{tikzpicture}[x=3mm,y=3mm]
  \ftileA{0}{0}{0}{};
  \tileAr{60}{2}{-1}{};
\end{tikzpicture}%
\end{center}%
\end{minipage}%
} \qquad \subfloat[Possible neighbour~$48$ (eliminated by considering neighbours containing the shaded kite)]{%
\begin{minipage}[b]{6cm}
\begin{center}
\begin{tikzpicture}[x=3mm,y=3mm]
  \ffkite{60}{1}{0};
  \ftileA{0}{0}{0}{};
  \tileAr{240}{1}{1}{};
\end{tikzpicture}%
\end{center}%
\end{minipage}%
} \\ \subfloat[Possible neighbour~$49$ (eliminated by considering neighbours containing the shaded kite)]{%
\begin{minipage}[b]{6cm}
\begin{center}
\begin{tikzpicture}[x=3mm,y=3mm]
  \ffkite{120}{1}{-1};
  \ftileA{0}{0}{0}{};
  \tileA{180}{2}{-2}{};
\end{tikzpicture}%
\end{center}%
\end{minipage}%
} \qquad \subfloat[Possible neighbour~$50$ (eliminated by considering neighbours containing the shaded kite)]{%
\begin{minipage}[b]{6cm}
\begin{center}
\begin{tikzpicture}[x=3mm,y=3mm]
  \ffkite{0}{1}{-1};
  \ftileA{0}{0}{0}{};
  \tileAr{120}{2}{-2}{};
\end{tikzpicture}%
\end{center}%
\end{minipage}%
} \\ \subfloat[Possible neighbour~$51$ (eliminated together with possible neighbour~$50$)]{%
\begin{minipage}[b]{6cm}
\begin{center}
\begin{tikzpicture}[x=3mm,y=3mm]
  \ftileA{0}{0}{0}{};
  \tileAr{120}{2}{0}{};
\end{tikzpicture}%
\end{center}%
\end{minipage}%
} \qquad \subfloat[Possible neighbour~$52$ (eliminated by considering neighbours containing the shaded kite)]{%
\begin{minipage}[b]{6cm}
\begin{center}
\begin{tikzpicture}[x=3mm,y=3mm]
  \ffkite{60}{1}{0};
  \ftileA{0}{0}{0}{};
  \tileAr{0}{2}{-1}{};
\end{tikzpicture}%
\end{center}%
\end{minipage}%
} \\ \subfloat[Possible neighbour~$53$ (eliminated together with possible neighbour~$52$)]{%
\begin{minipage}[b]{6cm}
\begin{center}
\begin{tikzpicture}[x=3mm,y=3mm]
  \ftileA{0}{0}{0}{};
  \tileAr{0}{-1}{-1}{};
\end{tikzpicture}%
\end{center}%
\end{minipage}%
} \qquad \subfloat[Possible neighbour~$54$ (eliminated by considering neighbours containing the shaded kite)]{%
\begin{minipage}[b]{6cm}
\begin{center}
\begin{tikzpicture}[x=3mm,y=3mm]
  \ffkite{60}{1}{0};
  \ftileA{0}{0}{0}{};
  \tileA{180}{2}{0}{};
\end{tikzpicture}%
\end{center}%
\end{minipage}%
} \\ \subfloat[Possible neighbour~$55$ (eliminated by considering neighbours containing the shaded kite)]{%
\begin{minipage}[b]{6cm}
\begin{center}
\begin{tikzpicture}[x=3mm,y=3mm]
  \ffkite{60}{1}{0};
  \ftileA{0}{0}{0}{};
  \tileAr{180}{2}{0}{};
\end{tikzpicture}%
\end{center}%
\end{minipage}%
} \qquad \subfloat[Possible neighbour~$56$ (eliminated by considering neighbours containing the shaded kite)]{%
\begin{minipage}[b]{6cm}
\begin{center}
\begin{tikzpicture}[x=3mm,y=3mm]
  \ffkite{240}{0}{-1};
  \ftileA{0}{0}{0}{};
  \tileAr{180}{-1}{0}{};
\end{tikzpicture}%
\end{center}%
\end{minipage}%
}%
\end{center}
\caption{Possible neighbours (part 6)}
\label{fig:nbr:6}
\end{figure}
\begin{figure}[htp!]
\renewcommand\thesubfigure{\arabic{subfigure}}%
\ContinuedFloat
\captionsetup{margin=0pt,justification=raggedright}%
\begin{center}
\subfloat[Possible neighbour~$57$ (eliminated by considering neighbours containing the shaded kite)]{%
\begin{minipage}[b]{6cm}
\begin{center}
\begin{tikzpicture}[x=3mm,y=3mm]
  \ffkite{180}{0}{-1};
  \ftileA{0}{0}{0}{};
  \tileAr{240}{-1}{0}{};
\end{tikzpicture}%
\end{center}%
\end{minipage}%
} \qquad \subfloat[Possible neighbour~$58$ (eliminated together with possible neighbour~$57$)]{%
\begin{minipage}[b]{6cm}
\begin{center}
\begin{tikzpicture}[x=3mm,y=3mm]
  \ftileA{0}{0}{0}{};
  \tileAr{240}{0}{-1}{};
\end{tikzpicture}%
\end{center}%
\end{minipage}%
}%
\end{center}
\caption{Possible neighbours (part 7)}
\label{fig:nbr:7}
\end{figure}

\FloatBarrier

\subsection{Enumeration of $1$-patches}

Having produced a list of possible neighbours, we now proceed to
enumerating possible $1$-patches.  When we have a partial $1$-patch
(some number of neighbours for the original, shaded polykite), we pick
some kite neighbouring that original polykite and enumerate the
possible neighbouring polykites containing that kite, excluding any
that would result in the patch containing two polykites that either
intersect or form a pair of neighbours previously ruled out; the kite
we use is chosen so that the number of choices for the neighbour added
is minimal.  This process results in $37$~possible $1$-patches; the
partial patches from the search process are shown in
Figure~\ref{fig:partpatch} and the $1$-patches are shown in
Figures~\ref{fig:patch}.

Some of the $1$-patches found can be immediately eliminated at this
point, by identifying a tile in the $1$-patch that cannot itself be
surrounded by any of the $1$-patches (that has not yet been
eliminated) without resulting in either an intersection or a pair of
neighbours that were previously ruled out.  In the $12$ cases 
implicated here, the tile that cannot be surrounded is shaded, and they are
eliminated in the order shown, leaving $25$~remaining $1$-patches.
For each of those remaining $1$-patches, the classification of the
central tile by the
rules in \secref{sec:clusters} is shown.

\begin{figure}[htp!]
\renewcommand\thesubfigure{\arabic{subfigure}}%
\captionsetup{margin=0pt,justification=raggedright}%
\begin{center}
\subfloat[Partial patch 1 (extends to partial patches 2--4)]{%
\begin{minipage}[b]{6cm}
\begin{center}
\begin{tikzpicture}[x=3mm,y=3mm]
  \ffkite{180}{0}{-1};
  \ftileA{0}{0}{0}{};
\end{tikzpicture}%
\end{center}%
\end{minipage}%
} \qquad \subfloat[Partial patch 2 (extends to partial patches 5--6)]{%
\begin{minipage}[b]{6cm}
\begin{center}
\begin{tikzpicture}[x=3mm,y=3mm]
  \ffkite{240}{0}{-1};
  \tileA{0}{0}{-1}{};
  \ftileA{0}{0}{0}{};
\end{tikzpicture}%
\end{center}%
\end{minipage}%
}%
\end{center}
\caption{Partial patches (part 1)}
\label{fig:partpatch}
\end{figure}
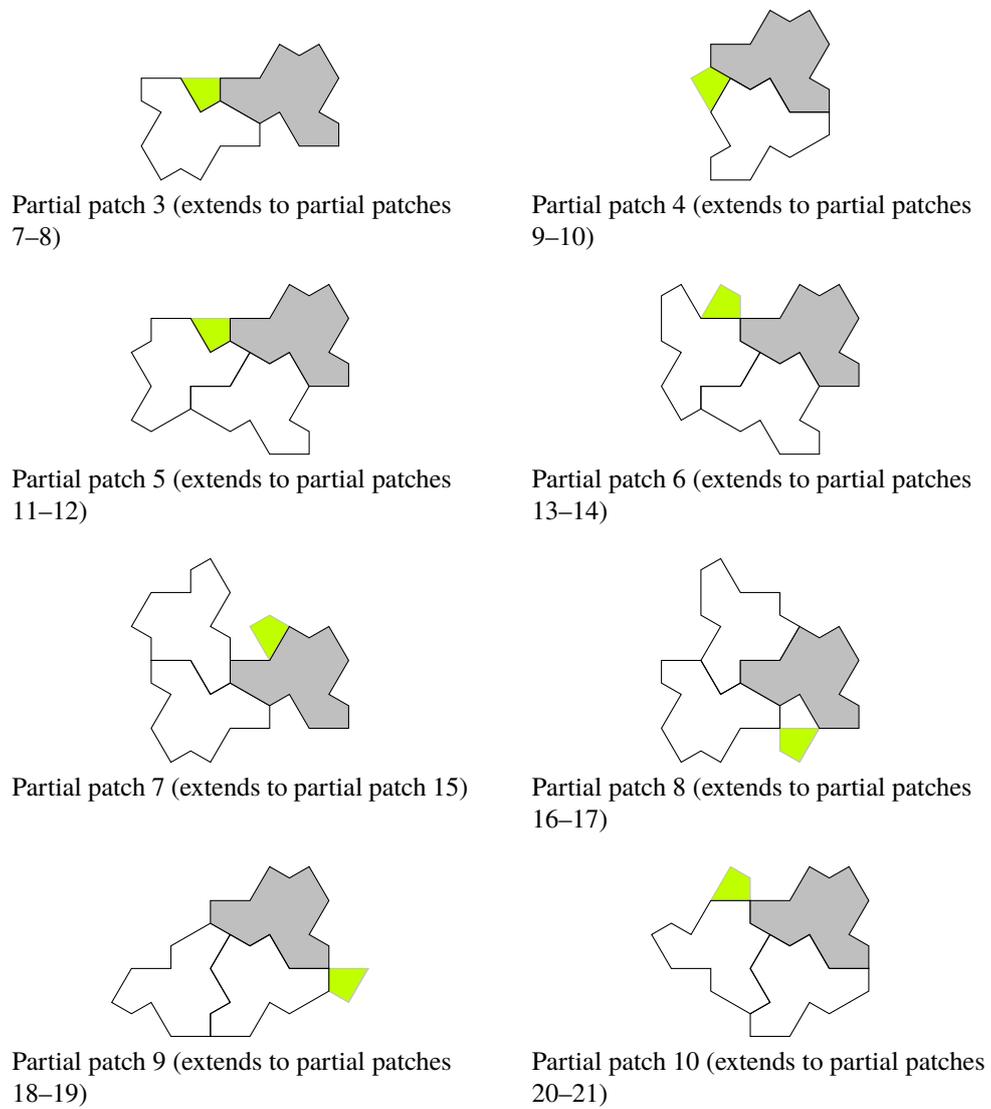
\begin{figure}[htp!]
\renewcommand\thesubfigure{\arabic{subfigure}}%
\ContinuedFloat
\captionsetup{margin=0pt,justification=raggedright}%
\begin{center}
\subfloat[Partial patch 3 (extends to partial patches 7--8)]{%
\begin{minipage}[b]{6cm}
\begin{center}
\begin{tikzpicture}[x=3mm,y=3mm]
  \ffkite{120}{-1}{0};
  \tileA{180}{0}{-1}{};
  \ftileA{0}{0}{0}{};
\end{tikzpicture}%
\end{center}%
\end{minipage}%
} \qquad \subfloat[Partial patch 4 (extends to partial patches 9--10)]{%
\begin{minipage}[b]{6cm}
\begin{center}
\begin{tikzpicture}[x=3mm,y=3mm]
  \ffkite{240}{0}{-1};
  \ftileA{0}{0}{0}{};
  \tileAr{180}{1}{-1}{};
\end{tikzpicture}%
\end{center}%
\end{minipage}%
} \\ \subfloat[Partial patch 5 (extends to partial patches 11--12)]{%
\begin{minipage}[b]{6cm}
\begin{center}
\begin{tikzpicture}[x=3mm,y=3mm]
  \ffkite{120}{-1}{0};
  \tileA{60}{-1}{-1}{};
  \tileA{0}{0}{-1}{};
  \ftileA{0}{0}{0}{};
\end{tikzpicture}%
\end{center}%
\end{minipage}%
} \qquad \subfloat[Partial patch 6 (extends to partial patches 13--14)]{%
\begin{minipage}[b]{6cm}
\begin{center}
\begin{tikzpicture}[x=3mm,y=3mm]
  \ffkite{180}{-1}{0};
  \tileA{300}{-1}{0}{};
  \tileA{0}{0}{-1}{};
  \ftileA{0}{0}{0}{};
\end{tikzpicture}%
\end{center}%
\end{minipage}%
} \\ \subfloat[Partial patch 7 (extends to partial patch 15)]{%
\begin{minipage}[b]{6cm}
\begin{center}
\begin{tikzpicture}[x=3mm,y=3mm]
  \ffkite{240}{0}{0};
  \tileA{120}{-1}{0}{};
  \tileA{180}{0}{-1}{};
  \ftileA{0}{0}{0}{};
\end{tikzpicture}%
\end{center}%
\end{minipage}%
} \qquad \subfloat[Partial patch 8 (extends to partial patches 16--17)]{%
\begin{minipage}[b]{6cm}
\begin{center}
\begin{tikzpicture}[x=3mm,y=3mm]
  \ffkite{0}{1}{-1};
  \tileA{300}{-1}{1}{};
  \tileA{180}{0}{-1}{};
  \ftileA{0}{0}{0}{};
\end{tikzpicture}%
\end{center}%
\end{minipage}%
} \\ \subfloat[Partial patch 9 (extends to partial patches 18--19)]{%
\begin{minipage}[b]{6cm}
\begin{center}
\begin{tikzpicture}[x=3mm,y=3mm]
  \ffkite{0}{2}{-1};
  \tileA{240}{0}{-1}{};
  \ftileA{0}{0}{0}{};
  \tileAr{180}{1}{-1}{};
\end{tikzpicture}%
\end{center}%
\end{minipage}%
} \qquad \subfloat[Partial patch 10 (extends to partial patches 20--21)]{%
\begin{minipage}[b]{6cm}
\begin{center}
\begin{tikzpicture}[x=3mm,y=3mm]
  \ffkite{180}{-1}{0};
  \tileAr{120}{0}{-1}{};
  \ftileA{0}{0}{0}{};
  \tileAr{180}{1}{-1}{};
\end{tikzpicture}%
\end{center}%
\end{minipage}%
}%
\end{center}
\caption{Partial patches (part 2)}
\label{fig:partpatch:2}
\end{figure}
\begin{figure}[htp!]
\renewcommand\thesubfigure{\arabic{subfigure}}%
\ContinuedFloat
\captionsetup{margin=0pt,justification=raggedright}%
\begin{center}
\subfloat[Partial patch 11 (extends to partial patch 22)]{%
\begin{minipage}[b]{6cm}
\begin{center}
\begin{tikzpicture}[x=3mm,y=3mm]
  \ffkite{240}{0}{0};
  \tileA{60}{-1}{-1}{};
  \tileA{120}{-1}{0}{};
  \tileA{0}{0}{-1}{};
  \ftileA{0}{0}{0}{};
\end{tikzpicture}%
\end{center}%
\end{minipage}%
} \qquad \subfloat[Partial patch 12 (extends to partial patches 23--24)]{%
\begin{minipage}[b]{6cm}
\begin{center}
\begin{tikzpicture}[x=3mm,y=3mm]
  \ffkite{60}{1}{-1};
  \tileA{60}{-1}{-1}{};
  \tileA{300}{-1}{1}{};
  \tileA{0}{0}{-1}{};
  \ftileA{0}{0}{0}{};
\end{tikzpicture}%
\end{center}%
\end{minipage}%
} \\ \subfloat[Partial patch 13 (extends to partial patch 25)]{%
\begin{minipage}[b]{6cm}
\begin{center}
\begin{tikzpicture}[x=3mm,y=3mm]
  \ffkite{240}{0}{0};
  \tileA{0}{-2}{1}{};
  \tileA{300}{-1}{0}{};
  \tileA{0}{0}{-1}{};
  \ftileA{0}{0}{0}{};
\end{tikzpicture}%
\end{center}%
\end{minipage}%
} \qquad \subfloat[Partial patch 14 (extends to partial patch 26)]{%
\begin{minipage}[b]{6cm}
\begin{center}
\begin{tikzpicture}[x=3mm,y=3mm]
  \ffkite{240}{0}{0};
  \tileA{300}{-1}{0}{};
  \tileA{240}{-1}{1}{};
  \tileA{0}{0}{-1}{};
  \ftileA{0}{0}{0}{};
\end{tikzpicture}%
\end{center}%
\end{minipage}%
} \\ \subfloat[Partial patch 15 (extends to partial patches 27--28)]{%
\begin{minipage}[b]{6cm}
\begin{center}
\begin{tikzpicture}[x=3mm,y=3mm]
  \ffkite{120}{0}{1};
  \tileA{120}{-1}{0}{};
  \tileAr{300}{-1}{1}{};
  \tileA{180}{0}{-1}{};
  \ftileA{0}{0}{0}{};
\end{tikzpicture}%
\end{center}%
\end{minipage}%
} \qquad \subfloat[Partial patch 16 (extends to partial patches 29--30)]{%
\begin{minipage}[b]{6cm}
\begin{center}
\begin{tikzpicture}[x=3mm,y=3mm]
  \ffkite{60}{1}{-1};
  \tileA{300}{-1}{1}{};
  \tileA{180}{0}{-1}{};
  \ftileA{0}{0}{0}{};
  \tileA{120}{1}{-2}{};
\end{tikzpicture}%
\end{center}%
\end{minipage}%
} \\ \subfloat[Partial patch 17 (extends to partial patches 31--33)]{%
\begin{minipage}[b]{6cm}
\begin{center}
\begin{tikzpicture}[x=3mm,y=3mm]
  \ffkite{0}{2}{-1};
  \tileA{300}{-1}{1}{};
  \tileA{180}{0}{-1}{};
  \ftileA{0}{0}{0}{};
  \tileA{300}{1}{-1}{};
\end{tikzpicture}%
\end{center}%
\end{minipage}%
} \qquad \subfloat[Partial patch 18 (extends to partial patch 34)]{%
\begin{minipage}[b]{6cm}
\begin{center}
\begin{tikzpicture}[x=3mm,y=3mm]
  \ffkite{300}{2}{-1};
  \tileA{240}{0}{-1}{};
  \ftileA{0}{0}{0}{};
  \tileAr{180}{1}{-1}{};
  \tileA{60}{2}{-2}{};
\end{tikzpicture}%
\end{center}%
\end{minipage}%
}%
\end{center}
\caption{Partial patches (part 3)}
\label{fig:partpatch:3}
\end{figure}
\begin{figure}[htp!]
\renewcommand\thesubfigure{\arabic{subfigure}}%
\ContinuedFloat
\captionsetup{margin=0pt,justification=raggedright}%
\begin{center}
\subfloat[Partial patch 19 (extends to partial patch 35)]{%
\begin{minipage}[b]{6cm}
\begin{center}
\begin{tikzpicture}[x=3mm,y=3mm]
  \ffkite{300}{2}{-1};
  \tileA{240}{0}{-1}{};
  \ftileA{0}{0}{0}{};
  \tileAr{180}{1}{-1}{};
  \tileAr{240}{2}{-1}{};
\end{tikzpicture}%
\end{center}%
\end{minipage}%
} \qquad \subfloat[Partial patch 20 (extends to partial patch 36)]{%
\begin{minipage}[b]{6cm}
\begin{center}
\begin{tikzpicture}[x=3mm,y=3mm]
  \ffkite{240}{0}{0};
  \tileA{180}{-1}{0}{};
  \tileAr{120}{0}{-1}{};
  \ftileA{0}{0}{0}{};
  \tileAr{180}{1}{-1}{};
\end{tikzpicture}%
\end{center}%
\end{minipage}%
} \\ \subfloat[Partial patch 21 (extends to partial patch 37)]{%
\begin{minipage}[b]{6cm}
\begin{center}
\begin{tikzpicture}[x=3mm,y=3mm]
  \ffkite{240}{0}{0};
  \tileAr{60}{-1}{0}{};
  \tileAr{120}{0}{-1}{};
  \ftileA{0}{0}{0}{};
  \tileAr{180}{1}{-1}{};
\end{tikzpicture}%
\end{center}%
\end{minipage}%
} \qquad \subfloat[Partial patch 22 (extends to partial patches 38--39)]{%
\begin{minipage}[b]{6cm}
\begin{center}
\begin{tikzpicture}[x=3mm,y=3mm]
  \ffkite{120}{0}{1};
  \tileA{60}{-1}{-1}{};
  \tileA{120}{-1}{0}{};
  \tileAr{300}{-1}{1}{};
  \tileA{0}{0}{-1}{};
  \ftileA{0}{0}{0}{};
\end{tikzpicture}%
\end{center}%
\end{minipage}%
} \\ \subfloat[Partial patch 23 (extends to partial patches 40--41)]{%
\begin{minipage}[b]{6cm}
\begin{center}
\begin{tikzpicture}[x=3mm,y=3mm]
  \ffkite{60}{1}{0};
  \tileA{60}{-1}{-1}{};
  \tileA{300}{-1}{1}{};
  \tileA{0}{0}{-1}{};
  \ftileA{0}{0}{0}{};
  \tileA{120}{2}{-2}{};
\end{tikzpicture}%
\end{center}%
\end{minipage}%
} \qquad \subfloat[Partial patch 24 (extends to partial patch 42)]{%
\begin{minipage}[b]{6cm}
\begin{center}
\begin{tikzpicture}[x=3mm,y=3mm]
  \ffkite{60}{1}{0};
  \tileA{60}{-1}{-1}{};
  \tileA{300}{-1}{1}{};
  \tileA{0}{0}{-1}{};
  \ftileA{0}{0}{0}{};
  \tileA{240}{2}{-1}{};
\end{tikzpicture}%
\end{center}%
\end{minipage}%
} \\ \subfloat[Partial patch 25 (extends to partial patches 43--44)]{%
\begin{minipage}[b]{6cm}
\begin{center}
\begin{tikzpicture}[x=3mm,y=3mm]
  \ffkite{120}{0}{1};
  \tileA{0}{-2}{1}{};
  \tileA{300}{-1}{0}{};
  \tileAr{300}{-1}{1}{};
  \tileA{0}{0}{-1}{};
  \ftileA{0}{0}{0}{};
\end{tikzpicture}%
\end{center}%
\end{minipage}%
} \qquad \subfloat[Partial patch 26 (extends to partial patches 45--46)]{%
\begin{minipage}[b]{6cm}
\begin{center}
\begin{tikzpicture}[x=3mm,y=3mm]
  \ffkite{60}{1}{-1};
  \tileA{300}{-1}{0}{};
  \tileA{240}{-1}{1}{};
  \tileA{0}{0}{-1}{};
  \ftileA{0}{0}{0}{};
  \tileAr{180}{0}{1}{};
\end{tikzpicture}%
\end{center}%
\end{minipage}%
}%
\end{center}
\caption{Partial patches (part 4)}
\label{fig:partpatch:4}
\end{figure}
\begin{figure}[htp!]
\renewcommand\thesubfigure{\arabic{subfigure}}%
\ContinuedFloat
\captionsetup{margin=0pt,justification=raggedright}%
\begin{center}
\subfloat[Partial patch 27 (extends to partial patch 47)]{%
\begin{minipage}[b]{6cm}
\begin{center}
\begin{tikzpicture}[x=3mm,y=3mm]
  \ffkite{180}{1}{0};
  \tileA{120}{-1}{0}{};
  \tileAr{300}{-1}{1}{};
  \tileA{180}{0}{-1}{};
  \ftileA{0}{0}{0}{};
  \tileA{120}{0}{1}{};
\end{tikzpicture}%
\end{center}%
\end{minipage}%
} \qquad \subfloat[Partial patch 28 (extends to partial patch 48)]{%
\begin{minipage}[b]{6cm}
\begin{center}
\begin{tikzpicture}[x=3mm,y=3mm]
  \ffkite{180}{1}{0};
  \tileA{120}{-1}{0}{};
  \tileAr{300}{-1}{1}{};
  \tileA{180}{0}{-1}{};
  \ftileA{0}{0}{0}{};
  \tileAr{0}{0}{1}{};
\end{tikzpicture}%
\end{center}%
\end{minipage}%
} \\ \subfloat[Partial patch 29 (extends to partial patches 49--50)]{%
\begin{minipage}[b]{6cm}
\begin{center}
\begin{tikzpicture}[x=3mm,y=3mm]
  \ffkite{60}{1}{0};
  \tileA{300}{-1}{1}{};
  \tileA{180}{0}{-1}{};
  \ftileA{0}{0}{0}{};
  \tileA{120}{1}{-2}{};
  \tileA{120}{2}{-2}{};
\end{tikzpicture}%
\end{center}%
\end{minipage}%
} \qquad \subfloat[Partial patch 30 (extends to partial patch 51)]{%
\begin{minipage}[b]{6cm}
\begin{center}
\begin{tikzpicture}[x=3mm,y=3mm]
  \ffkite{60}{1}{0};
  \tileA{300}{-1}{1}{};
  \tileA{180}{0}{-1}{};
  \ftileA{0}{0}{0}{};
  \tileA{120}{1}{-2}{};
  \tileA{240}{2}{-1}{};
\end{tikzpicture}%
\end{center}%
\end{minipage}%
} \\ \subfloat[Partial patch 31 (extends to partial patch 52)]{%
\begin{minipage}[b]{6cm}
\begin{center}
\begin{tikzpicture}[x=3mm,y=3mm]
  \ffkite{300}{2}{-1};
  \tileA{300}{-1}{1}{};
  \tileA{180}{0}{-1}{};
  \ftileA{0}{0}{0}{};
  \tileA{300}{1}{-1}{};
  \tileA{0}{2}{-1}{};
\end{tikzpicture}%
\end{center}%
\end{minipage}%
} \qquad \subfloat[Partial patch 32 (extends to partial patches 53--54)]{%
\begin{minipage}[b]{6cm}
\begin{center}
\begin{tikzpicture}[x=3mm,y=3mm]
  \ffkite{60}{1}{0};
  \tileA{300}{-1}{1}{};
  \tileA{180}{0}{-1}{};
  \ftileA{0}{0}{0}{};
  \tileA{300}{1}{-1}{};
  \tileA{300}{2}{-1}{};
\end{tikzpicture}%
\end{center}%
\end{minipage}%
} \\ \subfloat[Partial patch 33 (extends to partial patch 55)]{%
\begin{minipage}[b]{6cm}
\begin{center}
\begin{tikzpicture}[x=3mm,y=3mm]
  \ffkite{300}{2}{-1};
  \tileA{300}{-1}{1}{};
  \tileA{180}{0}{-1}{};
  \ftileA{0}{0}{0}{};
  \tileA{300}{1}{-1}{};
  \tileA{180}{3}{-2}{};
\end{tikzpicture}%
\end{center}%
\end{minipage}%
} \qquad \subfloat[Partial patch 34 (extends to partial patches 56--57)]{%
\begin{minipage}[b]{6cm}
\begin{center}
\begin{tikzpicture}[x=3mm,y=3mm]
  \ffkite{180}{1}{0};
  \tileA{240}{0}{-1}{};
  \ftileA{0}{0}{0}{};
  \tileAr{180}{1}{-1}{};
  \tileA{60}{2}{-2}{};
  \tileA{120}{2}{-1}{};
\end{tikzpicture}%
\end{center}%
\end{minipage}%
}%
\end{center}
\caption{Partial patches (part 5)}
\label{fig:partpatch:5}
\end{figure}
\begin{figure}[htp!]
\renewcommand\thesubfigure{\arabic{subfigure}}%
\ContinuedFloat
\captionsetup{margin=0pt,justification=raggedright}%
\begin{center}
\subfloat[Partial patch 35 (extends to partial patches 58--60)]{%
\begin{minipage}[b]{6cm}
\begin{center}
\begin{tikzpicture}[x=3mm,y=3mm]
  \ffkite{120}{-1}{0};
  \tileA{240}{0}{-1}{};
  \ftileA{0}{0}{0}{};
  \tileAr{180}{1}{-1}{};
  \tileAr{300}{1}{0}{};
  \tileAr{240}{2}{-1}{};
\end{tikzpicture}%
\end{center}%
\end{minipage}%
} \qquad \subfloat[Partial patch 36 (extends to partial patches 61--62)]{%
\begin{minipage}[b]{6cm}
\begin{center}
\begin{tikzpicture}[x=3mm,y=3mm]
  \ffkite{120}{0}{1};
  \tileA{180}{-1}{0}{};
  \tileAr{300}{-1}{1}{};
  \tileAr{120}{0}{-1}{};
  \ftileA{0}{0}{0}{};
  \tileAr{180}{1}{-1}{};
\end{tikzpicture}%
\end{center}%
\end{minipage}%
} \\ \subfloat[Partial patch 37 (extends to partial patches 63--64)]{%
\begin{minipage}[b]{6cm}
\begin{center}
\begin{tikzpicture}[x=3mm,y=3mm]
  \ffkite{180}{1}{0};
  \tileAr{60}{-1}{0}{};
  \tileAr{120}{0}{-1}{};
  \ftileA{0}{0}{0}{};
  \tileAr{180}{0}{1}{};
  \tileAr{180}{1}{-1}{};
\end{tikzpicture}%
\end{center}%
\end{minipage}%
} \qquad \subfloat[Partial patch 38 (extends to partial patch 65)]{%
\begin{minipage}[b]{6cm}
\begin{center}
\begin{tikzpicture}[x=3mm,y=3mm]
  \ffkite{180}{1}{0};
  \tileA{60}{-1}{-1}{};
  \tileA{120}{-1}{0}{};
  \tileAr{300}{-1}{1}{};
  \tileA{0}{0}{-1}{};
  \ftileA{0}{0}{0}{};
  \tileA{120}{0}{1}{};
\end{tikzpicture}%
\end{center}%
\end{minipage}%
} \\ \subfloat[Partial patch 39 (extends to partial patch 66)]{%
\begin{minipage}[b]{6cm}
\begin{center}
\begin{tikzpicture}[x=3mm,y=3mm]
  \ffkite{180}{1}{0};
  \tileA{60}{-1}{-1}{};
  \tileA{120}{-1}{0}{};
  \tileAr{300}{-1}{1}{};
  \tileA{0}{0}{-1}{};
  \ftileA{0}{0}{0}{};
  \tileAr{0}{0}{1}{};
\end{tikzpicture}%
\end{center}%
\end{minipage}%
} \qquad \subfloat[Partial patch 40 (extends to partial patches 67--68)]{%
\begin{minipage}[b]{6cm}
\begin{center}
\begin{tikzpicture}[x=3mm,y=3mm]
  \ffkite{60}{0}{1};
  \tileA{60}{-1}{-1}{};
  \tileA{300}{-1}{1}{};
  \tileA{0}{0}{-1}{};
  \ftileA{0}{0}{0}{};
  \tileA{60}{1}{0}{};
  \tileA{120}{2}{-2}{};
\end{tikzpicture}%
\end{center}%
\end{minipage}%
} \\ \subfloat[Partial patch 41 (extends to partial patch 69, $1$-patch 1)]{%
\begin{minipage}[b]{6cm}
\begin{center}
\begin{tikzpicture}[x=3mm,y=3mm]
  \ffkite{180}{1}{0};
  \tileA{60}{-1}{-1}{};
  \tileA{300}{-1}{1}{};
  \tileA{0}{0}{-1}{};
  \ftileA{0}{0}{0}{};
  \tileA{120}{2}{-2}{};
  \tileA{240}{2}{0}{};
\end{tikzpicture}%
\end{center}%
\end{minipage}%
} \qquad \subfloat[Partial patch 42 (extends to partial patches 70--71)]{%
\begin{minipage}[b]{6cm}
\begin{center}
\begin{tikzpicture}[x=3mm,y=3mm]
  \ffkite{60}{0}{1};
  \tileA{60}{-1}{-1}{};
  \tileA{300}{-1}{1}{};
  \tileA{0}{0}{-1}{};
  \ftileA{0}{0}{0}{};
  \tileA{60}{1}{0}{};
  \tileA{240}{2}{-1}{};
\end{tikzpicture}%
\end{center}%
\end{minipage}%
}%
\end{center}
\caption{Partial patches (part 6)}
\label{fig:partpatch:6}
\end{figure}
\begin{figure}[htp!]
\renewcommand\thesubfigure{\arabic{subfigure}}%
\ContinuedFloat
\captionsetup{margin=0pt,justification=raggedright}%
\begin{center}
\subfloat[Partial patch 43 (extends to partial patch 72)]{%
\begin{minipage}[b]{6cm}
\begin{center}
\begin{tikzpicture}[x=3mm,y=3mm]
  \ffkite{180}{1}{0};
  \tileA{0}{-2}{1}{};
  \tileA{300}{-1}{0}{};
  \tileAr{300}{-1}{1}{};
  \tileA{0}{0}{-1}{};
  \ftileA{0}{0}{0}{};
  \tileA{120}{0}{1}{};
\end{tikzpicture}%
\end{center}%
\end{minipage}%
} \qquad \subfloat[Partial patch 44 (extends to partial patch 73)]{%
\begin{minipage}[b]{6cm}
\begin{center}
\begin{tikzpicture}[x=3mm,y=3mm]
  \ffkite{180}{1}{0};
  \tileA{0}{-2}{1}{};
  \tileA{300}{-1}{0}{};
  \tileAr{300}{-1}{1}{};
  \tileA{0}{0}{-1}{};
  \ftileA{0}{0}{0}{};
  \tileAr{0}{0}{1}{};
\end{tikzpicture}%
\end{center}%
\end{minipage}%
} \\ \subfloat[Partial patch 45 (extends to $1$-patch 2)]{%
\begin{minipage}[b]{6cm}
\begin{center}
\begin{tikzpicture}[x=3mm,y=3mm]
  \ffkite{180}{1}{0};
  \tileA{300}{-1}{0}{};
  \tileA{240}{-1}{1}{};
  \tileA{0}{0}{-1}{};
  \ftileA{0}{0}{0}{};
  \tileAr{180}{0}{1}{};
  \tileA{120}{2}{-2}{};
\end{tikzpicture}%
\end{center}%
\end{minipage}%
} \qquad \subfloat[Partial patch 46 (extends to $1$-patch 3)]{%
\begin{minipage}[b]{6cm}
\begin{center}
\begin{tikzpicture}[x=3mm,y=3mm]
  \ffkite{60}{1}{0};
  \tileA{300}{-1}{0}{};
  \tileA{240}{-1}{1}{};
  \tileA{0}{0}{-1}{};
  \ftileA{0}{0}{0}{};
  \tileAr{180}{0}{1}{};
  \tileA{240}{2}{-1}{};
\end{tikzpicture}%
\end{center}%
\end{minipage}%
} \\ \subfloat[Partial patch 47 (extends to partial patches 74--75)]{%
\begin{minipage}[b]{6cm}
\begin{center}
\begin{tikzpicture}[x=3mm,y=3mm]
  \ffkite{0}{1}{-1};
  \tileA{120}{-1}{0}{};
  \tileAr{300}{-1}{1}{};
  \tileA{180}{0}{-1}{};
  \ftileA{0}{0}{0}{};
  \tileA{120}{0}{1}{};
  \tileA{60}{1}{0}{};
\end{tikzpicture}%
\end{center}%
\end{minipage}%
} \qquad \subfloat[Partial patch 48 (extends to partial patch 76)]{%
\begin{minipage}[b]{6cm}
\begin{center}
\begin{tikzpicture}[x=3mm,y=3mm]
  \ffkite{60}{1}{-1};
  \tileA{120}{-1}{0}{};
  \tileAr{300}{-1}{1}{};
  \tileA{180}{0}{-1}{};
  \ftileA{0}{0}{0}{};
  \tileAr{0}{0}{1}{};
  \tileAr{300}{1}{0}{};
\end{tikzpicture}%
\end{center}%
\end{minipage}%
} \\ \subfloat[Partial patch 49 (extends to partial patches 77--78)]{%
\begin{minipage}[b]{6cm}
\begin{center}
\begin{tikzpicture}[x=3mm,y=3mm]
  \ffkite{60}{0}{1};
  \tileA{300}{-1}{1}{};
  \tileA{180}{0}{-1}{};
  \ftileA{0}{0}{0}{};
  \tileA{120}{1}{-2}{};
  \tileA{60}{1}{0}{};
  \tileA{120}{2}{-2}{};
\end{tikzpicture}%
\end{center}%
\end{minipage}%
} \qquad \subfloat[Partial patch 50 (extends to partial patch 79, $1$-patch 4)]{%
\begin{minipage}[b]{6cm}
\begin{center}
\begin{tikzpicture}[x=3mm,y=3mm]
  \ffkite{180}{1}{0};
  \tileA{300}{-1}{1}{};
  \tileA{180}{0}{-1}{};
  \ftileA{0}{0}{0}{};
  \tileA{120}{1}{-2}{};
  \tileA{120}{2}{-2}{};
  \tileA{240}{2}{0}{};
\end{tikzpicture}%
\end{center}%
\end{minipage}%
}%
\end{center}
\caption{Partial patches (part 7)}
\label{fig:partpatch:7}
\end{figure}
\begin{figure}[htp!]
\renewcommand\thesubfigure{\arabic{subfigure}}%
\ContinuedFloat
\captionsetup{margin=0pt,justification=raggedright}%
\begin{center}
\subfloat[Partial patch 51 (extends to partial patches 80--81)]{%
\begin{minipage}[b]{6cm}
\begin{center}
\begin{tikzpicture}[x=3mm,y=3mm]
  \ffkite{60}{0}{1};
  \tileA{300}{-1}{1}{};
  \tileA{180}{0}{-1}{};
  \ftileA{0}{0}{0}{};
  \tileA{120}{1}{-2}{};
  \tileA{60}{1}{0}{};
  \tileA{240}{2}{-1}{};
\end{tikzpicture}%
\end{center}%
\end{minipage}%
} \qquad \subfloat[Partial patch 52 (extends to partial patch 82)]{%
\begin{minipage}[b]{6cm}
\begin{center}
\begin{tikzpicture}[x=3mm,y=3mm]
  \ffkite{60}{0}{1};
  \tileA{300}{-1}{1}{};
  \tileA{180}{0}{-1}{};
  \ftileA{0}{0}{0}{};
  \tileA{300}{1}{-1}{};
  \tileAr{300}{1}{0}{};
  \tileA{0}{2}{-1}{};
\end{tikzpicture}%
\end{center}%
\end{minipage}%
} \\ \subfloat[Partial patch 53 (extends to partial patches 83--84)]{%
\begin{minipage}[b]{6cm}
\begin{center}
\begin{tikzpicture}[x=3mm,y=3mm]
  \ffkite{60}{0}{1};
  \tileA{300}{-1}{1}{};
  \tileA{180}{0}{-1}{};
  \ftileA{0}{0}{0}{};
  \tileA{300}{1}{-1}{};
  \tileA{60}{1}{0}{};
  \tileA{300}{2}{-1}{};
\end{tikzpicture}%
\end{center}%
\end{minipage}%
} \qquad \subfloat[Partial patch 54 (extends to partial patch 85, $1$-patch 5)]{%
\begin{minipage}[b]{6cm}
\begin{center}
\begin{tikzpicture}[x=3mm,y=3mm]
  \ffkite{180}{1}{0};
  \tileA{300}{-1}{1}{};
  \tileA{180}{0}{-1}{};
  \ftileA{0}{0}{0}{};
  \tileA{300}{1}{-1}{};
  \tileA{300}{2}{-1}{};
  \tileA{240}{2}{0}{};
\end{tikzpicture}%
\end{center}%
\end{minipage}%
} \\ \subfloat[Partial patch 55 (extends to partial patch 86, $1$-patch 6)]{%
\begin{minipage}[b]{6cm}
\begin{center}
\begin{tikzpicture}[x=3mm,y=3mm]
  \ffkite{180}{1}{0};
  \tileA{300}{-1}{1}{};
  \tileA{180}{0}{-1}{};
  \ftileA{0}{0}{0}{};
  \tileA{300}{1}{-1}{};
  \tileA{120}{2}{-1}{};
  \tileA{180}{3}{-2}{};
\end{tikzpicture}%
\end{center}%
\end{minipage}%
} \qquad \subfloat[Partial patch 56 (extends to $1$-patches 7--8)]{%
\begin{minipage}[b]{6cm}
\begin{center}
\begin{tikzpicture}[x=3mm,y=3mm]
  \ffkite{240}{0}{0};
  \tileA{240}{0}{-1}{};
  \ftileA{0}{0}{0}{};
  \tileA{0}{0}{1}{};
  \tileAr{180}{1}{-1}{};
  \tileA{60}{2}{-2}{};
  \tileA{120}{2}{-1}{};
\end{tikzpicture}%
\end{center}%
\end{minipage}%
} \\ \subfloat[Partial patch 57 (extends to partial patches 87--88)]{%
\begin{minipage}[b]{6cm}
\begin{center}
\begin{tikzpicture}[x=3mm,y=3mm]
  \ffkite{240}{0}{0};
  \tileA{240}{0}{-1}{};
  \ftileA{0}{0}{0}{};
  \tileAr{180}{1}{-1}{};
  \tileA{240}{1}{1}{};
  \tileA{60}{2}{-2}{};
  \tileA{120}{2}{-1}{};
\end{tikzpicture}%
\end{center}%
\end{minipage}%
} \qquad \subfloat[Partial patch 58 (no extensions)]{%
\begin{minipage}[b]{6cm}
\begin{center}
\begin{tikzpicture}[x=3mm,y=3mm]
  \ffkite{60}{0}{1};
  \tileA{60}{-1}{0}{};
  \tileA{240}{0}{-1}{};
  \ftileA{0}{0}{0}{};
  \tileAr{180}{1}{-1}{};
  \tileAr{300}{1}{0}{};
  \tileAr{240}{2}{-1}{};
\end{tikzpicture}%
\end{center}%
\end{minipage}%
}%
\end{center}
\caption{Partial patches (part 8)}
\label{fig:partpatch:8}
\end{figure}
\begin{figure}[htp!]
\renewcommand\thesubfigure{\arabic{subfigure}}%
\ContinuedFloat
\captionsetup{margin=0pt,justification=raggedright}%
\begin{center}
\subfloat[Partial patch 59 (extends to partial patch 89)]{%
\begin{minipage}[b]{6cm}
\begin{center}
\begin{tikzpicture}[x=3mm,y=3mm]
  \ffkite{240}{0}{0};
  \tileA{120}{-1}{0}{};
  \tileA{240}{0}{-1}{};
  \ftileA{0}{0}{0}{};
  \tileAr{180}{1}{-1}{};
  \tileAr{300}{1}{0}{};
  \tileAr{240}{2}{-1}{};
\end{tikzpicture}%
\end{center}%
\end{minipage}%
} \qquad \subfloat[Partial patch 60 (extends to partial patch 90)]{%
\begin{minipage}[b]{6cm}
\begin{center}
\begin{tikzpicture}[x=3mm,y=3mm]
  \ffkite{60}{0}{1};
  \tileA{300}{-1}{1}{};
  \tileA{240}{0}{-1}{};
  \ftileA{0}{0}{0}{};
  \tileAr{180}{1}{-1}{};
  \tileAr{300}{1}{0}{};
  \tileAr{240}{2}{-1}{};
\end{tikzpicture}%
\end{center}%
\end{minipage}%
} \\ \subfloat[Partial patch 61 (extends to partial patch 91)]{%
\begin{minipage}[b]{6cm}
\begin{center}
\begin{tikzpicture}[x=3mm,y=3mm]
  \ffkite{180}{1}{0};
  \tileA{180}{-1}{0}{};
  \tileAr{300}{-1}{1}{};
  \tileAr{120}{0}{-1}{};
  \ftileA{0}{0}{0}{};
  \tileA{120}{0}{1}{};
  \tileAr{180}{1}{-1}{};
\end{tikzpicture}%
\end{center}%
\end{minipage}%
} \qquad \subfloat[Partial patch 62 (extends to partial patch 92)]{%
\begin{minipage}[b]{6cm}
\begin{center}
\begin{tikzpicture}[x=3mm,y=3mm]
  \ffkite{180}{1}{0};
  \tileA{180}{-1}{0}{};
  \tileAr{300}{-1}{1}{};
  \tileAr{120}{0}{-1}{};
  \ftileA{0}{0}{0}{};
  \tileAr{0}{0}{1}{};
  \tileAr{180}{1}{-1}{};
\end{tikzpicture}%
\end{center}%
\end{minipage}%
} \\ \subfloat[Partial patch 63 (no extensions)]{%
\begin{minipage}[b]{6cm}
\begin{center}
\begin{tikzpicture}[x=3mm,y=3mm]
  \ffkite{300}{2}{-1};
  \tileAr{60}{-1}{0}{};
  \tileAr{120}{0}{-1}{};
  \ftileA{0}{0}{0}{};
  \tileAr{180}{0}{1}{};
  \tileAr{180}{1}{-1}{};
  \tileA{60}{1}{0}{};
\end{tikzpicture}%
\end{center}%
\end{minipage}%
} \qquad \subfloat[Partial patch 64 (extends to $1$-patch 9)]{%
\begin{minipage}[b]{6cm}
\begin{center}
\begin{tikzpicture}[x=3mm,y=3mm]
  \ffkite{0}{2}{-1};
  \tileAr{60}{-1}{0}{};
  \tileAr{120}{0}{-1}{};
  \ftileA{0}{0}{0}{};
  \tileAr{180}{0}{1}{};
  \tileAr{180}{1}{-1}{};
  \tileAr{300}{1}{0}{};
\end{tikzpicture}%
\end{center}%
\end{minipage}%
} \\ \subfloat[Partial patch 65 (extends to $1$-patches 10--11)]{%
\begin{minipage}[b]{6cm}
\begin{center}
\begin{tikzpicture}[x=3mm,y=3mm]
  \ffkite{60}{1}{-1};
  \tileA{60}{-1}{-1}{};
  \tileA{120}{-1}{0}{};
  \tileAr{300}{-1}{1}{};
  \tileA{0}{0}{-1}{};
  \ftileA{0}{0}{0}{};
  \tileA{120}{0}{1}{};
  \tileA{60}{1}{0}{};
\end{tikzpicture}%
\end{center}%
\end{minipage}%
} \qquad \subfloat[Partial patch 66 (no extensions)]{%
\begin{minipage}[b]{6cm}
\begin{center}
\begin{tikzpicture}[x=3mm,y=3mm]
  \ffkite{60}{1}{-1};
  \tileA{60}{-1}{-1}{};
  \tileA{120}{-1}{0}{};
  \tileAr{300}{-1}{1}{};
  \tileA{0}{0}{-1}{};
  \ftileA{0}{0}{0}{};
  \tileAr{0}{0}{1}{};
  \tileAr{300}{1}{0}{};
\end{tikzpicture}%
\end{center}%
\end{minipage}%
}%
\end{center}
\caption{Partial patches (part 9)}
\label{fig:partpatch:9}
\end{figure}
\begin{figure}[htp!]
\renewcommand\thesubfigure{\arabic{subfigure}}%
\ContinuedFloat
\captionsetup{margin=0pt,justification=raggedright}%
\begin{center}
\subfloat[Partial patch 67 (extends to $1$-patch 12)]{%
\begin{minipage}[b]{6cm}
\begin{center}
\begin{tikzpicture}[x=3mm,y=3mm]
  \ffkite{0}{0}{1};
  \tileA{60}{-1}{-1}{};
  \tileA{300}{-1}{1}{};
  \tileA{0}{0}{-1}{};
  \ftileA{0}{0}{0}{};
  \tileA{60}{0}{1}{};
  \tileA{60}{1}{0}{};
  \tileA{120}{2}{-2}{};
\end{tikzpicture}%
\end{center}%
\end{minipage}%
} \qquad \subfloat[Partial patch 68 (extends to $1$-patch 13)]{%
\begin{minipage}[b]{6cm}
\begin{center}
\begin{tikzpicture}[x=3mm,y=3mm]
  \ffkite{120}{0}{1};
  \tileA{60}{-1}{-1}{};
  \tileA{300}{-1}{1}{};
  \tileA{0}{0}{-1}{};
  \ftileA{0}{0}{0}{};
  \tileAr{120}{0}{1}{};
  \tileA{60}{1}{0}{};
  \tileA{120}{2}{-2}{};
\end{tikzpicture}%
\end{center}%
\end{minipage}%
} \\ \subfloat[Partial patch 69 (extends to $1$-patch 14)]{%
\begin{minipage}[b]{6cm}
\begin{center}
\begin{tikzpicture}[x=3mm,y=3mm]
  \ffkite{0}{0}{1};
  \tileA{60}{-1}{-1}{};
  \tileA{300}{-1}{1}{};
  \tileA{0}{0}{-1}{};
  \ftileA{0}{0}{0}{};
  \tileA{240}{1}{1}{};
  \tileA{120}{2}{-2}{};
  \tileA{240}{2}{0}{};
\end{tikzpicture}%
\end{center}%
\end{minipage}%
} \qquad \subfloat[Partial patch 70 (extends to $1$-patch 15)]{%
\begin{minipage}[b]{6cm}
\begin{center}
\begin{tikzpicture}[x=3mm,y=3mm]
  \ffkite{0}{0}{1};
  \tileA{60}{-1}{-1}{};
  \tileA{300}{-1}{1}{};
  \tileA{0}{0}{-1}{};
  \ftileA{0}{0}{0}{};
  \tileA{60}{0}{1}{};
  \tileA{60}{1}{0}{};
  \tileA{240}{2}{-1}{};
\end{tikzpicture}%
\end{center}%
\end{minipage}%
} \\ \subfloat[Partial patch 71 (extends to $1$-patch 16)]{%
\begin{minipage}[b]{6cm}
\begin{center}
\begin{tikzpicture}[x=3mm,y=3mm]
  \ffkite{120}{0}{1};
  \tileA{60}{-1}{-1}{};
  \tileA{300}{-1}{1}{};
  \tileA{0}{0}{-1}{};
  \ftileA{0}{0}{0}{};
  \tileAr{120}{0}{1}{};
  \tileA{60}{1}{0}{};
  \tileA{240}{2}{-1}{};
\end{tikzpicture}%
\end{center}%
\end{minipage}%
} \qquad \subfloat[Partial patch 72 (extends to $1$-patches 17--18)]{%
\begin{minipage}[b]{6cm}
\begin{center}
\begin{tikzpicture}[x=3mm,y=3mm]
  \ffkite{60}{1}{-1};
  \tileA{0}{-2}{1}{};
  \tileA{300}{-1}{0}{};
  \tileAr{300}{-1}{1}{};
  \tileA{0}{0}{-1}{};
  \ftileA{0}{0}{0}{};
  \tileA{120}{0}{1}{};
  \tileA{60}{1}{0}{};
\end{tikzpicture}%
\end{center}%
\end{minipage}%
}%
\end{center}
\caption{Partial patches (part 10)}
\label{fig:partpatch:10}
\end{figure}
\begin{figure}[htp!]
\renewcommand\thesubfigure{\arabic{subfigure}}%
\ContinuedFloat
\captionsetup{margin=0pt,justification=raggedright}%
\begin{center}
\subfloat[Partial patch 73 (no extensions)]{%
\begin{minipage}[b]{6cm}
\begin{center}
\begin{tikzpicture}[x=3mm,y=3mm]
  \ffkite{60}{1}{-1};
  \tileA{0}{-2}{1}{};
  \tileA{300}{-1}{0}{};
  \tileAr{300}{-1}{1}{};
  \tileA{0}{0}{-1}{};
  \ftileA{0}{0}{0}{};
  \tileAr{0}{0}{1}{};
  \tileAr{300}{1}{0}{};
\end{tikzpicture}%
\end{center}%
\end{minipage}%
} \qquad \subfloat[Partial patch 74 (extends to $1$-patches 19--20)]{%
\begin{minipage}[b]{6cm}
\begin{center}
\begin{tikzpicture}[x=3mm,y=3mm]
  \ffkite{60}{1}{-1};
  \tileA{120}{-1}{0}{};
  \tileAr{300}{-1}{1}{};
  \tileA{180}{0}{-1}{};
  \ftileA{0}{0}{0}{};
  \tileA{120}{0}{1}{};
  \tileA{120}{1}{-2}{};
  \tileA{60}{1}{0}{};
\end{tikzpicture}%
\end{center}%
\end{minipage}%
} \\ \subfloat[Partial patch 75 (extends to $1$-patch 21)]{%
\begin{minipage}[b]{6cm}
\begin{center}
\begin{tikzpicture}[x=3mm,y=3mm]
  \ffkite{300}{2}{-1};
  \tileA{120}{-1}{0}{};
  \tileAr{300}{-1}{1}{};
  \tileA{180}{0}{-1}{};
  \ftileA{0}{0}{0}{};
  \tileA{120}{0}{1}{};
  \tileA{300}{1}{-1}{};
  \tileA{60}{1}{0}{};
\end{tikzpicture}%
\end{center}%
\end{minipage}%
} \qquad \subfloat[Partial patch 76 (extends to $1$-patch 22)]{%
\begin{minipage}[b]{6cm}
\begin{center}
\begin{tikzpicture}[x=3mm,y=3mm]
  \ffkite{0}{2}{-1};
  \tileA{120}{-1}{0}{};
  \tileAr{300}{-1}{1}{};
  \tileA{180}{0}{-1}{};
  \ftileA{0}{0}{0}{};
  \tileAr{0}{0}{1}{};
  \tileA{300}{1}{-1}{};
  \tileAr{300}{1}{0}{};
\end{tikzpicture}%
\end{center}%
\end{minipage}%
} \\ \subfloat[Partial patch 77 (extends to $1$-patch 23)]{%
\begin{minipage}[b]{6cm}
\begin{center}
\begin{tikzpicture}[x=3mm,y=3mm]
  \ffkite{0}{0}{1};
  \tileA{300}{-1}{1}{};
  \tileA{180}{0}{-1}{};
  \ftileA{0}{0}{0}{};
  \tileA{60}{0}{1}{};
  \tileA{120}{1}{-2}{};
  \tileA{60}{1}{0}{};
  \tileA{120}{2}{-2}{};
\end{tikzpicture}%
\end{center}%
\end{minipage}%
} \qquad \subfloat[Partial patch 78 (extends to $1$-patch 24)]{%
\begin{minipage}[b]{6cm}
\begin{center}
\begin{tikzpicture}[x=3mm,y=3mm]
  \ffkite{120}{0}{1};
  \tileA{300}{-1}{1}{};
  \tileA{180}{0}{-1}{};
  \ftileA{0}{0}{0}{};
  \tileAr{120}{0}{1}{};
  \tileA{120}{1}{-2}{};
  \tileA{60}{1}{0}{};
  \tileA{120}{2}{-2}{};
\end{tikzpicture}%
\end{center}%
\end{minipage}%
}%
\end{center}
\caption{Partial patches (part 11)}
\label{fig:partpatch:11}
\end{figure}
\begin{figure}[htp!]
\renewcommand\thesubfigure{\arabic{subfigure}}%
\ContinuedFloat
\captionsetup{margin=0pt,justification=raggedright}%
\begin{center}
\subfloat[Partial patch 79 (extends to $1$-patch 25)]{%
\begin{minipage}[b]{6cm}
\begin{center}
\begin{tikzpicture}[x=3mm,y=3mm]
  \ffkite{0}{0}{1};
  \tileA{300}{-1}{1}{};
  \tileA{180}{0}{-1}{};
  \ftileA{0}{0}{0}{};
  \tileA{120}{1}{-2}{};
  \tileA{240}{1}{1}{};
  \tileA{120}{2}{-2}{};
  \tileA{240}{2}{0}{};
\end{tikzpicture}%
\end{center}%
\end{minipage}%
} \qquad \subfloat[Partial patch 80 (extends to $1$-patch 26)]{%
\begin{minipage}[b]{6cm}
\begin{center}
\begin{tikzpicture}[x=3mm,y=3mm]
  \ffkite{0}{0}{1};
  \tileA{300}{-1}{1}{};
  \tileA{180}{0}{-1}{};
  \ftileA{0}{0}{0}{};
  \tileA{60}{0}{1}{};
  \tileA{120}{1}{-2}{};
  \tileA{60}{1}{0}{};
  \tileA{240}{2}{-1}{};
\end{tikzpicture}%
\end{center}%
\end{minipage}%
} \\ \subfloat[Partial patch 81 (extends to $1$-patch 27)]{%
\begin{minipage}[b]{6cm}
\begin{center}
\begin{tikzpicture}[x=3mm,y=3mm]
  \ffkite{120}{0}{1};
  \tileA{300}{-1}{1}{};
  \tileA{180}{0}{-1}{};
  \ftileA{0}{0}{0}{};
  \tileAr{120}{0}{1}{};
  \tileA{120}{1}{-2}{};
  \tileA{60}{1}{0}{};
  \tileA{240}{2}{-1}{};
\end{tikzpicture}%
\end{center}%
\end{minipage}%
} \qquad \subfloat[Partial patch 82 (extends to $1$-patch 28)]{%
\begin{minipage}[b]{6cm}
\begin{center}
\begin{tikzpicture}[x=3mm,y=3mm]
  \ffkite{120}{0}{1};
  \tileA{300}{-1}{1}{};
  \tileA{180}{0}{-1}{};
  \ftileA{0}{0}{0}{};
  \tileAr{120}{0}{1}{};
  \tileA{300}{1}{-1}{};
  \tileAr{300}{1}{0}{};
  \tileA{0}{2}{-1}{};
\end{tikzpicture}%
\end{center}%
\end{minipage}%
} \\ \subfloat[Partial patch 83 (extends to $1$-patch 29)]{%
\begin{minipage}[b]{6cm}
\begin{center}
\begin{tikzpicture}[x=3mm,y=3mm]
  \ffkite{0}{0}{1};
  \tileA{300}{-1}{1}{};
  \tileA{180}{0}{-1}{};
  \ftileA{0}{0}{0}{};
  \tileA{60}{0}{1}{};
  \tileA{300}{1}{-1}{};
  \tileA{60}{1}{0}{};
  \tileA{300}{2}{-1}{};
\end{tikzpicture}%
\end{center}%
\end{minipage}%
} \qquad \subfloat[Partial patch 84 (extends to $1$-patch 30)]{%
\begin{minipage}[b]{6cm}
\begin{center}
\begin{tikzpicture}[x=3mm,y=3mm]
  \ffkite{120}{0}{1};
  \tileA{300}{-1}{1}{};
  \tileA{180}{0}{-1}{};
  \ftileA{0}{0}{0}{};
  \tileAr{120}{0}{1}{};
  \tileA{300}{1}{-1}{};
  \tileA{60}{1}{0}{};
  \tileA{300}{2}{-1}{};
\end{tikzpicture}%
\end{center}%
\end{minipage}%
} \\ \subfloat[Partial patch 85 (extends to $1$-patch 31)]{%
\begin{minipage}[b]{6cm}
\begin{center}
\begin{tikzpicture}[x=3mm,y=3mm]
  \ffkite{0}{0}{1};
  \tileA{300}{-1}{1}{};
  \tileA{180}{0}{-1}{};
  \ftileA{0}{0}{0}{};
  \tileA{300}{1}{-1}{};
  \tileA{240}{1}{1}{};
  \tileA{300}{2}{-1}{};
  \tileA{240}{2}{0}{};
\end{tikzpicture}%
\end{center}%
\end{minipage}%
} \qquad \subfloat[Partial patch 86 (extends to $1$-patch 32)]{%
\begin{minipage}[b]{6cm}
\begin{center}
\begin{tikzpicture}[x=3mm,y=3mm]
  \ffkite{0}{0}{1};
  \tileA{300}{-1}{1}{};
  \tileA{180}{0}{-1}{};
  \ftileA{0}{0}{0}{};
  \tileA{300}{1}{-1}{};
  \tileA{240}{1}{1}{};
  \tileA{120}{2}{-1}{};
  \tileA{180}{3}{-2}{};
\end{tikzpicture}%
\end{center}%
\end{minipage}%
}%
\end{center}
\caption{Partial patches (part 12)}
\label{fig:partpatch:12}
\end{figure}
\begin{figure}[htp!]
\renewcommand\thesubfigure{\arabic{subfigure}}%
\ContinuedFloat
\captionsetup{margin=0pt,justification=raggedright}%
\begin{center}
\subfloat[Partial patch 87 (extends to $1$-patch 33)]{%
\begin{minipage}[b]{6cm}
\begin{center}
\begin{tikzpicture}[x=3mm,y=3mm]
  \ffkite{0}{0}{1};
  \tileA{60}{-1}{0}{};
  \tileA{240}{0}{-1}{};
  \ftileA{0}{0}{0}{};
  \tileAr{180}{1}{-1}{};
  \tileA{240}{1}{1}{};
  \tileA{60}{2}{-2}{};
  \tileA{120}{2}{-1}{};
\end{tikzpicture}%
\end{center}%
\end{minipage}%
} \qquad \subfloat[Partial patch 88 (extends to $1$-patch 34)]{%
\begin{minipage}[b]{6cm}
\begin{center}
\begin{tikzpicture}[x=3mm,y=3mm]
  \ffkite{0}{0}{1};
  \tileA{300}{-1}{1}{};
  \tileA{240}{0}{-1}{};
  \ftileA{0}{0}{0}{};
  \tileAr{180}{1}{-1}{};
  \tileA{240}{1}{1}{};
  \tileA{60}{2}{-2}{};
  \tileA{120}{2}{-1}{};
\end{tikzpicture}%
\end{center}%
\end{minipage}%
} \\ \subfloat[Partial patch 89 (extends to $1$-patch 35)]{%
\begin{minipage}[b]{6cm}
\begin{center}
\begin{tikzpicture}[x=3mm,y=3mm]
  \ffkite{120}{0}{1};
  \tileA{120}{-1}{0}{};
  \tileAr{300}{-1}{1}{};
  \tileA{240}{0}{-1}{};
  \ftileA{0}{0}{0}{};
  \tileAr{180}{1}{-1}{};
  \tileAr{300}{1}{0}{};
  \tileAr{240}{2}{-1}{};
\end{tikzpicture}%
\end{center}%
\end{minipage}%
} \qquad \subfloat[Partial patch 90 (extends to $1$-patch 36)]{%
\begin{minipage}[b]{6cm}
\begin{center}
\begin{tikzpicture}[x=3mm,y=3mm]
  \ffkite{120}{0}{1};
  \tileA{300}{-1}{1}{};
  \tileA{240}{0}{-1}{};
  \ftileA{0}{0}{0}{};
  \tileAr{120}{0}{1}{};
  \tileAr{180}{1}{-1}{};
  \tileAr{300}{1}{0}{};
  \tileAr{240}{2}{-1}{};
\end{tikzpicture}%
\end{center}%
\end{minipage}%
} \\ \subfloat[Partial patch 91 (no extensions)]{%
\begin{minipage}[b]{6cm}
\begin{center}
\begin{tikzpicture}[x=3mm,y=3mm]
  \ffkite{300}{2}{-1};
  \tileA{180}{-1}{0}{};
  \tileAr{300}{-1}{1}{};
  \tileAr{120}{0}{-1}{};
  \ftileA{0}{0}{0}{};
  \tileA{120}{0}{1}{};
  \tileAr{180}{1}{-1}{};
  \tileA{60}{1}{0}{};
\end{tikzpicture}%
\end{center}%
\end{minipage}%
} \qquad \subfloat[Partial patch 92 (extends to $1$-patch 37)]{%
\begin{minipage}[b]{6cm}
\begin{center}
\begin{tikzpicture}[x=3mm,y=3mm]
  \ffkite{0}{2}{-1};
  \tileA{180}{-1}{0}{};
  \tileAr{300}{-1}{1}{};
  \tileAr{120}{0}{-1}{};
  \ftileA{0}{0}{0}{};
  \tileAr{0}{0}{1}{};
  \tileAr{180}{1}{-1}{};
  \tileAr{300}{1}{0}{};
\end{tikzpicture}%
\end{center}%
\end{minipage}%
}%
\end{center}
\caption{Partial patches (part 13)}
\label{fig:partpatch:13}
\end{figure}
\begin{figure}[htp!]
\renewcommand\thesubfigure{\arabic{subfigure}}%
\captionsetup{margin=0pt,justification=raggedright}%
\begin{center}
\subfloat[$1$-patch 1 (central tile class $F_2$)]{%
\begin{minipage}[b]{6cm}
\begin{center}
\begin{tikzpicture}[x=3mm,y=3mm]
  \tileA{60}{-1}{-1}{};
  \tileA{300}{-1}{1}{};
  \tileA{0}{0}{-1}{};
  \tileA{0}{0}{0}{};
  \tileA{0}{0}{1}{};
  \tileA{120}{2}{-2}{};
  \tileA{240}{2}{0}{};
\end{tikzpicture}%
\end{center}%
\end{minipage}%
} \qquad \subfloat[$1$-patch 2 (central tile class $H_3$)]{%
\begin{minipage}[b]{6cm}
\begin{center}
\begin{tikzpicture}[x=3mm,y=3mm]
  \tileA{300}{-1}{0}{};
  \tileA{240}{-1}{1}{};
  \tileA{0}{0}{-1}{};
  \tileA{0}{0}{0}{};
  \tileAr{180}{0}{1}{};
  \tileA{60}{1}{0}{};
  \tileA{120}{2}{-2}{};
\end{tikzpicture}%
\end{center}%
\end{minipage}%
} \\ \subfloat[$1$-patch 3 (central tile class $H_3$)]{%
\begin{minipage}[b]{6cm}
\begin{center}
\begin{tikzpicture}[x=3mm,y=3mm]
  \tileA{300}{-1}{0}{};
  \tileA{240}{-1}{1}{};
  \tileA{0}{0}{-1}{};
  \tileA{0}{0}{0}{};
  \tileAr{180}{0}{1}{};
  \tileA{60}{1}{0}{};
  \tileA{240}{2}{-1}{};
\end{tikzpicture}%
\end{center}%
\end{minipage}%
} \qquad \subfloat[$1$-patch 4 (central tile class $F_2$)]{%
\begin{minipage}[b]{6cm}
\begin{center}
\begin{tikzpicture}[x=3mm,y=3mm]
  \tileA{300}{-1}{1}{};
  \tileA{180}{0}{-1}{};
  \tileA{0}{0}{0}{};
  \tileA{0}{0}{1}{};
  \tileA{120}{1}{-2}{};
  \tileA{120}{2}{-2}{};
  \tileA{240}{2}{0}{};
\end{tikzpicture}%
\end{center}%
\end{minipage}%
} \\ \subfloat[$1$-patch 5 (central tile class $P_2$)]{%
\begin{minipage}[b]{6cm}
\begin{center}
\begin{tikzpicture}[x=3mm,y=3mm]
  \tileA{300}{-1}{1}{};
  \tileA{180}{0}{-1}{};
  \tileA{0}{0}{0}{};
  \tileA{0}{0}{1}{};
  \tileA{300}{1}{-1}{};
  \tileA{300}{2}{-1}{};
  \tileA{240}{2}{0}{};
\end{tikzpicture}%
\end{center}%
\end{minipage}%
} \qquad \subfloat[$1$-patch 6 (eliminated by trying to surround shaded tile)]{%
\begin{minipage}[b]{6cm}
\begin{center}
\begin{tikzpicture}[x=3mm,y=3mm]
  \tileA{300}{-1}{1}{};
  \tileA{180}{0}{-1}{};
  \tileA{0}{0}{0}{};
  \tileA{0}{0}{1}{};
  \tileA{300}{1}{-1}{};
  \fftileA{120}{2}{-1}{};
  \tileA{180}{3}{-2}{};
\end{tikzpicture}%
\end{center}%
\end{minipage}%
} \\ \subfloat[$1$-patch 7 (central tile class $H_2$)]{%
\begin{minipage}[b]{6cm}
\begin{center}
\begin{tikzpicture}[x=3mm,y=3mm]
  \tileA{60}{-1}{0}{};
  \tileA{240}{0}{-1}{};
  \tileA{0}{0}{0}{};
  \tileA{0}{0}{1}{};
  \tileAr{180}{1}{-1}{};
  \tileA{60}{2}{-2}{};
  \tileA{120}{2}{-1}{};
\end{tikzpicture}%
\end{center}%
\end{minipage}%
} \qquad \subfloat[$1$-patch 8 (central tile class $H_2$)]{%
\begin{minipage}[b]{6cm}
\begin{center}
\begin{tikzpicture}[x=3mm,y=3mm]
  \tileA{300}{-1}{1}{};
  \tileA{240}{0}{-1}{};
  \tileA{0}{0}{0}{};
  \tileA{0}{0}{1}{};
  \tileAr{180}{1}{-1}{};
  \tileA{60}{2}{-2}{};
  \tileA{120}{2}{-1}{};
\end{tikzpicture}%
\end{center}%
\end{minipage}%
}%
\end{center}
\caption{$1$-patches (part 1)}
\label{fig:patch}
\end{figure}
\begin{figure}[htp!]
\renewcommand\thesubfigure{\arabic{subfigure}}%
\ContinuedFloat
\captionsetup{margin=0pt,justification=raggedright}%
\begin{center}
\subfloat[$1$-patch 9 (central tile class $H_1$)]{%
\begin{minipage}[b]{6cm}
\begin{center}
\begin{tikzpicture}[x=3mm,y=3mm]
  \tileAr{60}{-1}{0}{};
  \tileAr{120}{0}{-1}{};
  \tileA{0}{0}{0}{};
  \tileAr{180}{0}{1}{};
  \tileAr{180}{1}{-1}{};
  \tileAr{300}{1}{0}{};
  \tileAr{240}{2}{-1}{};
\end{tikzpicture}%
\end{center}%
\end{minipage}%
} \qquad \subfloat[$1$-patch 10 (central tile class $H_4$)]{%
\begin{minipage}[b]{6cm}
\begin{center}
\begin{tikzpicture}[x=3mm,y=3mm]
  \tileA{60}{-1}{-1}{};
  \tileA{120}{-1}{0}{};
  \tileAr{300}{-1}{1}{};
  \tileA{0}{0}{-1}{};
  \tileA{0}{0}{0}{};
  \tileA{120}{0}{1}{};
  \tileA{60}{1}{0}{};
  \tileA{120}{2}{-2}{};
\end{tikzpicture}%
\end{center}%
\end{minipage}%
} \\ \subfloat[$1$-patch 11 (eliminated by trying to surround shaded tile)]{%
\begin{minipage}[b]{6cm}
\begin{center}
\begin{tikzpicture}[x=3mm,y=3mm]
  \tileA{60}{-1}{-1}{};
  \tileA{120}{-1}{0}{};
  \tileAr{300}{-1}{1}{};
  \fftileA{0}{0}{-1}{};
  \tileA{0}{0}{0}{};
  \tileA{120}{0}{1}{};
  \tileA{60}{1}{0}{};
  \tileA{240}{2}{-1}{};
\end{tikzpicture}%
\end{center}%
\end{minipage}%
} \qquad \subfloat[$1$-patch 12 (central tile class $FP_1$)]{%
\begin{minipage}[b]{6cm}
\begin{center}
\begin{tikzpicture}[x=3mm,y=3mm]
  \tileA{60}{-1}{-1}{};
  \tileA{300}{-1}{1}{};
  \tileAr{240}{-1}{2}{};
  \tileA{0}{0}{-1}{};
  \tileA{0}{0}{0}{};
  \tileA{60}{0}{1}{};
  \tileA{60}{1}{0}{};
  \tileA{120}{2}{-2}{};
\end{tikzpicture}%
\end{center}%
\end{minipage}%
} \\ \subfloat[$1$-patch 13 (central tile class $FP_1$)]{%
\begin{minipage}[b]{6cm}
\begin{center}
\begin{tikzpicture}[x=3mm,y=3mm]
  \tileA{60}{-1}{-1}{};
  \tileA{300}{-1}{1}{};
  \tileA{0}{0}{-1}{};
  \tileA{0}{0}{0}{};
  \tileAr{120}{0}{1}{};
  \tileA{300}{0}{2}{};
  \tileA{60}{1}{0}{};
  \tileA{120}{2}{-2}{};
\end{tikzpicture}%
\end{center}%
\end{minipage}%
} \qquad \subfloat[$1$-patch 14 (central tile class $F_2$)]{%
\begin{minipage}[b]{6cm}
\begin{center}
\begin{tikzpicture}[x=3mm,y=3mm]
  \tileA{60}{-1}{-1}{};
  \tileA{300}{-1}{1}{};
  \tileA{0}{0}{-1}{};
  \tileA{0}{0}{0}{};
  \tileAr{60}{0}{1}{};
  \tileA{240}{1}{1}{};
  \tileA{120}{2}{-2}{};
  \tileA{240}{2}{0}{};
\end{tikzpicture}%
\end{center}%
\end{minipage}%
}%
\end{center}
\caption{$1$-patches (part 2)}
\label{fig:patch:2}
\end{figure}
\begin{figure}[htp!]
\renewcommand\thesubfigure{\arabic{subfigure}}%
\ContinuedFloat
\captionsetup{margin=0pt,justification=raggedright}%
\begin{center}
\subfloat[$1$-patch 15 (eliminated by trying to surround shaded tile)]{%
\begin{minipage}[b]{6cm}
\begin{center}
\begin{tikzpicture}[x=3mm,y=3mm]
  \tileA{60}{-1}{-1}{};
  \tileA{300}{-1}{1}{};
  \tileAr{240}{-1}{2}{};
  \fftileA{0}{0}{-1}{};
  \tileA{0}{0}{0}{};
  \tileA{60}{0}{1}{};
  \tileA{60}{1}{0}{};
  \tileA{240}{2}{-1}{};
\end{tikzpicture}%
\end{center}%
\end{minipage}%
} \qquad \subfloat[$1$-patch 16 (eliminated by trying to surround shaded tile)]{%
\begin{minipage}[b]{6cm}
\begin{center}
\begin{tikzpicture}[x=3mm,y=3mm]
  \tileA{60}{-1}{-1}{};
  \tileA{300}{-1}{1}{};
  \fftileA{0}{0}{-1}{};
  \tileA{0}{0}{0}{};
  \tileAr{120}{0}{1}{};
  \tileA{300}{0}{2}{};
  \tileA{60}{1}{0}{};
  \tileA{240}{2}{-1}{};
\end{tikzpicture}%
\end{center}%
\end{minipage}%
} \\ \subfloat[$1$-patch 17 (eliminated by trying to surround shaded tile)]{%
\begin{minipage}[b]{6cm}
\begin{center}
\begin{tikzpicture}[x=3mm,y=3mm]
  \tileA{0}{-2}{1}{};
  \fftileA{300}{-1}{0}{};
  \tileAr{300}{-1}{1}{};
  \tileA{0}{0}{-1}{};
  \tileA{0}{0}{0}{};
  \tileA{120}{0}{1}{};
  \tileA{60}{1}{0}{};
  \tileA{120}{2}{-2}{};
\end{tikzpicture}%
\end{center}%
\end{minipage}%
} \qquad \subfloat[$1$-patch 18 (eliminated by trying to surround shaded tile)]{%
\begin{minipage}[b]{6cm}
\begin{center}
\begin{tikzpicture}[x=3mm,y=3mm]
  \tileA{0}{-2}{1}{};
  \fftileA{300}{-1}{0}{};
  \tileAr{300}{-1}{1}{};
  \tileA{0}{0}{-1}{};
  \tileA{0}{0}{0}{};
  \tileA{120}{0}{1}{};
  \tileA{60}{1}{0}{};
  \tileA{240}{2}{-1}{};
\end{tikzpicture}%
\end{center}%
\end{minipage}%
} \\ \subfloat[$1$-patch 19 (central tile class $H_4$)]{%
\begin{minipage}[b]{6cm}
\begin{center}
\begin{tikzpicture}[x=3mm,y=3mm]
  \tileA{120}{-1}{0}{};
  \tileAr{300}{-1}{1}{};
  \tileA{180}{0}{-1}{};
  \tileA{0}{0}{0}{};
  \tileA{120}{0}{1}{};
  \tileA{120}{1}{-2}{};
  \tileA{60}{1}{0}{};
  \tileA{120}{2}{-2}{};
\end{tikzpicture}%
\end{center}%
\end{minipage}%
} \qquad \subfloat[$1$-patch 20 (central tile class $H_4$)]{%
\begin{minipage}[b]{6cm}
\begin{center}
\begin{tikzpicture}[x=3mm,y=3mm]
  \tileA{120}{-1}{0}{};
  \tileAr{300}{-1}{1}{};
  \tileA{180}{0}{-1}{};
  \tileA{0}{0}{0}{};
  \tileA{120}{0}{1}{};
  \tileA{120}{1}{-2}{};
  \tileA{60}{1}{0}{};
  \tileA{240}{2}{-1}{};
\end{tikzpicture}%
\end{center}%
\end{minipage}%
}%
\end{center}
\caption{$1$-patches (part 3)}
\label{fig:patch:3}
\end{figure}
\begin{figure}[htp!]
\renewcommand\thesubfigure{\arabic{subfigure}}%
\ContinuedFloat
\captionsetup{margin=0pt,justification=raggedright}%
\begin{center}
\subfloat[$1$-patch 21 (eliminated by trying to surround shaded tile)]{%
\begin{minipage}[b]{6cm}
\begin{center}
\begin{tikzpicture}[x=3mm,y=3mm]
  \tileA{120}{-1}{0}{};
  \tileAr{300}{-1}{1}{};
  \tileA{180}{0}{-1}{};
  \tileA{0}{0}{0}{};
  \tileA{120}{0}{1}{};
  \tileA{300}{1}{-1}{};
  \fftileA{60}{1}{0}{};
  \tileA{300}{2}{-1}{};
\end{tikzpicture}%
\end{center}%
\end{minipage}%
} \qquad \subfloat[$1$-patch 22 (eliminated by trying to surround shaded tile)]{%
\begin{minipage}[b]{6cm}
\begin{center}
\begin{tikzpicture}[x=3mm,y=3mm]
  \tileA{120}{-1}{0}{};
  \fftileAr{300}{-1}{1}{};
  \tileA{180}{0}{-1}{};
  \tileA{0}{0}{0}{};
  \tileAr{0}{0}{1}{};
  \tileA{300}{1}{-1}{};
  \tileAr{300}{1}{0}{};
  \tileA{0}{2}{-1}{};
\end{tikzpicture}%
\end{center}%
\end{minipage}%
} \\ \subfloat[$1$-patch 23 (central tile class $FP_1$)]{%
\begin{minipage}[b]{6cm}
\begin{center}
\begin{tikzpicture}[x=3mm,y=3mm]
  \tileA{300}{-1}{1}{};
  \tileAr{240}{-1}{2}{};
  \tileA{180}{0}{-1}{};
  \tileA{0}{0}{0}{};
  \tileA{60}{0}{1}{};
  \tileA{120}{1}{-2}{};
  \tileA{60}{1}{0}{};
  \tileA{120}{2}{-2}{};
\end{tikzpicture}%
\end{center}%
\end{minipage}%
} \qquad \subfloat[$1$-patch 24 (central tile class $FP_1$)]{%
\begin{minipage}[b]{6cm}
\begin{center}
\begin{tikzpicture}[x=3mm,y=3mm]
  \tileA{300}{-1}{1}{};
  \tileA{180}{0}{-1}{};
  \tileA{0}{0}{0}{};
  \tileAr{120}{0}{1}{};
  \tileA{300}{0}{2}{};
  \tileA{120}{1}{-2}{};
  \tileA{60}{1}{0}{};
  \tileA{120}{2}{-2}{};
\end{tikzpicture}%
\end{center}%
\end{minipage}%
} \\ \subfloat[$1$-patch 25 (central tile class $F_2$)]{%
\begin{minipage}[b]{6cm}
\begin{center}
\begin{tikzpicture}[x=3mm,y=3mm]
  \tileA{300}{-1}{1}{};
  \tileA{180}{0}{-1}{};
  \tileA{0}{0}{0}{};
  \tileAr{60}{0}{1}{};
  \tileA{120}{1}{-2}{};
  \tileA{240}{1}{1}{};
  \tileA{120}{2}{-2}{};
  \tileA{240}{2}{0}{};
\end{tikzpicture}%
\end{center}%
\end{minipage}%
} \qquad \subfloat[$1$-patch 26 (central tile class $FP_1$)]{%
\begin{minipage}[b]{6cm}
\begin{center}
\begin{tikzpicture}[x=3mm,y=3mm]
  \tileA{300}{-1}{1}{};
  \tileAr{240}{-1}{2}{};
  \tileA{180}{0}{-1}{};
  \tileA{0}{0}{0}{};
  \tileA{60}{0}{1}{};
  \tileA{120}{1}{-2}{};
  \tileA{60}{1}{0}{};
  \tileA{240}{2}{-1}{};
\end{tikzpicture}%
\end{center}%
\end{minipage}%
}%
\end{center}
\caption{$1$-patches (part 4)}
\label{fig:patch:4}
\end{figure}
\begin{figure}[htp!]
\renewcommand\thesubfigure{\arabic{subfigure}}%
\ContinuedFloat
\captionsetup{margin=0pt,justification=raggedright}%
\begin{center}
\subfloat[$1$-patch 27 (central tile class $FP_1$)]{%
\begin{minipage}[b]{6cm}
\begin{center}
\begin{tikzpicture}[x=3mm,y=3mm]
  \tileA{300}{-1}{1}{};
  \tileA{180}{0}{-1}{};
  \tileA{0}{0}{0}{};
  \tileAr{120}{0}{1}{};
  \tileA{300}{0}{2}{};
  \tileA{120}{1}{-2}{};
  \tileA{60}{1}{0}{};
  \tileA{240}{2}{-1}{};
\end{tikzpicture}%
\end{center}%
\end{minipage}%
} \qquad \subfloat[$1$-patch 28 (eliminated by trying to surround shaded tile)]{%
\begin{minipage}[b]{6cm}
\begin{center}
\begin{tikzpicture}[x=3mm,y=3mm]
  \tileA{300}{-1}{1}{};
  \tileA{180}{0}{-1}{};
  \tileA{0}{0}{0}{};
  \tileAr{120}{0}{1}{};
  \fftileA{300}{1}{-1}{};
  \tileAr{300}{1}{0}{};
  \tileA{180}{1}{1}{};
  \tileA{0}{2}{-1}{};
\end{tikzpicture}%
\end{center}%
\end{minipage}%
} \\ \subfloat[$1$-patch 29 (central tile class $T_1$)]{%
\begin{minipage}[b]{6cm}
\begin{center}
\begin{tikzpicture}[x=3mm,y=3mm]
  \tileA{300}{-1}{1}{};
  \tileAr{240}{-1}{2}{};
  \tileA{180}{0}{-1}{};
  \tileA{0}{0}{0}{};
  \tileA{60}{0}{1}{};
  \tileA{300}{1}{-1}{};
  \tileA{60}{1}{0}{};
  \tileA{300}{2}{-1}{};
\end{tikzpicture}%
\end{center}%
\end{minipage}%
} \qquad \subfloat[$1$-patch 30 (central tile class $T_1$)]{%
\begin{minipage}[b]{6cm}
\begin{center}
\begin{tikzpicture}[x=3mm,y=3mm]
  \tileA{300}{-1}{1}{};
  \tileA{180}{0}{-1}{};
  \tileA{0}{0}{0}{};
  \tileAr{120}{0}{1}{};
  \tileA{300}{0}{2}{};
  \tileA{300}{1}{-1}{};
  \tileA{60}{1}{0}{};
  \tileA{300}{2}{-1}{};
\end{tikzpicture}%
\end{center}%
\end{minipage}%
} \\ \subfloat[$1$-patch 31 (central tile class $P_2$)]{%
\begin{minipage}[b]{6cm}
\begin{center}
\begin{tikzpicture}[x=3mm,y=3mm]
  \tileA{300}{-1}{1}{};
  \tileA{180}{0}{-1}{};
  \tileA{0}{0}{0}{};
  \tileAr{60}{0}{1}{};
  \tileA{300}{1}{-1}{};
  \tileA{240}{1}{1}{};
  \tileA{300}{2}{-1}{};
  \tileA{240}{2}{0}{};
\end{tikzpicture}%
\end{center}%
\end{minipage}%
} \qquad \subfloat[$1$-patch 32 (central tile class $P_2$)]{%
\begin{minipage}[b]{6cm}
\begin{center}
\begin{tikzpicture}[x=3mm,y=3mm]
  \tileA{300}{-1}{1}{};
  \tileA{180}{0}{-1}{};
  \tileA{0}{0}{0}{};
  \tileAr{60}{0}{1}{};
  \tileA{300}{1}{-1}{};
  \tileA{240}{1}{1}{};
  \tileA{120}{2}{-1}{};
  \tileA{180}{3}{-2}{};
\end{tikzpicture}%
\end{center}%
\end{minipage}%
}%
\end{center}
\caption{$1$-patches (part 5)}
\label{fig:patch:5}
\end{figure}
\begin{figure}[htp!]
\renewcommand\thesubfigure{\arabic{subfigure}}%
\ContinuedFloat
\captionsetup{margin=0pt,justification=raggedright}%
\begin{center}
\subfloat[$1$-patch 33 (central tile class $H_2$)]{%
\begin{minipage}[b]{6cm}
\begin{center}
\begin{tikzpicture}[x=3mm,y=3mm]
  \tileA{60}{-1}{0}{};
  \tileA{240}{0}{-1}{};
  \tileA{0}{0}{0}{};
  \tileA{180}{0}{1}{};
  \tileAr{180}{1}{-1}{};
  \tileA{240}{1}{1}{};
  \tileA{60}{2}{-2}{};
  \tileA{120}{2}{-1}{};
\end{tikzpicture}%
\end{center}%
\end{minipage}%
} \qquad \subfloat[$1$-patch 34 (central tile class $H_2$)]{%
\begin{minipage}[b]{6cm}
\begin{center}
\begin{tikzpicture}[x=3mm,y=3mm]
  \tileA{300}{-1}{1}{};
  \tileA{240}{0}{-1}{};
  \tileA{0}{0}{0}{};
  \tileAr{60}{0}{1}{};
  \tileAr{180}{1}{-1}{};
  \tileA{240}{1}{1}{};
  \tileA{60}{2}{-2}{};
  \tileA{120}{2}{-1}{};
\end{tikzpicture}%
\end{center}%
\end{minipage}%
} \\ \subfloat[$1$-patch 35 (eliminated by trying to surround shaded tile)]{%
\begin{minipage}[b]{6cm}
\begin{center}
\begin{tikzpicture}[x=3mm,y=3mm]
  \tileA{120}{-1}{0}{};
  \fftileAr{300}{-1}{1}{};
  \tileA{240}{0}{-1}{};
  \tileA{0}{0}{0}{};
  \tileAr{0}{0}{1}{};
  \tileAr{180}{1}{-1}{};
  \tileAr{300}{1}{0}{};
  \tileAr{240}{2}{-1}{};
\end{tikzpicture}%
\end{center}%
\end{minipage}%
} \qquad \subfloat[$1$-patch 36 (eliminated by trying to surround shaded tile)]{%
\begin{minipage}[b]{6cm}
\begin{center}
\begin{tikzpicture}[x=3mm,y=3mm]
  \fftileA{300}{-1}{1}{};
  \tileA{240}{0}{-1}{};
  \tileA{0}{0}{0}{};
  \tileAr{120}{0}{1}{};
  \tileAr{180}{1}{-1}{};
  \tileAr{300}{1}{0}{};
  \tileA{180}{1}{1}{};
  \tileAr{240}{2}{-1}{};
\end{tikzpicture}%
\end{center}%
\end{minipage}%
} \\ \subfloat[$1$-patch 37 (eliminated by trying to surround shaded tile)]{%
\begin{minipage}[b]{6cm}
\begin{center}
\begin{tikzpicture}[x=3mm,y=3mm]
  \tileA{180}{-1}{0}{};
  \fftileAr{300}{-1}{1}{};
  \tileAr{120}{0}{-1}{};
  \tileA{0}{0}{0}{};
  \tileAr{0}{0}{1}{};
  \tileAr{180}{1}{-1}{};
  \tileAr{300}{1}{0}{};
  \tileAr{240}{2}{-1}{};
\end{tikzpicture}%
\end{center}%
\end{minipage}%
}%
\end{center}
\caption{$1$-patches (part 6)}
\label{fig:patch:6}
\end{figure}

\FloatBarrier

\subsection{Classification of outer tiles}

For each of the possible neighbours that actually occurs in some of
the remaining $1$-patches, we can now list the possible
classifications of a central tile that has such a neighbour; see
Table~\ref{table:nbrclass}.

For each of the outer tiles in a $1$-patch, we have some but not all
of its neighbours, and can take the intersection of the sets from
Table~\ref{table:nbrclass} to produce a set of possible classes for
that outer tile.  Although this is not a single class, it can still be
used for the within-cluster and between-cluster checks.  In each case,
it turns out that the set of possible classes for a neighbour
appearing in one of those checks is a subset of the classes permitted
by that check, and so we have a complete proof of the within-cluster
and between-cluster matching properties that depends only on the
enumeration of $1$-patches presented here and not on a larger
enumeration of $2$-patches; the lists of checks and corresponding sets
of classes appear below.

\begin{table}[htp!]
\begin{center}
\begin{tabular}{|c|c|}
\textbf{Possible neighbour} & \textbf{Possible classes for central tile}\\
$2$ & $\{FP_1, F_2, H_4\}$\\
$3$ & $\{H_2\}$\\
$4$ & $\{H_4\}$\\
$6$ & $\{H_3\}$\\
$7$ & $\{H_1\}$\\
$8$ & $\{H_3\}$\\
$9$ & $\{FP_1, F_2, H_2, P_2, T_1\}$\\
$11$ & $\{H_4\}$\\
$12$ & $\{FP_1, T_1\}$\\
$13$ & $\{FP_1, F_2, H_3, H_4\}$\\
$14$ & $\{FP_1, F_2, H_4, P_2, T_1\}$\\
$15$ & $\{H_2\}$\\
$16$ & $\{H_1\}$\\
$17$ & $\{F_2, H_2, P_2\}$\\
$18$ & $\{FP_1, T_1\}$\\
$19$ & $\{H_4\}$\\
$20$ & $\{H_2\}$\\
$22$ & $\{F_2, H_2, P_2\}$\\
$23$ & $\{FP_1, T_1\}$\\
$24$ & $\{H_1, H_3\}$\\
$25$ & $\{FP_1, T_1\}$\\
$26$ & $\{FP_1, F_2, H_4\}$\\
$27$ & $\{P_2, T_1\}$\\
$28$ & $\{H_1, H_2\}$\\
$29$ & $\{FP_1, H_3, H_4, T_1\}$\\
$30$ & $\{H_1\}$\\
$32$ & $\{F_2, H_2, P_2\}$\\
$33$ & $\{H_2\}$\\
$34$ & $\{FP_1, F_2, H_3, H_4\}$\\
$36$ & $\{H_2, P_2\}$\\
$37$ & $\{FP_1, H_3, H_4\}$\\
$38$ & $\{P_2, T_1\}$\\
$39$ & $\{H_1\}$\\
$40$ & $\{F_2, P_2\}$\\
$41$ & $\{P_2\}$\\
\end{tabular}
\caption{}
\label{table:nbrclass}
\end{center}
\end{table}
\begin{itemize}
\item $1$-patch 1 (class $F_2$)
\begin{itemize}
\item $P_2$ or $F_2$ neighbour $FP_1$ OK: $\{FP_1\} \subseteq \{FP_1\}$
\item $F$ edge $F^+$ OK: $\{F_2\} \subseteq \{F_2\}$
\item $F$ edge $F^-$ OK: $\{F_2\} \subseteq \{F_2\}$
\item $X^+$ edge at top of polykite OK: $\{H_3\} \subseteq \{F_2, FP_1, H_3, H_4\}$
\item $X^-$ edge at bottom of polykite OK: $\{H_2\} \subseteq \{F_2, H_2, P_2\}$
\item $L$ edge at bottom of polykite OK: $\{P_2\} \subseteq \{P_2\}$
\end{itemize}
\item $1$-patch 2 (class $H_3$)
\begin{itemize}
\item $H_3$ neighbour $H_1$ OK: $\{H_1\} \subseteq \{H_1\}$
\item $H$ lower edge $B^-$ OK: $\{FP_1, T_1\} \subseteq \{FP_1, T_1\}$
\item $X^+$ edge at right of polykite OK: $\{P_2\} \subseteq \{H_2, P_2\}$
\item $X^-$ edge at bottom of polykite OK: $\{H_2, P_2\} \subseteq \{F_2, H_2, P_2\}$
\end{itemize}
\item $1$-patch 3 (class $H_3$)
\begin{itemize}
\item $H_3$ neighbour $H_1$ OK: $\{H_1\} \subseteq \{H_1\}$
\item $H$ lower edge $B^-$ OK: $\{FP_1, T_1\} \subseteq \{FP_1, T_1\}$
\item $X^+$ edge at right of polykite OK: $\{F_2, FP_1, H_4\} \subseteq \{F_2, FP_1, H_3, H_4\}$
\item $X^-$ edge at bottom of polykite OK: $\{F_2, P_2\} \subseteq \{F_2, H_2, P_2\}$
\end{itemize}
\item $1$-patch 4 (class $F_2$)
\begin{itemize}
\item $P_2$ or $F_2$ neighbour $FP_1$ OK: $\{FP_1\} \subseteq \{FP_1\}$
\item $F$ edge $F^+$ OK: $\{F_2\} \subseteq \{F_2\}$
\item $F$ edge $F^-$ OK: $\{F_2\} \subseteq \{F_2\}$
\item $X^+$ edge at top of polykite OK: $\{H_3\} \subseteq \{F_2, FP_1, H_3, H_4\}$
\item $X^-$ edge at bottom of polykite OK: $\{FP_1, H_3, H_4\} \subseteq \{FP_1, H_3, H_4\}$
\item $L$ edge at bottom of polykite OK: $\{F_2, FP_1\} \subseteq \{F_2, FP_1\}$
\end{itemize}
\item $1$-patch 5 (class $P_2$)
\begin{itemize}
\item $P_2$ or $F_2$ neighbour $FP_1$ OK: $\{FP_1\} \subseteq \{FP_1\}$
\item $T$ or $P$ lower edge $A^-$ OK: $\{H_2\} \subseteq \{H_2\}$
\item $X^+$ edge at top of polykite OK: $\{H_3\} \subseteq \{F_2, FP_1, H_3, H_4\}$
\item $X^-$ edge at right of polykite OK: $\{FP_1, H_4\} \subseteq \{FP_1, H_3, H_4\}$
\item $L$ edge at right of polykite OK: $\{F_2, FP_1\} \subseteq \{F_2, FP_1\}$
\end{itemize}
\item $1$-patch 7 (class $H_2$)
\begin{itemize}
\item $H_2$ neighbour $H_1$ OK: $\{H_1\} \subseteq \{H_1\}$
\item $H$ edge $A^+$ OK: $\{P_2, T_1\} \subseteq \{P_2, T_1\}$
\item $X^+$ edge at top of polykite OK: $\{F_2, FP_1, H_4\} \subseteq \{F_2, FP_1, H_3, H_4\}$
\item $X^-$ edge at right of polykite OK: $\{F_2, P_2\} \subseteq \{F_2, H_2, P_2\}$
\end{itemize}
\item $1$-patch 8 (class $H_2$)
\begin{itemize}
\item $H_2$ neighbour $H_1$ OK: $\{H_1\} \subseteq \{H_1\}$
\item $H$ edge $A^+$ OK: $\{T_1\} \subseteq \{T_1\}$
\item $X^+$ edge at top of polykite OK: $\{H_3\} \subseteq \{F_2, FP_1, H_3, H_4\}$
\item $X^-$ edge at right of polykite OK: $\{F_2, P_2\} \subseteq \{F_2, H_2, P_2\}$
\end{itemize}
\item $1$-patch 9 (class $H_1$)
\begin{itemize}
\item $H_1$ neighbour $H_2$ OK: $\{H_2\} \subseteq \{H_2\}$
\item $H_1$ neighbour $H_3$ OK: $\{H_3\} \subseteq \{H_3\}$
\item $H_1$ neighbour $H_4$ OK: $\{H_4\} \subseteq \{H_4\}$
\item $H$ upper edge $B^-$ OK: $\{FP_1, T_1\} \subseteq \{FP_1, T_1\}$
\end{itemize}
\item $1$-patch 10 (class $H_4$)
\begin{itemize}
\item $H_4$ neighbour $H_1$ OK: $\{H_1\} \subseteq \{H_1\}$
\item $X^+$ edge at right of polykite OK: $\{P_2\} \subseteq \{H_2, P_2\}$
\item $X^-$ edge at bottom of polykite OK: $\{H_2\} \subseteq \{F_2, H_2, P_2\}$
\end{itemize}
\item $1$-patch 12 (class $FP_1$)
\begin{itemize}
\item $FP_1$ neighbour $P_2$ or $F_2$ OK: $\{F_2\} \subseteq \{F_2, P_2\}$
\item $T$, $P$ or $F$ edge $B^+$ OK: $\{H_4\} \subseteq \{H_3, H_4\}$
\item $X^+$ edge at right of polykite OK: $\{P_2\} \subseteq \{H_2, P_2\}$
\item $X^-$ edge at bottom of polykite OK: $\{H_2\} \subseteq \{F_2, H_2, P_2\}$
\item $L$ edge at bottom of polykite OK: $\{P_2\} \subseteq \{P_2\}$
\end{itemize}
\item $1$-patch 13 (class $FP_1$)
\begin{itemize}
\item $FP_1$ neighbour $P_2$ or $F_2$ OK: $\{F_2\} \subseteq \{F_2, P_2\}$
\item $T$, $P$ or $F$ edge $B^+$ OK: $\{H_3\} \subseteq \{H_3, H_4\}$
\item $X^+$ edge at right of polykite OK: $\{P_2\} \subseteq \{H_2, P_2\}$
\item $X^-$ edge at bottom of polykite OK: $\{H_2\} \subseteq \{F_2, H_2, P_2\}$
\item $L$ edge at bottom of polykite OK: $\{P_2\} \subseteq \{P_2\}$
\end{itemize}
\item $1$-patch 14 (class $F_2$)
\begin{itemize}
\item $P_2$ or $F_2$ neighbour $FP_1$ OK: $\{FP_1\} \subseteq \{FP_1\}$
\item $F$ edge $F^+$ OK: $\{F_2\} \subseteq \{F_2\}$
\item $F$ edge $F^-$ OK: $\{F_2\} \subseteq \{F_2\}$
\item $X^+$ edge at top of polykite OK: $\{H_2\} \subseteq \{H_2, P_2\}$
\item $X^-$ edge at bottom of polykite OK: $\{H_2\} \subseteq \{F_2, H_2, P_2\}$
\item $L$ edge at bottom of polykite OK: $\{P_2\} \subseteq \{P_2\}$
\end{itemize}
\item $1$-patch 19 (class $H_4$)
\begin{itemize}
\item $H_4$ neighbour $H_1$ OK: $\{H_1\} \subseteq \{H_1\}$
\item $X^+$ edge at right of polykite OK: $\{P_2\} \subseteq \{H_2, P_2\}$
\item $X^-$ edge at bottom of polykite OK: $\{FP_1, H_3, H_4\} \subseteq \{FP_1, H_3, H_4\}$
\end{itemize}
\item $1$-patch 20 (class $H_4$)
\begin{itemize}
\item $H_4$ neighbour $H_1$ OK: $\{H_1\} \subseteq \{H_1\}$
\item $X^+$ edge at right of polykite OK: $\{FP_1, H_4\} \subseteq \{F_2, FP_1, H_3, H_4\}$
\item $X^-$ edge at bottom of polykite OK: $\{FP_1, H_4\} \subseteq \{FP_1, H_3, H_4\}$
\end{itemize}
\item $1$-patch 23 (class $FP_1$)
\begin{itemize}
\item $FP_1$ neighbour $P_2$ or $F_2$ OK: $\{F_2\} \subseteq \{F_2, P_2\}$
\item $T$, $P$ or $F$ edge $B^+$ OK: $\{H_4\} \subseteq \{H_3, H_4\}$
\item $X^+$ edge at right of polykite OK: $\{P_2\} \subseteq \{H_2, P_2\}$
\item $X^-$ edge at bottom of polykite OK: $\{FP_1, H_3, H_4\} \subseteq \{FP_1, H_3, H_4\}$
\item $L$ edge at bottom of polykite OK: $\{F_2, FP_1\} \subseteq \{F_2, FP_1\}$
\end{itemize}
\item $1$-patch 24 (class $FP_1$)
\begin{itemize}
\item $FP_1$ neighbour $P_2$ or $F_2$ OK: $\{F_2\} \subseteq \{F_2, P_2\}$
\item $T$, $P$ or $F$ edge $B^+$ OK: $\{H_3\} \subseteq \{H_3, H_4\}$
\item $X^+$ edge at right of polykite OK: $\{P_2\} \subseteq \{H_2, P_2\}$
\item $X^-$ edge at bottom of polykite OK: $\{FP_1, H_3, H_4\} \subseteq \{FP_1, H_3, H_4\}$
\item $L$ edge at bottom of polykite OK: $\{F_2, FP_1\} \subseteq \{F_2, FP_1\}$
\end{itemize}
\item $1$-patch 25 (class $F_2$)
\begin{itemize}
\item $P_2$ or $F_2$ neighbour $FP_1$ OK: $\{FP_1\} \subseteq \{FP_1\}$
\item $F$ edge $F^+$ OK: $\{F_2\} \subseteq \{F_2\}$
\item $F$ edge $F^-$ OK: $\{F_2\} \subseteq \{F_2\}$
\item $X^+$ edge at top of polykite OK: $\{H_2\} \subseteq \{H_2, P_2\}$
\item $X^-$ edge at bottom of polykite OK: $\{FP_1, H_3, H_4\} \subseteq \{FP_1, H_3, H_4\}$
\item $L$ edge at bottom of polykite OK: $\{F_2, FP_1\} \subseteq \{F_2, FP_1\}$
\end{itemize}
\item $1$-patch 26 (class $FP_1$)
\begin{itemize}
\item $FP_1$ neighbour $P_2$ or $F_2$ OK: $\{F_2, P_2\} \subseteq \{F_2, P_2\}$
\item $T$, $P$ or $F$ edge $B^+$ OK: $\{H_4\} \subseteq \{H_3, H_4\}$
\item $X^+$ edge at right of polykite OK: $\{FP_1, H_4\} \subseteq \{F_2, FP_1, H_3, H_4\}$
\item $X^-$ edge at bottom of polykite OK: $\{FP_1, H_4\} \subseteq \{FP_1, H_3, H_4\}$
\item $L$ edge at bottom of polykite OK: $\{F_2, FP_1\} \subseteq \{F_2, FP_1\}$
\end{itemize}
\item $1$-patch 27 (class $FP_1$)
\begin{itemize}
\item $FP_1$ neighbour $P_2$ or $F_2$ OK: $\{F_2, P_2\} \subseteq \{F_2, P_2\}$
\item $T$, $P$ or $F$ edge $B^+$ OK: $\{H_3\} \subseteq \{H_3, H_4\}$
\item $X^+$ edge at right of polykite OK: $\{FP_1, H_4\} \subseteq \{F_2, FP_1, H_3, H_4\}$
\item $X^-$ edge at bottom of polykite OK: $\{FP_1, H_4\} \subseteq \{FP_1, H_3, H_4\}$
\item $L$ edge at bottom of polykite OK: $\{F_2, FP_1\} \subseteq \{F_2, FP_1\}$
\end{itemize}
\item $1$-patch 29 (class $T_1$)
\begin{itemize}
\item $T$ upper edge $A^-$ OK: $\{H_2\} \subseteq \{H_2\}$
\item $T$ or $P$ lower edge $A^-$ OK: $\{H_2\} \subseteq \{H_2\}$
\item $T$, $P$ or $F$ edge $B^+$ OK: $\{H_4\} \subseteq \{H_3, H_4\}$
\end{itemize}
\item $1$-patch 30 (class $T_1$)
\begin{itemize}
\item $T$ upper edge $A^-$ OK: $\{H_2\} \subseteq \{H_2\}$
\item $T$ or $P$ lower edge $A^-$ OK: $\{H_2\} \subseteq \{H_2\}$
\item $T$, $P$ or $F$ edge $B^+$ OK: $\{H_3\} \subseteq \{H_3, H_4\}$
\end{itemize}
\item $1$-patch 31 (class $P_2$)
\begin{itemize}
\item $P_2$ or $F_2$ neighbour $FP_1$ OK: $\{FP_1\} \subseteq \{FP_1\}$
\item $T$ or $P$ lower edge $A^-$ OK: $\{H_2\} \subseteq \{H_2\}$
\item $X^+$ edge at top of polykite OK: $\{H_2\} \subseteq \{H_2, P_2\}$
\item $X^-$ edge at right of polykite OK: $\{FP_1, H_3, H_4\} \subseteq \{FP_1, H_3, H_4\}$
\item $L$ edge at right of polykite OK: $\{F_2, FP_1\} \subseteq \{F_2, FP_1\}$
\end{itemize}
\item $1$-patch 32 (class $P_2$)
\begin{itemize}
\item $P_2$ or $F_2$ neighbour $FP_1$ OK: $\{FP_1\} \subseteq \{FP_1\}$
\item $T$ or $P$ lower edge $A^-$ OK: $\{H_2\} \subseteq \{H_2\}$
\item $X^+$ edge at top of polykite OK: $\{H_2\} \subseteq \{H_2, P_2\}$
\item $X^-$ edge at right of polykite OK: $\{H_2\} \subseteq \{F_2, H_2, P_2\}$
\item $L$ edge at right of polykite OK: $\{P_2\} \subseteq \{P_2\}$
\end{itemize}
\item $1$-patch 33 (class $H_2$)
\begin{itemize}
\item $H_2$ neighbour $H_1$ OK: $\{H_1\} \subseteq \{H_1\}$
\item $H$ edge $A^+$ OK: $\{P_2\} \subseteq \{P_2, T_1\}$
\item $X^+$ edge at top of polykite OK: $\{P_2\} \subseteq \{H_2, P_2\}$
\item $X^-$ edge at right of polykite OK: $\{H_2, P_2\} \subseteq \{F_2, H_2, P_2\}$
\end{itemize}
\item $1$-patch 34 (class $H_2$)
\begin{itemize}
\item $H_2$ neighbour $H_1$ OK: $\{H_1\} \subseteq \{H_1\}$
\item $H$ edge $A^+$ OK: $\{T_1\} \subseteq \{T_1\}$
\item $X^+$ edge at top of polykite OK: $\{H_2\} \subseteq \{H_2, P_2\}$
\item $X^-$ edge at right of polykite OK: $\{H_2, P_2\} \subseteq \{F_2, H_2, P_2\}$
\end{itemize}
\end{itemize}

\FloatBarrier


%
%

\bibliographystyle{alphaurl}
\bibliography{tilings}

\end{document}